\documentclass[11pt]{amsart}

\usepackage{mathtools}
\usepackage{amsmath}
\usepackage{amssymb, stmaryrd, bm}
\usepackage{amsthm}
\usepackage{graphicx}
\usepackage{tikz}
\usepackage{xcolor}
\usepackage[shortlabels]{enumitem}
\usepackage{algorithm}
\usepackage[colorinlistoftodos]{todonotes}
\usepackage{tcolorbox}
\newtcolorbox{simplebox}{
  boxrule=0pt,           
  frame hidden,          
  colback=white,         
  opacityback=0,         
  fontupper=\itshape,    
  left=12pt, right=12pt, 
  top=6pt, bottom=6pt,
  before skip=\medskipamount,  
  after skip=\medskipamount     
}

\usepackage[foot]{amsaddr}
\usepackage[pdftex,colorlinks,backref=page,citecolor=blue]{hyperref}

\allowdisplaybreaks

\usepackage[margin=1.2in]{geometry}
\usepackage{bookmark}

\usepackage[noabbrev,capitalize]{cleveref}
\crefname{equation}{}{}
\crefformat{enumi}{#2#1#3}
\crefrangeformat{enumi}{#3#1#4 to~#5#2#6}
\crefmultiformat{enumi}{#2#1#3}
{ and~#2#1#3}{, #2#1#3}{ and~#2#1#3}
\crefname{claim}{Claim}{Claims}
\Crefname{claim}{Claim}{Claims} 

\def\E{\mathbb{E}}
\def\var{\textrm{Var}}

\def\N{\mathbb{N}}

\def\R{\mathbb{R}}
\def\P{\mathbb{P}}

\def\eps{\varepsilon}

\def\cA{\mathcal {A}}

\def\cE{\mathcal {E}}

\def\cH{\mathcal {H}}

\def\cS{\mathcal {S}}
\def\cT{\mathcal {T}}
\def\cI{\mathcal {I}}

\def\bI{\mathbb {I}}

\def\cW{\mathcal {W}}

\def\sf{$\sigma$-field}
\def\1{\mathbf{1}}

\def\lam {\lambda}

\def\tce{t_c + \eps}
\def\tce2{t_c + \frac{\eps}{2}}

\newcommand{\bb}[1]{\mathbb{#1}}
\newcommand{\fm}[1]{\mathfrak{#1}}
\renewcommand{\sf}[1]{\mathsf{#1}}

\newcommand{\ol}[1]{\overline{#1}}

\newcommand{\of}[1]{\of{\fm{#1}}}

\newcommand{\mc}[1]{\mathcal{#1}}
\newcommand{\mb}[1]{\mathbb{#1}}
\newcommand{\mr}[1]{\mathrm{#1}}

\renewcommand{\Pr}{\operatorname{\mathbb{P}}}

\newcommand{\codeg}{{\mr{codeg}}}

\newtheorem*{theorem*}{Theorem}
\newtheorem{theorem}{Theorem}[section]
\newtheorem{lemma}[theorem]{Lemma}
\newtheorem{conjecture}[theorem]{Conjecture}
\newtheorem{observation}[theorem]{Observation}
\newtheorem{corollary}[theorem]{Corollary}
\newtheorem{claim}[theorem]{Claim}

\newtheorem*{fact*}{Fact}

\newtheorem{remark}[theorem]{Remark}

\theoremstyle{definition}

\newtheorem{definition}[theorem]{Definition}
\newtheorem{notation}[theorem]{Notation}

\numberwithin{equation}{section}

\title[The replica symmetric solution for hypergraph independent sets]{The replica symmetric solution for hypergraph independent sets in the critical regime}

\author{Matthew Jenssen}
\author{Will Perkins}
\author{Aditya Potukuchi}
\author{Michael Simkin}

\address{King's College London, Department of Mathematics}
\email{matthew.jenssen@kcl.ac.uk}

\address{Georgia Institute of Technology, School of Computer Science}
\email{math@willperkins.org}

\address{Max Planck Institute for Software Systems}
\email{apotu@mpi-sws.org}

\address{Hebrew University of Jerusalem, Einstein Institute of Mathematics and Rachel and Selim Benin School of Computer Science and Engineering}
\email{michael.simkin@mail.huji.ac.il}

\begin{document}

\begin{abstract}
    We prove a variational formula for the logarithmic asymptotics of a non-existence probability in a broad class of combinatorial problems such as avoiding cliques in random graphs and $k$-term arithmetic progressions in random subsets of integers. These results follow from a formula for the probability that a binomial random subset of the vertices of a locally sparse hypergraph is an independent set. The formula holds throughout the critical regime, interpolating between the regimes in which Janson's inequality and the method of hypergraph containers give the respective asymptotics. The formula is the replica-symmetric Bethe free energy formula from statistical physics applied to a hypergraph hardcore model.

    The proof uses two new techniques.  The first, for the upper bound, involves revealing a small `window' of a random independent set and then applying entropy methods.  The second, for the lower bound, involves the analysis of a random greedy algorithm guided by a Belief Propagation fixed point.  

    In the case of  avoiding cliques in random graphs, the variational formula can be expressed as an optimization problem over graphons. Moreover, typical random graphs conditioned on not containing $K_r$, when suitably normalized, approach the set of optimizing graphons in cut distance.
\end{abstract}

\maketitle

\section{Introduction}
\label{secIntro}

What is the probability that the random graph $G(n,p)$ has no triangles? Estimating non-existence probabilities like this (random graphs avoiding another subgraph $H$, a random set of integers avoiding a $k$-term arithmetic progression, a random subset of an Abelian group being sum-free, among many other natural examples) is a long-studied problem in combinatorics and probability.  Problems of this type can be encoded as problems about independent sets in hypergraphs: what is the probability that a $p$-random vertex subset of a given hypergraph induces no edges? Powerful and versatile tools in combinatorics have been developed precisely to attack these problems.

On one hand, when $p$ is small (as a function of parameters of the hypergraph), Janson's inequality~\cite{janson1987uczak,janson1990poisson} and the Harris inequality give the logarithmic asymptotics of the non-existence probability, matching a Poisson distribution.  On the other hand, when $p$ is large, the method of hypergraph containers applies~\cite{saxton2015hypergraph,balogh2015independent} and shows the non-existence probability is closely connected to an extremal problem.  For example, in the case of the random graph $G(n,p)$ avoiding a triangle (with $X$ denoting the number of triangles), we have 
\begin{itemize}
    \item (Subcritical) When $p = o(n^{-1/2})$, $\log \P (X=0) = - (1+o(1)) \E X$~\cite{janson1987uczak} (here and in what follows, all logarithms are natural);
    \item (Supercritical) When $p = \omega(n^{-1/2})$,  $\log \P (X=0) = (1+o(1))\log \P (G \text{ bipartite}) = (1+o(1)) \frac{n^2}{4}\log(1-p)$~\cite{luczak2000triangle}.
\end{itemize}
In particular, we see that the subcritical formula is  generic (the `Poisson paradigm') while the supercritical formula is related to an optimization problem (Mantel's theorem in the case of avoiding triangles).  
What remains mysterious is the \textit{critical regime}, between the range of applicability of Janson's inequality and hypergraph containers (for avoiding triangles, $p= \Theta(n^{-1/2})$).  This dichotomy between subcritical and supercritical results and methods is common to many  combinatorial problems of avoiding a substructure in a random structure. 

Previous work of the authors established logarithmic asymptotics in some restricted special cases: triangles in $G(n,p)$ in a small portion of the critical regime~\cite{jenssen2024lower}; the general hypergraph setting again in a small portion of the critical regime~\cite{jenssen2026non}.

Here we give a complete solution to the problem in great generality, expressing the logarithmic asymptotics of the non-existence probability as a variational problem.  This variational problem is maximizing the \textit{Bethe free energy} over all assignments of `marginals' in $[0,1]$ to vertices of the hypergraph encoding the specific problem.  Below we will give background on the Bethe free energy,  which arose in statistical physics and the study of inference on graphical models, and explain why it is relevant in these combinatorial and probabilistic problems.

In the more structured setting of avoiding subgraphs in random graphs, by taking limits  we obtain a variational formula over graphons, with the additional consequence of proving the existence of the  limit of the scaled log avoidance probabilities. Furthermore, we prove that the set of optimizing graphons determines the typical structure of random graphs conditioned to avoid the subgraph in question.

The precise setup we consider is the following. Let $\mc H = (V,E)$ be a $k$-uniform hypergraph of maximum vertex degree $\Delta$, and let $\mc I = \mc I(\mc H)$ be the set of independent sets of $\mc H$; that is, vertex subsets that induce no edge. For $p \in (0,1)$, let $S \subseteq V$ be chosen by including each vertex $v$ independently with probability $p$.    Our main goal is to compute first-order asymptotics of the logarithm of the non-existence probability,  $\log \P_p[S \in \mc I]$, as the number of vertices of $\mc H$ tends to infinity.  

To state the main results, we need to define the Bethe free energy  as well as a class of locally tree-like hypergraphs to which our results apply.  The basic objects we work with are assignments of `marginals' to the vertices of the hypergraph $\mc H$, representing the probability each vertex is in $S$ given that $S \in \mc I$.    The Bethe free energy is a functional mapping a set of marginals to a real number.  Adapted to our setting, it is the following.
\begin{definition}
\label{defBethe}
    Let $\mc H = (V,E)$ be a hypergraph and let $p \in (0,1)$. The \textit{Bethe free energy} on $\mc H$ with parameter $p$, $\mc B_{\mc H,p} : [0,1]^V \to \R$ is defined by
    \[
    \mc B_{\mc H,p}(x) = - \sum_{e \in E} \prod_{v \in e} x_v + \sum_{v \in V} \left( x_v - p - x_v \log\left(\frac{x_v}{p}\right)\right).
    \]
\end{definition}

Our main theorem is that the log-asymptotics of the probability that a $p$-random binomial subset of $V$ is independent is governed by the Bethe free energy, as long as locally $\mc H$ resembles a hypertree. In many cases this is already known for $p$ either large or small, in which cases the Bethe free energy reduces to the mean-field approximation or the Poisson approximation respectively. Our contribution is bridging the gap by studying the \textit{critical regime},  in which these two approximations  balance in a delicate way.  We next define a notion of `resembling a hypertree' and define the critical regime.

\begin{notation}
    Let $\mc H=(V,E)$ be a hypergraph. For a vertex $v \in V$ we define $E_v \coloneqq \{ e \subseteq V \setminus \{v\} : e \cup \{v\} \in E\}$, the collection of sets completing $v$ to an edge. We write $\deg_{\mc H}(v) = |E_v|$ for the degree of $v$. We also write $\Delta(\mc H) = \max_v \deg_{\mc H}(v)$ for the maximum hypergraph degree. For $1 \leq \ell \leq k$ we write $\Delta_\ell(\mc H)$ for the maximum $\ell$-degree, i.e., the maximum over all sets $S \in \binom{V}{\ell}$ of the number of edges containing $S$. For a pair of distinct vertices $u,v \in V$ we write $\codeg_{\mc H}(u,v) = |E_u \cap E_v|$ for the number of mutual completions of $u$ and $v$ to edges.   Usually, the hypergraph $\mc H$ will be clear from context. In this case we may write $\Delta_\ell$ in place of $\Delta_\ell(\mc H)$, and $\Delta$ in place of $\Delta(\mc H)$.
\end{notation}

\begin{definition}\label{def:LTL}
    Let $\eta > 0$. An $n$-vertex $k$-uniform hypergraph $\mc H=(V,E)$ is \textit{$\eta$-locally tree-like} if:
    \begin{enumerate}[{\bfseries{A\arabic{enumi}}}]
        \item\label{itm:setup codegree bound} $\max_{u\neq v} \codeg_{\mc H}(u,v) \leq \eta \Delta(\mc H)$,
        \item\label{itm:setup Delta ell UB} for each $\ell \in \{2,\ldots,k-1\}$, $\Delta_\ell(\mc H) \leq \eta\Delta(\cH)^{(k-\ell)/(k-1)}$.
    \end{enumerate}
    For $c\in (0,1)$, a probability density $p \in (0,1)$ is \textit{$c$-critical with respect to $\mc H$} if
    \begin{enumerate}[{\bfseries{A\arabic{enumi}}},resume]
        \item\label{itm:setup p is critical} $p \Delta^{1/(k-1)} \in [c,1/c]$.
    \end{enumerate}
\end{definition}
We remark that if $\mc H$ has average degree $\Theta(\Delta)$, the critical regime is precisely when the expected number of vertices in a $p$-random binomial subset of $V$ (i.e., $np$) is of the same order as the expected number of induced edges ($\Theta(n\Delta p^k)$).
\begin{theorem}\label{thm:Bethe exactness}
    Fix $k \geq 2$ and $c\in (0,1)$. Let $\cH=(V,E)$ be an $n$-vertex, $k$-uniform, $\eta$-locally tree-like hypergraph where $\eta\to 0$ and $\Delta(\cH) \to \infty$ as $n\to\infty$.
    Suppose $p$ is $c$-critical with respect to $\mc H$. Let $\mc I$ be the collection of independent sets in $\mc H$. Let $S$ be a $p$-random binomial subset of $V$. Then the probability that $S$ is an independent set satisfies
    \[
    \log  \Pr_p [S \in \mc I]  = \max_{x\in [0,1]^V} \mc B_{\mc H,p}(x) + o(np).
    \]
\end{theorem}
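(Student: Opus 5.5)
We prove the two inequalities separately, following the abstract: an entropy argument on a small revealed `window' gives the upper bound, and a BP-guided random greedy process gives the lower bound. Throughout, write $Z=\Pr_p[S\in\mc I]$ and let $I$ be a uniformly weighted sample from the conditional law, i.e.\ $\Pr[I=A]=p^{|A|}(1-p)^{n-|A|}/Z$ for $A\in\mc I$. By \ref{itm:setup p is critical}, $np$, $n\Delta p^k$ and all terms of $\mc B_{\mc H,p}$ at relevant $x$ are $\Theta(np)$, so all errors must be $o(np)$.

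\textbf{Upper bound.} We have $\log Z = H(I) + \E[|I|]\log p + (n-\E|I|)\log(1-p)$, where $H$ is Shannon entropy. Using $\log(1-p)=-p+O(p^2)$, this becomes $\log Z = H(I) - \sum_v \mathrm{H}_{\mathrm{bin}}\text{-type terms}$. Concretely, the plan is to show
\[
H(I)\le \sum_v\bigl(-x_v\log x_v + x_v\bigr) - \sum_e \prod_{v\in e}x_v + o(np),
\]
with $x_v=\Pr[v\in I]$. Combining this with the identity above yields $\log Z\le \mc B_{\mc H,p}(x)+o(np)\le\max\mc B$.

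To bound the entropy, fix a small $\delta>0$ and let $W$ be a random set containing each vertex independently with probability $\delta$ (the `window'). Write $H(I)=H(I\setminus W)+H(I\cap W\mid I\setminus W)$ and average over $W$ (a Shearer/chain-rule averaging over $1/\delta$ disjoint windows). Given the outside configuration $J=I\setminus W$, the window is essentially a product hardcore measure: $v\in W$ is allowed iff no $e\in E_v$ has $e\subseteq J$. Correlations inside the window are negligible because \ref{itm:setup codegree bound} and \ref{itm:setup Delta ell UB} imply that edges with two or more vertices in $W$ are rare. Hence
\[
H(I\cap W\mid J)\approx\sum_{v\in W}\E\bigl[h(\Pr[v\in I\mid J])\bigr].
\]
The conditional marginal is $\approx \frac{p}{1-p+p}\,\1[\text{no blocking edge}]$. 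The number of blocking edges is approximately Poisson with mean $\sum_{e\in E_v}\Pr[e\subseteq I]\approx \sum_{e\in E_v}\prod_{u\in e}x_u$, which follows from local tree-likeness and a second-moment/Janson-type bound under the conditioned measure. This turns the window entropy into $\delta$ times the Bethe expression. Iterating the chain rule over the partition $V=W_1\sqcup\dots\sqcup W_{1/\delta}$ and letting $\delta\to0$ slowly gives the claimed bound.

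\textbf{Main obstacle.} The hardest step is justifying that the blocking events are approximately independent, so that $e^{-\sum_e\prod x}$ appears with the right constant. Here I would try a conditional Janson inequality applied within the window, using the codegree bounds.

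\textbf{Lower bound.} Let $x^*$ maximize $\mc B$. Stationarity gives the BP fixed-point equation $x_v=p\exp\bigl(-\sum_{e\in E_v}\prod_{u\in e}x_u\bigr)$ (if $x_v>0$). Run a random greedy process in continuous time $t\in[0,1]$: each vertex $v$ attempts to enter at a uniform time, accepting with a probability tuned from $x^*$, and is admitted only if it completes no edge. Track the trajectories with the differential equation method, using the degree bounds for concentration, to show that the final set has marginals $\approx x^*_v$. Then lower bound $\log Z\ge H(\mu)+\E_\mu\log\bigl(p^{|A|}(1-p)^{n-|A|}\bigr)$ for the output law $\mu$ (Gibbs variational principle). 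Compute $H(\mu)$ by the chain rule along the process: each acceptance decision contributes binary entropy given the history, which integrates to $\mc B_{\mc H,p}(x^*)+o(np)$ via the fixed-point identity. Handling the errors here is routine once the trajectory concentration is established.
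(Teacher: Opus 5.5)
Your upper bound has a genuine gap. The target inequality uses the deterministic marginals $x_v=\Pr[v\in I]$. Combined with your (correct) identity for $\log Z$, it would give $\log Z\le\mc B_{\mc H,p}(x)+o(np)$ at the \emph{averaged} marginal vector, and this is false in general.

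Take $\mc H=\mc H(K_3,n)$ and $p=cn^{-1/2}$. By the symmetry of $K_n$, your $x$ is constant. By the discrete-to-continuous comparison (valid since $x_v\le p$), your bound would give
$n^{-3/2}\log Z\le\max_{a\in[0,1]}\bigl(-\tfrac{c^3a^3}{6}-\tfrac c2(a\log a+1-a)\bigr)+o(1)$.
For, say, $c\ge 16$ this maximum is strictly below $\mc B^{K_3}_c(w_3^{1,0})=-c/4\le F_{K_3}(c)$. That contradicts the lower bound by $\Omega(np)$.

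The underlying reason is that the conditioned measure is a mixture of `phases' (here, near-bipartite graphs with a random bipartition). The number of blocking completions at $v$ is Poisson with a \emph{random} parameter, $\Pr[e\subseteq I]\neq\prod_{u\in e}x_u$, and $\E e^{-Y}\neq e^{-\E Y}$. So no Janson-type bound with a deterministic mean gives the right constant.

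The missing idea is to evaluate the Bethe functional at sample-dependent marginals, with the expectation outside. The paper sets $\mathbf y_v=|\{e\in E_v:e\subseteq\mc U(\mathbb I)\}|$ and $\mathbf x_v=pe^{-p^{k-1}\mathbf y_v}$. It proves $\log Z_{\mc H}(\lambda)\le\E[\widetilde{\mc B}_{\mc H,p}(\bar{\mathbf x})]+np+o(np)$ and $\E[\mc B_{\mc H,p}(\bar{\mathbf x})]=\E[\widetilde{\mc B}_{\mc H,p}(\bar{\mathbf x})]+o(np)$, and only then uses $\E[\mc B_{\mc H,p}(\bar{\mathbf x})]\le\max\mc B_{\mc H,p}$. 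Its window has density $q\to0$ and serves to pin down the phase. A truncated inclusion--exclusion estimator on $W\cap\mathbb I$ recovers $\mathbf y_v$ up to $o(\Delta)$ in $L^1$, and revealing the window costs only $o(np)$ in entropy plus energy.

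Your entropy decomposition also points the wrong way. Conditioning each window on the whole outside configuration gives a \emph{lower} bound, since
$H(I)=\sum_iH(I\cap W_i\mid I\cap W_{<i})\ge\sum_iH(I\cap W_i\mid I\setminus W_i)$.
Concretely, your computation yields $h(p)\Pr[v\in\mc U(I)]\approx x_v(\log(1/p)+1)$ per vertex, with no $(1-\frac1k)x_v\log(p/x_v)$ term. That term is $\Theta(np)$ in the critical regime. If you instead iterate the chain rule correctly, each window is conditioned on only part of $I$, and its conditional law is no longer the product `$p$ if unblocked by $J$'.

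The paper instead reveals vertices in uniformly random order. When $v$ is reached at time $t$, it is unblocked by the revealed part with probability $u_v(t)\approx e^{-(pt)^{k-1}\mathbf y_v}$ (Chen--Stein plus the spatial Markov property), and it is then occupied with probability $pu_v(1)/u_v(t)$. Integrating $u_v(t)h(pu_v(1)/u_v(t))$ over $t$ produces the factor $1-\frac1k$.

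Your lower bound matches the paper's strategy in outline, with two caveats. First, the acceptance probability must be the time-dependent $r_v(t)=p(x_v/p)^{1-t^{k-1}}$, so that availability times acceptance is $\approx x_v$ at every $t$. Second, the estimate $a_v(t)=(x_v/p)^{t^{k-1}}+o(1)$, with only $\eta\to0$ and no rate, is the heart of the matter rather than routine trajectory concentration. The paper obtains it from a self-avoiding-walk-tree coupling, a Gronwall argument for independent clocks, and an interpolation showing that repeated labels rarely enter the influence cluster.
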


In Section~\ref{subsecBP} we show that the maximization can be restricted to those $x \in [0,1]^V$ that are \textit{Belief Propagation fixed points} (defined below in Section~\ref{subsecBP}).
\cref{thm:Bethe exactness} establishes the \textit{replica symmetric formula} for the logarithmic asymptotics throughout the critical regime, in the sense from statistical physics that the Bethe free energy is accurate (e.g.~\cite{mezard2009information,dembo2014replica}).  In \cref{subsecRelated} below we contrast this to other notions of replica symmetry used in the large deviations literature.

Note that the Bethe free energy consists of two terms; one is a Janson-like term which resembles an expected number of edges, the other is a relative entropy term which resembles that in the variational problem for the supercritical regime~\cite{kozma2023lower}. \cref{thm:Bethe exactness} shows that these two terms are in balance in the critical regime.

\subsection{Applications}

\subsubsection{Avoiding subgraphs in random graphs}\label{sec:intro avoiding subgraphs}

A central application of \cref{thm:Bethe exactness} --- indeed, the original motivation for this paper --- is to determine the rate function (that is, the normalized limit of the log probability) for the non-existence of a copy of $K_r$ in $G(n,p)$ in the critical regime. Here, the critical regime is $p = \Theta(n^{-2/(r+1)})$. To state our result we use standard graphon terminology (see, for example, \cite{lovasz2012large}). Let $\mc W$ be the collection of symmetric, Borel measurable functions $w:[0,1]^2\to[0,1]$. We write $\delta_\Box(\cdot,\cdot)$ for the \textit{cut distance} between two graphons. For a graph $G$, we write $\tilde G$ for the step graphon it induces. For $w \in \mc W$ and a graph $H$ with vertex set $\{1,2,\ldots,v(H)\}$ we define
\[
t(H,w) \coloneqq \int_{[0,1]^{v(H)}} \prod_{ij \in E(H)} w(x_i,x_j) dx_1\cdots dx_{v(H)}.
\]
This is the homomorphism density of $H$ in $w$. We also define the entropy functional
\[
I(w) \coloneqq \frac{1}{2} \iint_{x,y \in [0,1]} \left( w(x,y)\log(w(x,y)) + 1 - w(x,y) \right) dxdy
\]
(as usual, $0\log0\coloneqq0$). Then, for $r \geq 3$ and $c>0$, we define the $K_r$-avoidance Bethe functional
\[
\mc B^{K_r}_c(w) \coloneqq - \frac{1}{r!} c^{\binom{r}{2}} t(K_r,w) - c I(w).
\]
The two terms in $\mc B^{K_r}_c(w)$ describe the competing costs of changing the edge-density profile of $G(n,p)$ and suppressing copies of $K_r$: $I(w)$ is the entropy cost of deviating from the uniform graphon $w \equiv 1$, and $t(K_r,w)$ penalizes the expected number of copies of $K_r$. Finally, we define
\[
F_{K_r}(c) \coloneqq \max_{w\in\mc W} \mc B^{K_r}_c(w)
\]
(we prove that the maximum exists in \cref{sec:Graphon}).

\begin{theorem}\label{thm:graphon clique avoidance}
    Fix an integer $r \geq 3$, a constant $c>0$ and let $p=cn^{-2/(r+1)}$. Then
    \[
   f_r(c) :=  \lim_{n \to \infty} n^{-2r/(r+1)} \log (\Pr[K_r \nsubseteq G(n,p)]) = F_{K_r}(c).
    \]
    Moreover, if $G \sim G(n,p)$ conditioned on not containing $K_r$ as a subgraph, then whp there exists a graphon $w$ that maximizes $\mc B_c^{K_r}$ and such that $\delta_\Box(\tilde G/p,w)=o(1)$.
\end{theorem}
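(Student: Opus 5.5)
We derive \cref{thm:graphon clique avoidance} from \cref{thm:Bethe exactness}, applied to the hypergraph $\mc H_n$ on vertex set $\binom{[n]}{2}$ whose edges are the edge sets of copies of $K_r$. Here $k=\binom r2$ and $\Delta=\binom{n-2}{r-2}\asymp n^{r-2}$. Then $p\Delta^{1/(k-1)}\asymp c\, n^{-2/(r+1)} n^{(r-2)/(\binom r2-1)}$, and $\binom r2-1=(r-2)(r+1)/2$, so this quantity is $\Theta(1)$ and $p$ is critical. The codegree of two pairs is $O(n^{r-3})=o(\Delta)$. A set of $\ell$ pairs spanning $s$ vertices has $\ell\le\binom s2$ and lies in at most $n^{r-s}$ cliques. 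Checking \ref{itm:setup Delta ell UB} reduces to $r-s< (r-2)(k-\ell)/(k-1)$ for $2\le\ell\le k-1$, which I would verify using $\ell\le\binom s2$ (the extreme case being $s=r-1$, $\ell=\binom{r-1}2$). Since $np\asymp n^{2r/(r+1)}$, the theorem gives
\[
\log\Pr[K_r\nsubseteq G(n,p)]=\max_x \mc B_{\mc H_n,p}(x)+o(n^{2r/(r+1)}).
\]

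The next step is to identify $n^{-2r/(r+1)}\max_x\mc B_{\mc H_n,p}(x)$ with $F_{K_r}(c)$. Write $x_{uv}=p\,w(u,v)$ and let $w$ also denote the step graphon. The vertex term becomes
\[
\sum_{uv}p(w-1-w\log w)=-\tfrac{n^2p}{2}\cdot 2I(w)\cdot(1+o(1)),
\]
which equals $-c\,n^{2r/(r+1)}I(w)$ up to errors. The edge term is
\[
-p^{\binom r2}\#\{\text{weighted } K_r\}=-p^{\binom r2}\tfrac{n^r}{r!}t(K_r,w)+O(\text{diagonal}),
\]
and $p^{\binom r2}n^r=c^{\binom r2}n^{2r/(r+1)}$. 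A priori $w$ may be unbounded, since $x_v\le1$ only gives $w\le1/p$. So I would first show that optimizers have $w\le M$ with $M$ independent of $n$. The stationarity condition, i.e.\ the BP fixed point from \cref{subsecBP}, gives $x_v=p\exp(-\sum_{e\in E_v}\prod x_u)\le p$, hence $w\le1$. Therefore the restriction to $\mc W$ costs nothing, and the diagonal (non-injective) terms are negligible. This yields $\max\mc B_{\mc H_n,p}\le n^{2r/(r+1)}(F_{K_r}(c)+o(1))$. For the matching lower bound, given any graphon $w$, take $x_{uv}=p\,w(u/n,v/n)$ after averaging $w$ over the grid blocks. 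Continuity of $t(K_r,\cdot)$ and lower semicontinuity/convexity of $I$ under averaging, together with density of step functions, give the reverse inequality. Existence of the maximizer follows from compactness of the graphon space under $\delta_\Box$, continuity of $t(K_r,\cdot)$, and lower semicontinuity of $I$ (which is convex). This proves the limit statement.

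For the structural statement, I would use a standard large deviations argument. Let $\mc O_\delta$ be the set of graphs $G$ with $\delta_\Box(\tilde G/p,w)\ge\delta$ for every maximizer $w$. By compactness and upper semicontinuity, $\sup\{\mc B_c^{K_r}(w):\delta_\Box(w,\text{maximizers})\ge\delta\}\le F_{K_r}(c)-\gamma$ for some $\gamma>0$. It therefore suffices to show
\[
\log\Pr[K_r\nsubseteq G,\ G\in\mc O_\delta]\le n^{2r/(r+1)}(F_{K_r}(c)-\gamma+o(1)).
\]
This needs an upper bound version of \cref{thm:Bethe exactness} restricted to a cut-distance region. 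I would cover $\mc O_\delta$ by $e^{o(n^{2r/(r+1)})}$ regions where the edge densities of $G/p$ between the parts of a weak regular partition with $O(1)$ parts are prescribed up to $\epsilon$; the count is fine since there are only boundedly many parts. Within each region, I would run the entropy/window upper-bound argument with the constraint. Equivalently, tilt $p$ to block-dependent $p_{ij}$: the probability of the block constraint under $p$ is controlled by relative entropy, and applying \cref{thm:Bethe exactness} in its inhomogeneous form bounds the independence probability. This inhomogeneous/constrained version of the upper bound is the step I expect to be the main obstacle, since \cref{thm:Bethe exactness} as stated has a single $p$. I would first check whether the proof's upper bound extends verbatim to vertex-dependent densities $p_v$ in $[c\Delta^{-1/(k-1)},\Delta^{-1/(k-1)}/c]$, and then combine this with the blockwise change-of-measure.
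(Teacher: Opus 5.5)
\textbf{The limit part is correct; the structural part has a genuine gap.}

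Your proof of the limit $f_r(c)=F_{K_r}(c)$ is correct and is essentially the paper's argument. You check that the $K_r$-hypergraph on $\binom{[n]}{2}$ is locally tree-like with $p$ critical. You use that maximizers of $\mc B_{\mc H_n,p}$ are BP fixed points, so $x\le p$ and $w\le 1$. You then compare the discrete and continuous functionals and approximate a maximizing graphon by block averages, with Jensen giving $I(w_n)\le I(w)$.

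The gap is in the structural statement. The constrained upper bound you flag is not merely unverified: the tilting route you propose for it does not work. Suppose you change measure to $q_e=p\,d_{ij}$ and bound $\Pr_q[K_r\nsubseteq G]$ by the \emph{unconstrained} maximum of the inhomogeneous Bethe functional. After adding the log-likelihood ratio on the region, which is about $-\sum_v(q_v\log(q_v/p)+p-q_v)$, your bound becomes
\[
\sup_{x}\Bigl[\mc B_{\mc H_n,p}(x)+\sum_{v}\log(q_v/p)\,(x_v-q_v)\Bigr],
\]
and this is not localized to $x$ with block profile $d$.
\begin{itemize}
\item For $d\equiv 1$ the bound is simply $\max_x\mc B_{\mc H_n,p}(x)$. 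Yet that region is at positive cut distance from the (closed, hence compact) set of maximizers, since $w\equiv 1$ is not a maximizer.
\item Optimizing over block-constant tilts only yields the concave envelope of $d\mapsto\sup\{\mc B_{\mc H_n,p}(x): x \text{ has block profile } d\}$. Since $\mc B$ is not concave, and distinct relabelings of a non-constant maximizer give distinct profiles relative to a fixed partition, this envelope attains the maximum at midpoints of such profiles. For large $c$ those midpoints have $K_r$-density bounded away from $0$, so they are far from every maximizer.
\item Running the window/entropy argument ``with the constraint'' is not routine either. The measure $\mu_{\mc H,p}(\cdot\mid R)$ is not a hardcore measure and loses \cref{obs:spatial-markov}, which underlies $\Pr(v\in\mathbb I\mid\mathcal W)=p\,u_v(1)$ and the Stein estimates.
\item Separately, an $O(1)$-part weak regular partition of the unbounded kernel $\tilde G/p$ needs a no-dense-spots condition, which you do not address.
\end{itemize}

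\textbf{The missing idea.} No inhomogeneous or constrained version of \cref{thm:Bethe exactness} is needed, because the upper bound is proved at the level of the Gibbs measure. \cref{lem:partition function UB,lem:fixedpointrewrite} give
\[
\log Z_{\mc H}(\lambda)\le\E[\mc B_{\mc H,p}(\bar{\mathbf x})]+np+o(np),
\]
where $\bar{\mathbf x}=(p\,e^{-p^{k-1}\mathbf y_v})_v$ is a function of the sample $\mathbb I\sim\mu_{\mc H,p}$ itself. Combined with \cref{lem:Bethe FE lower bound}, this gives $\E[\max_x\mc B_{\mc H,p}(x)-\mc B_{\mc H,p}(\bar{\mathbf x})]=o(np)$. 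By Markov, the empirical marginals are whp a near-maximizer.

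It remains to show the sample is close to $\bar{\mathbf x}$ in cut norm, which is \cref{thm:hypergraph-structure}. First, $\Pr(v\in\mathbb I\mid\mathcal W)$ is $L^1$-close to $\mathbf x_v$ by \cref{lem:availability-target} at $t=1$. Second, the window estimator $q^{-1}|\mathbb I\cap A\cap W|$ concentrates around $|\mathbb I\cap A|$ by Chernoff. This holds uniformly over the at most $4^n$ sets $A=E_{S,T}$, because $\log 4^n=o(n^{2r/(r+1)})$. Your compactness and upper-semicontinuity step then turns ``near-maximizer'' into ``$\delta_\Box$-close to a maximizer'', exactly as in the paper.
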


\cref{thm:graphon clique avoidance} establishes the existence of the limit defining $f_r(c)$ throughout the critical regime, answering in the affirmative Question 1  from~\cite{jenssen2024lower} (in the case $\eta=0$).  In~\cite{jenssen2026non} it is shown that $f_r(c)$ has at least one point of non-analyticity on the interval $(0, \infty)$.

In \cref{sec:Graphon} we formulate and prove a more general version of \cref{thm:graphon clique avoidance} that covers the probability that $G(n,p)$ avoids a fixed graph $H$ for any strictly $2$-balanced $H$. We also prove some results and state a conjecture regarding the typical structure of $K_r$-free graphs in the critical regime and the optimization problem defining $F_{K_r}(c)$.

\subsubsection{Avoiding $k$-APs}
Fix $k\geq 3$.
A second application of \cref{thm:Bethe exactness} is to the study of random integer sets avoiding $k$-term arithmetic progressions ($k$-APs). Let $[n] = \{1,\ldots,n\}$ and let $[n]_p$ denote the binomial random model on $[n]$ with density $p$. The natural constraint hypergraph here is $\mc H_k(n)$, which has vertex set $[n]$ and whose hyperedges are the $k$-APs in $[n]$.

The critical regime is when $p = \Theta(n^{-1/(k-1)})$. The story is similar to that of clique-free random graphs: Letting $X(S)$ be the number of $k$-APs in a set $S$, Janson's and Harris's inequalities (in the subcritical regime) and hypergraph container methods \cite{saxton2015hypergraph,balogh2015independent,kozma2023lower} (in the supercritical regime) imply that
\[
\log(\Pr_{S \sim [n]_p}[S \text{ is $k$-AP-free}]) = \begin{cases}
    - (1+o(1))\E[X(S)] & p = o(n^{-1/(k-1)})\\
    (1+o(1))n\log(1-p) & p = \omega(n^{-1/(k-1)}).
\end{cases}
\]

In \cite{jenssen2026non}, the authors determined the log asymptotics of $\Pr[S \text{ is $k$-AP-free}]$ whenever $p < c_k n^{-1/(k-1)}$, for a small constant $c_k$. It is straightforward to show that the natural constraint hypergraph for $k$-term arithmetic progressions in $[n]$ is $n^{-\delta}$-locally tree-like, for some $\delta = \delta(k)$ (cf.\ \cite[Lemma 8.1]{jenssen2026non}). Thus, \cref{thm:Bethe exactness} determines the asymptotic log probability that $[n]_p$ contains no $k$-term arithmetic progression throughout the critical regime, in terms of an optimization problem over assignments of marginals to the integers $[n]$.

Similar to the subgraph avoidance case, it is natural to seek a limiting continuous formulation. Indeed, weights on $[n]$ naturally correspond to step functions from $[0,1]$ to $[0,1]$, and one can formulate an appropriate variational problem on the limit space of such functions. Unfortunately, there are delicate convergence issues that do not appear in the graphon case: The most natural topology here is the weak--$*$ topology, with respect to which the limit space is compact. However, the functional that `counts' arithmetic progressions is not weak--$*$ continuous. Thus, we leave as open problems to prove that, for every $c>0$, the limit
\[
\lim_{n\to\infty}n^{1/(k-1)-1} \log(\Pr_{S \sim [n]_{cn^{-1/(k-1)}}}[ S \text{ is $k$-AP-free}])
\]
exists and to determine its value.

\subsection{Related work}
\label{subsecRelated}

The study of $\P_p[S \in \mathcal I]$, and particularly the special case of $G(n,p)$ avoiding a copy of a fixed graph $H$ has a long history, with connections to several different areas of mathematics.  One of the earliest and most influential results on this problem was the theorem of Erd\H{o}s, Kleitman, and Rothschild~\cite{erdosasymptotic} showing that almost every triangle-free graph is bipartite, or in other words, in $G(n,1/2)$ the first-order asymptotics of the probability of triangle-freeness coincide with the probability of being bipartite.   This result has since been extended and generalized in many different directions, including moving from $p=1/2$ to other values of $p$, treating other subgraphs besides cliques, studying other types of large deviations (the upper and lower tail problems).  Focusing just on lower-tail results since they are directly relevant to the current paper, there is a line of work identifying the approximate number and typical structure of $H$-free graphs~\cite{erdosasymptotic,promel1996asymptotic,janson1987uczak,promel1996counting,luczak2000triangle,osthus2003densities,balogh2015independent,saxton2015hypergraph,balogh2016typical,jenssen2023evolution} that are the immediate predecessors of ours. For the more general lower-tail problem, the subcritical regime again follows Poisson behavior~\cite{janson1990poisson,janson2016lower} while understanding the supercritical behavior involves solving an optimization problem.  Results showing that the behavior reduces to such an optimization problem include~\cite{chatterjee2011large,chatterjee2016nonlinear,augeri2020nonlinear,eldan2018gaussian}, culminating in the work of Kozma and Samotij~\cite{kozma2023lower} establishing this reduction through the entire supercritical regime. Building on the work of Kozma and Samotij, Chin~\cite{chin2025structure} obtained results on the typical structure of sparse random graphs conditioned on lower-tail subgraph counts. Solving the relevant optimization problem for  lower-tail problems turns out to be very challenging, but some results are known~\cite{zhao2017lower}.
In the critical regime, the only result known until recently was the order of magnitude of the log probability~\cite{janson1987uczak}; the two recent  papers~\cite{jenssen2024lower,jenssen2026non} find the logarithmic asymptotics for sufficiently small critical densities (in fact precisely in a regime where the Bethe free energy has a unique stationary point, at the global maximum).

As we describe in more detail below, Belief Propagation and the Bethe free energy are heuristics for approximating vertex marginals and the log partition function of graphical models. These heuristics are exact on acyclic graphs and hypergraphs.  A major direction of study is understanding when these heuristics are approximately accurate in hypergraphs that may contain cycles.  For statistical physics models on sparse random graphs (say random $\Delta$-regular graphs for $\Delta$ constant), this has been verified in a number of cases.  Roughly, results confirming the replica symmetric prediction (that is, the log partition function is well approximated by the maximum of the Bethe free energy) fall into two categories: for a specific model when interactions are weak enough or $\Delta$ small enough~\cite{bapst2016harnessing,coja2018belief,coja2018information,coja2018charting,biswas2025replica}; for classes of models, like ferromagnetic spin models, for all values of parameters~\cite{DemboMontanari2010,dembo2013factor,dembo2014replica,galanis2016inapproximability,cai2016hardness,sly2014counting}.  In particular, because the replica symmetric solution is an optimization problem, a phase transition can still occur even if a model is replica symmetric for all parameters (as occurs in, say, the ferromagnetic Ising and Potts models on random graphs).  In fact, we conjecture this type of phase transition occurs in the problem of avoiding cliques in random graphs.

Theorem~\ref{thm:Bethe exactness} shows that the `replica symmetric cavity method' (e.g.,~\cite{mezard2009information,dembo2014replica}) is accurate for this hypergraph independent set problem throughout the entire critical regime.  This may well be  surprising to a reader familiar with either combinatorial non-existence probabilities and large deviations in random graphs or with the cavity method for spin models on random graphs and hypergraphs, so let us make a couple remarks to clarify (though even after clarification we find this remarkable!).

First, in some of the literature on large deviations in random graphs~\cite{chatterjee2011large,lubetzky2015replica,zhao2017lower,mukherjee2020replica}, a variational problem is often called replica symmetric when its optimizer is a constant graphon (equivalently, when the large deviation event is realized by a uniform Erdős–Rényi tilt). This  should not be confused with replica symmetry in the statistical physics spin-glass and cavity method sense, which concerns symmetry among replicas or the distribution of overlaps of independent samples from a Gibbs measure. A constant graphon optimizer  is instead a statement of homogeneity of marginals; a non-constant  optimizer does not by itself constitute replica-symmetry breaking in the statistical physics sense.  In this paper we adopt the original statistical physics sense.

Second, to readers familiar with the cavity method applied to sparse random graphs, it is well known that the hardcore model on a random $\Delta$-regular graph does undergo a replica symmetry breaking phase transition (and is also typically locally treelike).  This apparent inconsistency is explained by the fact that when we work in the $\Delta \to \infty$ regime and compute the asymptotics of $\log \P_p [ S \in \mc I]$ rather than $\log Z_{\mc H}(\lambda)$, the additional term $np$ in the formula dominates $\log Z_{\mc H}(\lambda)$ when there is replica symmetry breaking.

\subsection{Concurrent independent work}

Partway through working on this project the authors discovered that Matthew Kwan and Huy Tuan Pham were independently working on very similar problems. We wish to thank them for friendly discussions and for coordinating posting of our papers.

In particular, Kwan and Pham \cite{kwan2026nonlinear} essentially prove \cref{thm:Bethe exactness} (with a mild quantitative assumption on $\eta$ required for the lower bound) and the non-structural statements of \cref{thm:graphon clique avoidance,thm:H non-existence} (they discuss the prospect of proving structural results, and this seems well within reach of their methods). Their paper goes beyond non-existence and obtains a variational formula for lower tail probabilities throughout the critical regime. They additionally prove analogous results for hypergeometric models. Their paper also solves the variational problems associated to avoiding a subgraph $H$ in $G(n,p)$ when $H$ is either strictly $2$-balanced bipartite with the Sidorenko property or a triangle. This proves, respectively, the even-cycle case of \cite[Conjecture 1]{jenssen2026non} and the $r=3$ case of \cref{con:block structure} below. Together with the structural statement in \cref{thm:H non-existence} this gives a very satisfying description of the behavior of $G(n,p)$ conditioned on $H$-freeness throughout the critical regime.

While the papers' methods share some similarities, we take a statistical physics perspective while theirs is more combinatorial. In particular, as we describe below, both our lower and upper bounds use the Gibbs variational formula for the hardcore model as a starting point. We use a Belief Propagation guided random process to prove the lower bound and a local entropy analysis for the hardcore model for the upper bound. Interestingly, the corresponding quantitative calculations in our proofs below for the lower and upper bounds in Theorem~\ref{thm:Bethe exactness} mirror each other quite closely.

\subsection{Organization}

The rest of this paper is organized as follows. In \cref{sec:preliminaries} we collect some basic facts about the hardcore model and the Bethe free energy. In~\cref{sec:motivation} we give an overview of the proof of Theorem~\ref{thm:Bethe exactness}.
In \cref{sec:upper bound} we prove the upper bound in \cref{thm:Bethe exactness} and in \cref{sec:lower bound} we prove the corresponding lower bound. In \cref{sec:hypergraph-structure} we prove a structural result for the hypergraph hardcore model which is a key step towards the structural statement contained in \cref{thm:graphon clique avoidance}.
In \cref{sec:Graphon} we study the subgraph-avoidance case of \cref{thm:Bethe exactness}: we prove and generalize \cref{thm:graphon clique avoidance}, and consider the structure of typical graphs that avoid a fixed subgraph.

\section{Preliminaries}\label{sec:preliminaries}

\subsection{The hardcore model, its partition function, and non-existence probabilities}
\label{subsecHardcore}

For $p\in (0,1)$, let $\mu_{p}$ denote the $p$-random binomial distribution on subsets $S$ of $V$; that is, each vertex is included in $S$ independently with probability $p$. We write $\mu_{\mc H,p}$ for $\mu_p$ conditioned on the event that $S \in \mc I(\mc H)$.

The distribution $\mu_{\mc H,p}$ coincides exactly with the \textit{hardcore model} on the hypergraph $\mc H$ at fugacity $\lambda  = \lambda(p) \coloneqq p/(1-p)$. That is, $\mu_{\mc H,p}$ samples a set $S \in \mc I(\mc H)$ with probability proportional to $\lambda^{|S|}$. We write $Z_{\mc H}(\lambda)$ for partition function of the hardcore model on $\mc H$ at fugacity $\lambda$, i.e., $Z_{\mc H}(\lambda) \coloneqq \sum_{S \in \mc I(\mc H)} \lambda^{|S|}$.

We now record some simple connections between the probability that a sample from $\mu_p$ is independent, the entropy of a sample from $\mu_{\mc H,p}$, and the partition function $Z_{\mc H}(\lambda)$. Throughout this section we fix a hypergraph $\cH$ on $n$ vertices, $p\in (0,1)$ and let $\lam= p/(1-p)$.

\begin{observation}\label{obs:hardcore partition and log probability}
If $S \sim \mu_p$ then  
\[
\log (\Pr [S \in \mc I(\mc H)]) = \log Z_{\mc H}(\lambda) - n \log(1+\lambda)
\]
\end{observation}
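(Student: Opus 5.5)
The identity follows from a direct computation: expand the probability as a sum over independent sets and factor out $(1-p)^n$.

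For any fixed $S \subseteq V$, the binomial measure gives $\Pr[S] = p^{|S|}(1-p)^{n-|S|}$. Summing over independent sets,
\[
\Pr[S \in \mc I(\mc H)] = \sum_{S \in \mc I(\mc H)} p^{|S|}(1-p)^{n-|S|} = (1-p)^n \sum_{S \in \mc I(\mc H)} \Bigl(\frac{p}{1-p}\Bigr)^{|S|} = (1-p)^n Z_{\mc H}(\lambda),
\]
where the last step uses $\lambda = p/(1-p)$.

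Next, invert the fugacity: from $\lambda = p/(1-p)$ we get $1+\lambda = 1/(1-p)$, so $(1-p)^n = (1+\lambda)^{-n}$.

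Taking logarithms of $\Pr[S \in \mc I(\mc H)] = (1+\lambda)^{-n} Z_{\mc H}(\lambda)$ yields the claimed formula. Both sides are finite: $\emptyset \in \mc I(\mc H)$, so $Z_{\mc H}(\lambda) \ge 1 > 0$.

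No step is a genuine obstacle. The only point needing care is the bookkeeping identity $(1-p)^{-1} = 1+\lambda$.
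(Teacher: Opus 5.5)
Your proof is correct and is the same computation as the paper's: expand $\Pr[S \in \mc I(\mc H)] = \sum_{I \in \mc I(\mc H)} p^{|I|}(1-p)^{n-|I|} = (1-p)^n Z_{\mc H}(\lambda)$, then use $(1-p)^{-1} = 1+\lambda$ and take logarithms. The only blemish is cosmetic: you use $S$ both for the random set and for the summation index.
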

\begin{proof}
This follows straightforwardly from unpacking the definitions:
\[
\Pr [S \in \mc I(\mc H)]
=
\sum_{I\in \cI(\cH)}p^{|I|}(1-p)^{n-|I|}= (1-p)^nZ_{\cH}(p/(1-p))\, .\qedhere
\]
\end{proof}
Key to our proofs is the following Gibbs variational formula.
\begin{observation}\label{obs:Gibbs}
\[\
    \log Z_{\cH}(\lambda)
  =
  \max_{\nu}
  \left\{
  H(\nu)+\log\lambda\,
  \mathbb E_{\nu}|\bb I|
  \right\},
  \]
    where the maximum is over all probability distributions $\nu$ supported on $\cI(\cH)$ and the maximum is attained at $\nu=\mu_{\cH,p}$.
\end{observation}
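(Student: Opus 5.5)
The statement is the standard Gibbs variational principle, and I would prove it by a single application of Gibbs' inequality (nonnegativity of relative entropy). Write $\mu = \mu_{\cH,p}$. By the identification in \cref{subsecHardcore}, $\mu(I) = \lambda^{|I|}/Z_{\cH}(\lambda)$ for every $I \in \cI(\cH)$, so $\mu$ has full support on $\cI(\cH)$.

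Fix any probability distribution $\nu$ supported on $\cI(\cH)$. With the convention $0\log 0 = 0$, compute the relative entropy
\[
0 \le D(\nu\,\|\,\mu) = \sum_{I \in \cI(\cH)} \nu(I)\log\frac{\nu(I)}{\mu(I)} = -H(\nu) - \sum_{I}\nu(I)\bigl(|I|\log\lambda - \log Z_{\cH}(\lambda)\bigr).
\]
The last sum simplifies to $\log\lambda\,\E_\nu|\bb I| - \log Z_{\cH}(\lambda)$, using $\sum_I \nu(I) = 1$. Rearranging gives
\[
H(\nu) + \log\lambda\,\E_\nu|\bb I| = \log Z_{\cH}(\lambda) - D(\nu\,\|\,\mu) \le \log Z_{\cH}(\lambda).
\]

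For attainment, take $\nu = \mu$. Then $D(\mu\,\|\,\mu) = 0$, so equality holds and the maximum is achieved at $\mu_{\cH,p}$. One can also check this directly: $H(\mu) = -\sum_I \mu(I)\bigl(|I|\log\lambda - \log Z_{\cH}(\lambda)\bigr)$.

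The only point needing care is the nonnegativity $D(\nu\,\|\,\mu) \ge 0$. This follows from Jensen's inequality applied to the concave function $\log$:
\[
-D(\nu\,\|\,\mu) = \sum_{I:\,\nu(I)>0} \nu(I)\log\frac{\mu(I)}{\nu(I)} \le \log\sum_{I} \mu(I) = 0.
\]
All sums are finite, so there is no real obstacle. Strict concavity of $\log$ additionally shows that $\mu$ is the unique maximizer.
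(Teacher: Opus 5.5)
Your proposal is correct and follows the paper's proof essentially verbatim: expand $0 \le D(\nu\,\|\,\mu_{\cH,p})$ using $\mu_{\cH,p}(I)=\lambda^{|I|}/Z_{\cH}(\lambda)$, rearrange to get the upper bound, and note equality at $\nu=\mu_{\cH,p}$. Your added Jensen justification of $D\ge 0$ is a harmless extra step that the paper simply cites.
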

\begin{proof}
Let $\mu=\mu_{\mc H,p}$ and recall that $\mu(I)=\lambda^{|I|}/Z_{\cH}(\lam)$ for $I\in \cI(\cH)$. Note that by non-negativity of the Kullback-Leibler divergence
    \begin{align*}
        0\leq D(\nu||\mu)
        =
        \sum_{I\in \cI(\cH)}
        \nu(I)\log \frac{\nu(I)}{\mu(I)}
        =
        -H(\nu) - \log \lam\, \E_\nu|\mb I| + \log Z_{\cH}(\lambda)\, .
    \end{align*}
    Rearranging gives $\log Z_{\cH}(\lambda)\geq H(\nu) + \log \lam\, \E_\nu|\mb I|$ and we have equality precisely when $\nu=\mu$.    
\end{proof}

We will also use the following basic `spatial Markov' property of the hardcore model. 
\begin{observation}\label{obs:spatial-markov}
For every set $R\subseteq V$ and function
$F\colon\mathcal I(\mc H)\to\mathbb R$, we have
\[
\mathbb E\left[\mathbf 1_{\{R\subseteq \mb I\}}F(\mb I)\right]
=
p^{|R|}
\mathbb E\left[
\mathbf 1_{\{ \mb I\cup R\in\mathcal I(\mc H)\}}F(\mb I\cup R)
\right].
\]
In particular,
\[
\P(R\subseteq \bb I)\leq p^{|R|}\, .
\]
\end{observation}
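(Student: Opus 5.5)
The plan is to verify the identity by a direct computation with the explicit weights of the hardcore measure. Throughout, $\mb I \sim \mu_{\mc H,p}$, so that $\mu_{\mc H,p}(I) = \lambda^{|I|}/Z_{\mc H}(\lambda)$ for $I \in \mc I(\mc H)$, with $\lambda = p/(1-p)$ as in \cref{subsecHardcore}.

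First, write the left-hand side as
\[
\frac{1}{Z_{\mc H}(\lambda)} \sum_{J \in \mc I(\mc H),\, J \supseteq R} \lambda^{|J|} F(J).
\]
Next, expand the right-hand side as
\[
\frac{p^{|R|}}{Z_{\mc H}(\lambda)} \sum_{I \in \mc I(\mc H),\, I \cup R \in \mc I(\mc H)} \lambda^{|I|} F(I\cup R).
\]
Group its terms according to $J \coloneqq I \cup R$. This $J$ is an independent set containing $R$. Conversely, for any such $J$, the sets $I$ with $I \cup R = J$ are exactly $I = (J\setminus R) \cup T$ with $T \subseteq R$. Each of these is a subset of $J$, hence automatically independent, so the grouping is a bijection onto the pairs $(J,T)$.

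For fixed $J$, summing the weights over $T \subseteq R$ gives
\[
\sum_{T\subseteq R}\lambda^{|J|-|R|+|T|} = \lambda^{|J|-|R|}(1+\lambda)^{|R|}.
\]
Multiplying by $p^{|R|}$ yields
\[
\lambda^{|J|}\left(\frac{p(1+\lambda)}{\lambda}\right)^{|R|} = \lambda^{|J|},
\]
since $1+\lambda = 1/(1-p)$ and $p/\lambda = 1-p$. The two sides therefore agree term by term in $J$, which proves the identity.

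For the inequality, take $F \equiv 1$. The right-hand side becomes $p^{|R|}\,\P(\mb I\cup R \in \mc I(\mc H)) \le p^{|R|}$. There is no real obstacle here. The only point needing care is the observation that every $I$ in a fibre over $J$ is independent, which holds because independence is closed under taking subsets.
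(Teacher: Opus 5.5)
Your proof is correct and is essentially the paper's argument: both rest on the identity $\lambda/p = 1+\lambda$ together with the expansion $(1+\lambda)^{|R|}=\sum_{T\subseteq R}\lambda^{|T|}$, which accounts for the fibre $\{I : I\cup R = J\}$. The only difference is direction: the paper starts from the left-hand side and splits $\lambda^{|R|}=p^{|R|}(1+\lambda)^{|R|}$, whereas you start from the right-hand side and group its terms by $J = I\cup R$.
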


\begin{proof}
    Recall that $\lambda/p = 1+\lambda$. By definition,
    \begin{align*}
     \E \left[\mathbf 1_{\{R\subseteq \mb I\}}F(\mb I)\right] 
     & = \frac{1}{Z_{\mc H}(\lambda)} \sum_{S \subseteq V \setminus R : S \cup R \in \mc I(\mc H)} \lambda^{|S|+|R|} F(S \cup R)\\
     & = \frac{1}{Z_{\mc H}(\lambda)} p^{|R|} \sum_{S \subseteq V \setminus R : S \cup R \in \mc I(\mc H)} \lambda^{|S|} F(S \cup R) \left( \frac{\lambda}{p} \right)^{|R|}\\
     & = \frac{1}{Z_{\mc H}(\lambda)} p^{|R|} \sum_{S \subseteq V \setminus R : S \cup R \in \mc I(\mc H)} \lambda^{|S|} F(S \cup R) (1+\lambda)^{|R|}\\
     & = \frac{1}{Z_{\mc H}(\lambda)} p^{|R|} \sum_{S \subseteq V \setminus R : S \cup R \in \mc I(\mc H)} \lambda^{|S|} F(S \cup R) \sum_{W \subseteq R} \lambda^{|W|}\\
     & = \frac{1}{Z_{\mc H}(\lambda)} p^{|R|} \sum_{S \subseteq V: S \cup R \in \mc I(\mc H)} \lambda^{|S|} F(S \cup R)\\
     & = p^{|R|} \E \left[\mathbf 1_{\{ \mb I\cup R\in\mathcal I(\mc H)\}}F(\mb I\cup R)\right],
    \end{align*}
    as desired.
\end{proof}

\subsection{Bethe free energy and belief propagation fixed points}\label{subsecBP}

Here we give some background on Belief Propagation and the Bethe free energy.  For background on these topics see, e.g.,~\cite{mezard2009information,yedidia2003understanding}.  At a high level, Belief Propagation is a heuristic message-passing algorithm for computing marginals (the probabilities that variables or vertices receive particular spins) in graphical models.  The heuristic is based on the assumption that conditioned on the status of a vertex (say, it is unoccupied in the hardcore model), the spins of  its neighbors are independent.  This assumption is easily seen to be true on a tree (or hypertree), and so Belief Propagation is exact in this case.  The Bethe free energy is a formula for $\log Z$ that uses this same assumption to compute $H(\nu)$ in the Gibbs variational formula (\cref{obs:Gibbs}) and again on a hypertree this heuristic  formula is exact.  The hypergraphs we deal with here are not acyclic, but the assumption of $\eta$-locally tree-like means they look similar to hypertrees in local neighborhoods.  Moreover, a key part of our proofs is verifying an approximate version of the conditional independence assumption that underlies the two heuristics (see~\cref{sec:motivation} for more).  Note that this conditional independence assumption is more sophisticated than the `mean-field' heuristic which assumes all spins are independent without conditioning; compared to results in the supercritical regime such as~\cite{kozma2023lower} in which the mean-field approximation is proved accurate, our results show that the more sophisticated BP heuristic is necessary inside the critical regime.

Before we proceed, let us briefly examine the maximizer(s) of $\mc{B}_{\mathcal{H},p}$. Throughout this section and Sections~\ref{sec:motivation},~\ref{sec:upper bound}
and~\ref{sec:lower bound}, we work in the setup of
Theorem~\ref{thm:Bethe exactness}. Thus $k\geq2$ and
$c\in(0,1)$ are fixed, $\eta=\eta(n)\to0$,
$\Delta=\Delta(\cH)\to\infty$, and $p$ is $c$-critical.
We write $\lambda=p/(1-p)$. Implicit constants may depend
on $k,c$ and on any additional fixed parameters explicitly
indicated.

We first define a Belief Propagation fixed point.
\begin{definition}\label{def:BP fixed point}
    Let $\mc H = (V,E)$ be a hypergraph and let $p \in (0,1)$. We write $E_v \coloneqq \{ e \setminus \{v\} : v \in e \in E \}$ for the edges containing $v$, with $v$ removed. A set of marginals $x \in  [0,1]^V$ is a Belief Propagation (BP) fixed point with respect to $\mc H$ and $p$ if, for every $v \in V$, 
    \begin{equation}\label{eq:BP fixed point definition}
    x_v = p \exp \left( - \sum_{e \in E_v} \prod_{u \in e} x_u \right).
    \end{equation}
\end{definition}

 The formulas in \cref{def:BP fixed point} and~\cref{defBethe} are somewhat simplified from the definitions for bounded-degree factor graphs since we work in a specific asymptotic regime; the Bethe free energy also includes an additional $-np$ term to account for the difference between $\log (\Pr [S \in \mc I(\mc H)])$  and $\log Z_{\mc H}(\lambda)$. 

\begin{observation}\label{obs:Bethe free energy properties}
    Let $x \in [0,1]^V$. Then $x$ is a BP fixed point with respect to $\mc H$ and $p$ iff it is a stationary point of $\mc B_{\mc H,p}$. Moreover, $\mc B_{\mc H,p}$ attains its maximum on a BP fixed point. Finally, if $x$ is a BP fixed point, then $x_v \leq p$ and $x_v = \Omega(p)$ uniformly over $v$.
\end{observation}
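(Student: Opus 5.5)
The proof splits into three parts: the equivalence of stationary points and BP fixed points, existence of a maximizer among them, and the two-sided bound on the coordinates.

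\emph{Stationary points.} Differentiate $\mc B_{\mc H,p}$ coordinatewise. For $x_v\in(0,1]$,
\[
\frac{\partial \mc B_{\mc H,p}}{\partial x_v}(x) = -\sum_{e\in E_v}\prod_{u\in e}x_u + 1 - \log\frac{x_v}{p} - 1 = -\sum_{e\in E_v}\prod_{u\in e}x_u - \log\frac{x_v}{p}.
\]
This vanishes exactly when $x_v = p\exp(-\sum_{e\in E_v}\prod_{u\in e}x_u)$, which is \eqref{eq:BP fixed point definition}. Since $p<1$ and the exponential is at most $1$, every BP fixed point lies in $(0,p]^V$. In particular it lies in the interior of $[0,1]^V$, so there stationarity means vanishing of the gradient. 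This gives the equivalence, provided we interpret ``stationary point'' as an interior critical point.

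\emph{Maximum at a BP fixed point.} $\mc B_{\mc H,p}$ is continuous on the compact cube $[0,1]^V$ (using $0\log 0=0$), so it attains its maximum at some $x^*$. We show $x^*$ is interior.
\begin{itemize}
\item If $x^*_v=0$: the partial derivative in $x_v$ tends to $+\infty$ as $x_v\downarrow 0$, because of the $-\log(x_v/p)$ term while the edge sum stays bounded. So increasing $x_v$ slightly strictly increases $\mc B$, a contradiction.
\item If $x^*_v>p$: the derivative is at most $-\log(x_v/p)<0$, because the edge sum is nonnegative. So decreasing $x_v$ increases $\mc B$, again a contradiction.
\end{itemize}
Hence $x^*\in(0,p]^V$. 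In fact $x^*_v<p$ or the derivative is zero, and $p<1$, so $x^*$ is an interior maximizer. Its gradient therefore vanishes, and by the first part $x^*$ is a BP fixed point.

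\emph{Coordinate bounds.} The upper bound $x_v\le p$ is immediate from \eqref{eq:BP fixed point definition}. For the lower bound, use $x_u\le p$ in the exponent:
\[
\sum_{e\in E_v}\prod_{u\in e}x_u \le |E_v|\, p^{k-1} \le \Delta p^{k-1} \le c^{-(k-1)},
\]
where the last step is the $c$-critical hypothesis \ref{itm:setup p is critical}. Therefore $x_v \ge p\,e^{-c^{-(k-1)}} = \Omega(p)$, with an implicit constant depending only on $k$ and $c$, uniformly over $v$.

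The only delicate point is the boundary analysis in the second part, in particular the $x_v=0$ case, where the entropy term's derivative blows up. This is routine, so no step is a genuine obstacle.
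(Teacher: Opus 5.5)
Your proof is correct and matches the paper's argument essentially step for step: the same coordinatewise derivative computation, compactness together with a check that no maximizer lies on the boundary of $[0,1]^V$, and the same lower bound $x_v\ge p\exp(-\Delta p^{k-1})=\Omega(p)$ from $c$-criticality. The only difference is that you rule out $x^*_v>p$ rather than just $x^*_v=1$, which is a harmless, slightly stronger version of the paper's boundary check.
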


\begin{proof}
    Taking partial derivatives:
    \[
    \frac{\partial \mc B_{\mc H,p}}{\partial x_v} = -\sum_{e \in E_v} \prod_{u \in e} x_u + 1 - \log \left( \frac{x_v}{p} \right) - 1 = -\sum_{e \in E_v} \prod_{u \in e} x_u - \log \left( \frac{x_v}{p} \right)
    \]
    By definition, $x$ is a stationary point of $\mc B_{\mc H,p}$ iff for every $v$, $\frac{\partial \mc B_{\mc H,p}}{\partial x_v} = 0$. This is equivalent to \eqref{eq:BP fixed point definition}. The fact that $\mc B_{\mc H,p}$ attains its maximum follows from the fact that it is a continuous function on a compact set.

    To prove that $\mc B_{\mc H,p}$ attains its maximum on a BP fixed point, it suffices to prove that it does not attain its maximum on the boundary of $[0,1]^V$, and to observe that it is continuously differentiable on $(0,1)^V$. Let $v \in V$. To see that $\mc B_{\mc H,p}$ is not maximized by any $x$ in which $x_v=0$, we observe that $\frac{\partial \mc B_{\mc H,p}}{\partial x_v} > 0$ for all sufficiently small $x_v$. To see that $\mc B_{\mc H,p}$ is not maximized by any $x$ in which $x_v=1$, we observe that $\frac{\partial\mc B}{\partial x_v}\big|_{x_v=1}\leq\log p<0$.
    
    For the final observation, we note that if $x$ is a BP fixed point then for every $v$, $x_v = p \exp \left(- \sum_{e \in E_v} \prod_{u \in e} x_u \right) \leq p$. This also implies that $x_v = p \exp \left(-\sum_{e \in E_v} \prod_{u\in e}x_u\right) \geq p \exp \left(-|E_v|p^{k-1}\right) \geq p\exp(-\Delta p^{k-1}) = \Omega(p)$ by \cref{itm:setup p is critical}.
\end{proof}

\begin{definition}\label{def:tilde B}
    Define $\widetilde{\mathcal{B}}_{\mc H,p} : [0,1]^V \to \R$ by
    \[
    \widetilde{\mathcal{B}}_{\mc H,p} (x) = \sum_{v \in V} \left( x_v - p + \left( 1 - \frac{1}{k} \right) x_v \log \left( \frac{p}{x_v} \right) \right).
    \]
\end{definition}

\begin{observation}\label{obs:Bethe equivalence at BPFP}
    Let $x\in [0,1]^V$ be a BP fixed point with respect to $\mc H$ and $p$. Then
    \[
    \mc B_{\mc H,p}(x) = \widetilde{\mathcal{B}}_{\mc H,p}(x).
    \]
\end{observation}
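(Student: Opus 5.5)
The plan is to use the fixed-point equation to rewrite the edge term of $\mc B_{\mc H,p}$ as a sum over vertices, and then collect terms. The key identity is a double-counting: each edge $e\in E$ is counted exactly $k$ times when we sum over pairs $(v,e')$ with $v\in V$, $e'\in E_v$ and $e'\cup\{v\}=e$. Hence
\[
\sum_{e\in E}\prod_{u\in e}x_u=\frac{1}{k}\sum_{v\in V}x_v\sum_{e'\in E_v}\prod_{u\in e'}x_u .
\]
This identity holds for any $x$, since every edge has exactly $k$ vertices and removing each one gives a distinct element of the corresponding $E_v$.

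Next, I invoke the BP fixed point condition \eqref{eq:BP fixed point definition}. By \cref{obs:Bethe free energy properties}, $x_v>0$ (indeed $x_v=\Omega(p)$), so we may take logarithms:
\[
\sum_{e'\in E_v}\prod_{u\in e'}x_u=\log\!\left(\frac{p}{x_v}\right).
\]
Substituting, the edge term becomes
\[
-\frac1k\sum_{v}x_v\log\!\left(\frac{p}{x_v}\right).
\]
The vertex term of $\mc B_{\mc H,p}$ is $\sum_v\bigl(x_v-p+x_v\log(p/x_v)\bigr)$, since $-x_v\log(x_v/p)=x_v\log(p/x_v)$. Adding the two gives
\[
\sum_v\Bigl(x_v-p+\bigl(1-\tfrac1k\bigr)x_v\log(p/x_v)\Bigr)=\widetilde{\mc B}_{\mc H,p}(x).
\]

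There is no real obstacle here. The only points needing care are the factor-$k$ double count, which relies on $k$-uniformity, and the positivity of $x_v$ needed to take logarithms, which is supplied by \cref{obs:Bethe free energy properties}.
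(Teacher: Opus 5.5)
Your proposal is correct and follows essentially the same route as the paper: take logarithms in the BP fixed-point equation, multiply by $x_v$, sum over $v$, use the factor-$k$ double count to identify the edge term, and substitute into $\mc B_{\mc H,p}$. One small simplification: positivity of $x_v$ follows directly from $x_v = p\exp(\cdot) > 0$, so you do not need the $\Omega(p)$ bound from \cref{obs:Bethe free energy properties}, which uses criticality.
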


\begin{proof}
Since $x$ is a BP fixed point, we have
\[
\log(p/x_v)
=
\sum_{e \in E_v} \prod_{u \in e} x_u\, .
\]
Multiplying by $x_v$ and summing over $v\in V$ yields
\[
\sum_{v\in V}x_v\log(p/x_v)
=
\sum_{v\in V}x_v\sum_{e \in E_v} \prod_{u \in e} x_u
=
k\sum_{e\in E} \prod_{u\in e}x_u
\]
where for the final equality we note that each edge is counted exactly $k$ times in the double sum. Substituting into the definition of $\mc B_{\mc H,p}(x)$ completes the proof.
\end{proof}

\section{Overview of the proof}\label{sec:motivation}

In this section we give an overview of the proof of our main theorem, Theorem~\ref{thm:Bethe exactness}.
By Observation~\ref{obs:hardcore partition and log probability} it suffices to find asymptotics for $\log Z_{\cH}(\lambda)$ where $\lambda=p/(1-p)$.
We use separate arguments to upper bound and lower bound $\log Z_{\cH}(\lambda)$. The starting point of both arguments is the Gibbs variational formula (Observation~\ref{obs:Gibbs})
 \[
 \log Z_{\cH}(\lambda)
 =
 \max_{\nu}
 \left\{
 H(\nu)+\log\lambda\,
 \mathbb E_{\nu}|\bb I|
 \right\},
 \]
where the maximum is over all probability distributions $\nu$ supported on the independent sets of $\cH$ and the maximum is attained at $\nu=\mu_{\mc H, p}$ where $\lambda = p/(1-p)$. For the upper bound, we therefore focus our attention on bounding the entropy $H(\mu_{\mc H,p})$. For the lower bound, it suffices to construct a distribution $\nu$ on independent sets that makes $H(\nu)+\log\lambda\,
\mathbb E_{\nu}|\bb I|$ large. In fact, we construct a family of distributions, one for each BP fixed point with respect to $\cH, p$: given a fixed point $x$, we consider a random greedy independent set process that is `guided by' $x$. We now turn to a more detailed discussion of these arguments.

\subsection{Upper bound}\label{sec:upper-bound}
Let $\bb I\sim \mu_{\cH,p}$.
By the equality case of the Gibbs variational formula (Observation~\ref{obs:Gibbs}) we have
\begin{equation}\label{eqn:EnergyEntropy}
\log Z_{\mc H}(\lam) = H(\mathbb{I}) + \log \lam \cdot \E[|\mathbb{I}|]\, .
\end{equation}
We upper-bound $\log Z_{\mc H}(\lam)$ via an upper-bound on the entropy $H(\mathbb{I})$. The proof is inspired by Radhakrishnan's method, first used to prove permanent inequalities \cite{radhakrishnan1997entropy}: we consider the vertices of $V$ in a random order, and successively reveal whether or not they are in $\mb I$. By bounding the entropy of each step in this process we bound the entropy of $\mb I$.

When bounding the entropy of the step where we reveal the state of $v$, it is useful to know something about the local environment of $v$. For example, if $v$ is blocked by previously revealed vertices, then the entropy of this step is $0$. This motivates the following definitions.   

A key component of our analysis is understanding when vertices can be added to a (usually random) independent set without forming an edge. To this end, for a set $S \subseteq V$, we define $\mc U(S) \coloneqq \{ v \in V : S \cup \{v\} \in \mc I(\mc H) \}$. We call these vertices \textit{unblocked}.

For $v \in V$, let $Y_v(I) \subseteq E_v$ denote the set of completions of $v$ that are fully unblocked by $I$. That is,
\[
Y_v(I) \coloneqq \{e \in E_v : e \subseteq \mc U(I)\}.
\]
Observe that it is not necessarily the case that $e \in Y_v(I) \implies I \cup e \in \mc I(\mc H)$. However, we will show that this is the typical situation. We write
\begin{align}\label{eq:empirical-marginals}
y_v(I) \coloneqq |Y_v(I)|, \quad x_v(I) \coloneqq p \exp \left( - p^{k-1}y_v(I) \right), \quad\text{and } \bar x(I) \coloneqq (x_v(I))_{v \in V}.
\end{align}
When $\bI \sim \mu_{\mc H,p}$, we  denote $\mathbf{Y}_v = Y_v(\mathbb{I})$, $ \mathbf{y}_v = y_v(\mathbb{I})$, $ \mathbf{x}_v = x_v(\mathbb{I})$, and $ \ol{\mathbf{x}} = \ol{x}(\mathbb{I})$. We will show that $\P(v\in \bb I)=\E[\mathbf x_v]+o(p)$ (see \cref{remark:empirical} below) and so we refer to $\mathbf x_v$ as the \emph{empirical marginal} at $v$.

To further motivate these definitions, we introduce our guiding heuristic: typically, $\mb I$ resembles a $p$-random subset of $\mc U(\mb I)$. Indeed, by Observation~\ref{obs:spatial-markov}, conditioned on $v \in \mc U(\mb I)$, the probability that $v \in \mb I$ is exactly $p$. Moreover, we will show that in general, for a small vertex set $U$, the events that each of the vertices in $U$ are unblocked are approximately independent (in an averaged sense, see~Lemma~\ref{lem:availability product decomposition} below).

As hinted above, it is crucial to understand the probability that a vertex $v$ is unblocked, (usually after conditioning on a subset of $\mb I$). It turns out that this probability is encoded by $\mathbf y_v$: in a $p$-random subset of $\mc U(\mb I)$, we would expect the number of edges blocking $v$ to be approximately $ p^{k-1} \mathbf {y}_v$. Thus, adopting a Poissonian heuristic, we expect that $\Pr[v \in \mc U(\mb I)] \approx \E \left[ \exp \left( - p^{k-1} \mathbf{y}_v \right) \right]$. This would imply that $\Pr[v \in \mb I] \approx \E \left[ p \exp \left( - p^{k-1} \mathbf{y}_v \right) \right] = \E[\mathbf x_v]$.

Of course, we cannot expect that $\mathbf y_v$ is concentrated around any particular value. For instance, in the case where $\cH$ encodes triangles in $K_n$, in the regime in which a typical sample is near-bipartite, $\mathbf y_v$ is quite small if $v$ corresponds to a ``crossing'' edge, while it is quite large if it corresponds to an ``internal'' edge. However, these events can be distinguished from a small amount of information. (In the triangle-free graph example, simply the bipartition itself. This can usually be reconstructed by randomly revealing a small number of edges.) In the proof, we will choose a sparse ``window'' $W \subseteq V$, and condition $\mb I$ on $\mb I \cap W$. As we will show, the information encoded by $W$ and $\mb I \cap W$ is enough to approximately reconstruct $\ol{\mathbf x}$.

This idea of inducing approximate independence by revealing a small fraction of information echoes the conditional entropy technique of Kozma and Samotij~\cite{kozma2023lower} in analyzing the mean-field approximation in the supercritical regime for the lower-tail problem as well as the `pinning lemma' from information theory~\cite{andrea2008estimating,allen2015conditioning,el2022information}, the study of approximation algorithms~\cite{barak2011rounding,raghavendra2012approximating}, mean-field approximations~\cite{jain2019mean,eldan2020taming} and rigorous implementations of the cavity method~\cite{coja2018information,coja2019spin}. 

One key way in which we make these heuristics precise is as follows. Recall from Observation~\ref{obs:Bethe equivalence at BPFP} that if $x$ is a BP fixed point with respect to $\mc H$ and $p$, then
    $
    \mc B_{\mc H,p}(x) = \widetilde{\mathcal{B}}_{\mc H,p}(x).
    $
We will show that $\ol{\mathbf x}$ is an approximate fixed point in the following sense.

\begin{lemma}\label{lem:fixedpointrewrite}
We have 
\[
\E[\mathcal{B}_{\mc H,p}(\ol{\mathbf{x}})] = \E[\widetilde{\mathcal{B}}_{\mc H,p}(\overline{\mathbf{x}})] + o(np).
\]
\end{lemma}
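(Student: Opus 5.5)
The plan is to reduce the statement to a single identity relating the edge term and the entropy term of the Bethe free energy along the empirical marginals. The approach is to first apply the spatial Markov property (\cref{obs:spatial-markov}), and then use an approximate conditional independence estimate that I expect to be the main difficulty.

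\textbf{Reduction.} Comparing \cref{defBethe} with \cref{def:tilde B}, for every $x$ we have
\[
\mc B_{\mc H,p}(x)-\widetilde{\mc B}_{\mc H,p}(x) = -\sum_{e\in E}\prod_{u\in e}x_u + \frac1k\sum_{v}x_v\log\frac{p}{x_v}.
\]
At $x=\ol{\mathbf x}$ we have $\log(p/\mathbf x_v)=p^{k-1}\mathbf y_v$ by \eqref{eq:empirical-marginals}. Hence it suffices to prove
\[
\sum_{v}\sum_{e\in E_v}\E\Bigl[\mathbf x_v\bigl(p^{k-1}\mathbf 1_{\{e\subseteq \mc U(\bb I)\}}-\textstyle\prod_{u\in e}\mathbf x_u\bigr)\Bigr]=o(np).
\]
There are at most $n\Delta$ pairs $(v,e)$, and each term is $O(p^k)$. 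So the total is $O(n\Delta p^k)=O(np)$ by \cref{itm:setup p is critical}, and we need a relative error $o(1)$ in an averaged sense.

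\textbf{Step 1: from unblocked to occupied.} First I replace $\mathbf 1_{\{e\subseteq\mc U(\bb I)\}}$ by $\mathbf 1_{\{\bb I\cup e\in\mc I\}}$. The two indicators differ only if some edge $f$ meets $e$ in at least two vertices and has $f\setminus e\subseteq\bb I$. By the bound $\Pr(R\subseteq\bb I)\le p^{|R|}$ and \cref{itm:setup Delta ell UB}, the expected number of such $f$, summed over all pairs $(v,e)$, is $O(\eta\, np)$.

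Next I apply \cref{obs:spatial-markov} with $R=e$ and $F=x_v$. This gives
\[
p^{k-1}\E\bigl[\mathbf 1_{\{\bb I\cup e\in\mc I\}}x_v(\bb I\cup e)\bigr]=\E\bigl[\mathbf 1_{\{e\subseteq\bb I\}}\mathbf x_v\bigr].
\]
It remains to check that $x_v(\bb I\cup e)\approx x_v(\bb I)$ on average. Adding the $k-1$ vertices of $e$ can only decrease $y_v$. It does so by the number of completions $f\in E_v$ that contain a vertex $w$ newly blocked by some $u\in e$, i.e.\ $w$ and $u$ lie in a common edge whose remaining vertices are in $\bb I$. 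By the codegree bound \cref{itm:setup codegree bound} together with \cref{itm:setup Delta ell UB}, the expected size of this decrease is $o(\Delta)$ after averaging. Since $|\partial x_v/\partial y_v|\le p^k$ and $p^{k}\Delta=O(p)$, the error is $o(p^k)$ per pair on average. After Step 1, the left-hand side of the target equals $\sum_{v}\sum_{e\in E_v}\E[\mathbf x_v\mathbf 1_{\{e\subseteq\bb I\}}]-\sum_{v}\sum_{e\in E_v}\E[\mathbf x_v\prod_{u\in e}\mathbf x_u]$ up to $o(np)$. Equivalently, we need, summed over edges $f\in E$ and $v\in f$,
\[
\E\Bigl[\mathbf x_v\mathbf 1_{\{f\setminus\{v\}\subseteq\bb I\}}\Bigr]=\E\Bigl[\textstyle\prod_{u\in f}\mathbf x_u\Bigr]+o(p^k)\quad\text{on average.}
\]

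\textbf{Step 2, the main obstacle.} This step is the heuristic ``$\bb I$ is a $p$-random subset of $\mc U(\bb I)$, and unblocking events are approximately independent with probabilities $\approx e^{-p^{k-1}\mathbf y_u}$''. I would apply \cref{obs:spatial-markov} one vertex at a time. This turns $\mathbf 1_{\{u\in\bb I\}}$ into $p\,\mathbf 1_{\{u\in\mc U(\bb I)\}}$, again with a small perturbation of the empirical marginals as in Step 1. The task then reduces to showing
\[
\Pr\bigl(e\subseteq\mc U(\bb I)\mid\text{coarse information}\bigr)\approx\prod_{u\in e}\exp\bigl(-p^{k-1}\mathbf y_u\bigr)
\]
in an averaged sense.

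Here I would use the window strategy described above. Choose a sparse random $W\subseteq V$ and condition on $\bb I\cap W$; this approximately determines $\ol{\mathbf x}$, because $\mathbf y_u$ is a sum over many completions. Then establish the approximate product decomposition of unblocking probabilities, i.e.\ the availability product decomposition lemma, by a Poisson or Janson-type comparison for the number of blocking edges. The local tree-likeness assumptions ensure that the blocking events for distinct $u\in e$ involve nearly disjoint families of completions, so they decorrelate.

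I expect this decorrelation to be the hard part. Step 1 and the reduction are bookkeeping using \cref{itm:setup codegree bound,itm:setup Delta ell UB}. Once the decorrelation is in hand, summing over $(v,f)$ gives the required $o(np)$ bound.
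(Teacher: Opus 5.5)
Your reduction matches the paper's: everything comes down to showing $\sum_v\sum_{e\in E_v}\E\bigl[\mathbf x_v\bigl(p^{k-1}\mathbf 1_{\{e\subseteq\mc U(\bb I)\}}-\prod_{u\in e}\mathbf x_u\bigr)\bigr]=o(np)$. Beyond that point, however, the proposal contains no proof. Step 1 is also a round trip. Applying \cref{obs:spatial-markov} to $\mathbf 1_{\{e\subseteq\bb I\}}$ one vertex at a time in Step 2 undoes it, and returns you, up to the same $O(\eta)$ perturbations, to $p^{k-1}\E[\mathbf x_v\mathbf 1_{\{e\subseteq\mc U(\bb I)\}}]$. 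What you actually need is
\[
\E\Bigl[\mathbf 1_{\{e\subseteq \mc U(\bb I)\}}e^{-p^{k-1}\mathbf y_v}\Bigr]=\E\Bigl[\exp\Bigl(-p^{k-1}\sum_{u\in e\cup\{v\}}\mathbf y_u\Bigr)\Bigr]+o(1)
\]
uniformly in $v$ and $e\in E_v$. This is \cref{lem:availability product decomposition} with $U=e$ and $S=\{v\}$. Once you have it, multiplying by $p^k$ and summing gives total error $o(n\Delta p^k)=o(np)$, and the lemma follows. You leave this step unproved, and the tools you suggest for it do not deliver it.

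The obstacle is not decorrelation across $u\in e$. Take $\cT=\bigcup_{u\in e}E_u$, and let $X$ and $Y$ count the members of $\cT$ that are fully occupied and fully unblocked, respectively. Then $e\subseteq\mc U(\bb I)$ iff $X=0$, and $0\le\sum_{u\in e}\mathbf y_u-Y\le k^2\eta\Delta$ by \cref{itm:setup codegree bound}. So the product form comes for free.

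The real difficulty is the Poisson approximation $\E[G\mathbf 1_{\{X=0\}}]\approx\E[Ge^{-p^{k-1}Y}]$ under the non-product measure $\mu_{\mc H,p}$. Its Poisson parameter $p^{k-1}Y$ is itself random and not concentrated. Janson's inequality concerns product measures and does not apply here. The paper uses the Chen--Stein method:
\begin{itemize}
\item Substitute $t=p^{k-1}Y$ and $m=X$ into the Stein equation, and multiply by $G=e^{-p^{k-1}\mathbf y_v}$.
\item Use \cref{obs:spatial-markov} as an exact size-bias identity, $\E[G\mathbf 1_{\{f\subseteq\bb I\}}\phi(X,Y)]=p^{k-1}\E[\mathbf 1_{\{\bb I\cup f\in\mc I\}}G^f\phi(X^f,Y^f)]$, where $G^f=G(\bb I\cup f)$ and so on.
\item The error is then controlled by $|\mathbf 1_{\{f\subseteq\mc U(\bb I)\}}-\mathbf 1_{\{\bb I\cup f\in\mc I\}}|$, $|G^f-G|$, $|X^f-X-1|$ and $|Y^f-Y|$; see \cref{lem:stein-app,lem:stein-app2}.
\end{itemize}
The term $|G^f-G|$ is exactly your Step 1 estimate, and the others are local perturbation bounds of the same kind. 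So you had the ingredients but not the mechanism that assembles them.

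No window is needed. The window matters only when $G$ must be $\cW$-measurable, as in \cref{lem:availability-target}. Routing through it, as you propose, would still require this Chen--Stein estimate, plus \cref{lem:y-estimator} on top.
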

With this in hand, our goal will be to prove the following bound on the partition function.
\begin{lemma}\label{lem:partition function UB} 
    We have
    \begin{equation}
    \log Z_{\mc H}(\lam) \leq \E\left[\widetilde{\mathcal{B}}_{\mc H,p}(\bar {\mathbf{x}})\right] + np + o(np).
    \end{equation}
\end{lemma}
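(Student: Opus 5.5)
Our plan is to bound $H(\mathbb I)$ in \eqref{eqn:EnergyEntropy} by revealing $\mathbb I$ in stages. First fix a small $\delta>0$ and choose a random window $W\subseteq V$, including each vertex independently with probability $\delta$ (with $\delta\to 0$ slowly). By the chain rule,
\[
H(\mathbb I)\le H(\mathbb I\cap W)+H(\mathbb I\setminus W\mid \mathbb I\cap W).
\]
The first term is at most $|W|\cdot h(p)\approx \delta n p\log(e/p)$, using subadditivity and $\Pr(v\in\mathbb I)\le p$ from \cref{obs:spatial-markov}. After the $\log\lambda\,\E|\mathbb I|$ term is added, this contributes only $O(\delta np)$, which is $o(np)$. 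For the second term we reveal $V\setminus W$ in a uniformly random order, in the style of Radhakrishnan. When $v$ is reached, its entropy contribution is at most $\E\, h(\Pr(v\in\mathbb I\mid\mathcal F_v))$, where $\mathcal F_v$ consists of $\mathbb I\cap W$ together with the previously revealed vertices. This is zero unless $v$ is unblocked by the revealed part. If $v$ is unblocked, then by \cref{obs:spatial-markov} its conditional probability is approximately $p$ times the conditional probability that $v\in\mathcal U(\mathbb I)$.

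Combining $H$ with the energy term, the contribution of $v$ is bounded by roughly
\[
\Pr(v \text{ unblocked by revealed part})\cdot\bigl(p - q_v\log(q_v/p)\bigr)\text{-type terms},
\]
where $q_v$ is the conditional marginal. The next step is to evaluate the probability that $v$ survives being blocked by its predecessors. If $v$ sits at relative position $t\in[0,1]$ in the random order, each completion $e\in\mathbf Y_v$ is fully revealed with probability about $t^{k-1}$. It blocks $v$ with probability roughly $p^{k-1}$, with approximately independent behaviour across completions; this independence is supplied by the averaged approximate-independence statement (the analogue of \cref{lem:availability product decomposition}) together with \cref{itm:setup codegree bound,itm:setup Delta ell UB}, which make the completions nearly disjoint. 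A Janson/Poisson estimate then gives survival probability $\approx \E\exp(-t^{k-1}p^{k-1}\mathbf y_v)$. The window is what makes $\mathbf y_v$ essentially measurable. Conditioning on $\mathbb I\cap W$ pins down $\bar{\mathbf x}$ up to $o(p)$ per vertex on average: a pinning-lemma argument shows that for most $v$ the conditional law of $\mathbf y_v$ given $\mathbb I\cap W$ is concentrated, since otherwise $\mathbb I\cap W$ would carry more information than $H(\mathbb I\cap W)$ allows.

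Finally we integrate over $t$, using the identity
\[
\int_0^1 \bigl(\text{per-step bound at } x_v^{t^{k-1}}\bigr)\,dt \;=\; x_v - p + \Bigl(1-\tfrac1k\Bigr)x_v\log(p/x_v) + p,
\]
which produces $\widetilde{\mathcal B}_{\mc H,p}$. Here $\int_0^1 t^{k-1}\,dt = 1/k$ supplies the factor $(1-1/k)$, and the $+p$ per vertex, summed, gives the $np$ in \cref{lem:partition function UB}. Concavity of $h$, via Jensen, lets us pass the expectation outside.

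We expect the main obstacle to be the concentration/approximate-independence step. This is the claim that, after conditioning on the window and a random prefix, the blocking events from distinct completions of $v$ behave Poisson-like with parameter given by the empirical $\mathbf y_v$, uniformly enough to incur only $o(np)$ total error. We would attack it by bounding second moments of blocking counts, using the codegree conditions, and by a pinning/averaging argument over the random choice of $W$.
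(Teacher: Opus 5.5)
Your skeleton matches the paper's: a sparse window, a Radhakrishnan-style random-order chain rule, and the spatial Markov identity giving occupation probability $pu_v(1)/u_v(t)$ when $v$ is unblocked at time $t$. It then uses a Poisson law for survival with parameter $(pt)^{k-1}\mathbf y_v$ and integrates over $t$. However, the two steps you defer to ``the main obstacle'' carry essentially all of the content, and the tools you propose for them do not work.

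\emph{The window step.} Entropy-budget and pinning arguments control mutual information between coordinates of $\mathbb I$, summed or averaged over pairs. Concentration of $\mathbf y_v=\sum_{e\in E_v}\mathbf 1_{\{e\subseteq\mathcal U(\mathbb I)\}}$ given $\mathbb I\cap W$ needs something else: the $\Theta(\Delta^2)$ covariances between unblocked indicators, each a function of $\Theta(\Delta)$ coordinates, must be $o(1)$ on average. Pairwise decorrelation of coordinates does not give this, and ``otherwise $\mathbb I\cap W$ would carry more information than $H(\mathbb I\cap W)$ allows'' is not a valid inference. The missing idea is to exploit the randomness of $W$ rather than any decorrelation of $\mathbb I$. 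By inclusion--exclusion, $\mathbf 1_{\{e\subseteq\mathcal U(\mathbb I)\}}$ is an alternating sum of $\mathbf 1_{\{V(F)\subseteq\mathbb I\}}$ over $F\subseteq\mathcal T(e)$. Each term has the $W$-unbiased, $(\mathbb I\cap W)$-measurable estimator $q^{-|V(F)|}\mathbf 1_{\{V(F)\subseteq\mathbb I\cap W\}}$. After truncating at level $L$, the Bonferroni error and the variance over $W$ are bounded using only $\Pr(R\subseteq\mathbb I)\le p^{|R|}$ and the tree-like counting of \cref{lem:tree-estimator}. This yields \cref{lem:y-estimator} with $q\to0$ and $L\to\infty$ slowly.

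\emph{The Poisson step.} Several of your proposed tools do not apply here:
Janson's inequality is a product-measure tool, and $\mu_{\mc H,p}$ is not a product measure.
Second-moment bounds cannot give $\Pr(X=0)\approx e^{-\E X}$ when $\E X=\Theta(1)$.
\cref{lem:availability product decomposition} concerns at most $k$ vertices (and is itself proved by Chen--Stein), not the $\Theta(\Delta)$ completions of $v$.

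More importantly, an averaged estimate $\Pr(\text{survive})\approx\E e^{-(pt)^{k-1}\mathbf y_v}$ is not enough. The per-vertex bound $u_v(t)h(pu_v(1)/u_v(t))$ is jointly concave in $(u_v(1),u_v(t))$, being a perspective of a concave function. So averaging over the window before approximating gives a bound that is in general strictly weaker than $\E[\widetilde{\mathcal B}_{\mc H,p}(\bar{\mathbf x})]+np$. What you need is \cref{lem:availability-target}: $\E|u_v(t)-e^{-(pt)^{k-1}\mathbf y_v}|=o(1)$ for the $\cW$-measurable $u_v(t)=\Pr(U_v(t)\mid\cW)$. This does not follow from near-measurability of $\mathbf y_v$ plus an unconditional Poisson estimate.

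The paper gets it from Chen--Stein with the size-bias coupling $\mathbb I\mapsto\mathbb I\cup e$ supplied by \cref{obs:spatial-markov} (\cref{lem:stein-app,lem:stein-app2}), weighted by an arbitrary $\cW$-measurable $G\in[0,1]$. Inserting $e$ changes $G$ only if $e\cap W\neq\emptyset$, which costs $O(q)$ (\cref{cor:stein-app}). Taking $G=\mathbf 1_{\{D_v(t)>0\}}$ and $G=\mathbf 1_{\{D_v(t)<0\}}$, with $D_v(t)=u_v(t)-e^{-(pt)^{k-1}\hat{\mathbf y}_v}$, then upgrades the weighted estimate to an $L^1$ statement.

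\emph{A smaller point.} Bounding the window cost by $|W|h(p)$ and then adding the energy does not give $O(\delta np)$. Since $\E|\mathbb I\cap W|$ may be a constant factor below $\delta np$, this leaves $\Theta(\delta np\log(1/p))$. Instead, pair entropy and energy vertex by vertex via $h(z)+z\log\lambda\le\log(1+\lambda)=O(p)$, as in \cref{lem:cheap-window}.
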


We now describe the `window' and the entropy argument in more detail.

\subsection{Entropy and the window}\label{sec:entropy-window}
The main insight is that $\mathbf y_v$ can be recovered by studying a small random subset of $\mb I$. More precisely, let $q=q(n)\in(0,1)$ tend to zero sufficiently slowly (as specified later in the proof of Lemma~\ref{lem:y-estimator})  and let $W$ be a $q$-random binomial subset of $V$ (which we call the \textit{window}). $W$ is chosen independently of $\mb I$. We will show that conditioning on $W \cap \mb I$, the typical value of $\mathbf y_v(\mb I)$ is determined up to a small error. Indeed, letting $\hat{\mathbf y}_v=\E[\mathbf y_v \mid W\cap\mathbb I]$ we show the following.
\begin{lemma}\label{lem:y-estimator}
Uniformly in $v\in V$,
\[
p^{k-1}
\E|{\mathbf y}_v-\hat{\mathbf y}_v|
=
o(1).
\]
\end{lemma}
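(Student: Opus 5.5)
The plan is an $L^2$ argument. By Cauchy--Schwarz and the tower property,
\[
\E|\mathbf y_v-\hat{\mathbf y}_v| \le \Big(\E\,\var(\mathbf y_v \mid W,\, W\cap \mb I)\Big)^{1/2},
\]
so it suffices to show that the averaged conditional variance of $\mathbf y_v$ is $o(p^{-2(k-1)})=o(\Delta^2)$. (I read the conditioning as being on the pair $(W, W\cap\mb I)$. I assume the lemma's $W\cap\mb I$ is meant this way, since the window $W$ is itself random.)

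Write $\mathbf y_v=\sum_{e\in E_v} \1\{e\subseteq \mc U(\mb I)\}$. The conditional variance is a double sum of conditional covariances over pairs $(e,e')\in E_v^2$.
\begin{itemize}
\item Pairs with $e\cap e'\neq\emptyset$ are handled crudely: each such covariance is at most $1$. By \cref{itm:setup codegree bound} and \cref{itm:setup Delta ell UB}, the number of such pairs is $\Delta\cdot O(\eta\Delta)=o(\Delta^2)$.
\item The real content is the $\approx\Delta^2$ disjoint pairs. For these we must show that, after conditioning on the window, the events $\{e\subseteq\mc U(\mb I)\}$ and $\{e'\subseteq\mc U(\mb I)\}$ are nearly uncorrelated on average over pairs.
\end{itemize}

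For the disjoint pairs I would use a pinning (conditional-entropy) argument in the spirit of Kozma--Samotij and the pinning lemma cited above. Reveal the window vertex by vertex in a random order, where each vertex of $V$ joins $W$ independently with probability $q$. By the chain rule, the expected drop in the entropy of the collection $\{\1\{e\subseteq\mc U(\mb I)\}\}_{e\in E_v}$ at each revealed vertex equals a mutual information between that collection and the revealed spin. Summing over revealed vertices, the total drop is bounded by the entropy of the collection. The pinning lemma then says that for most window sizes, the average pairwise conditional mutual information among the indicators is at most their total entropy divided by the number of revealed relevant variables. Here the relevant variables are the spins in the second neighbourhood $N_2(v)=\bigcup_{e\in E_v}\bigcup_{u\in e}\bigcup_{f\in E_u} f$. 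Pinsker's inequality then converts small mutual information into small covariance.

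The scales must be made to match. The blocking status of $u\in e$ is determined by whether some $f\in E_u$ satisfies $f\subseteq\mb I$. These are rare events, of probability $O(p^{k-1})$ each, for which $\Delta$ trials give $\Theta(1)$ blocking events. So the information relevant to $\mathbf y_v$ is carried by $\Theta(\Delta)$ occupied vertices in $N_2(v)$, and a $q$-fraction of them is revealed by $W$. Since $q\to0$ slowly, $q\Delta\to\infty$, and the bound on the averaged covariance becomes $o(1)$ per pair. It is also useful to first replace $\{e\subseteq\mc U(\mb I)\}$ by $\{\mb I\cup e\in\mc I\}$: by \cref{obs:spatial-markov} the latter relates to $\P(e\subseteq\mb I)$ up to the factor $p^{k-1}$, and the two events differ only on configurations already controlled by the local sparsity assumptions \cref{itm:setup codegree bound} and \cref{itm:setup Delta ell UB}.

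The main obstacle is making the pinning step uniform in $v$. The generic pinning lemma controls covariances averaged over all pairs in $V$, but we need control averaged over pairs inside the single set $E_v$, for every $v$. I would first try a localized pinning lemma, applied to the restriction of $\mb I$ to $N_2(v)$. Its entropy is $O(|N_2(v)|\,p\log(1/p))$, which after the $p^{k-1}$ normalisation is comparable to $\Delta$ up to logarithms. The window meets $N_2(v)$ in about $q|N_2(v)|$ vertices, a density independent of $v$, so the same bound holds for each $v$. The catch is that $W$ is chosen globally rather than for each $v$, so the step where the pinning lemma picks a good window size needs care; I expect it to be handled by averaging over the random ordering and using Markov's inequality. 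If logarithmic losses appear, I would choose $q$ so that $q\Delta/\log\Delta\to\infty$. This is presumably how ``$q\to0$ sufficiently slowly'' gets fixed.
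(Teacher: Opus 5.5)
\textbf{Verdict: there is a genuine gap.} Your reduction to showing $\E\,\var(\mathbf y_v\mid W,W\cap\mathbb I)=o(\Delta^2)$ is fine, and so is your crude treatment of intersecting pairs. The core step, however, does not go through.

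\textbf{Why the pinning step fails.}
\begin{enumerate}
\item The pinning lemma (in the Montanari or Raghavendra--Tan form) controls only the conditional mutual information between \emph{pinned coordinates}, here single spins $X_u=\mathbf 1\{u\in\mathbb I\}$, averaged over uniformly random pairs. Equivalently, the telescoping entropy argument you describe shows only that a typical \emph{additional} spin carries little information about $\mathbf y_v$ given the window.
\item Neither statement says anything about the covariance of the composite events $\{e\subseteq\mathcal U(\mathbb I)\}$ and $\{e'\subseteq\mathcal U(\mathbb I)\}$. Each is a function of $\Theta(\Delta)$ spins through the $(k-1)$-fold products $\mathbf 1\{f\subseteq\mathbb I\}$, $f\in\mathcal T(e)$. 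These hyperedge-structured tuples are a vanishing fraction of all pairs of coordinates, even inside $N_2(v)$, so averaged pairwise control of spins gives no information about them.
\item More fundamentally, ``no single further spin is informative'' is compatible with a quantity being far from window-measurable. A parity of many independent bits is an example: it stays uniform given the window, and each further revealed bit is uninformative until the last. So some structural property of $\mathbf y_v$ must be used. Your scale-matching paragraph ($q\Delta\to\infty$) is a heuristic, not a bound.
\item The uniformity issue you flag is a further symptom. Pinning produces a good window size only after averaging over sizes, whereas the lemma concerns a single binomial $W$ with a fixed $q$.
\end{enumerate}

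\textbf{The missing idea.} Exploit the structure of $\mathbf y_v$ directly. By inclusion--exclusion over $\mathcal T(e)=\bigcup_{u\in e}E_u$, the indicator $\mathbf 1\{e\subseteq\mathcal U(\mathbb I)\}$ is a signed sum of indicators $\mathbf 1\{V(F)\subseteq\mathbb I\}$ of small sets. Each of these has the unbiased, window-measurable estimator $q^{-|V(F)|}\mathbf 1\{V(F)\subseteq\mathbb I\cap W\}$. The paper's argument then runs as follows.
\begin{enumerate}
\item Truncate the expansion at level $L$. The Bonferroni error is at most $\Delta\bigl(K^{L+1}/(L+1)!+C_{L+1}\eta\bigr)$ in expectation, by the spatial Markov property and the locally tree-like counts.
\item Bound the remaining error by a variance \emph{over $W$ alone, with $\mathbb I$ fixed}. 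The covariance terms vanish unless $V(F)\cap V(F')\neq\emptyset$. These overlapping configurations have total weight at most $q^{-(k-1)L}\eta\Delta^2\sum_{i,j\le L}C_{i,j}$, which is $o(\Delta^2)$ when $q\to0$ and $L\to\infty$ slowly enough.
\item Conclude using $\E|\mathbf y_v-\hat{\mathbf y}_v|\le 2\E|\mathbf y_v-\widetilde{\mathbf y}_v|$, valid for any window-measurable $\widetilde{\mathbf y}_v$.
\end{enumerate}
This route needs no decorrelation property of $\mu_{\mathcal H,p}$ at all. It uses only that $W$ is independent of $\mathbb I$, and it is automatically uniform in $v$ for a single deterministic $q$.
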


Moreover, we will show that the combined entropy and energy cost of revealing $W \cap \mb I$ is small (see Lemma~\ref{lem:cheap-window} below). The window therefore gives us a cheap way to learn the `local environment' of a vertex.

We now describe the entropy argument. Let $\vec{\alpha} = (\alpha_v)_{v\in V} \in [0,1]^V$ be i.i.d.\ uniformly random values in $[0,1]$. For $v\in V$, let $X_v$ indicate the event $v \in \mb I$ and define $<_v := \{u \in V~|~\alpha_u < \alpha_v\}$. For $S\subseteq V$, let $X_S := \{X_v\}_{v\in S}$. Recalling that $W$ is independent of $\mb I$, and that both are independent of $\vec\alpha$, the entropy chain rule implies
\begin{align}
H(\mathbb{I}) = H(\mb I \mid W) &= H(\mathbb{I} \cap W \mid W) + H(\mathbb{I}~|~W \cap \mb I , W) \nonumber \\
&= H(\mathbb{I} \cap W \mid W) + H(\mathbb{I}~|~ W\cap \mathbb{I}, W,\vec \alpha) \nonumber \\
&= H(\mathbb{I} \cap W \mid W) + \sum_{v \in V}H(X_v~|~W\cap \mathbb{I}, W,\vec \alpha,X_{<_v})\, .\label{eqn:startpoint}
\end{align}

It is simple to show that $H(\mathbb{I} \cap W \mid W) + \log \lambda \E |\mb I \cap W|$ is small and so we fix $v\in V$ and turn our attention to bounding $H(X_v~|~W\cap \mathbb{I}, W,\vec \alpha,X_{<_v})$.
For convenience, let
$\cW$ denote the pair $(W\cap \mathbb{I}, W)$.

For $t\in [0,1]$, let $\mathbb I_t=\{u\in \mathbb I: \alpha_u<t\}$ and let 
\[
U_v(t)=\{v\in \mc U(\mathbb I_t) \}.
\]
In words, $U_v(t)$ denotes the event that $v$ is unblocked by the vertices revealed by time $t$. We also define
\[
u_v(t)
=
\P(U_v(t) \mid \cW)
\]
as the (random) probability that $U_v(t)$ occurs, conditioned on $\mc W$.

\subsubsection{Heuristic entropy bound}
We consider an outcome of the window with $v\notin W$, otherwise conditioning on $\cW$ reveals $X_v$.
Lemma~\ref{lem:y-estimator} shows that the window gives an estimate $\hat{\mathbf{y}}_v$ for $\mathbf y_v$. Heuristically, there are $\hat{\mathbf{y}}_v$ fully unblocked $e\in E_v$, each of which has probability approximately $(pt)^{k-1}$ of being occupied and revealed before time $t$. A Poisson approximation therefore suggests that
\begin{align}\label{eq:Poisson-heuristic-UB}
u_v(t)\approx e^{-(pt)^{k-1}\hat{\mathbf y}_v}.
\end{align}
On the other hand, by the spatial Markov property,
\[
\P(X_v=1\mid \cW)=p u_v(1),
\]
and the conditional probability of $X_v=1$, given that it is unblocked when it is revealed, is $p u_v(1)/u_v(t)$.
If $v$ is blocked when it is revealed, its entropy contribution is zero. Otherwise the contribution is at most  
$h(pu_v(1)/u_v(t))$ where $h(x) = - x \log x - (1-x)\log(1-x)$ denotes the entropy of a $\text{Ber}(x)$ random variable. The total entropy is therefore at most
\[
u_v(t)h\left(\frac{p u_v(1)}{u_v(t)}\right)\, .
\]
Combining this with the corresponding
energy contribution from $v$ (the contribution to $\log\lambda\,
 \mathbb E |\bb I|$) we obtain
heuristically
\[
u_v(t)h\left(\frac{p u_v(1)}{u_v(t)}\right)
+
p u_v(1)\log \lambda
\approx
pe^{-p^{k-1}\hat{\mathbf{y}}_v}\bigl(1+(1-t^{k-1})p^{k-1}\hat{\mathbf{y}}_v\bigr).
\]
Returning to the Gibbs variational formula~\eqref{eqn:EnergyEntropy}, we integrate the above estimate over $t$, sum over $v$, and average over the window to conclude that
\begin{align*}
\log Z_{\mathcal H}(\lambda)
\lesssim 
\E\sum_{v\in V}
pe^{-p^{k-1}\hat{\mathbf{y}}_v}\left(1+\frac{k-1}{k}p^{k-1} \hat{\mathbf{y}}_v\right)
&\approx
\E\sum_{v\in V}
pe^{-p^{k-1} \mathbf y_v}\left(1+\frac{k-1}{k}p^{k-1}  \mathbf y_v\right)\\
&\approx
\E[\widetilde{\mathcal B}_{\mathcal H,p}
(\overline{\mathbf x})]+np
\end{align*}
where in the second approximation we used that $\hat{\mathbf{y}}_v$ is a good estimator for $\mathbf y_v$ (Lemma~\ref{lem:y-estimator}). Thus we (heuristically) arrive at Lemma~\ref{lem:partition function UB}. Our desired upper bound follows from Lemma~\ref{lem:fixedpointrewrite} and the bound $\E[\mathcal{B}_{\mc H,p}(\ol{\mathbf{x}})] \leq \max_{x\in[0,1]^V}\mathcal{B}_{\mc H,p}(x)$. The main work in making this rigorous is to prove~\cref{lem:y-estimator} and to make rigorous the Poisson heuristic at~\eqref{eq:Poisson-heuristic-UB}.

\subsection{Lower bound}\label{sec:overviewLB}
Recall that to prove a lower bound on $\log Z_{\cH}(\lambda)$, it is enough to construct the distribution of one random independent set $I$ and lower-bound
 $
 H(I)+\log\lambda\,\mathbb E|I|.
 $
 To this end, we will use a simple random greedy algorithm for constructing an independent set $I$. We describe the algorithm and then motivate it. 

 First we fix a BP fixed point $x\in [0,1]^V$. For $t\in[0,1]$ and $v\in V$ let 
 \begin{equation}\label{eq:r_v definition}
 r_v(t)= p (x_v/p)^{1-t^{k-1}}\, .
 \end{equation}

\subsubsection{The BP-guided random greedy process on $\cH$.}\label{subsec:BP-guided} Start with $I=\emptyset$. Independently for every $v\in V$, choose
 \begin{align}\label{eq:tau-xi}
 \tau_v,\xi_v\sim U[0,1].
 \end{align}
 We refer to $\tau_v, \xi_v$ as the `clock' and `coin' at $v$ respectively. We refer to the pair $(\tau_v, \xi_v)$ as the `mark' at $v$. We process the vertices in increasing order of $\tau_v$. When the clock of $v$ rings at time $t=\tau_v$, add $v$ to $I$ if both of the following hold:
 \begin{enumerate}
 \item no $e\in E_v$ is already contained in $I$,
\item $\xi_v\leq r_v(t)$.
\end{enumerate}
The first condition says that $v$ is unblocked. The second ensures that if $v$ is unblocked, we include it in the independent set with probability $r_v(t)$.

\subsubsection{Heuristic analysis of the process}\label{subsec:heuristic}
Let $I_{<t}$ denote the set of vertices occupied (included in the independent set) before time $t$, and define
\begin{align}\label{eq:av-Def}
a_v(t)
=
\P\bigl(v\in\mc U(I_{<t})\mid \tau_v=t\bigr)\, .
\end{align}
In words, $a_v(t)$ is the probability that $v$ is unblocked when its clock rings at time $t$.

Imagine for a moment that, by time $t\in[0,1]$, each vertex $u$ has been occupied independently with probability $tx_u$.  
The expected number of $e\in E_v$ that block $v$ at time $t$ would then be
\[
t^{k-1}\sum_{e\in E_v}\prod_{u\in e}x_u\, .
\]
 A Poisson heuristic then suggests that
\begin{align}\label{eq:av-estimate}
a_v(t)
\approx
\exp\left(-t^{k-1}\sum_{e\in E_v}\prod_{u\in e}x_u \right)
=
(x_v/p)^{t^{k-1}}.
\end{align}
where for the final equality we used that $x$ is a BP fixed point. We can now see the motivation for the definition of $r_v(t)$ above. It ensures that when the clock at $v$ rings at time $t$, the probability that $v$ is included in $I$ is 
\[
a_v(t) r_v(t)
\approx
(x_v/p)^{t^{k-1}} \cdot r_v(t)
= x_v\, ,
\]
independent of $t$.

We now show this (heuristically) gives the correct entropy.
Recall that $h(q)$ denotes the entropy of a $\text{Ber}(q)$ random variable and recall that $h(q)=q (\log(1/q)+1) + O(q^2)$. 

Suppose the clock at $v$ rings at time $t$. If it is blocked the indicator that $v$ is occupied has entropy $0$. If it is unblocked, then this indicator has entropy $h(r_v(t))$. We therefore have
\begin{align*}
H( I \mid (\tau_v)_{v\in V})
=
\sum_{v\in V} \int_{0}^1 
a_v(t)
h(r_v(t))\, dt
&\approx
\sum_{v\in V} \int_{0}^1 
x_v
\left(
\log(1/r_v(t)) + 1
\right)
\, dt \\
&=
\sum_{v\in V}
x_v 
\left(
\log(1/p)+(1-1/k)\log (p/x_v) + 1
\right)\, .
\end{align*}
Since conditioning can only decrease entropy we conclude that
\begin{align*}
H(I) + \log \lambda \E |I|
&\geq
\sum_{v\in V}
x_v 
\left(
\log(1/p)+(1-1/k)\log (p/x_v) + 1
\right)
+
\log p
\sum_{v\in V} x_v\\
&=
\widetilde{\mathcal{B}}_{\mc H,p} (x) +np\, .
\end{align*}
To turn this heuristic into a proof, the main work is in rigorously justifying the approximation~\eqref{eq:av-estimate} (see Lemma~\ref{lem:root-availability} below).

\section{The upper bound in \cref{thm:Bethe exactness}}\label{sec:upper bound}
\newcommand{\qvalue}{o(1)}

In this section we prove the upper bound in \cref{thm:Bethe exactness}. Recalling~Observation~\ref{obs:hardcore partition and log probability}, this is the content of the following lemma.
\begin{lemma}\label{lem:Bethe FE upper bound}
    In the setup of \cref{thm:Bethe exactness}, with $\lambda=p/(1-p)$, it holds that
    \begin{equation}
    \log Z_{\mc H}(\lam) \leq \max_{x\in [0,1]^V} \mathcal{B}_{\mathcal{H},p}(x) + np + o(np).
    \end{equation}
\end{lemma}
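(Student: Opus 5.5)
The plan is to deduce \cref{lem:Bethe FE upper bound} from the two lemmas stated in the overview, \cref{lem:fixedpointrewrite} and \cref{lem:partition function UB}, and then to prove those lemmas, spending most of the effort on the second. Granting both, the lemma is a short chain of inequalities:
\[
\log Z_{\mc H}(\lambda) \le \E\bigl[\widetilde{\mathcal B}_{\mc H,p}(\bar{\mathbf x})\bigr] + np + o(np) = \E\bigl[\mathcal B_{\mc H,p}(\bar{\mathbf x})\bigr] + np + o(np) \le \max_{x\in[0,1]^V}\mathcal B_{\mc H,p}(x) + np + o(np).
\]
The last step holds because $\bar{\mathbf x}\in[0,1]^V$ pointwise.

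For \cref{lem:fixedpointrewrite}, I would compare the two functionals term by term. The difference $\mathcal B-\widetilde{\mathcal B}$ at $\bar{\mathbf x}$ equals
\[
-\sum_e\prod_{u\in e}\mathbf x_u + \frac1k\sum_v \mathbf x_v p^{k-1}\mathbf y_v,
\]
since $\log(p/\mathbf x_v)=p^{k-1}\mathbf y_v$ by definition. It therefore suffices to show
\[
\E\sum_v \mathbf x_v\sum_{e\in E_v}\prod_{u\in e}\mathbf x_u = \E\sum_v \mathbf x_v\, p^{k-1}\mathbf y_v + o(np).
\]
This in turn reduces to showing that, on average over $v$, $\sum_{e\in E_v}\prod_{u\in e}\mathbf x_u \approx p^{k-1}\mathbf y_v$. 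The heuristic is that $\mathbf x_u/p$ approximates the probability that $u$ is unblocked, and that near-independence of unblocking events (Lemma~\ref{lem:availability product decomposition}, together with the codegree and $\Delta_\ell$ bounds \ref{itm:setup codegree bound}, \ref{itm:setup Delta ell UB}) lets products factor. I would carry this out by expressing both sides through $\P(e\subseteq\mc U(\mathbb I))$ and the spatial Markov property (Observation~\ref{obs:spatial-markov}).

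For \cref{lem:partition function UB}, I would follow the Radhakrishnan-type argument sketched in the overview. The steps, in order, are:
\begin{enumerate}
\item Start from the chain-rule decomposition \eqref{eqn:startpoint}, and bound the window term $H(\mathbb I\cap W\mid W)+\log\lambda\,\E|\mathbb I\cap W|$ by $O(qnp\log(1/p)) = o(np)$. This uses $q\to0$ and $\P(v\in\mathbb I)\le p$, and $O(q\,np\log(1/p))$ is $o(np)$ provided $q\log(1/p)\to 0$.
\item For each $v\notin W$, bound $H(X_v\mid \cW,\vec\alpha,X_{<_v})$ by integrating over $t=\alpha_v$. Only revealed states matter, $\mathbb I_t$ is determined by the conditioning, and $v$ contributes nothing when blocked. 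This gives a bound of $\int_0^1 \E\bigl[u_v(t)\,h\bigl(pu_v(1)/u_v(t)\bigr)\bigr]dt$, using concavity of $h$ to pass from a per-realisation bound to the conditional-on-$\cW$ quantities.
\item Prove the rigorous Poisson estimate $u_v(t)= e^{-(pt)^{k-1}\hat{\mathbf y}_v}+o(1)$ in an averaged sense. I would do this by inclusion–exclusion (Bonferroni) on the events that each $e\in \mathbf Y_v$ lies in $\mathbb I_t$. The $j$-th factorial moment would be computed via the spatial Markov property: each fully unblocked $k-1$ set is occupied with probability about $p^{k-1}$, and revealed with probability $t^{k-1}$. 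Overlapping configurations are controlled by \ref{itm:setup codegree bound} and \ref{itm:setup Delta ell UB}.
\item Plug this estimate in, use the expansion $h(q)=q(\log(1/q)+1)+O(q^2)$ and monotonicity to get the displayed heuristic bound, and integrate in $t$ to obtain the factor $1+\frac{k-1}{k}p^{k-1}\hat{\mathbf y}_v$.
\item Replace $\hat{\mathbf y}_v$ by $\mathbf y_v$ using Lemma~\ref{lem:y-estimator}, noting that the integrand is Lipschitz in $p^{k-1}y$ with constant $O(p)$. This yields $\E\widetilde{\mathcal B}(\bar{\mathbf x})+np+o(np)$.
\end{enumerate}

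I expect the main obstacle to be step 3, justifying the Poisson approximation for $u_v(t)$ conditionally on the window, together with Lemma~\ref{lem:y-estimator}. Conditioning on $\cW$ destroys exact spatial Markov structure, and one must show that $\mathbf y_v$ concentrates given $W\cap\mathbb I$. For Lemma~\ref{lem:y-estimator}, I would first try a variance argument: write $\mathbf y_v$ as a sum of indicators $\1\{e\subseteq\mc U(\mathbb I)\}$, and show that the conditional covariances, averaged over the random window, are small via a pinning-lemma style bound. The underlying fact is that the mutual information between $\mathbb I$ and the window, summed over pairs, is at most $H(\mathbb I\cap W)$ spread over many scales. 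Choosing $q\to0$ slowly then ensures the average conditional correlation is $o(1)$.
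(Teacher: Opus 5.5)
Your top-level reduction (Lemma~\ref{lem:partition function UB}, then Lemma~\ref{lem:fixedpointrewrite}, then $\E[\mathcal B_{\mc H,p}(\bar{\mathbf x})]\le\max$) is exactly the paper's, and your steps 2, 4 and 5 follow it. In step 4 you need uniform continuity of $(x,y)\mapsto x(\log(y/x)+1)$ on $\{0\le x\le y\le1\}$ rather than monotonicity, because $u_v$ is only approximated in $L^1$. Replacing Chen--Stein by Bonferroni/factorial moments in step 3 is a workable alternative.

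\textbf{The gap: Lemma~\ref{lem:y-estimator}.} Steps 3 and 5 both rest on this lemma. A pinning-lemma argument bounds the average, over all pairs of vertices, of the conditional mutual information between single coordinates $X_u,X_w$. What you actually need is
\[
\E\sum_{e,f\in E_v}\mathrm{Cov}\bigl(\mathbf 1_{\{e\subseteq\mc U(\mathbb I)\}},\mathbf 1_{\{f\subseteq\mc U(\mathbb I)\}}\bigm| W\cap\mathbb I\bigr)=o(\Delta^2)
\]
for each $v$, or at least on average over $v$. Here each indicator depends on all vertices of $V(\mc T(e))$, i.e.\ up to $(k-1)^2\Delta$ coordinates, and the relevant pairs are a vanishing, highly structured fraction of all pairs. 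Your sketch gives no way to pass from the pinning bound to this.

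The missing idea is that no decorrelation of $\mu_{\mc H,p}$ is needed at all; instead you build an explicit $(W\cap\mathbb I)$-measurable estimator.
\begin{itemize}
\item By inclusion--exclusion over $\mc T(e)=\bigcup_{u\in e}E_u$, write $\mathbf 1_{\{e\subseteq\mc U(\mathbb I)\}}=\sum_{j\ge0}(-1)^j\sum_{F\in\binom{\mc T(e)}{j}}\mathbf 1_{\{V(F)\subseteq\mathbb I\}}$.
\item Truncate at $j\le L$, and replace each indicator by $q^{-|V(F)|}\mathbf 1_{\{V(F)\subseteq\mathbb I\cap W\}}$, which is unbiased over $W$ for fixed $\mathbb I$.
\item The Bonferroni error has expectation at most $\sum_{e\in E_v}\sum_{F\in\binom{\mc T(e)}{L+1}}p^{|V(F)|}\le(K^{L+1}/(L+1)!+C_{L+1}\eta)\Delta$.
\item $\var_W(\cdot\mid\mathbb I)$ only sees pairs $F,F'$ with $V(F)\cap V(F')\neq\emptyset$, which gives $O_L(q^{-(k-1)L}\eta\Delta^2)$ in expectation.
\end{itemize}
Both bounds use only $\P(R\subseteq\mathbb I)\le p^{|R|}$ and the tree-like counting of Lemma~\ref{lem:tree-estimator}. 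Let $L\to\infty$ and $q\to0$ slowly. Then $\E|\hat{\mathbf y}_v-\widetilde{\mathbf y}_v|\le\E|\mathbf y_v-\widetilde{\mathbf y}_v|$ (conditional Jensen) gives the lemma.

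\textbf{Smaller points.}
\begin{itemize}
\item The estimator forces $q\to0$ slowly relative to $\eta$. That clashes with your step-1 requirement $q\log(1/p)\to0$ whenever $\eta$ decays slowly, and the theorem allows any $\eta\to0$. The fix is not to discard the energy term: $h(z)+z\log\lambda\le\log(1+\lambda)=O(p)$ bounds the window cost by $O(pq\,n)$ for any $q\to0$.
\item Conditioning on $\cW$ does not destroy the spatial Markov property for sets disjoint from the window. For a fixed window $W=T$, apply Observation~\ref{obs:spatial-markov} with $F(I)=\mathbf 1_{\{I\cap T=S\}}$. This is what makes your step 3 go through; sets meeting $W$ cost only $O(q)$.
\item In Lemma~\ref{lem:fixedpointrewrite}, the weight $\mathbf x_v=pe^{-p^{k-1}\mathbf y_v}$ must stay inside the expectation. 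Comparing $\E\sum_{e}\prod_{u\in e}\mathbf x_u$ with $p^{k-1}\E\mathbf y_v=p^{k-1}\sum_e\P(e\subseteq\mc U(\mathbb I))$ loses it. The paper applies Lemma~\ref{lem:availability product decomposition} with $U=e$, $S=\{v\}$. That lemma itself still needs a proof: the paper's uses Chen--Stein (Lemma~\ref{lem:stein-app2}), and your moment method could also supply it.
\end{itemize}
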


First we recall the overview from Section~\ref{sec:upper-bound}, and note that Lemma~\ref{lem:Bethe FE upper bound} will follow immediately once we prove Lemmas~\ref{lem:partition function UB} and~\ref{lem:fixedpointrewrite} since $\E\left[{\mathcal{B}}_{\mc H,p}(\bar {\mathbf{x}})\right]\leq \max_{x\in [0,1]^V} {\mathcal{B}}_{\mc H,p}(x) $.

Returning to~\eqref{eqn:startpoint},
let us first argue that $H(\mathbb{I} \cap W \mid W) + \log \lambda\cdot \E [|\mathbb{I} \cap W|]$ is small. In view of~\eqref{eqn:EnergyEntropy}, we prove the following.  

\begin{lemma}\label{lem:cheap-window}
\[
H(\mathbb{I} \cap W \mid W)
+
\log \lambda\cdot 
\E [|\mathbb{I} \cap W|]
\leq
o(np)\, .
\]
\end{lemma}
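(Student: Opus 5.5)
We bound the entropy of $\mathbb I\cap W$ given $W$ by subadditivity, vertex by vertex, and then add the energy term. Conditioned on $W$, the set $\mathbb I\cap W$ is determined by the indicators $X_v$ for $v\in W$. Since $W$ is independent of $\mathbb I$,
\[
H(\mathbb I\cap W\mid W)\le \E\Big[\sum_{v\in W} h(\P(v\in\mathbb I))\Big] = q\sum_{v\in V} h(\rho_v),
\]
where $\rho_v:=\P(v\in\mathbb I)$. Likewise $\E|\mathbb I\cap W| = q\sum_v \rho_v$. So the left-hand side of the lemma is at most
\[
q\sum_{v\in V}\bigl(h(\rho_v)+\rho_v\log\lambda\bigr).
\]

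The next step is to bound each summand by $O(p)$. By Observation~\ref{obs:spatial-markov} (with $R=\{v\}$) we have $\rho_v\le p$. Since $\lambda<1$ for small $p$, the function $\rho\mapsto h(\rho)+\rho\log\lambda$ is concave. Its derivative is $\log((1-\rho)/\rho)+\log\lambda$, which vanishes at $\rho=\lambda/(1+\lambda)=p$. So it is increasing on $[0,p]$, and its maximum over $[0,p]$ is attained at $\rho=p$, with value
\[
h(p)+p\log\lambda=-\log(1-p)=p+O(p^2).
\]
Hence each summand is at most $p(1+O(p))$. Note that $p\to0$, because $p\Delta^{1/(k-1)}\le 1/c$ by \cref{itm:setup p is critical} and $\Delta\to\infty$.

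Summing over $v$ gives a total of at most $q\,np(1+o(1))$, which is $o(np)$ because $q=q(n)\to0$. This holds however slowly $q$ tends to zero, so the lemma is compatible with whatever rate is later chosen in Lemma~\ref{lem:y-estimator}.

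None of these steps is a real obstacle. The one point to get right is the sign: $\log\lambda$ is negative, so we must bound $h(\rho_v)+\rho_v\log\lambda$ jointly through the concavity maximization, rather than bounding $h(\rho_v)\le h(p)$ separately and losing the cancellation. The separate bound would only give $O(p\log(1/p))$ per vertex. That is still $o(np)$ if $q\log(1/p)\to0$, but the joint bound avoids needing that extra condition.
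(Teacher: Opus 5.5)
Your proof is correct and is essentially the paper's argument: subadditivity plus the independence of $W$ and $\mathbb I$ reduces the problem to bounding $h(z)+z\log\lambda$ jointly, whose maximum is $\log(1+\lambda)=-\log(1-p)=O(p)$, and then $\E|W|=nq=o(n)$ finishes. The only difference is that you restrict to $[0,p]$ via Observation~\ref{obs:spatial-markov}. This is unnecessary, since $z=p$ is the global maximizer of this concave function on $[0,1]$, and the paper simply uses the bound for all $z\in(0,1)$.
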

\begin{proof}
Let
$
h(z) = -z\log(z) - (1-z)\log (1-z)
$
denote the Shannon entropy of a $\text{Ber}(z)$ random variable.
For every $S \subseteq V$, we have
\begin{align}
H(\mathbb{I} \cap W \mid W = S) 
+ \log \lambda \cdot
\E [|\mathbb{I} \cap W| \mid W=S]\nonumber
&\leq 
\sum_{v\in S} \left(h(\E[X_v])
+
\log \lambda\cdot  \E[X_v]\right)\\
&\leq 
\log(1+\lambda) |S|\, .
\end{align}
The first inequality uses subadditivity of entropy and independence of $W$ and $\bb I$, and the second inequality uses that $h(z)+ z\log \lambda\leq \log(1+\lambda)$ for all $z\in (0,1)$. Taking an expectation over $W$ and noting that $\log(1+\lambda)=O(p)$ and $\E |W|= nq=o(n)$
completes the proof.
\end{proof}

We now fix $v\in V$ and turn our attention to bounding $H(X_v~|~\cW,\vec \alpha,X_{<_v})$. Recall that $\vec{\alpha} = (\alpha_v)_{v\in V} \in [0,1]^V$ where the $\alpha_v$ are i.i.d.\ uniformly random values in $[0,1]$. 
Recall also that $t\in [0,1]$, $\mathbb I_t=\{u\in \mathbb I: \alpha_u<t\}$,  
$
U_v(t)=\{v\in \mc U(\mathbb I_t) \},
$
and
$
u_v(t)
=
\P(U_v(t) \mid \cW)\, .
$
We note that the event $U_v(t)$ does not depend on $\alpha_v$. Hence
$
u_v(t)=\P(U_v(t)\mid\cW,\alpha_v=t).
$
\begin{lemma}\label{lem:Vx-entropy-bd}
\[
H(X_v~|~\cW,\vec \alpha,X_{<_v})
\leq 
\E
\left[
\mathbf 1_{\{v\notin W\}}
\int_{0}^1
u_v(t)
h\left(
\frac{p u_v(1)}{u_v(t)}
\right)\, dt\, 
\right]\, .
\]
\end{lemma}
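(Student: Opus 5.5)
We will bound each conditional entropy by coarsening the conditioning and then applying concavity of $h$. Write $\mathcal F_v$ for the $\sigma$-field generated by $(\cW,\vec\alpha,X_{<_v})$. Since conditioning cannot increase entropy, it suffices to work with a smaller $\sigma$-field. The natural choice is the one generated by $\cW$, $\alpha_v$, and the indicator $\mathbf 1_{U_v(\alpha_v)}$. This indicator is a function of $\vec\alpha$ and $X_{<_v}$: the set $\mathbb I_{\alpha_v}$ consists of exactly the occupied vertices among $<_v$, so whether $v$ is unblocked by it is determined. Hence
\[
H(X_v\mid \mathcal F_v)\le H\bigl(X_v\mid \cW,\alpha_v,\mathbf 1_{U_v(\alpha_v)}\bigr).
\]

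First, on the event $v\in W$, the pair $\cW$ reveals $X_v$, so this contribution is zero. This accounts for the factor $\mathbf 1_{\{v\notin W\}}$.

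Second, fix an outcome of $\cW$ with $v\notin W$ and condition on $\alpha_v=t$. We split according to the unblocked indicator.
\begin{itemize}
\item If $U_v(t)$ fails, then $v$ is blocked by $\mathbb I_t\subseteq \mathbb I$. Hence $X_v=0$ deterministically and the conditional entropy vanishes.
\item If $U_v(t)$ holds, the entropy is $h\bigl(\P(X_v=1\mid \cW,\alpha_v=t,U_v(t))\bigr)$. Since $\{X_v=1\}\subseteq U_v(t)$, this probability equals $\P(X_v=1\mid\cW,\alpha_v=t)/u_v(t)$. The event $\{X_v=1\}$ does not involve $\alpha_v$, and $\alpha_v$ is independent of $(\mathbb I,W)$, so the numerator is $\P(X_v=1\mid\cW)$.
\end{itemize}
The event $U_v(t)$ has conditional probability $u_v(t)$, using the earlier remark that $u_v(t)=\P(U_v(t)\mid\cW,\alpha_v=t)$. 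Integrating over $t\sim U[0,1]$ and averaging over $\cW$ then gives the claimed bound, provided we verify $\P(X_v=1\mid\cW)=p\,u_v(1)$.

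The one step needing care is this identity, the spatial Markov property conditioned on the window, for $v\notin W$. Observation~\ref{obs:spatial-markov} is unconditional. To condition on $\cW$, fix $W=S$ with $v\notin S$ (recall $W$ is independent of $\mathbb I$) and apply the observation with $R=\{v\}$ and $F(\mathbb I)=\mathbf 1\{\mathbb I\cap S=A\}$. Because $v\notin S$, we have $(\mathbb I\cup\{v\})\cap S=\mathbb I\cap S$, so
\[
\P(v\in\mathbb I,\ \mathbb I\cap S=A)=p\,\P\bigl(\mathbb I\cup\{v\}\in\mathcal I,\ \mathbb I\cap S=A\bigr).
\]
Dividing by $\P(\mathbb I\cap S=A)$ yields $\P(X_v=1\mid\cW)=p\,\P(v\in\mathcal U(\mathbb I)\mid\cW)$. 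Finally, $\mathcal U(\mathbb I)=\mathcal U(\mathbb I_1)$ almost surely, since $\alpha_u<1$ a.s.; hence the right-hand side is $p\,u_v(1)$.

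Two small points remain. First, $pu_v(1)/u_v(t)\le p\le 1$, because $u_v(t)$ is nonincreasing in $t$; so the argument of $h$ is a valid probability. Second, terms with $u_v(t)=0$ contribute zero, which we read as $0\cdot h(\cdot)=0$.
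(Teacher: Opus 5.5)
Your proof is correct and follows essentially the same route as the paper. Like the paper, you coarsen the conditioning to $(\cW,\alpha_v,\mathbf 1_{U_v(\alpha_v)})$, observe that $X_v=0$ off $U_v(t)$, and obtain $\P(X_v=1\mid\cW)=p\,u_v(1)$ for $v\notin W$ from Observation~\ref{obs:spatial-markov} with $R=\{v\}$ and the window indicator as $F$; the opening reference to concavity of $h$ is superfluous, since the coarsened entropy equals $u_v(t)h\bigl(pu_v(1)/u_v(t)\bigr)$ exactly for each fixed outcome of $\cW$ and $t$.
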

\begin{proof}
Let $\cW_{S,T}$ denote the event that $(W\cap \mathbb{I}, W)=(S,T)$. Fixing a pair $S,T$ with $\P(\cW_{S,T})>0$, we look to bound $H(X_v~|~\cW_{S,T},\vec \alpha,X_{<_v})$. We may assume that $v\notin T$ else the entropy is $0$. 

It will be convenient to condition on the value of $\alpha_v$. For notational convenience, let us define the event $\cE_t= \cW_{S,T}\cap \{\alpha_v=t\}$. 
Let 
\[
u^{S,T}_v(t)
=
\P(U_v(t) \mid \cE_t)
\]
In particular $u^{S,T}_v(1)=\P(v\in \mc U(\mathbb I) \mid \cW_{S,T})$ and
\[
\P(X_v=1 \mid  \cE_t)
=
p u^{S,T}_v(1)
\]
where we used that $\vec \alpha$ is independent of $(\bb I, W)$, $v\notin T$, and Observation~\ref{obs:spatial-markov} (applied with $R=\{v\}$ and $F(I)=\mathbf 1_{\{I\cap T=S\}}$).

Next note that, conditional on $\alpha_v=t$, $\vec \alpha$ and $X_{<v}$ determine $U_v(t)$ and so 
\begin{align}\label{eq:entropyWST}
H(X_v~|~\cE_t,\vec \alpha,X_{<_v})
\leq 
H(X_v~|~\cE_t, \mathbf 1_{U_v(t)})
=
u^{S,T}_v(t)
h\left(
\frac{p u^{S,T}_v(1)}{u^{S,T}_v(t)}
\right)\, ,
\end{align}
where for the final equality we used that $X_v=0$ whenever $U_v(t)$ fails to hold and 
\[
\P(X_v=1 \mid U_v(t), \cE_t)
=
\frac{\P(X_v=1 \mid \cE_t)}
{\P(U_v(t) \mid \cE_t)}
=
\frac{p u^{S,T}_v(1)}{u^{S,T}_v(t)}\, .
\]
If $u^{S,T}_v(t)=0$, we interpret the RHS of~\eqref{eq:entropyWST} as $0$. Integrating~\eqref{eq:entropyWST} over $t$ and taking an expectation with respect to $\cW$ yields the result. 
\end{proof}

With this lemma in hand, we turn our attention to approximating the availability probability $u_v(t)$. 
The following lemma makes the Poisson heuristic at~\eqref{eq:Poisson-heuristic-UB} formal.

\begin{lemma}\label{lem:availability-target}
We have
\[
\E \left|
u_v(t)
-
e^{-(pt)^{k-1}{\mathbf{y}}_v}
\right|
=o(1)
\]
uniformly in $v\in V$ and $t\in [0,1]$.
\end{lemma}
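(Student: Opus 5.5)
Since $u_v(t)=\P(U_v(t)\mid\cW)$ and $\mathbf y_v$ is not $\cW$-measurable, I would split the claim into two estimates via the triangle inequality:
\[
\E\bigl|u_v(t)-e^{-(pt)^{k-1}\mathbf y_v}\bigr|\le \E\bigl|\P(U_v(t)\mid\cW)-\E[e^{-(pt)^{k-1}\mathbf y_v}\mid\cW]\bigr| + \E\bigl|\E[e^{-(pt)^{k-1}\mathbf y_v}\mid\cW]-e^{-(pt)^{k-1}\mathbf y_v}\bigr|.
\]
The second term is controlled by Lemma~\ref{lem:y-estimator}. The map $y\mapsto e^{-(pt)^{k-1}y}$ is $p^{k-1}$-Lipschitz. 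By Jensen, $\E[e^{-(pt)^{k-1}\mathbf y_v}\mid\cW]$ is within $p^{k-1}\E[|\mathbf y_v-\hat{\mathbf y}_v|\mid\cW]$ of $e^{-(pt)^{k-1}\hat{\mathbf y}_v}$, and $e^{-(pt)^{k-1}\mathbf y_v}$ is within $p^{k-1}|\mathbf y_v-\hat{\mathbf y}_v|$ of it. Taking expectations, the second term is $o(1)$. So everything reduces to an ``annealed'' Poisson statement: for every event $A$ in the $\sigma$-field of $\cW$,
\[
\P(U_v(t)\cap A)=\E[e^{-(pt)^{k-1}\mathbf y_v}\mathbf 1_A]+o(\P(A))+o(1)
\]
in an averaged sense. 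In practice I would prove the unconditional version jointly with the window, i.e.\ with $\mb I$ replaced by the law conditioned on $\mb I\cap W$. This law is still a hardcore-type measure (on $V\setminus W$ with $W\cap\mb I$ pinned), so the spatial Markov property, Observation~\ref{obs:spatial-markov}, continues to apply.

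For the Poisson statement I would use inclusion--exclusion (Bonferroni) over blocking edges. Let $N_t$ be the number of $e\in E_v$ with $e\subseteq\mb I_t$, so $U_v(t)=\{N_t=0\}$. By the truncated inclusion--exclusion, $\1\{N_t=0\}$ is sandwiched between partial sums $\sum_{j\le 2m}(-1)^j\binom{N_t}{j}$ for odd and even truncation. The factorial moments are $\E\binom{N_t}{j}=\sum_{\{e_1,\dots,e_j\}}t^{|\cup e_i|}\P(\cup e_i\subseteq\mb I)$, since $\vec\alpha$ is independent. The contribution from tuples whose union has fewer than $j(k-1)$ vertices (overlapping completions) is negligible. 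Here I would use \ref{itm:setup codegree bound} and \ref{itm:setup Delta ell UB} together with $\P(R\subseteq\mb I)\le p^{|R|}$: each overlap of size $\ell$ costs a factor $\eta$ relative to the critical scaling $p^{k-1}\Delta=\Theta(1)$.

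For vertex-disjoint tuples, I apply Observation~\ref{obs:spatial-markov} with $R=\cup e_i$:
\[
\P(R\subseteq\mb I)=p^{|R|}\P(\mb I\cup R\in\mc I).
\]
Then $\P(\mb I\cup R\in\mc I)\approx\P(R\subseteq\mc U(\mb I))$ up to errors from edges inside $\mb I\cup R$ using two or more vertices of $R$, again controlled by the codegree conditions. Summing over disjoint $j$-sets of completions then gives approximately $\E\bigl[\binom{\mathbf y_v}{j}\bigr](pt)^{(k-1)j}$, modulo replacing ``all of $R$ unblocked'' by ``each $e_i\in\mathbf Y_v$''. Plugging into Bonferroni, and using $(pt)^{k-1}\mathbf y_v\le p^{k-1}\Delta=O(1)$ so the Taylor tail of $e^{-s}$ beyond $2m$ terms is uniformly small for $m$ large, yields $\P(U_v(t))=\E e^{-(pt)^{k-1}\mathbf y_v}+o(1)$. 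The same computation goes through with the indicator of a window event multiplied in.

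The main obstacle I expect is the last identification. The events ``$e\subseteq\mc U(\mb I)$'' are evaluated on $\mb I$ itself, but in the formula $\mb I$ is tilted by requiring $R\subseteq\mc U(\mb I)$. One must show that $\mb I\cup R$ being independent essentially coincides with each $e_i$ being fully unblocked. The ``cross'' blocking edges, which meet two different $e_i$'s or meet $R$ in at least two vertices, must contribute $o(1)$ in total, uniformly in $v$ and $t$. I would attempt this by a first-moment bound. The expected number of edges $f$ with $|f\cap R|=\ell\ge2$ and $f\setminus R\subseteq\mb I$ is at most $|R|^\ell\Delta_\ell p^{k-\ell}=O(\eta)$ by \ref{itm:setup Delta ell UB} and criticality. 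Justifying this requires once more Observation~\ref{obs:spatial-markov} applied inside the sum over $R$, and that is where the bookkeeping will be most delicate.
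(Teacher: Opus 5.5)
Your reduction is correct and matches the paper's skeleton. The paper also proves $\E[G\mathbf 1_{U_v(t)}]=\E[Ge^{-(pt)^{k-1}\mathbf y_v}]+o(1)$ uniformly over $\cW$-measurable $0\le G\le 1$. It then passes to the $\cW$-measurable proxy $e^{-(pt)^{k-1}\hat{\mathbf y}_v}$ via Lemma~\ref{lem:y-estimator}, and tests against $G=\mathbf 1_{\{D_v(t)>0\}}$ and $G=\mathbf 1_{\{D_v(t)<0\}}$. The difference is the Poisson step. The paper uses Chen--Stein (Lemmas~\ref{lem:stein-app}, \ref{lem:stein-app2} and Corollary~\ref{cor:stein-app}), which only ever size-biases by a single completion $e$ of size $k-1$. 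Interactions between different completions then appear only in the terms $X^e-X-1$ and $Y^e-Y$, which are bounded in aggregate, and the Poisson error is an explicit $O(\eta+q)$. The paper then needs Lemma~\ref{lem:Yt-to-bfyv} to replace the clock-dependent count $Y_t$ by $t^{k-1}\mathbf y_v$. Your Bonferroni/factorial-moment route is more elementary and integrates the clocks out exactly through the factor $t^{|R|}$, so no analogue of Lemma~\ref{lem:Yt-to-bfyv} is needed. The price is that you must handle all factorial moments up to order $2m$ with $m\to\infty$ slowly, and insert large sets $R$ through Observation~\ref{obs:spatial-markov}. That insertion is where the one real problem arises.

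The problem is in your last paragraph: the bound $|R|^\ell\Delta_\ell p^{k-\ell}=O(\eta)$ holds only for $2\le\ell\le k-1$. For $\ell=k$, i.e.\ an edge $f\subseteq R=e_1\cup\dots\cup e_j$, it only gives $O(1)$, and for $k=2$ this is the only case (\ref{itm:setup Delta ell UB} is vacuous). No per-$R$ bound can work here. If $R$ spans an edge then $\P(\mb I\cup R\in\mc I)=0$, while $\P(R\subseteq\mc U(\mb I))$ may be of order $1$, and such tuples are counted in $\binom{\mathbf y_v}{j}$. You need an aggregate count showing that the weighted number $\sum p^{|R|}$ of vertex-disjoint $j$-tuples from $E_v$ whose union spans an edge is $O_j(\eta)$. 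The key case is $f=e_2\cup\{w\}$ with $w\in e_1$. Then $e_2\in E_v\cap E_w$, so given $e_1$ there are at most $(k-1)\eta\Delta$ choices for $e_2$ by \ref{itm:setup codegree bound}; the case $f=e_1\cup\{z\}$ with $z\in e_2$ is symmetric. If $2\le|f\cap e_1|=a\le k-2$, the count is $O(\Delta_a\Delta_{k-a+1})=O(\eta^2\Delta)$ by \ref{itm:setup Delta ell UB}, and configurations where $f$ meets three or more $e_i$ are similar.

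A smaller omission concerns the window weight. With $G$ multiplied in, Observation~\ref{obs:spatial-markov} produces $G(\mb I\cup R,W)$ rather than $G(\mb I,W)$. These agree unless $R\cap W\ne\emptyset$, which costs $O_j(q)$ after summing over $R$; this is the analogue of the paper's $b_e|G^e-G|\le\mathbf 1_{\{e\cap W\ne\emptyset\}}$. Argue in this annealed form rather than per window realization, since under the law conditioned on $\cW$ the pinned vertices of $W\cap\mb I$ violate $\P(R\subseteq\mb I\mid\cW)\le p^{|R|}$. With these repairs your argument goes through.
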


Before turning to the proof this lemma, we show how it is used to complete the proof of Lemma~\ref{lem:partition function UB}.

\begin{proof}[Proof of Lemma~\ref{lem:partition function UB}]
We begin by observing that
\[
\E|\mathbb I \setminus W| 
=
p\sum_{v\in V}\E[\mathbf 1_{\{v\notin W\}}u_v(1)]\, .
\]
Indeed, on the event $\{v\notin W\}$, Observation~\ref{obs:spatial-markov} gives 
\begin{align}\label{eq:occ-cond-window}
\P(v\in\mathbb I\mid\cW)=p u_v(1)\, .
\end{align}
Combining this with~\eqref{eqn:EnergyEntropy},~\eqref{eqn:startpoint}, Lemma~\ref{lem:cheap-window} and Lemma~\ref{lem:Vx-entropy-bd} shows that
\[
\log Z_{\cH}(\lambda)
\leq
\sum_{v\in V}
\E
\left[
\mathbf 1_{\{v\notin W\}}
\int_{0}^1
u_v(t)
h\left(
\frac{p u_v(1)}{u_v(t)}
\right) 
+ pu_v(1)\log \lambda  \, dt
\right]  
+o(np)\, .
\]
Using the bound $h(z)\leq z(\log(1/z)+1)$ for $z\in[0,1]$ 
and noting that $\log(\lambda/p)=O(p)$
yields
\[
\log Z_{\cH}(\lambda)
\leq
p\sum_{v\in V}
\E
\left[
\mathbf 1_{\{v\notin W\}}
\int_{0}^1
 u_v(1)\left(\log\left( \frac{u_v(t)}{ u_v(1)}\right) 
+
1
\right)
\, dt
\right]  
+o(np)\, .
\]
Since the function $(x,y)\mapsto x (\log(y/x)+1)$ is bounded, non-negative, and uniformly continuous on the set 
$ \{(x,y): 0\leq x\leq y\leq 1\}$ (defining $x (\log(y/x)+1)=0$ when $x=0$)
we may drop the indicator and apply Lemma~\ref{lem:availability-target} to conclude that 
\begin{align*}
\log Z_{\cH}(\lambda)
&\leq
p\sum_{v\in V}
\E
\left[
\int_{0}^1
 e^{-p^{k-1}{\mathbf{y}}_v}
 \left(
 (1-t^{k-1})p^{k-1}{\mathbf{y}}_v
+
1
\right)
\, dt
\right]  
+o(np)\\
&
= 
p\sum_{v\in V}
\E
\left[
 e^{-p^{k-1}{\mathbf{y}}_v}
 \left(
\frac{k-1}{k}p^{k-1}{\mathbf{y}}_v
+
1
\right)
\right]  
+o(np)\\
&=
 \E\left[\widetilde{\mathcal{B}}_{\mc H,p}(\bar {\mathbf{x}})\right] + np + o(np)\, ,
\end{align*}
where for the final equality we recalled Definition~\ref{def:tilde B} (the definition of $\widetilde{\mathcal{B}}_{\mc H,p}$) and recalled that $\mathbf x_v=pe^{-p^{k-1}\mathbf y_v}$\, .
\end{proof}

To complete the proof of Lemma~\ref{lem:Bethe FE upper bound}, it remains to prove Lemma~\ref{lem:fixedpointrewrite}
and Lemma~\ref{lem:availability-target}. A key step along the way will be to prove Lemma~\ref{lem:y-estimator} which we turn to now. 

\subsection{Estimators from the window}

In this section we prove Lemma~\ref{lem:y-estimator}. Our goal is to find a $W\cap \mathbb{I}$-measurable random variable that provides a good estimate for $\mathbf y_v$. Our starting point is the following basic observation. If $S\subseteq V$, then a natural $W\cap \mathbb{I}$-measurable estimator for $\mathbf 1_{\{S\subseteq \mathbb I\}}$ is $q^{-|S|}\mathbf 1_{\{S\subseteq \mathbb I \cap W\}}$. Observe that it is unbiased with respect to $W$: for any fixed $I$
\begin{align}\label{eq:unbiased}
\E_W[q^{-|S|}\mathbf 1_{\{S\subseteq  I \cap W\}}] = \mathbf 1_{\{S\subseteq  I\}}\, .
\end{align}
Recall that $\mathbf y_v=\sum_{e\in E_v}\mathbf 1_{\{e\subseteq \mc U(\mathbb I)\}}$ which is notably not of the above form. However, $\mathbf y_v$ is naturally expressed as a linear combination of indicators of the form $1_{\{S\subseteq \mathbb I\}}$ via inclusion-exclusion. For this we need a bit of notation. For a set $S\subset V$, we let
\begin{align}\label{eq:cTdef}
\mc T(S) 
= 
\bigcup_{u\in S}E_u
\end{align}
and for $F\subseteq \mc T(S)$, let $V(F)=\bigcup_{f\in F}f$. By inclusion-exclusion we then have
\[
\mathbf 1_{\{e\subseteq \mc U(\mathbb I)\}} 
=
\sum_{j\geq 0} (-1)^j
\sum_{F \in \binom{\cT(e)}{j}} 
\mathbf 1_{\{V(F) \subseteq \mathbb I\}}\, .
\]
This suggests the following $(W\cap \mathbb{I})$-measurable estimator for $\mathbf y_v$:
\[
\sum_{e\in E_v}\sum_{j\geq 0} (-1)^j
\sum_{F \in \binom{\cT(e)}{j}}
q^{-|V(F)|}
\mathbf 1_{\{V(F) \subseteq \mathbb I \cap W\}}\, .
\]
In practice, we will truncate the alternating sum as this will allow us to more easily control fluctuations of the estimator. 

Before proceeding to the proof of Lemma~\ref{lem:y-estimator}, we will need the following bounds which follow from the fact that $\cH$ is $\eta$-locally tree-like (Definition~\ref{def:LTL}).

\begin{lemma}\label{lem:tree-estimator}
There is a constant $K$ and constants $C_j, C_{a,b}$ such that the following hold. For $S\subseteq V$ with $|S|\leq k$ and $j\geq 1$,
\begin{align}\label{eq:tree-estimator1}
\sum_{F\in \binom{\cT(S)}{j}} p^{|V(F)|}
\leq
\frac{K^j}{j!} + C_j \eta
\end{align}
and for $a,b\geq 1$ and $v\in V$
\begin{align}\label{eq:tree-estimator2}
\sum_{e,f\in E_v}
\sum_{\substack{F\in\binom{\mc T(e)}{a}, F'\in\binom{\mc T(f)}{b}\\
V(F)\cap V(F')\neq\emptyset}}
p^{|V(F)\cup V(F')|}
\leq
C_{a,b}\eta \Delta^2\, .
\end{align}
\end{lemma}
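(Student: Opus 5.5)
The plan is to prove both bounds by a structural decomposition of configurations $F$ according to the ``shape'' of the union $V(F)$. In the generic, tree-like case the contribution is $K^j/j!$; every degenerate configuration is charged, via \cref{itm:setup codegree bound}, \cref{itm:setup Delta ell UB} and \cref{itm:setup p is critical}, a factor $\eta$.

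For \eqref{eq:tree-estimator1}, I will first note that $|\cT(S)|\le k\Delta$ and each $f\in\cT(S)$ has $|f|=k-1$. If $F=\{f_1,\dots,f_j\}$ has pairwise disjoint members, then $p^{|V(F)|}=\prod_i p^{k-1}$. The sum over such $F$ is therefore at most $\binom{k\Delta}{j}p^{(k-1)j}\le (k\Delta p^{k-1})^j/j!\le (k c^{-(k-1)})^j/j!$, so $K=kc^{-(k-1)}$ works. For non-disjoint $F$, I will order the $f_i$ and count greedily. Suppose $f_i$ meets $V(f_1\cup\dots\cup f_{i-1})\cup S$ in a set $A$ of size $a$. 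Here $f_i\cup\{u\}$ is an edge for some $u\in S$, so $f_i\cup\{u\}$ contains $A\cup\{u\}$.

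There are three cases for $f_i$:
\begin{itemize}
\item If $u\notin A$ and $A=\emptyset$, then $f_i$ is counted by $\deg(u)\le\Delta$ with weight $p^{k-1}$, giving a contribution $O(1)$.
\item If $A\ne\emptyset$ beyond the forced overlap, the number of choices of $f_i$ is at most $O(1)\cdot\Delta_{a+1}$, with weight $p^{k-1-a}$. By \cref{itm:setup Delta ell UB} and $p\asymp\Delta^{-1/(k-1)}$, this gives $\eta\Delta^{(k-1-a)/(k-1)}p^{k-1-a}=O(\eta)$ when $2\le a+1\le k-1$.
\item If $a+1=k$, the edge is determined, so there are $O(1)$ choices with weight $1$. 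The extreme case $f_i\subseteq$ earlier vertices needs care: $f_i$ is then among $O(1)$ sets, and the configuration still has at least one overlap earlier or uses the codegree condition.
\end{itemize}
The configurations with some overlap of the first kind (the $A\neq\emptyset$ case), and those using two distinct roots $u,u'$ sharing a completion, are controlled by \cref{itm:setup codegree bound}: $\codeg(u,u')p^{k-1}\le\eta\Delta p^{k-1}=O(\eta)$. Since $j$ and $k$ are fixed, there are $O_j(1)$ overlap patterns, and every non-disjoint pattern picks up at least one factor $\eta$. This gives $C_j\eta$.

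For \eqref{eq:tree-estimator2}, I will use the same greedy exposure. First I choose $e\in E_v$, which contributes weight $\Delta$ (with no $p$-weight, since $e$'s vertices need not be in $V(F)$). Then I choose $F\in\binom{\cT(e)}{a}$, with total weight $O(1)$ by \eqref{eq:tree-estimator1}. Next I choose $f\in E_v$. If $f$ is disjoint from $e\cup V(F)$, I expose $F'$ member by member. The first member that meets $V(F)$ must satisfy one of the following:
\begin{itemize}
\item it shares at least one vertex with $V(F)$ beyond its root, which costs $O(\eta)$ as above;
\item its root in $f$ lies in $V(F)$ itself;
\item it is generated from $f$ through a vertex of $e\cup V(F)$.
\end{itemize}
Choosing $f$ freely costs $\Delta$, and the forced intersection costs a factor $\eta$. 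The key check is that the intersection $V(F)\cap V(F')\ne\emptyset$ forces at least one ``non-tree'' incidence. If the intersection occurs because $f$ itself meets $e\cup V(F)$, then the number of such $f$ is at most $O(1)\cdot\Delta_2$-type counts. That is at most $\codeg$, which is $\le\eta\Delta$ by \cref{itm:setup codegree bound}, or at most $\Delta_\ell\le\eta\Delta^{(k-\ell)/(k-1)}\le\eta\Delta$.

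The main obstacle is the bookkeeping. I need to show that every degenerate exposure step gains a factor $\eta$ without losing a power of $\Delta p^{k-1}$, particularly in steps where a new set is fully contained in already exposed vertices (weight $1$, but only $O(1)$ choices). I would handle this by a potential-function argument on (number of new vertices) versus (number of edges placed), as in standard hypertree counting.
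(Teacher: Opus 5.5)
\textbf{Part \eqref{eq:tree-estimator1}.} Your disjoint case is fine. The non-disjoint case, however, rests on the assertion that every non-disjoint pattern picks up a factor $\eta$, and with an unspecified greedy order this is false. Suppose $f_1,f_2$ are disjoint from each other and from $S$, and $f_3\subseteq f_1\cup f_2$ meets both. Exposing in the order $f_1,f_2,f_3$ costs $O(1)$ at every step: the last step is a full containment, with $O(1)$ choices and weight $1$. So no $\eta$ appears, and this is exactly the situation you defer to an unspecified potential-function argument.

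The missing idea is just a choice of order. Pick $e\neq f$ in $F$ with $e\cap f\neq\emptyset$ and expose $f_1=f$, $f_2=e$ first. Then $\ell_2=|e\setminus f|\in\{1,\dots,k-2\}$. Given $f_1$, there are $O(\Delta_{k-\ell_2})$ choices for $f_2$: choose $e\cap f$ and the root $u\in S$, then an edge containing the $(k-\ell_2)$-set $(e\cap f)\cup\{u\}$. Each choice has weight $p^{\ell_2}$, so this step is $O(\eta)$ by \textbf{A2} and \textbf{A3}. Every later step, including full containments with $\ell_i=0$, costs $O(1)$.

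Count new vertices relative to $V(f_1\cup\dots\cup f_{i-1})$ alone. Adding $S$ to the exposed set creates a spurious full-containment case $e\subseteq f\cup S$, where your weight bound $p^{k-1-a}=1$ throws away the factor $p^{|e\setminus f|}$ that actually supplies the gain. Also, \textbf{A1} plays no role here: a set lying in $E_u\cap E_{u'}$ is still a single member of $F$.

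\textbf{Part \eqref{eq:tree-estimator2}.} The gap is in the case that actually drives the estimate: a common member $g\in F\cap F'$ (more generally, a member $g$ of $F'$ with $g\subseteq V(F)$) when $f$ is disjoint from $e\cup V(F)$. Your second and third bullets are then vacuous. Your first bullet does not cost $O(\eta)$ ``as above'': adding $g$ is a full containment with $O(1)$ choices and weight $1$. Since $f$ was chosen freely at cost $\Delta$, your count for this case is only $O(\Delta^2)$.

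The constraint you must exploit is $g\in\mathcal{T}(e)\cap\mathcal{T}(f)$, i.e.\ $g\in E_u\cap E_w$ with $u\in e$ and $w\in f$. The paper fixes $e,f$ and sums over such $g$ first.
\begin{itemize}
\item If $u\neq w$, \textbf{A1} gives at most $\eta\Delta$ choices of $g$. With weight $p^{k-1}$ these contribute $O(\eta)$.
\item If $u=w$, then $u\in e\cap f$ and the contribution is $O(|e\cap f|)$. This is summable because $\sum_{e,f\in E_v}|e\cap f|\le (k-1)\Delta\Delta_2=O(\eta\Delta^2)$.
\end{itemize}
The remaining $a-1$ and $b-1$ members cost $O_{a,b}(1)$. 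The case of distinct intersecting members $h\in F$, $g\in F'$ is the ordered count from \eqref{eq:tree-estimator1} with $h$ and $g$ placed first.

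For $k\ge 3$ you could instead charge the shared member by counting $w$ among the at most $\Delta_{k-1}$ completions of $g$, then $f\ni w$ via $\Delta_2$. This gives $\Delta_{k-1}\Delta_2\le\eta^2\Delta$. For $k=2$ this gives nothing, and \textbf{A1} is indispensable.
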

\begin{proof}
We begin with~\eqref{eq:tree-estimator1}. Fix a sequence of integers $\vec \ell=(\ell_1, \ldots, \ell_j)$ where $0\leq \ell_i \leq k-1$ for all $i$. 
We consider the contribution to the sum~\eqref{eq:tree-estimator1} from those $F$ whose elements admit an ordering $(f_1, \ldots, f_j)$ such that $|f_i\backslash (f_1\cup\ldots\cup f_{i-1})|=\ell_i$ for $1\leq i \leq j$. Let $\cT_{\vec \ell}$ denote the collection of all such tuples $(f_1, \ldots, f_j)\in \cT(S)^j$. Note that if $F=\{f_1, \ldots, f_j\}$ and $(f_1, \ldots, f_j)\in \cT_{\vec \ell}$ 
 then $|V(F)|=\sum_{i=1}^j \ell_i$.

First we bound the size of $\cT_{\vec \ell}$. We build a tuple $(f_1, \ldots, f_j)\in \cT_{\vec \ell}$ iteratively. 
Note that given $f_{\leq i-1}:=f_1\cup\ldots\cup f_{i-1}$, there are 
\[
\binom{|f_{\leq i-1}|}{k-1-\ell_i}|S| \Delta_{k-\ell_i}
\]
choices for $f_{i}$. Indeed, the binomial coefficient counts the number of choices for $f_{i}\cap f_{\leq i-1}$, there are $|S|$ choices for the $u\in S$ for which $f_{i}\in E_u$, and there are then $\Delta_{k-\ell_i}$ choices for the remaining vertices of $f_{i}$.
Since $|f_{\leq i-1}|\leq ki$, we therefore have
\begin{align}\label{eq:contribution-ell}
|\cT_{\vec \ell}|\leq \prod_{i=1}^{j} \binom{ki}{k-1-\ell_i}|S|\Delta_{k-\ell_i}\, .
\end{align}
We now distinguish two cases. First suppose that $F\in \binom{\cT(S)}{j}$ such that for all distinct pairs $e,f\in F$ we have $e\cap f=\emptyset$. In this case any ordering $(f_1,\ldots,f_j)$ of the elements of $F$ belongs to $\cT_{\vec \ell}$ where $\vec \ell=(k-1, \ldots, k-1)$. Using~\eqref{eq:contribution-ell}, we see that the contribution to the sum in~\eqref{eq:tree-estimator1} from such $F$ is at most
\[
\frac{1}{j!}\prod_{i=1}^{j} |S|\Delta p^{k-1} \leq \frac{K^j}{j!}
\]
for some constant $K>0$ where the factor of $1/j!$ accounts for all of the possible orderings of the elements $F$ and we recall that $|S|\leq k$ and $\Delta p^{k-1}=O(1)$.

Suppose instead that $F$ contains $e,f$ such that $e\cap f\neq \emptyset$ and $e\neq f$. In this case let $(f_1,\ldots, f_j)$ be an ordering of the elements of $F$ where $f_1=f, f_2=e$. In this case $(f_1,\ldots, f_j)\in \cT_{\vec \ell}$ for some $\vec \ell$ satisfying $1\leq\ell_2\leq k-2$. Again using \eqref{eq:contribution-ell}, we see that the contribution from such $F$ to the sum in ~\eqref{eq:tree-estimator1} is at most
\[
\sum_{\vec \ell:1\leq\ell_2\leq k-2}
 \prod_{i=1}^{j} \binom{ki}{k-1-\ell_i}|S|\Delta_{k-\ell_i}p^{\ell_i} \leq C_j \eta
\]
for some constant $C_j>0$. Indeed there are at most $k^j$ choices for $\vec\ell$, $\Delta_{k-\ell_i}p^{\ell_i}=O(1)$ for all $i$ and $\Delta_{k-\ell_2}p^{\ell_2}= O(\eta)$ since $1\leq\ell_2\leq k-2$. This establishes~\eqref{eq:tree-estimator1}.

We now prove~\eqref{eq:tree-estimator2}. Fix $e,f\in E_v$. If $F\in\binom{\mc T(e)}{a}, F'\in\binom{\mc T(f)}{b}$ such that 
$V(F)\cap V(F')\neq\emptyset$ then there exists $h\in F, g\in F'$ such that $h\cap g\neq \emptyset$. We again distinguish two cases. If $h\neq g$, then by the same sequential counting argument as above, the contribution to the inner sum of~\eqref{eq:tree-estimator2} from such $F, F'$ is at most $O_{a,b}(\eta)$.

Suppose then that $h=g$. In this case, there is $g\in E_u \cap E_w$ where $u\in e, w\in f$. If $u\neq w$ there are at most $|E_u \cap E_w|\leq \eta \Delta$ choices for $g$ by the codegree bound. If $u=w$ there are at most $|E_u|\leq \Delta$ choices for $g$. It follows that 
\begin{align}\label{eq:g-inter}
\sum_{g\in \cT(e)\cap\cT(f)}p^{|g|}\leq k^2\eta \Delta p^{k-1} + |e\cap f|\Delta p^{k-1} = O(\eta + |e\cap f|)
\end{align}
where the factor of $k^2$ accounts for the number of choices of $u\in e, w\in f$ with $u\neq w$ and $|e\cap f|$ accounts for the number of choices with $u=w$. Now given $g \in \cT(e)\cap\cT(f)$, we  consider all of the ways of choosing the remaining $a-1$ elements of $F$ and $b-1$ elements of $F'$. Arguing as in the proof of~\eqref{eq:tree-estimator1} we have
\[
\sum_{\substack{F\in\binom{\mc T(e)}{a}, F'\in\binom{\mc T(f)}{b}\\
g\in F\cap F'}}
p^{|V(F)\cup V(F')|-|g|}
=
O_{a,b}(1)\, .
\]
Combined with~\eqref{eq:g-inter} and our analysis of the case where $h\neq g$, we conclude that
\[
\sum_{\substack{F\in\binom{\mc T(e)}{a}, F'\in\binom{\mc T(f)}{b}\\
V(F)\cap V(F')\neq\emptyset}}
p^{|V(F)\cup V(F')|}
=
O_{a,b}(\eta + |e\cap f|)\, .
\]
Finally summing over $e,f\in E_v$ and noting that 
\[
\sum_{e,f\in E_v}|e\cap f|
=
\sum_{e\in E_v}\sum_{u\in e}
|\{f\in E_v:u\in f\}|
\leq (k-1)\Delta\Delta_2
=O(\eta\Delta^2),
\]
completes the proof.
\end{proof}

\begin{proof}[Proof of Lemma~\ref{lem:y-estimator}]
First note that it suffices to construct a $(W\cap \mathbb{I})$-measurable random variable $\widetilde{\mathbf y}_v$ satisfying 
$
p^{k-1}\E |\widetilde{\mathbf y}_v- {\mathbf y}_v|=o(1)
$
since, recalling that $\hat{\mathbf y}_v=\E[{\mathbf y}_v \mid W\cap \mathbb I]$, we then have
\begin{align*}
p^{k-1}\E |\hat{\mathbf y}_v- \widetilde{\mathbf y}_v|
=
p^{k-1}\E \left|
\E[{\mathbf y}_v-\widetilde {\mathbf y}_v\mid W\cap \mathbb I]
\right| 
\leq 
p^{k-1}\E |{\mathbf y}_v- \widetilde{\mathbf y}_v|
\end{align*}
To this end we fix an integer $L\geq 0$ and define,
\begin{align}\label{eq:trunc-estimator}
\widetilde{\mathbf y}_v
=
\sum_{e\in E_v}\sum_{j= 0}^L (-1)^j
\sum_{F \in \binom{\cT(e)}{j}}
q^{-|V(F)|}
\mathbf 1_{\{V(F) \subseteq \mathbb I \cap W\}}\, .
\end{align}
By the triangle inequality and Cauchy-Schwarz we have
\begin{align}
\E |\widetilde{\mathbf y}_v- {\mathbf y}_v|
&\leq
\E |\widetilde{\mathbf y}_v- \E_W[\widetilde{\mathbf y}_v \mid \mathbb I]|
+
\E |{\mathbf y}_v- \E_W[\widetilde{\mathbf y}_v \mid \mathbb I]|\nonumber\\
&\leq
(\E\var_W(\widetilde{\mathbf y}_v \mid \mathbb I))^{1/2}
+
\E |{\mathbf y}_v- \E_W[\widetilde{\mathbf y}_v \mid \mathbb I]|\label{eq:Var-W}
\end{align}
We bound these terms separately starting with the rightmost term. 
Note that by~\eqref{eq:unbiased} we have
\[
\E_W[\widetilde{\mathbf y}_v \mid \mathbb I]
=
\sum_{e\in E_v}\sum_{j= 0}^L (-1)^j
\sum_{F \in \binom{\cT(e)}{j}}
\mathbf 1_{\{V(F) \subseteq \mathbb I \}}\, .
\]
Using the Bonferroni inequality
\[
\left|
\sum_{j= 0}^L (-1)^j
\sum_{F \in \binom{\cT(e)}{j}}\mathbf 1_{\{V(F) \subseteq \mathbb I \}}
-
\mathbf 1_{\{e\subseteq \mc U(\mathbb I)\}}
\right|
\leq 
\sum_{F \in \binom{\cT(e)}{L+1}}\mathbf 1_{\{V(F) \subseteq \mathbb I \}}
\]
we conclude that
\begin{align*}
\E\left|
\mathbf y_v-
\E_W[\widetilde{\mathbf y}_v \mid \mathbb I]
\right|
\leq 
\E
\left[
\sum_{e\in E_v}
\sum_{F \in \binom{\cT(e)}{L+1}}\mathbf 1_{\{V(F) \subseteq \mathbb I \}}
\right]
&\leq 
\sum_{e\in E_v}
\sum_{F \in \binom{\cT(e)}{L+1}} p^{|V(F)|}\\
&\leq 
\left(\frac{K^{L+1}}{(L+1)!}
+
C_{L+1}\eta
\right)\Delta\\
&=o(\Delta)
\end{align*}
where for the second inequality we used \cref{obs:spatial-markov}, for the penultimate inequality we used~\eqref{eq:tree-estimator1}, and for the final inequality we recall that $\eta\to0$ and we take $L\to\infty$ sufficiently slowly. 

We now bound the variance term in~\eqref{eq:Var-W}. Returning to~\eqref{eq:trunc-estimator} note that $\var_W(\widetilde{\mathbf y}_v \mid \mathbb I)$ is a sum of covariances of the form
\begin{align*}
\textrm{Cov}_W
&\left(q^{-|V(F)|}\mathbf 1_{\{V(F) \subseteq \mathbb I \cap W\}},
q^{-|V(F')|}\mathbf 1_{\{V(F') \subseteq \mathbb I \cap W\}}
\mid \mathbb I\right)\\
&=
\left(
q^{-|V(F)\cap V(F')|}
-
1
\right)
\mathbf 1_{\{V(F)\cup V(F')\subseteq \mathbb I\}}
\end{align*}
(along with some sign) where for the equality we used~\eqref{eq:unbiased}. Note that this is non-zero only if $V(F)\cap V(F')\neq\emptyset$. Using \cref{obs:spatial-markov} and Lemma~\ref{lem:tree-estimator}, we conclude that
\begin{align*}
\E\var_W(\widetilde{\mathbf y}_v \mid \mathbb I)
&\leq
\sum_{e,f\in E_v}
\sum_{1\leq i,j\leq L}
\sum_{\substack{F\in\binom{\mc T(e)}{i}, F'\in\binom{\mc T(f)}{j}\\
V(F)\cap V(F')\neq\emptyset}}
q^{-|V(F)\cap V(F')|}
p^{|V(F)\cup V(F')|}\\
&\leq
\Delta^2q^{-(k-1)L}\eta \sum_{i,j=1}^LC_{i,j}
=o(\Delta^2)
\end{align*}
where the final inequality holds by taking $q\to0$ and $L\to \infty$ sufficiently slowly. The result now follows from~\eqref{eq:Var-W}, recalling that $p^{k-1}\Delta=O(1)$.
\end{proof}

We now turn our attention to proving~Lemma~\ref{lem:availability-target} which establishes a Poisson approximation for $u_v(t)$. To establish the Poisson approximation we use the method of Chen and Stein, first developed by Stein~\cite{stein1972bound} for normal approximations and then developed by Chen~\cite{chen1975poisson} in the Poisson setting. Our presentation of the method is inspired by that of~\cite{arratia1989two}.

\subsection{The Chen--Stein method for Poisson approximation}
In order to prove~Lemma~\ref{lem:availability-target}, we make use of the Stein equation 
\begin{align}\label{eq:stein-eqn}
t f(t,m+1)-m f(t,m)
=
\mathbf 1_{\{m=0\}}-e^{-t}
\end{align}
where 
\begin{align}\label{eq:stein-f}
f(t,m):=\int_0^1 s^{m-1}e^{-ts}\,ds\, ,
\end{align}
for $t\geq 0, m\geq 1$ (and we define $f(t,0)=0$).
The identity~\eqref{eq:stein-eqn} is easily verified using integration by parts. The precise form of $f$ given in~\eqref{eq:stein-f} is not important in our proofs, all we need are  the following elementary bounds:
\begin{align}\label{eq:stein-bounds}
\|f\|_\infty\le1,\qquad
|f(t,m)-f(t,m')|\le|m-m'|,\qquad
|f(t,m)-f(t',m)|\le |t-t'|.
\end{align}

Let $\cT\subseteq \binom{V}{k-1}$ be a family of $(k-1)$-sets. Let
\[
\xi_e:=\mathbf 1_{\{e\subseteq \mb I\}},\qquad
a_e:=\mathbf 1_{\{e\subseteq \mc U(\mb I)\}},\qquad
b_e:=\mathbf 1_{\{\mb I\cup e\in\mathcal I(\mc H)\}},
\]
and define
\begin{align}\label{eq:XY-def}
X:=\sum_{e\in\cT}\xi_e,\qquad
Y:=\sum_{e\in\cT}a_e.
\end{align}
Given a random variable $U=U(\mb I)$, let $U^e$ denote the random variable given by $U^e(\mb I)=U(\mb I\cup e)$ if $\mb I\cup e\in\cI(\cH)$ and $U^e(\mb I)=0$ otherwise.

The following lemma is the first step in establishing the Poisson heuristic.

\begin{lemma}\label{lem:stein-app}
Let $0\le G=G(\mb I)\le1$.  Then
\begin{align*}
\Big|
\mathbb E\left[G\mathbf 1_{\{X=0\}}\right]
&-
\mathbb E\left[Ge^{-p^{k-1}Y}\right]
\Big|\\
&\le
p^{k-1}\sum_{e\in\cT}
\E\left[
       |a_e-b_e|
       +b_e|G^e-G|
       + b_e|X^e-X-1|
       +p^{k-1}b_e|Y^e-Y|
\right].
\end{align*}
\end{lemma}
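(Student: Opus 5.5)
We use the Stein equation \eqref{eq:stein-eqn} with the random intensity $\lambda_0:=p^{k-1}Y$ and evaluate at $m=X$. Multiplying by $G$ and taking expectations gives
\[
\E\left[G\mathbf 1_{\{X=0\}}\right]-\E\left[Ge^{-p^{k-1}Y}\right]
=
\E\left[G\,p^{k-1}Y f(p^{k-1}Y,X+1)\right]-\E\left[G X f(p^{k-1}Y,X)\right].
\]
So the task is to show that the two terms on the right nearly cancel. We expand each as a sum over $e\in\cT$, writing $Y=\sum_e a_e$ and $X=\sum_e \xi_e$.

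\textbf{The $X$ term.} For each $e$ we apply the spatial Markov property (\cref{obs:spatial-markov}) with $R=e$ and $F(\mb I)=G(\mb I)f(p^{k-1}Y(\mb I),X(\mb I))$. Since $|e|=k-1$, this gives
\[
\E\left[\xi_e G f(p^{k-1}Y,X)\right]=p^{k-1}\E\left[b_e\,G^e f(p^{k-1}Y^e,X^e)\right].
\]
Here we use the convention that $U^e$ equals $U(\mb I\cup e)$ on $\{b_e=1\}$; the factor $b_e$ kills the other case.

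\textbf{The $Y$ term.} Its summand is $p^{k-1}\E[a_e G f(p^{k-1}Y,X+1)]$. We first replace $a_e$ by $b_e$. Since $\|f\|_\infty\le1$ and $0\le G\le 1$, this costs at most $p^{k-1}\E|a_e-b_e|$.

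\textbf{Comparing summands.} It remains to bound, for each $e$,
\[
p^{k-1}\E\left[b_e\left|G f(p^{k-1}Y,X+1)-G^e f(p^{k-1}Y^e,X^e)\right|\right].
\]
We telescope in three steps.
\begin{enumerate}[(i)]
\item Replace $G$ by $G^e$. Using $\|f\|_\infty\le 1$, this costs $b_e|G-G^e|$.
\item Replace $X+1$ by $X^e$ in the second argument of $f$. By the Lipschitz bound in $m$ from \eqref{eq:stein-bounds} and $|G^e|\le1$, this costs $b_e|X^e-X-1|$.
\item Replace $p^{k-1}Y$ by $p^{k-1}Y^e$ in the first argument. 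By the Lipschitz bound in $t$, this costs $b_e\,p^{k-1}|Y^e-Y|$.
\end{enumerate}
Summing over $e\in\cT$ and multiplying by the prefactor $p^{k-1}$ gives exactly the claimed bound.

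\textbf{Points to check.}
\begin{itemize}
\item The Stein identity requires the first argument $t\ge0$ and uses $f(t,0)=0$. These are harmless because $X,Y\ge 0$ are integers.
\item On $\{b_e=1\}$, $G^e$, $X^e$ and $Y^e$ are genuinely evaluated at $\mb I\cup e$, so the Lipschitz bounds apply pointwise.
\item $f$ is taken at integer $m\ge0$, with $m=0$ allowed in the $t$-Lipschitz step.
\end{itemize}
The only real obstacle is bookkeeping: applying \cref{obs:spatial-markov} to the composite function $F=G\cdot f(p^{k-1}Y,X)$ and correctly identifying $F(\mb I\cup e)$ with $G^e f(p^{k-1}Y^e,X^e)$. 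No probabilistic estimate is needed at this stage; bounding the error terms is deferred to later lemmas.
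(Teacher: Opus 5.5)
Your proposal is correct and is essentially the paper's proof. Like the paper, you multiply the Stein equation by $G$ at $t=p^{k-1}Y$, $m=X$, and rewrite the $X$-term via \cref{obs:spatial-markov} with $R=e$. You then bound the difference of summands by swapping $a_e\to b_e$, then $G\to G^e$, then the arguments of $f$, which is exactly the paper's three-term product identity followed by \eqref{eq:stein-bounds}.
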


\begin{proof}
We multiply the Stein equation~\eqref{eq:stein-eqn} by $G$, set $t=p^{k-1}Y$, $m=X$, and take an expectation to obtain
\begin{align}\label{eq:stein-avg}
\E \left[G \mathbf 1_{\{X=0\}}\right] 
-
\E \left[Ge^{-p^{k-1}Y}\right]
=
\E \left[ Gp^{k-1} Y  f(p^{k-1}Y, X+1) \right]
-
\E \left[GX f(p^{k-1} Y, X)\right].
\end{align}
Recalling the definition of $Y$ at~\eqref{eq:XY-def} we have 
\[
\E \left[ Gp^{k-1} Y  f(p^{k-1}Y, X+1) \right]
=
p^{k-1} \sum_{e\in \cT}
\E \left[a_e G  f(p^{k-1}Y, X+1) \right]\, .
\]
For the second term in~\eqref{eq:stein-avg} we use the definition of $X$ and Observation~\ref{obs:spatial-markov} to obtain
\[
\E \left[GX f(p^{k-1} Y, X)\right]
=
\sum_{e\in \cT}
\E \left[G \xi_e f(p^{k-1} Y, X)\right]
=
p^{k-1}
\sum_{e\in \cT}
\E \left[b_e G^e  f(p^{k-1} Y^e, X^e)\right]
\]
Subtracting these two expressions, using the identity
\[
x_1 y_1 z_1-x_2y_2z_2
= 
y_1z_1(x_1-x_2)
+
x_2z_1(y_1-y_2)
+
x_2y_2(z_1-z_2)
\]
and the fact that $G, f$ are bounded by 1, we obtain
\begin{align*}
\Big|
\E \left[G \mathbf 1_{\{X=0\}}\right] 
&-
\E \left[Ge^{-p^{k-1}Y}\right]
\Big|\\
&\leq
p^{k-1}
\sum_{e\in \cT}
\E\left[
       |a_e-b_e|
       +b_e|G^e-G|
       + b_e|f(p^{k-1}Y, X+1)- f(p^{k-1} Y^e, X^e)|
\right]
\end{align*}
To complete the proof we use the properties~\eqref{eq:stein-bounds}.
\end{proof}

In the special case where $\cT\subseteq \cT(U)$ (as defined at~\eqref{eq:cTdef}) for $|U|\leq k$ we develop this bound further by bounding all terms on the RHS of the inequality in Lemma~\ref{lem:stein-app} except the change in $G$. 

We will use the following lemma in the proof of both of our remaining lemmas, Lemma~\ref{lem:fixedpointrewrite} and Lemma~\ref{lem:availability-target}.

\begin{lemma}\label{lem:stein-app2}
There exists a constant $C>0$ such that the following holds. Let $U\subseteq V$ with $|U|\leq k$ and let $\cT\subseteq \cT(U)$. Let $0\leq G=G(\mb I)\leq 1$. With $X,Y$ as in~\eqref{eq:XY-def} we have 
\[
\left|
\mathbb E\left[G\mathbf 1_{\{X=0\}}\right]
-
\mathbb E\left[Ge^{-p^{k-1}Y}\right]
\right|
\leq C\eta
+ 
p^{k-1}\sum_{e\in\cT}\E[b_e|G^e-G|]\, .
\]
\end{lemma}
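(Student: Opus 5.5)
Starting from Lemma~\ref{lem:stein-app}, it remains to show that
\[
p^{k-1}\sum_{e\in\cT}\E\big[|a_e-b_e| + b_e|X^e-X-1| + p^{k-1}b_e|Y^e-Y|\big]=O(\eta).
\]
Each of the three terms is handled by converting the event into the occupation of some small vertex set. \cref{obs:spatial-markov} then gives the probability bound $p^{|\cdot|}$, and the sum is controlled by the counting bounds of Lemma~\ref{lem:tree-estimator}. Since $\cT\subseteq\cT(U)$ with $|U|\le k$, we have $|\cT|\le k\Delta$ and hence $p^{k-1}|\cT|=O(1)$. So it suffices to show that each summand is $O(\eta)$ on average, or to bound the double sums directly.

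\textbf{Term $|a_e-b_e|$.} Since $b_e\le a_e$, the difference is the indicator that every vertex of $e$ is individually unblocked but $\mb I\cup e$ contains an edge. Such an edge $g$ meets $e$ in at least two vertices, and $g\setminus e\subseteq \mb I$. The number of $g$ with $|g\cap e|=\ell\ge2$ is at most $\binom{k-1}{\ell}\Delta_\ell$. Its contribution is therefore at most $p^{k-\ell}\Delta_\ell\le \eta\,(p\Delta^{1/(k-1)})^{k-\ell}=O(\eta)$, using \cref{itm:setup Delta ell UB} and \cref{itm:setup p is critical}. Summing over $e$ with the prefactor $p^{k-1}$ gives $O(\eta)$.

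\textbf{Term $|X^e-X-1|$.} On $b_e=1$, the count $X^e$ counts the $f\in\cT$ with $f\subseteq \mb I\cup e$. The set $e$ itself contributes the $+1$, and every $f\subseteq\mb I$ is counted in both $X$ and $X^e$. The difference is therefore the number of $f\ne e$ in $\cT$ with $f\cap e\ne\emptyset$ and $f\setminus e\subseteq \mb I$. Taking expectations with \cref{obs:spatial-markov} gives
\[
p^{k-1}\sum_{e\in\cT}\sum_{f\ne e,\ f\cap e\neq\emptyset}p^{|f\setminus e|}\, .
\]
This is exactly the overlapping-pairs count from the proof of \eqref{eq:tree-estimator1} with $j=2$: after writing $p^{k-1}p^{|f\setminus e|}=p^{|e\cup f|}$, it is bounded by $C_2\eta$, via the $h\ne g$ case using \cref{itm:setup Delta ell UB} and the codegree bound \cref{itm:setup codegree bound}.

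\textbf{Term $|Y^e-Y|$.} On $b_e=1$, adding $e$ only blocks vertices, so $Y-Y^e\ge0$. This difference counts the $f\in\cT$ that are fully unblocked by $\mb I$ but not by $\mb I\cup e$. For each such $f$ there is a vertex $w\in f$ and a completion $g\in E_w$ with $g\cap e\neq\emptyset$ and $g\setminus e\subseteq\mb I$. The expectation is therefore at most
\[
\sum_{f\in\cT}\sum_{w\in f}\sum_{g\in E_w,\, g\cap e\ne\emptyset}p^{|g\setminus e|}.
\]
We sum over $e$ and include the prefactor $p^{2(k-1)}$. Pairs with $f=e$ give $O(1)$ per $e$ times $p^{k-1}$ times at most $\Delta_2 p^{k-2}$-type terms. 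In general, when $g$ meets $e$ in $\ell\ge1$ vertices, the count is $\Delta p^{k-1}\cdot p^{k-1}\Delta_{\ell+1}p^{k-1-\ell}$ times $O(1)$ factors, which is $O(\eta)$ by \cref{itm:setup Delta ell UB}. The case $\ell=k-1$, i.e.\ $g=e$, makes $w$ a vertex with $e\in E_w$. It is bounded by $p^{2(k-1)}\sum_e\#\{f\in\cT: f\cap \{w:e\in E_w\}\ne\emptyset\}$, which is controlled because each vertex $w$ lies in at most $\Delta_2$ sets of $\cT(U)$ together with a fixed $u\in U$ (or $w\in U$). This again gives $O(\eta)$ after using \cref{itm:setup codegree bound}.

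I expect this third term to be the main obstacle. It requires a careful case analysis of the configuration $(e,f,w,g)$ to confirm that every case gains a factor $\eta$ rather than only $O(1)$, especially the degenerate cases where $g=e$ or where $f$ and $e$ share a vertex of $U$. Each case reduces to a codegree or $\Delta_\ell$ bound exactly as in Lemma~\ref{lem:tree-estimator}.
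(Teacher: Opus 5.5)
Your route is the same as the paper's. You apply \cref{lem:stein-app} and bound each error term using \cref{obs:spatial-markov} together with counting via \cref{itm:setup codegree bound,itm:setup Delta ell UB,itm:setup p is critical}. The $|a_e-b_e|$ term is handled exactly as in the paper. Bounding the overlapping part of the $X$ term by the $j=2$ overlapping-pairs estimate from the proof of \eqref{eq:tree-estimator1} is also fine. However, two contributions do not have the form you claim. Both can be brought down to $C\eta$ only through a fact you never establish, namely $p=O(\eta)$.

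\textbf{The case $e\subseteq\mb I$ in the $X$ term.} Your description of $X^e-X-1$ is wrong on the event $\{e\subseteq\mb I\}$. There $b_e=1$ and $e$ is already counted in $X$, so $X^e=X$ and $b_e|X^e-X-1|=1$, whereas your overlap count can vanish. This adds $p^{k-1}\sum_{e\in\cT}\P(e\subseteq\mb I)\le p^{2(k-1)}|\cT|=O(p^{k-1})$.

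\textbf{The case $k=2$ in the $Y$ term.} It is false that every configuration gains a factor $\eta$. For $k=2$ your $\Delta_2$-count gives only $O(1)$: $\Delta_2=1$, but up to $\Delta_1=\Delta$ vertices $w$ satisfy $e\in E_w$. The codegree bound repairs this when $e=\{z\}$ with $z\notin U$. It cannot help when $e=\{u\}$ with $u\in U$. This case is allowed, since $U$ need not be independent: take $U=\{u,u'\}$ with $\{u,u'\}\in E$ and $\cT=\cT(U)$. Inserting $u$ then blocks every currently unblocked neighbour of $u$ in $\cT$, so $p\,\E[b_e|Y^e-Y|]$ can be of order $p\Delta=\Theta(1)$. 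This is the term $\mathbf 1_{\{k=2,\ e=\{u\}\}}$ in \eqref{eq:y-to-ye-general}. There are at most $|U|$ such $e$, so after the outer factor $p^{k-1}=p$ they contribute $O(p)$.

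\textbf{The missing fact $p=O(\eta)$.} For $k\ge3$, \cref{itm:setup Delta ell UB} with $\ell=k-1$, together with $\Delta_{k-1}\ge1$, gives $\eta\Delta^{1/(k-1)}\ge1$. For $k=2$, two neighbours of a maximum-degree vertex have codegree at least $1$, so \cref{itm:setup codegree bound} gives $\eta\Delta\ge1$. In both cases \cref{itm:setup p is critical} yields $p\le\eta/c$, hence also $p^{k-1}=O(\eta)$.

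Separately, your count for $1\le\ell\le k-2$ drops a factor $\Delta$. The correct order is $(\Delta p^{k-1})^2\Delta_{\ell+1}p^{k-1-\ell}$, which is still $O(\eta)$. With these repairs your argument coincides with the paper's.
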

\begin{proof}
We apply \cref{lem:stein-app} and look to bound the terms on the RHS of the inequality in the statement of that lemma.
First note that $b_e\leq a_e$ by definition. If $a_e(\mb I)=1$ but $b_e(\mb I)=0$, some edge $h\in E$ satisfies $|h\cap e|\ge2$ and $h\setminus e\subseteq \mb I$.
By \cref{obs:spatial-markov}, we therefore have the bound
\[
\mathbb E|a_e-b_e|
\le
\sum_{j=2}^{k-1}\binom {k-1}{j}\Delta_jp^{k-j}
\stackrel{\text{\cref{itm:setup Delta ell UB}}}{=}O(\eta)
\]
uniformly in $e$.

Next note that inserting $e$ can complete another member $f$ of $\cT$ only when $e\cap f\neq\emptyset$ and
$f\setminus e\subseteq \mb I$. More precisely, for $e\in \cT$ such that $e\not\subseteq \mb I$, we have
\[
0\leq b_e(\mb I)(X^e(\mb I) - X(\mb I) -1) \leq |\{f\in \cT\setminus \{e\} : f\cap e\neq \emptyset, f\setminus e \subseteq \mb I\}|\, .
\] 
On the other hand, if $e\subseteq \mb I$, then $X^e(\mb I)=X(\mb I)$.
By \cref{obs:spatial-markov} we conclude that 
\begin{align*}
\E\left[b_e |X^e-X-1| \right]
&\leq 
 p^{k-1}+ \sum_{\substack{f\in\cT\setminus \{e\}\\\ f\cap e\ne\emptyset}}
 p^{k-1-|e\cap f|}\\
 &\leq
 p^{k-1} + |U| \sum_{j=1}^{k-2} \binom{k-1}{j} \Delta_{j+1} p^{k-1-j}
 \stackrel{\text{\cref{itm:setup Delta ell UB}}}{\leq}
p^{k-1}+ O(\eta)=O(\eta)\, .
\end{align*}
Indeed, after a $j$-set $R\subset e$ and $u\in U$ has been chosen, there are at most $\Delta_{j+1}$ choices for $f\in E_{u}$ containing $R$. For the final inequality we used that $p=O(\eta)$ which follows from \cref{itm:setup Delta ell UB}, \cref{itm:setup p is critical} if $k\geq 3$
and
\cref{itm:setup codegree bound}, \cref{itm:setup p is critical} if $k=2$.

Next, for a vertex $u$, inserting $e$ can remove $f\in E_u$ from $Y_u$ only
if some $w\in f$ is newly blocked.  In that case there is an
$h\in E_w$ with $h\cap e\ne \emptyset$ and $h\setminus e\subseteq \mb I$. In other words, 
\[
0
\leq 
b_e(\mb I)(y_u(\mb I) - y^e_u(\mb I))
\leq
|\{f\in E_u: \text {there exists } w\in f, h\in E_w \text{ s.t. } h\cap e\ne \emptyset, h\setminus e\subseteq \mb I\}|.
\]

If $k\geq 3$, it follows from \cref{obs:spatial-markov} that 
\begin{align}\label{eq:ye-y}
\E\left [b_ep^{k-1}|y_u-y^e_u| \right]
\leq k\Delta p^{k-1}
\sum_{j=1}^{k-2}\binom{k-1}{j}\Delta_{j+1}p^{k-1-j} + \Delta_{k-1} p^{k-1} \Delta_2
\stackrel{\text{\cref{itm:setup Delta ell UB,itm:setup p is critical}}}{\leq}
O(\eta)\, .
\end{align}
Indeed, after choosing a vertex $w$ in the neighborhood of $u$ and a $j$-set $R\subseteq e$ there are $\Delta_{j+1}$ choices for the edge $h \in E_w$. To deal with the case where $h=e$, we note that there are at most $\Delta_{k-1}$ vertices $w$ that complete an edge with $e$, and that each is contained in at most $\Delta_2$ edges in $E_u$.

If $k=2$, write $e=\{w\}$ and note that inserting $w$ can only remove a vertex from $Y_u$ if that vertex is a common neighbor of $u$ and $w$. Thus,
\begin{align*}
\E\left [b_ep^{k-1}|y_u-y^e_u| \right]
\leq 
p^{k-1}(\codeg_{\mc H}(u,w)\mathbf 1_{\{u\neq w\}} + \Delta \mathbf 1_{\{u= w\}})
\leq O(\eta)\cdot \mathbf 1_{\{u\neq w\}} + O(1)\cdot \mathbf 1_{\{u= w\}}
\, .
\end{align*}
Together with~\eqref{eq:ye-y}, this gives, for all $k\geq2$,
\begin{align}\label{eq:y-to-ye-general}
\E[b_ep^{k-1}|y_u-y_u^e|]
=
O\left(\eta+\mathbf 1_{\{k=2,\ e=\{u\}\}}\right).
\end{align}
Since $\cT\subseteq\cT(U)$ and inserting $e$ can only remove
unblocked vertices, we have
\[
0\leq b_e(Y-Y^e)
\leq \sum_{u\in U}b_e(y_u-y_u^e).
\]
It follows that
\begin{align}
p^{2(k-1)}
\sum_{e\in\cT}\E[b_e|Y^e-Y|]
&\leq 
p^{2(k-1)}\sum_{e\in \cT}
\E[ b_e\sum_{u\in U}|y_u^e-y_u|]\nonumber
\\
&
=
O\left(
\eta |U|p^{k-1}|\cT|
+
\mathbf 1_{\{k=2\}}p|U|
\right)
=
O(\eta)\, ,\label{eq:sum-yue-bound}
\end{align}
where for the final inequality we used that $p^{k-1}|\cT|\leq |U|\Delta p^{k-1}=O(1)$ and $p=O(\eta)$. The result follows. 
\end{proof}

\subsection{The availability estimate}
We now turn our attention to proving Lemma~\ref{lem:availability-target}. Before proceeding, we derive one quick corollary of Lemma~\ref{lem:stein-app} needed for the proof. 

\begin{corollary}\label{cor:stein-app}
There exists a constant $C>0$ such that the following holds. Let $v\in V$ and let $\cT\subseteq E_v$. Let $0\leq G\leq 1$ be $\cW$-measurable. With $X,Y$ as in~\eqref{eq:XY-def} we have 
\[
\left|
\mathbb E\left[G\mathbf 1_{\{X=0\}}\right]
-
\mathbb E\left[Ge^{-p^{k-1}Y}\right]
\right|
\leq C(\eta +q)\, .
\]    
\end{corollary}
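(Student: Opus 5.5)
The plan is to apply \cref{lem:stein-app2} with $U=\{v\}$; this is legitimate because $\cT\subseteq E_v=\cT(\{v\})$. The lemma reduces everything to the single term
\[
p^{k-1}\sum_{e\in\cT}\E\bigl[b_e|G^e-G|\bigr],
\]
and I will show that this term is $O(q)$. Since $G$ is $\cW$-measurable, we can write $G=g(W\cap\bI,W)$ with $W$ independent of $\bI$. By definition, $G^e(\bI)=g((\bI\cup e)\cap W,W)$ whenever $\bI\cup e\in\cI(\cH)$.

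The key observation is a containment of events. On $\{b_e=1\}$, the quantity $G^e-G$ can be nonzero only if $(\bI\cup e)\cap W\neq \bI\cap W$. This forces $e\cap W\neq\emptyset$. Using $0\le G\le 1$, we get
\[
b_e|G^e-G|\le \mathbf 1_{\{e\cap W\neq\emptyset\}},
\]
and so
\[
\E\bigl[b_e|G^e-G|\bigr]\le \Pr(e\cap W\neq\emptyset)\le (k-1)q,
\]
by a union bound over the $k-1$ vertices of $e$ and the independence of $W$ and $\bI$.

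Summing over $e\in\cT\subseteq E_v$ gives
\[
p^{k-1}\sum_{e\in\cT}\E\bigl[b_e|G^e-G|\bigr]\le (k-1)q\,p^{k-1}|E_v|\le (k-1)q\,\Delta p^{k-1}=O(q),
\]
where the last step uses \cref{itm:setup p is critical}. Combining this with the $C\eta$ term from \cref{lem:stein-app2} yields the bound $C(\eta+q)$.

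The only subtle point is how to interpret $G^e$. The random variable $G$ depends on both $\bI$ and $W$, and the operation $U\mapsto U^e$ modifies only the $\bI$ argument. I need to check that \cref{lem:stein-app} and \cref{lem:stein-app2} remain valid when $G$ carries this extra randomness. They do: condition on $W$, which is independent of $\bI$, and apply those lemmas to the function $G(\cdot,W)$ of $\bI$ alone. The spatial Markov property (\cref{obs:spatial-markov}) holds under this conditioning, so each bound holds pointwise in $W$. Averaging over $W$ then gives the stated inequality.
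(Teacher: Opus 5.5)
Your proposal is correct and matches the paper's proof essentially step for step: condition on $W$, apply \cref{lem:stein-app2} with $U=\{v\}$, bound $b_e|G^e-G|\le \mathbf 1_{\{e\cap W\neq\emptyset\}}$, and finish with a union bound and $p^{k-1}|\cT|\le p^{k-1}\Delta=O(1)$. Your union bound $(k-1)q$ is slightly sharper than the paper's $kq$, which makes no difference.
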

\begin{proof}
Condition on the window $W$ and apply Lemma~\ref{lem:stein-app2} with $U=\{v\}$. Since $G$ is $\cW$-measurable, $G^e(\mb I)$ and $G(\mb I)$ can only differ if $e\cap W\neq \emptyset$ i.e.
\[
b_e|G^e-G|\leq \mathbf 1_{\{e\cap W\neq \emptyset\}}\, .
\]
Taking expectations with respect to $W$, noting that $\P(e\cap W\neq \emptyset)\leq qk$ by a union bound, therefore gives 
\[
\left|
\mathbb E\left[G\mathbf 1_{\{X=0\}}\right]
-
\mathbb E\left[Ge^{-p^{k-1}Y}\right]
\right|
\leq C\eta + p^{k-1}|\cT|qk\, .
\]  
The result follows by noting that $kp^{k-1}|\cT|\leq kp^{k-1}\Delta=O(1)$.
\end{proof}

In order to set the stage for the proof of Lemma~\ref{lem:availability-target}, we recall some key definitions. Recall that $\vec{\alpha} = (\alpha_v)_{v\in V} \in [0,1]^V$ where the $\alpha_v$ are i.i.d.\ uniformly random values in $[0,1]$.

For $t\in [0,1]$, $\mathbb I_t=\{u\in \mathbb I: \alpha_u<t\}$,  
$
U_v(t)=\{v\in \mc U(\mathbb I_t) \},
$
and
$
u_v(t)
=
\P(U_v(t) \mid \cW)\, .
$
Recall also that $\mathbf y_v=\mathbf y_v(\mb I)=\sum_{e\in E_v} \mathbf 1_{\{e\subseteq \mc U(\mb I)\}}$.

Now let
\[
V_t=\{u\in V: \alpha_u<t\},\qquad
\cT_t=\{e\in E_v : e \subseteq  V_t\}\, .
\]
Define
\[
X_t=\sum_{e\in\cT_t}\mathbf 1_{\{e\subseteq \mb I\}},\qquad
Y_t=\sum_{e\in\cT_t} \mathbf 1_{\{e\subseteq \mc U(\mb I)\}}\ ,
\]
and note that $U_v(t)$ holds if and only if $X_t=0$. 
To prove Lemma~\ref{lem:availability-target}, we will apply Corollary~\ref{cor:stein-app} with these random variables. It will therefore be useful to first establish a comparison between $Y_t$ and $\mathbf y_v$.

\begin{lemma}\label{lem:Yt-to-bfyv}
We have 
\[
p^{k-1}\E|Y_t- t^{k-1}\mathbf y_v|=o(1)
\]
uniformly in $v\in V$ and $t\in [0,1]$.
\end{lemma}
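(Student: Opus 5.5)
We need $p^{k-1}\E|Y_t - t^{k-1}\mathbf y_v| = o(1)$. The key observation is that $\vec\alpha$ is independent of $\mb I$, and $Y_t=\sum_{e\in \mathbf Y_v}\mathbf 1_{\{e\subseteq V_t\}}$. So conditional on $\mb I$ (and hence on the set $\mathbf Y_v\subseteq E_v$), $Y_t$ is a sum of indicators of the events $\{e\subseteq V_t\}$, each with probability exactly $t^{k-1}$. Hence $\E[Y_t\mid \mb I]=t^{k-1}\mathbf y_v$. By Cauchy--Schwarz it then suffices to show that
\[
p^{2(k-1)}\E\,\var_{\vec\alpha}(Y_t\mid\mb I)=o(1),
\]
uniformly in $t$ and $v$.

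The conditional variance expands as a sum over ordered pairs $e,f\in \mathbf Y_v$ of
\[
\mathrm{Cov}\left(\mathbf 1_{\{e\subseteq V_t\}},\mathbf 1_{\{f\subseteq V_t\}}\right)=t^{2(k-1)}\left(t^{-|e\cap f|}-1\right).
\]
This covariance vanishes unless $e\cap f\neq\emptyset$, and it is at most $1$ in all cases. Bounding $\mathbf Y_v$ crudely by $E_v$, we get
\[
\var_{\vec\alpha}(Y_t\mid \mb I)\le \#\{(e,f)\in E_v^2: e\cap f\neq\emptyset\}\le \sum_{e,f\in E_v}|e\cap f|.
\]
The proof of Lemma~\ref{lem:tree-estimator} already bounds this last sum by $(k-1)\Delta\Delta_2$.

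For $k\ge3$, \cref{itm:setup Delta ell UB} gives $\Delta_2\le\eta\Delta^{(k-2)/(k-1)}\le \eta\Delta$. Thus the variance is $O(\eta\Delta^2)$, and after multiplying by $p^{2(k-1)}$ and using $p^{k-1}\Delta=O(1)$ from \cref{itm:setup p is critical}, we obtain $O(\eta)=o(1)$.

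The case $k=2$ needs separate handling, since then elements of $E_v$ are singletons. Two distinct singletons are disjoint, so only the diagonal pairs $e=f$ contribute. The variance is then at most $|E_v|\le\Delta$, which gives
\[
p^{2}\var\le p^2\Delta=O(p)=o(1).
\]
The same diagonal contribution also appears for $k\ge 3$; there it is $O(\Delta)$ and equally harmless.

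Combining these bounds,
\[
p^{k-1}\E|Y_t-t^{k-1}\mathbf y_v|\le \left(p^{2(k-1)}\E\,\var_{\vec\alpha}(Y_t\mid\mb I)\right)^{1/2}=O\left(\sqrt{\eta+p}\right)=o(1),
\]
uniformly in $v$ and $t$. I do not expect a real obstacle. The only point to check carefully is that conditioning on $\mb I$ leaves $\vec\alpha$ uniform, which holds because $\vec\alpha$ is independent of $(\mb I,W)$. This requires no window or $\cW$ conditioning, since the statement is unconditional.
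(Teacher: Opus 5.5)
Your proof is correct and is essentially the paper's argument: you compute $\E_{\vec\alpha}[Y_t\mid \mb I]=t^{k-1}\mathbf y_v$, bound $\var_{\vec\alpha}(Y_t\mid\mb I)$ by the number of intersecting pairs $e,f\in E_v$ (of order $\Delta\Delta_2\le\eta\Delta^2$), and finish with Cauchy--Schwarz. Your separate treatment of $k=2$, where only diagonal pairs contribute and you get $O(\sqrt p)$, is slightly more careful than the paper's one-line bound, which in that case tacitly uses $\eta\Delta\gtrsim 1$ (equivalently $p=O(\eta)$).
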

\begin{proof}
First observe that
\[
\E_{\vec \alpha}[Y_t \mid \mb I] = t^{k-1} \mathbf y_v\, .
\]
Moreover, since the events $e\subseteq V_t$ and $f\subseteq V_t$ are independent unless $e\cap f\neq \emptyset$, we have
\[
\var_{\vec\alpha}(Y_t\mid\mb I) 
\leq
\sum_{e,f \in E_v} \mathbf 1_{\{e\cap f\neq \emptyset\}}
\leq
k\Delta\Delta_2
\leq 
k\eta\Delta^2
\]
where for the penultimate inequality we note that there are at most $\Delta$ choices for $e$, then $k-1$ choices for $u\in e$ and $\Delta_2$ choices for $f\in E_v$ containing $u$.

By Cauchy-Schwarz it follows that
\[
p^{k-1} \E|Y_t- t^{k-1}\mathbf y_v|
\leq 
p^{k-1} \E_{\mb I}\left[\sqrt{\var_{\vec\alpha}(Y_t\mid\mb I)}\right]
=
O(\sqrt{\eta})\, .
\]
\end{proof}

Finally we turn to the proof of Lemma~\ref{lem:availability-target}.
\begin{proof}[Proof of Lemma~\ref{lem:availability-target}]
Let $0\leq G\leq 1$ be $\cW$-measurable. 
Conditioning on $\vec \alpha$ (thus fixing $\cT_t$) and applying Corollary~\ref{cor:stein-app} we have
\[
\left|
\mathbb E\left[G\mathbf 1_{U_v(t)}\mid \vec\alpha\right]
-
\mathbb E\left[Ge^{-p^{k-1}Y_t} \mid \vec\alpha\right]
\right|
\leq C(\eta +q)\, .
\] 
Taking expectations with respect to $\vec\alpha$ then shows that
\begin{align}\label{eq:cor-stein-app}
\left|
\mathbb E\left[G\mathbf 1_{U_v(t)}\right]
-
\mathbb E\left[Ge^{-p^{k-1}Y_t} \right]
\right|
\leq C(\eta +q)
\end{align}
Since $|e^{-x}-e^{-y}|\leq |x-y|$ for $x,y\geq 0$ we have
\[
\E|Ge^{-p^{k-1}Y_t}- Ge^{-p^{k-1} t^{k-1}\mathbf y_v}|
\leq 
p^{k-1} \E|Y_t- t^{k-1}\mathbf y_v|
=o(1)
\]
where for the final equality we used Lemma~\ref{lem:Yt-to-bfyv}.
Note also that 
\begin{align}\label{eq:y-to-haty}
\E|Ge^{-p^{k-1} t^{k-1}\mathbf y_v}- Ge^{-p^{k-1} t^{k-1}\hat{\mathbf y}_v}|
\leq 
p^{k-1} \E|\mathbf y_v- \hat{\mathbf y}_v|=o(1)
\end{align}
where for the final equality we used Lemma~\ref{lem:y-estimator}.

Returning to~\eqref{eq:cor-stein-app} we then have
\begin{align}\label{eq:D-prec}
\left|
\mathbb E\left[G\mathbf 1_{U_v(t)}\right]
-
\mathbb E\left[Ge^{-p^{k-1} t^{k-1}\hat{\mathbf y}_v} \right]
\right|
=o(1)
\end{align}
where the error is uniform in $G$.
Now, since $G$ is $\cW$-measurable we have
\[
\mathbb E\left[G\mathbf 1_{U_v(t)}\right]
= 
\mathbb E\left[G\,\E[\mathbf 1_{U_v(t)}\mid\cW]\right] 
= 
\mathbb E\left[Gu_v(t)\right]\, .
\]
Letting $D_v(t)= u_v(t)-e^{-p^{k-1} t^{k-1}\hat{\mathbf y}_v}$~\eqref{eq:D-prec} can then be written as $|\E [GD_v(t)]|=o(1)$. Taking $G=\mathbf 1_{\{D_v(t)>0\}}$ and $G=\mathbf 1_{\{D_v(t)<0\}}$ (both $\cW$-measurable), we conclude that
\[
\E|D_v(t)| 
=
\E\left|
u_v(t)-e^{-p^{k-1} t^{k-1}\hat{\mathbf y_v}}
\right|=o(1)\, .
\]
The result now follows from~\eqref{eq:y-to-haty} and the triangle inequality. 
\end{proof}

To complete the proof of Lemma~\ref{lem:Bethe FE upper bound} it remains only to prove Lemma~\ref{lem:fixedpointrewrite}.

\subsection{Approximate BP fixed points}

We begin by showing that given a small set of vertices $U$, the probability that all vertices $u\in U$ are unblocked can be approximated by an averaged product of the
local quantities $\exp(-p^{k-1}\mathbf y_u)$.
 The proof is another application of Poisson approximation given by Lemma~\ref{lem:stein-app2}.

\begin{lemma}\label{lem:availability product decomposition}
Let $\mb I \sim \mu_{\mc H,p}$, and let $U,S \subseteq V$ be disjoint vertex sets with $U \in \mc I( \mc H)$ and $|U|+|S| \leq k$. There holds, uniformly in $U,S$,
\[
\E \left[ \mathbf{1}_{U \subseteq \mc U(\mb I)} \exp \left( -p^{k-1}\sum_{s \in S} \mathbf y_s \right) \right]
=
\E\left[\exp\left(-p^{k-1} \sum_{u\in U \cup S}\mathbf{y}_u\right)\right] + o(1).
\]
\end{lemma}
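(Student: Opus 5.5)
We argue by induction on $|U|$, peeling off one vertex of $U$ at a time and applying Lemma~\ref{lem:stein-app2} at each step. If $U=\emptyset$ there is nothing to prove. Otherwise fix $u_0\in U$, let $U'=U\setminus\{u_0\}$ and $S'=S\cup\{u_0\}$, so that $|U'|+|S'|=|U|+|S|\leq k$. The induction hypothesis, applied to the pair $(U',S')$, gives
\[
\E\Bigl[\mathbf 1_{U'\subseteq\mc U(\mb I)}\exp\Bigl(-p^{k-1}\sum_{s\in S'}\mathbf y_s\Bigr)\Bigr]=\E\Bigl[\exp\Bigl(-p^{k-1}\sum_{w\in U\cup S}\mathbf y_w\Bigr)\Bigr]+o(1).
\]
It therefore suffices to show that we may replace $\mathbf 1_{u_0\in\mc U(\mb I)}$ by $e^{-p^{k-1}\mathbf y_{u_0}}$, at cost $o(1)$, inside an expectation weighted by $G:=\mathbf 1_{U'\subseteq\mc U(\mb I)}\exp(-p^{k-1}\sum_{s\in S}\mathbf y_s)\in[0,1]$.

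To make this replacement, apply Lemma~\ref{lem:stein-app2} with $\cT=E_{u_0}\subseteq\cT(U\cup S)$ and with $G$ as above. Then $X=\sum_{e\in E_{u_0}}\mathbf 1_{e\subseteq\mb I}$, so $\{X=0\}=\{u_0\in\mc U(\mb I)\}$, and $Y=\mathbf y_{u_0}$. The lemma gives
\[
\bigl|\E[G\mathbf 1_{u_0\in\mc U(\mb I)}]-\E[Ge^{-p^{k-1}\mathbf y_{u_0}}]\bigr|\leq C\eta+p^{k-1}\sum_{e\in E_{u_0}}\E[b_e|G^e-G|].
\]
The remaining task, which is the heart of the proof, is to bound this last sum by $o(1)$.

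To bound $|G^e-G|$, write $G=A\cdot B$ with $A=\mathbf 1_{U'\subseteq\mc U(\mb I)}$ and $B=\exp(-p^{k-1}\sum_{s\in S}\mathbf y_s)$. Then
\[
b_e|G^e-G|\leq b_e|A^e-A|+b_e\,p^{k-1}\sum_{s\in S}|\mathbf y_s^e-\mathbf y_s|.
\]

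\textbf{The $B$ part.} By \eqref{eq:y-to-ye-general} we have $\E[b_ep^{k-1}|\mathbf y_s^e-\mathbf y_s|]=O(\eta)$ whenever $e\neq\{s\}$. The exceptional case $k=2$, $e=\{s\}$ is a single term of $E_{u_0}$, so it contributes at most $p^{k-1}\cdot O(1)=O(p)=O(\eta)$. Multiplying by the outer factor $p^{k-1}|E_{u_0}|=O(1)$, the $B$ part contributes $O(\eta)$ in total.

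\textbf{The $A$ part.} Inserting $e$ can only block vertices, and on $b_e=1$ a vertex $w\in U'$ becomes newly blocked only if one of two things happens.
\begin{enumerate}
\item There is $h\in E_w$ with $h\cap e\neq\emptyset$, $h\neq e$ and $h\setminus e\subseteq\mb I$. By Observation~\ref{obs:spatial-markov} and the counting in the proof of Lemma~\ref{lem:stein-app2}, this has probability at most $\sum_j\binom{k-1}{j}\Delta_{j+1}p^{k-1-j}=O(\eta)$.
\item $h=e$, i.e.\ $e\in E_w\cap E_{u_0}$. Summing over $e\in E_{u_0}$, this case has at most $\codeg(w,u_0)\leq\eta\Delta$ terms, since $w\neq u_0$.
\end{enumerate}
Hence
\[
p^{k-1}\sum_{e\in E_{u_0}}\E[b_e|A^e-A|]\leq p^{k-1}\Delta\cdot k\cdot O(\eta)+k\,p^{k-1}\eta\Delta=O(\eta).
\]
This is where the hypotheses enter: $U\in\mc I(\mc H)$ together with the codegree bound \ref{itm:setup codegree bound} handles the case $h=e$, and \ref{itm:setup Delta ell UB} handles overlapping edges.

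The main obstacle is the bookkeeping in the $A$ part, in particular making sure that $e$ itself completing an edge with some $w\in U'$ is controlled by the codegree bound rather than by $\Delta$. For $k=2$ we must also check that the exceptional term $e=\{w\}$ with $w\in U'$ is harmless; this holds since a single $e$ contributes only $O(p)$. The errors are uniform in $U,S$, and there are at most $k$ induction steps, so the total error is $O(k\eta)=o(1)$.
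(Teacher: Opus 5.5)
Your proof is correct, but it is organized differently from the paper's. The paper applies \cref{lem:stein-app2} once, with $\cT=\bigcup_{u\in U}E_u$ and $G=\exp(-p^{k-1}\sum_{s\in S}\mathbf y_s)$. Then $\{X=0\}=\{U\subseteq\mc U(\mb I)\}$ directly, and the perturbation term $b_e|G^e-G|$ involves only the smooth exponential, which \eqref{eq:y-to-ye-general} already controls. The one extra ingredient is a deterministic comparison: since $Y$ counts a completion shared by several vertices of $U$ only once, one needs $0\le p^{k-1}\sum_{u\in U}\mathbf y_u-p^{k-1}Y\le p^{k-1}\sum_{u\neq u'}|E_u\cap E_{u'}|=O(\eta)$.

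You instead peel off one vertex at a time. Each application has $\cT=E_{u_0}$, so $Y=\mathbf y_{u_0}$ exactly and no overcounting arises. The cost is that your test function now carries the blocking indicator $\mathbf 1_{U'\subseteq\mc U(\mb I)}$, which forces the additional estimate on $b_e|A^e-A|$. The codegree bound enters both arguments at the same spot: completions $e\in E_w\cap E_{u_0}$ shared by two distinct vertices of $U$. In the paper they appear as overcounting in $Y$; in yours they appear as $e$ itself blocking $w$. The paper's route is shorter: one Stein step, no induction, and no new probabilistic estimate. Yours takes up to $k$ steps plus the indicator estimate, but each step is just the single-vertex case, which makes the mechanism transparent.

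One small correction: the hypothesis $U\in\mc I(\mc H)$ is not what handles the case $h=e$. There you only use $w\neq u_0$ together with the codegree bound. In fact neither your argument nor the paper's uses the independence of $U$ anywhere.
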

\begin{proof}
Let
\[
\cT:=\bigcup_{u\in U}E_u,\qquad
G:=\exp\left(-p^{k-1}\sum_{s\in S}y_s\right).
\]
Define $X,Y$ as in~\eqref{eq:XY-def} and observe that
 $U\subseteq \mc U(\mb I)$ if and only if $X=0$. We apply Lemma~\ref{lem:stein-app2} and observe that 
 \begin{align}\label{eq:Y-to-y}
0\le
p^{k-1}\sum_{u\in U}y_u-p^{k-1}Y
\le
p^{k-1}\sum_{\substack{u,u'\in U\\u\ne u'}}|E_u\cap E_{u'}|
\stackrel{\text{\cref{itm:setup codegree bound}}}{\leq} 
|U|^2 \eta p^{k-1}\Delta \stackrel{\text{\cref{itm:setup p is critical}}}{=} O(\eta)\, .
\end{align}
To complete the proof we note that 
\[
p^{k-1}\sum_{e\in \cT}
\E [b_e|G^e-G|] \leq p^{2(k-1)}\sum_{e\in \cT}
\E[ b_e\sum_{s\in S}|y_s^e-y_s|] =  O(\eta),
\]
where for the final equality we sum~\eqref{eq:y-to-ye-general} over $e\in \cT, s\in S$ and bound the resulting expression as in~\cref{eq:sum-yue-bound}.
\end{proof}

\begin{remark}\label{remark:empirical}
\cref{lem:availability product decomposition} shows in particular that $\P(v\in \mc U(\bb I))=\E[e^{-p^{k-1}\mathbf y_v}]+o(1)$ for all $v\in V$. Combined with the fact that $\P(v\in \bb I)= p \P(v\in \mc U(\bb I))$ we see that $\P(v\in \bb I)=\E[\mathbf x_v]+o(p)$ thus motivating the  name `empirical marginal' for $\mathbf x_v$.
\end{remark}

We are ready to prove \cref{lem:fixedpointrewrite} thereby completing the proof of \cref{lem:Bethe FE upper bound}.

\begin{proof}[Proof of \cref{lem:fixedpointrewrite}]
By the definitions of $\mc B$ and $\widetilde{\mc B}$,
\begin{align}
\widetilde{\mc B}_{\mc H,p}(\overline{\mathbf x})
-\mc B_{\mc H,p}(\overline{\mathbf x})
&=
\sum_{e\in E}\prod_{u\in e}\mathbf x_u
-\frac1k\sum_{v\in V}\mathbf x_v\log(p/\mathbf x_v)\nonumber\\
&=
\sum_{e\in E}\prod_{u\in e}\mathbf x_u
-\frac1k\sum_{v\in V}\mathbf x_vp^{k-1} \mathbf y_v \label{eq:final-B-Diff}
\end{align}
where for the final equality we recalled that $\mathbf x_v = pe^{-p^{k-1} \mathbf y_v}$. Recalling also that $\mathbf y_v=\sum_{e\in E_v}\mathbf 1_{\{e\subseteq \mc U(\mb I)\}}$ we have
\begin{align*}
p^{k-1}\E[\mathbf x_v \mathbf y_v]
=
p^k\sum_{e\in E_v} 
\E\left[
e^{-p^{k-1} \mathbf y_v}\mathbf 1_{\{e\subseteq \mc U(\mb I)\}}
\right]
&=
p^k\sum_{e\in E_v}\E\left[\exp\left(-p^{k-1} \sum_{u\in \{v\} \cup e}\mathbf{y}_u\right)\right] + o(p^k \Delta)\\
&=\sum_{e\in E_v}\E\left[\prod_{u\in \{v\} \cup e} \mathbf x_u\right]
+
o(p)
\end{align*}
where for the penultimate inequality we used Lemma~\ref{lem:availability product decomposition} with $U=e$ and $S=\{v\}$.
Summing over $v\in V$ yields
\[
p^{k-1}\sum_{v\in V}
\E[\mathbf x_v \mathbf y_v]
=
k
\sum_{e\in E}\E\left[\prod_{u\in  e} \mathbf x_u\right]
+
o(p n)\, .
\]
The result follows by returning to \eqref{eq:final-B-Diff} and taking expectations.
\end{proof}

\section{The lower bound in \cref{thm:Bethe exactness}}\label{sec:lower bound}

The main goal of this section is to prove the following.

\begin{lemma}\label{lem:Bethe FE lower bound}
    In the setup of \cref{thm:Bethe exactness}, with $\lambda=p/(1-p)$, it holds that
    \begin{equation}
    \log Z_{\mc H}(\lam) \geq \max_{x\in [0,1]^V} \mathcal{B}_{\mathcal{H},p}(x) + np - o(np).
    \end{equation}
\end{lemma}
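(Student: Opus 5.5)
The approach follows the heuristic of \cref{sec:overviewLB}. By \cref{obs:Bethe free energy properties} we fix a BP fixed point $x$ maximizing $\mc B_{\mc H,p}$, and by \cref{obs:Bethe equivalence at BPFP} it suffices to show $\log Z_{\mc H}(\lambda)\geq \widetilde{\mc B}_{\mc H,p}(x)+np-o(np)$. Let $I$ be the output of the BP-guided random greedy process of \cref{subsec:BP-guided}, with $r_v(t)=p(x_v/p)^{1-t^{k-1}}\in[x_v,p]$. By \cref{obs:Gibbs}, $\log Z_{\mc H}(\lambda)\geq H(I)+\log\lambda\,\E|I|$. Since conditioning reduces entropy, $H(I)\geq H(I\mid \vec\tau)$. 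Revealing vertices in clock order, the chain rule gives exactly
\[
H(I\mid\vec\tau)=\sum_{v}\int_0^1 a_v(t)h(r_v(t))\,dt,
\]
because the coin $\xi_v$ is independent of the past. We also have $\P(v\in I)=\int_0^1 a_v(t)r_v(t)\,dt$. Using $h(r)\geq r(\log(1/r)+1)-O(r^2)$ with $r=O(p)$ and $\log\lambda=\log p+O(p)$, the objective becomes
\[
\sum_v\int_0^1 a_v(t)r_v(t)\bigl(\log(p/r_v(t))+1\bigr)dt-o(np).
\]
Here $\log(p/r_v(t))=(1-t^{k-1})\log(p/x_v)=O(1)$, since $x_v=\Omega(p)$. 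It therefore suffices to prove the key estimate, uniformly in $v$ and $t$:
\[
a_v(t)=(x_v/p)^{t^{k-1}}+o(1).
\]
Substituting this gives $a_vr_v=x_v+o(p)$, and integrating in $t$ produces $\sum_v x_v\bigl(1+(1-1/k)\log(p/x_v)\bigr)=\widetilde{\mc B}_{\mc H,p}(x)+np$.

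The key estimate is the main obstacle. I would prove it by induction along a local exploration, in the style of a correlation-decay argument. First, define the idealized quantity $\phi_u(t)=\P(u\in I_{<t})$ and show the target $\phi_u(t)=t\,x_u+o(p)$. This follows from the availability estimate, since
\[
\tfrac{d}{dt}\phi_u=a_u(t)r_u(t)\approx x_u.
\]
Second, show that for $e\in E_v$ the events $\{w\in I_{<t}\}$, $w\in e$, are approximately independent. The probability that $e$ blocks $v$ is then $\approx t^{k-1}\prod_{w\in e}x_w$.

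Third, show via a Poisson approximation that the number of such blocking edges is approximately Poisson. I would use the Chen--Stein bounds of \cref{lem:stein-app}, or alternatively truncated inclusion--exclusion (Bonferroni) together with \cref{lem:tree-estimator}. Its mean is $t^{k-1}\sum_{e\in E_v}\prod_{w\in e}x_w=t^{k-1}\log(p/x_v)$, and this yields $a_v(t)\approx (x_v/p)^{t^{k-1}}$. The locally tree-like conditions \cref{itm:setup codegree bound,itm:setup Delta ell UB} control overlapping configurations at cost $O(\eta)$, exactly as in \cref{lem:stein-app2}.

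To make the approximate independence rigorous, I would use the fact that the process has finite speed. Since $\Delta p^{k-1}=O(1)$, the event that $v$ is unblocked at time $t$ depends, up to error $\epsilon$, only on marks within a bounded-depth ``history'' tree: the chains of edges with decreasing clocks. Its expected size is bounded by $\sum_j (K t^{k-1})^j/j!$-type sums, as in \cref{lem:tree-estimator}. Truncating at depth $L\to\infty$ slowly, the locally tree-like assumptions imply that with probability $1-O_L(\eta)$ this history is a genuine hypertree. On a hypertree, the statuses of the vertices $w\in e$, computed from disjoint subtrees, are exactly independent. The recursion along the tree is then the one solved exactly by $r$, namely $\P(w\in I_{<t}\mid w \text{ not blocked through } e)=\int_0^t (x_w/p)^{s^{k-1}}r_w(s)\,ds=tx_w$. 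The one subtlety is conditioning on $v$ being excluded from the subtree of $w$, which changes $y_w$ by $O(1)$ edges and hence the rate by $O(p^{k-1})$. I would therefore run the induction on the ``cavity'' version $a_{w\to e}$ and show that it differs from $a_w$ by $O(\eta+p^{k-1})$. Combining the truncation error, which is $o(1)$ as $L\to\infty$, with the non-tree error $O_L(\eta)$ and choosing $L$ growing slowly gives the uniform $o(1)$ bound and completes the lower bound.
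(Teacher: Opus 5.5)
Your reduction is the paper's: a maximizing BP fixed point, the BP-guided greedy process, $H(I)\geq H(I\mid\vec\tau)=\sum_v\int_0^1 a_v(t)h(r_v(t))\,dt$, and then everything rests on the uniform estimate $a_v(t)=(x_v/p)^{t^{k-1}}+o(1)$. Your tree recursion, using $\int_0^t (x_w/p)^{s^{k-1}}r_w(s)\,ds=tx_w$ with cavity losses that grow with depth, is also essentially the paper's analysis of the independent-clock process on the self-avoiding walk tree $\mathbb T_v$ (\cref{lem:ind-availability}).

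The gap is the transfer from that tree computation to the actual process on $\mc H$. Your last paragraph asserts this step rather than proving it. Knowing that the history is a hypertree with probability $1-O_L(\eta)$ does not by itself give $a_v(t)$. That event is determined by the marks, so conditioning on it destroys the exact independence and the recursion you want to use. What you need is a coupling, or an exact identity, relating the real dynamics to the independent-mark tree dynamics.

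Such a coupling does not come from ``fresh marks until a label repeats''. Deciding which edges are influential requires inspecting marks of vertices outside the cluster, and these repeat labels with probability bounded away from zero. For example, take triangles in $K_n$ with root $\{1,2\}$. Once $\{1,3\}$ joins the cluster, each of its child edges $\{\{1,j\},\{3,j\}\}$ contains a vertex $\{1,j\}$ whose mark was already relevant at depth one. Such repetitions matter only rarely, but proving that is the whole difficulty.

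The paper handles this in two steps. First, \cref{lem:Weitz} shows that $a_v(t)$ equals exactly the root availability on $\mathbb T_v$ when every node carries the mark of its label. Second, \cref{lem:fourcorners} re-randomizes the marks of same-label nodes one at a time through intermediate clock processes. A one-sided telescoping bound only forces the re-marked node into an influence cluster. Summing that over nodes gives something of order $\E|\mathcal J_{\mathsf r}|=O(1)$, not $o(1)$. So the paper uses the symmetrized four-term difference \eqref{eq:Gamma-swap} to charge each change to the event that two distinct same-label nodes both lie in truncated influence clusters of intermediate processes. \cref{lem:influence-same-label} then shows this pair event has total probability $o(1)$, by a first-moment count using \cref{itm:setup codegree bound,itm:setup Delta ell UB}. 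The truncation of clusters by size is paid for via \cref{lem:inf1stmoment}. Some argument of this kind is the missing idea in your proposal.

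Separately, \cref{lem:stein-app} is not available here. Its proof rests on the spatial Markov identity of \cref{obs:spatial-markov}, which holds for $\mu_{\mc H,p}$ but not for the law of the greedy output $I$.
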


We note that Theorem~\ref{thm:Bethe exactness} follows immediately from Lemmas~\ref{lem:Bethe FE lower bound},~\ref{lem:Bethe FE upper bound} and Observation~\ref{obs:hardcore partition and log probability}.
 
Fix a BP fixed point $x$ with respect to $\cH, p$ and 
recall the BP-guided random greedy process corresponding to $x$ from Section~\ref{subsec:BP-guided}. Recall also from~\eqref{eq:av-Def} that for $v\in V, t\in [0,1]$ we let $a_v(t)$ denote the probability that $v$ is unblocked when its clock rings at time $t$. With the heuristic analysis from Section~\ref{subsec:BP-guided} in mind, 
the main goal of this section is to prove the following lemma.

\begin{lemma}\label{lem:root-availability}
We have
\[
a_v(t)= (x_v/p)^{t^{k-1}} + o(1)\, .
\]
uniformly over all $v\in V$ and $t\in[0,1]$.
\end{lemma}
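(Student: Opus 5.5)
We prove the estimate by induction in the spirit of a local (hyper)tree recursion, backed by a Poisson approximation. Fix $v$ and $t$. Conditioned on $\tau_v=t$, $v$ is unblocked at time $t$ exactly when no $e\in E_v$ satisfies $e\subseteq I_{<t}$. Write $X=\sum_{e\in E_v}\mathbf 1_{\{e\subseteq I_{<t}\}}$, so that $a_v(t)=\P(X=0\mid \tau_v=t)$. We aim to show
\[
\E X\approx t^{k-1}\sum_{e\in E_v}\prod_{u\in e}x_u
\]
and that $X$ is approximately Poisson. The BP fixed point identity then turns $e^{-\E X}$ into $(x_v/p)^{t^{k-1}}$.

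The first step is a one-vertex marginal statement, which we prove simultaneously with the lemma. For every $u$ and $s\le 1$,
\[
\P(u\in I_{<s})=\int_0^s a_u(r)r_u(r)\,dr .
\]
If $a_u(r)=(x_u/p)^{r^{k-1}}+o(1)$, this integral equals $s\,x_u+o(p)$, since $r_u(r)\le p$. For $k$-sets we need the joint version: for $e\in E_v$,
\[
\P(e\subseteq I_{<t}\mid\tau_v=t)=t^{k-1}\prod_{u\in e}x_u\,(1+o(1)),
\]
up to an $o(p^{k-1})$ error that sums over $E_v$ to $o(1)$. The heuristic behind this is that the vertices of $e$ lie in distinct branches of the local hypertree. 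By \cref{itm:setup codegree bound,itm:setup Delta ell UB}, their blocking events are driven by nearly disjoint collections of edges, so their availabilities are approximately independent. We make this precise by a Chen--Stein comparison with the same structure as \cref{lem:stein-app,lem:stein-app2}. Here the spatial Markov property is replaced by the explicit construction of the process: we reveal the marks in a neighbourhood.

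To avoid circularity we argue by depth. Define $a_v^{(d)}(t)$ as the availability in the process restricted to the depth-$d$ neighbourhood ball of $v$, with vertices outside the ball treated as never occupied. Then:

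1. The influence of depth beyond $d$ decays geometrically in $d$. This should follow from a disagreement-percolation bound, since each vertex is occupied with probability at most $p$ and $\Delta p^{k-1}=O(1)$. The per-level branching factor is $\sum_{e}p^{k-1}$-type, which is only $O(1)$ and not small, so we use the time variable instead. A blocking chain must be time-decreasing, so a chain of length $d$ has probability $\le (K)^d/d!$, as in the $K^j/j!$ bound of \cref{lem:tree-estimator}. Truncating at $d=L\to\infty$ slowly therefore costs $o(1)$.

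2. On a genuine hypertree the recursion is exact. Conditionally on the marks along the edge to $v$, the children's availabilities satisfy
\[
a_u(s)=\exp\Bigl(-\sum_{f}\prod_{w\in f}\P(w\in I_{<s})\Bigr),
\]
and by induction on depth the fixed point $x$ is preserved: $a_u(s)=(x_u/p)^{s^{k-1}}$ exactly. This is because $\frac{d}{ds}\P(w\in I_{<s})=a_w(s)r_w(s)=x_w$.

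3. The ball is not a hypertree. The local tree-like conditions give that the expected number of cycles, meaning pairs of overlapping completions in the ball weighted by $p^{|\cdot|}$, is $O_L(\eta)$. This is exactly the computation of \cref{lem:tree-estimator}, summed over the $O_L(1)$-sized exploration. We couple the process on the ball with the process on its hypertree unfolding, and they agree off an event of probability $O_L(\eta)+o(1)$.

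The main obstacle is step 1: controlling long-range dependence when the branching parameter $\Delta p^{k-1}$ is of order $1/c^{k-1}$ and not small. The $1/d!$ from the time ordering is what we expect to rescue this. An alternative we would try first is a direct Chen--Stein bound via \cref{lem:stein-app}, with $G\equiv 1$ and the marginal inputs supplied inductively in time, using a Gr\"onwall-type argument on the error function $\sup_v|a_v(t)-(x_v/p)^{t^{k-1}}|$.
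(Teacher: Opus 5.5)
Your architecture matches the paper's: time-decreasing influence chains giving $1/d!$ decay, the product recursion on a hypertree with independent marks, and a locally tree-like comparison. The gap is in step~3, the comparison, which is the heart of the proof; it is asserted with a justification that would not work.

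First, the ball of radius $L$ has $\Delta^{\Theta(L)}$ vertices, not $O_L(1)$. What has bounded expected size is the influence cluster, and that must be truncated.

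More seriously, you never say which marks the unfolding carries. The process on $\mathcal H$ corresponds to \emph{shared} marks on the self-avoiding-walk tree (\cref{lem:Weitz}), while your step~2 product formula needs \emph{independent} marks. Passing between the two is not a matter of coupling until a cycle appears. To decide which child edges of an explored node are influential, you must inspect the marks of $\Theta(\Delta)$ nodes, and among inspected nodes repeated labels are the norm. For triangles in $K_n$, $\bigcup E_{\{1,2\}}$ and $\bigcup E_{\{1,3\}}$ share $n-2=\Delta$ vertices. The negative information revealed about a shared mark (clock too late, coin failed) then correlates its later copies, so a union bound over inspected same-label pairs is hopeless.

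The only genuinely rare event is that two same-label nodes both lie in a truncated influence cluster. That forces both coins to pass and supplies a factor $p$ per distinct label on the two root paths. Showing that the root's availability is insensitive to everything else is the real work. The paper does it as follows:
\begin{enumerate}
\item It uncouples same-label nodes one at a time through intermediate clock processes.
\item It uses the swap symmetry \eqref{eq:Gamma-swap} and \cref{obs:F-diff} to bound each step by $\P(\mathsf{u}_i,\mathsf{u}_j\in\mathcal J(i))$.
\item It counts pairs of root paths in $\mathbb T_v$ with a common end label (\cref{lem:influence-same-label}). This is a different count from \cref{lem:tree-estimator}, and its case $|h|=k-1$ needs the codegree bound \cref{itm:setup codegree bound}.
\end{enumerate}

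Step~2 also contains a false claim: that $a_u(s)=(x_u/p)^{s^{k-1}}$ holds exactly by induction on depth. You have only checked that this family is a self-consistent solution of the recursion. On any tree you can actually use, the recursion is driven by data that do not match it:
\begin{itemize}
\item nodes at the truncation depth are never blocked, so their occupation probability is $\int_0^s r_u(\sigma)\,d\sigma\neq s\,x_u$;
\item a node at depth $d$ lacks its parent edge and, in the self-avoiding unfolding, up to $d\Delta_2$ further edges, so $\sum_{E'_{\mathsf{u}}}\prod x\neq\log(p/x_{\ell(\mathsf{u})})$;
\item $\prod(1-z)\neq e^{-\sum z}$.
\end{itemize}
What is needed is stability: errors injected at each node must not amplify on the way to the root. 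This comes from the Volterra form $\widetilde o_{\mathsf{u}}(t)=\int_0^t\widetilde a_{\mathsf{u}}(s)\,r_{\ell(\mathsf{u})}(s)\,ds$. The paper tracks the depth-weighted error $D(t)$, whose weight $(d_{\mathsf{u}}+1)^{-1}$ absorbs the $d_{\mathsf{u}}\Delta_2p^{k-1}$ loss, and closes with \cref{lem:Gronwall} in \cref{lem:ind-availability}. So the Gronwall argument you list as a fallback is not optional.

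Finally, your Chen--Stein route does not transfer. \cref{lem:stein-app} rests on the identity $\E[G\xi_e f(p^{k-1}Y,X)]=p^{k-1}\E[b_eG^ef(p^{k-1}Y^e,X^e)]$, which comes from \cref{obs:spatial-markov} for $\mu_{\mathcal H,p}$ and has no analogue for the output of the greedy process. Moreover, the approximate independence of the events $\{w\in I_{<s}\}$ that you would feed into it is precisely what the missing comparison has to provide.
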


It will be useful to organize our computation of $a_v(t)$ via a \emph{computational tree}.
For this, we use the construction of a self-avoiding-walk tree for hypergraphs from~\cite{jenssen2026non}.

\subsection{The self-avoiding walk tree}
Fix $v\in V(\cH)$. A \emph{self-avoiding walk} (SAW) starting at $v$ is an
alternating sequence
\[
\mathsf u=(v_0,e_1,v_1,\ldots,e_d,v_d)
\]
such that $v_0=v$, the vertices $v_0,\ldots,v_d$ are distinct,
the hyperedges $e_1,\ldots,e_d$ are distinct, and
$\{v_{i-1},v_i\}\subseteq e_i$ for every $1\leq i\leq d$.
Its \emph{length} is $d$, and its \emph{label} is
$\ell(\mathsf u)=v_d$.

We can construct a linear hypertree $\bb{T}_v$ of self-avoiding walks in $\cH$ starting at $v$ (this tree is denoted by $T_{\mathrm{SAW},v}(\cH)$ in~\cite{jenssen2026non}).
   To avoid confusion we will refer to vertices of $\cH$ as \emph{vertices}, and vertices of $\bb{T}_v$ as \emph{nodes}. Furthermore, we use $u,v,w\ldots$ to denote vertices of $\cH$ and $\sf{u},\sf{v},\sf{w},\ldots$ to denote nodes of the tree. To define the construction formally, we first introduce some notation.

 Given a SAW $\sf{w}=(v_0,e_1,\ldots, v_{d})$, we say that a SAW $\sf{w}'$ \emph{extends} $\sf{w}$, denoted $\sf{w}\prec \sf{w}'$, if $\sf{w}'=(v_0,e_1,\ldots, v_{d}, e_{d+1}, v_{d+1})$. We now define $\bb T_v$ recursively, starting with the root
$\sf{r}=(v)$.
For each node $\sf{w}$ already constructed, attach a hyperedge
\begin{align}\label{eq:tree-hyperedge}
\{\sf{w},\sf{w}_1,\ldots,\sf{w}_{k-1}\}
\end{align}
for every collection of distinct SAWs
$\sf{w}_1,\ldots,\sf{w}_{k-1}$ satisfying:
\begin{enumerate}
    \item $\sf{w}\prec\sf{w}_i$ for every $i$;
    \item $\sf{w}_1,\ldots,\sf{w}_{k-1}$ differ only in their
    final vertex;
    \item
    $\{\ell(\sf{w}),\ell(\sf{w}_1),\ldots,
    \ell(\sf{w}_{k-1})\}\in E(\cH)$.
\end{enumerate}
We call $\sf{w}$ the \emph{parent node} of the hyperedge~\eqref{eq:tree-hyperedge}
and the parent of each node $\sf{w}_i$. 
For each $\sf{w}_i$, we refer to the hyperedge~\eqref{eq:tree-hyperedge} as the \emph{parent edge} of $\sf{w}_i$ and denote it by $h(\sf w_i)$.
The root has no parent node, and we set $h(\sf{r})=\emptyset$.
Conversely, we call the hyperedges~\eqref{eq:tree-hyperedge} the \emph{child edges} of $\sf w$ and we call the $\sf w_i$ the \emph{children} of $\sf w$. Given a subgraph $T\subseteq \bb T_v$, we say that $\sf u\in V(T)$ is a \emph{leaf} of $T$ if it has no children in $T$.

 Note that if $\sf{e}=\{\sf{w}_1, \ldots, \sf{w}_k\}$ is an edge of $\bb T_v$ then $\ell(\sf{e})=\{\ell(\sf{w}_1), \ldots, \ell(\sf{w}_k)\}$ is an edge of $\cH$ and we refer to $\ell(\sf{e})$ as the label of $\sf{e}$. 
 If for $\sf{w}, \sf{w}'$ there exists a sequence $\sf{w}=\sf{w}_1\prec \ldots \prec \sf{w}_\ell=\sf{w}'$ we say that $\sf{w}'$ is a \emph{descendant} of $\sf{w}$ and $\sf{w}$ is an \emph{ancestor} of $\sf{w}'$. We refer to the tree induced by $\sf{w}$ and its descendants as the \emph{subtree rooted at $\sf{w}$}. If the SAW $\sf{w}$ has length $\ell$ we say $\sf{w}$ is at \emph{depth} $\ell$ in $\bb T_v$. The depth of a tree hyperedge is one plus the depth of its
parent node.
 We note that by construction $\bb T_v$ is a $k$-uniform, linear hypertree of maximum degree at most $\Delta$.

\subsection{The shared clock process on $\mathbb{T}_v$.} We construct a random (not necessarily independent) set $I\subseteq V(\mathbb T_v)$. The process is similar to the BP guided process on $\cH$ (as defined in Section~\ref{subsec:BP-guided}) and we will argue that this new process gives an alternative (and useful) way to calculate $a_v(t)$. One small but key difference is our definition of being unblocked. Let 
\[
E'_\sf u
=
\{\sf f\setminus\{\sf u\}: \sf f \text{ is a child edge of } \sf u\}\, .
\]
We say that a node $\sf u\in V(\mathbb T_v)$ is \emph{available} with respect to a set $I\subseteq V(\mathbb T_v)$ if no $\sf e\in E'_{\sf u}$ is contained in $I$. Note that we do not require $I$ to be independent and only child edges incident to $\sf u$ determine its availability.

Assign to each node $\sf u$ the pair of random variables
$(\tau_{\ell(\sf u)}, \xi_{\ell(\sf u)})$
from~\eqref{eq:tau-xi}, and recall the definition of the functions $r_u$ from \eqref{eq:r_v definition}.

Start with $I=\emptyset$. If $\tau_{\ell(\sf u)}=t$ we add $\sf u$ to $I$ if both of the following hold:
\begin{enumerate}
\item no $\sf e\in E'_{\sf u}$ is already contained in $I$,
\item $\xi_{\ell(\sf u)}\leq r_{\sf \ell(\sf u)}(t)$.
\end{enumerate}

Let $a^{\textrm{sh}}_v(t)$ denote the probability that the root of $\mathbb{T}_v$ is available at time $t$ (note that here the clock $\tau_v$ cannot affect the availability status of $v$).
We begin with the observation that $a_v(t), a^{\textrm{sh}}_v(t)$ are identical. To prove this it will be useful to introduce a bit of terminology. 

Let us call any collection $(\tau_\sf u,\xi_\sf u)_{\sf u\in V(\mathbb T_v)}$ where  $\tau_\sf u, \xi_\sf u \sim U[0,1]$ for all $\sf u$ a \emph{clock process}. 

Consider a clock process $\omega$. Given $\sf u\in V(\mathbb T_v)$, we call $\sf e\in E'_{\sf u}$ \emph{influential at time} $t$ if  
\[
\tau_{\sf w}
< 
t
\quad\text{and}\quad
\xi_{\sf w}
\leq r_{\sf \ell(\sf w)}(\tau_{\sf w})
\quad\text{for each }\sf w\in \sf e
\, .
\]
Informally, $e$ is influential at time $t$ if it has the potential to block $\sf u$ at time $t$. 

For a node $\sf u$ and $t\in[0,1]$, we construct the
\emph{influence cluster} at time $t$, denoted $\mathcal J_{\sf u}(\omega,t)$,
recursively as follows. Initially include $\sf u$ and all nodes of every $\sf e\in E'_\sf u$
that is influential at time $t$. Whenever a node $\sf w\neq \sf u$
is included, add all nodes of every $\sf e\in E'_\sf w$
that is influential at time $\tau_{\sf w}$.
Continue until no further nodes can be added.
We note that $\mathcal{J}_{\sf u}(\omega, t)$ determines whether $\sf u$ is available at time $t$.

The following lemma is similar in spirit to the classical theorem of Weitz~\cite{weitz2006counting} (and its recent generalizations to hypergraphs~\cite{ bencs2025optimal,jenssen2026non}) which identifies the marginal of a vertex $v$ in the hardcore model on a graph, to the root marginal of the hardcore model on the self-avoiding walk tree $\bb T_v$ under certain boundary conditions.

\begin{lemma}\label{lem:Weitz}
For all $v\in V$ and $t\in [0,1]$
\[
a_v(t)=a^{\textrm{sh}}_v(t).
\]
\end{lemma}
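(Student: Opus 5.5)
The plan is to prove the stronger, pathwise statement: for every realization of the marks $(\tau_u,\xi_u)_{u\in V}$ with distinct clock values, $v$ is unblocked at time $t$ in the greedy process on $\cH$ if and only if the root $\sf r$ of $\bb T_v$ is available at time $t$ in the shared clock process. Since the shared clock process uses exactly the marks $(\tau_{\ell(\sf u)},\xi_{\ell(\sf u)})$, taking probabilities then gives $a_v(t)=a^{\textrm{sh}}_v(t)$. The clock $\tau_v$ plays no role on either side: in $\cH$ the vertex $v$ cannot be occupied before its own clock rings, and in the tree the root's availability depends only on its children.

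To make the induction work we need a statement about general nodes, not just the root. For a node $\sf w=(v_0,e_1,\dots,v_d)$, let $\cH_{\sf w}$ be the hypergraph obtained from $\cH$ by deleting the vertices $v_0,\dots,v_{d-1}$ together with every edge containing one of them, and also deleting the edges $e_1,\dots,e_d$. The claim $P(\sf w,s)$ is then:
\begin{quote}
$\sf w$ is available at time $s$ in the tree process if and only if $\ell(\sf w)$ is unblocked at time $s$ in the greedy process run on $\cH_{\sf w}$ with the same marks.
\end{quote}
This is the hypergraph analogue of Weitz's boundary condition. There is no pinning here, because the tree process only uses child edges. I would prove $P(\sf w,s)$ by induction on the clock values. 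Since $\ell(\sf w)$ is not yet occupied, it is unblocked at time $s$ exactly when no $e\in E_{\ell(\sf w)}$ in $\cH_{\sf w}$ has all its vertices occupied before $s$. Each vertex $u\in e$ is occupied iff $\tau_u<s$, $\xi_u\le r_u(\tau_u)$, and $u$ was unblocked at time $\tau_u$. The edges $e$ here correspond bijectively to the child edges of $\sf w$, which is exactly conditions (1)--(3) of the tree construction. For each such $u$, with child node $\sf u$, we need $u$ unblocked at $\tau_u$ in the running hypergraph to agree with $u$ unblocked at $\tau_u$ in $\cH_{\sf u}$. Given that, the induction hypothesis (applied at the strictly smaller time $\tau_u<s$) finishes the step.

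The main obstacle is justifying that replacement. The greedy process on $\cH_{\sf w}$ at vertex $u$ still sees the other vertices of $e$ and the edges through $\ell(\sf w)$, whereas $\cH_{\sf u}$ removes $\ell(\sf w)$ and $e$. Two observations should handle this, and this is where acyclicity is traded for the time-ordering.
\begin{itemize}
\item Edges through $\ell(\sf w)$ cannot block $u$ before time $s$, because $\ell(\sf w)$ itself is not occupied before $s$, and $\tau_u<s$.
\item The edge $e$ (as a completion of $u$) contains the still-unoccupied $\ell(\sf w)$, so it cannot block $u$ either.
\end{itemize}
So deleting these vertices and edges does not change the blocked status of $u$ at time $\tau_u$. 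The delicate point is making this deletion argument consistent along the whole recursion. I would formalize it by letting the influence cluster $\mathcal J_{\sf r}(\omega,t)$ drive an induction over the finitely many nodes of the cluster, ordered by decreasing depth/time. The cluster is finite almost surely since clocks strictly decrease along it.

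A further consistency issue is that the greedy process on $\cH_{\sf w}$ differs from the greedy process on $\cH$. The point is that the occupied status of a vertex $u$ at time $\tau_u$ depends only on vertices with earlier clocks. Removing vertices that are unoccupied at the relevant times, or edges that cannot be completed before those times, leaves the evolution of the relevant vertices unchanged. I would state and check this monotone locality claim once as a separate sublemma, and then run the induction.
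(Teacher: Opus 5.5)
Your proof is correct, and it runs on the same mechanism as the paper's. It is a pathwise coupling in which every edge of $\cH$ at $\ell(\sf w)$ that is not represented among the child edges of $\sf w$ passes through an ancestor label. Such labels are still unoccupied at the relevant time because clocks decrease along the path to $\sf w$.

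The bookkeeping differs.
\begin{itemize}
\item The paper compares each node of the influence cluster $\mathcal J_{\sf r}(t)$ directly with its label in the single actual process on $\cH$, recursing from the leaves of the cluster to the root. Membership in the cluster is exactly what guarantees that every ancestor label has a later clock, so no auxiliary processes are needed. The same cluster is reused later when comparing $a^{\textrm{ind}}_v$ with $a^{\textrm{sh}}_v$.
\item You instead compare every node $\sf w$ with a greedy process on the pruned hypergraph $\cH_{\sf w}$, in Weitz's style. Since $\cH_{\sf u}=\cH_{\sf w}-\ell(\sf w)$ for a child $\sf u$, each inductive step removes a single vertex. Your only extra ingredient is the deletion sublemma: removing a vertex whose clock is at least $s$ does not change the process before time $s$. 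This follows by a one-line induction over clock order, since no edge through that vertex can be complete before $s$.
\end{itemize}
Your route gives a statement valid for all nodes, not only cluster nodes, at the price of tracking a family of auxiliary processes.

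One statement needs correcting. $P(\sf w,s)$ is false for $s>\tau_{\ell(\sf w)}$, and so is your opening pathwise claim at the root without $\tau_v\ge t$. The reason is that an occupied vertex always counts as unblocked, while the corresponding tree node need not be available. For example, take $\cH$ to be a single edge $\{v,u\}$ with $\tau_v<\tau_u<t$ and both coins passing. Then $v$ is occupied, hence unblocked, at time $t$. In the tree, however, $\sf u$ has no children, so it is occupied at $\tau_u$ and makes the root unavailable.

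So restrict $P(\sf w,s)$ to $s\le\tau_{\ell(\sf w)}$, which is all your induction uses; your sentence ``since $\ell(\sf w)$ is not yet occupied'' already assumes this. At the root, condition on $\tau_v=t$. Then note that the tree event is independent of $\tau_v$ because $v$ labels only the root of $\bb T_v$, since self-avoiding walks never revisit $v$. Saying that the root's availability depends only on its children is not by itself enough.
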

\begin{proof}
Fix $v\in V$ and $t\in [0,1]$ and condition on $\tau_v=t$.
Recall that we couple the shared clock process on $\mathbb T_v$ and the random greedy process on $\cH$, by giving each node $\sf u \in V(\mathbb T_v)$ the same mark (clock and coin) as $\ell(\sf u)\in V(\cH)$.  Since $v$ labels only the root in $\bb T_v$, whose availability depends only on descendant marks, the conditioning does not change $a_v^{\textrm{sh}}(t)$. Let $\mathcal J_\sf r(t)$ denote the influence cluster at the root $\sf r$ at time $t$ under the shared clock process. 

Consider a non-root leaf $\sf w \in \mathcal J_\sf r(t)$ and let $\{\sf e_1, \ldots, \sf e_d\}$ denote its incident edges in $\bb T_v$ where $\sf e_1$ is the parent edge.
Note that $\{\ell(\sf e_1), \ldots, \ell(\sf e_d)\}$ is a subset of the edges incident to $\ell(\sf w)$ in $\cH$. Moreover, $\sf e_i$ and $\ell(\sf e_i)$ carry precisely the same set of marks.  

By the SAW tree construction, if $f$ is incident to $\ell(\sf w)$ and $f\notin \{\ell(\sf e_1), \ldots, \ell(\sf e_d)\}$ then $f$ contains the label of a strict ancestor of $\sf w$. Since $\sf w\in\mathcal J_\sf r(t)$, all such ancestors have clocks strictly greater than $\tau_{\sf w}$. Thus $f$ cannot block $\ell(\sf w)$ when its clock rings.

Since $\sf w$ is a leaf of $\mathcal J_\sf r(t)$, none of the
sets $\sf e_i':=\sf e_i\setminus\{\sf w\}$, $i\geq2$, is influential at time $\tau_{\sf w}$. It follows that none of
$\{\ell(\sf e'_2),\ldots,\ell(\sf e'_d)\}$ blocks $\ell(\sf w)$ when its clock rings. Moreover, the parent edge $\sf e_1$ carries a clock strictly greater than $\tau_{\sf w}$, so $\ell(\sf e'_1)$ cannot block $\ell(\sf w)$ at this time either. Thus, $\sf w$ and $\ell(\sf w)$ are both available when their common clock rings, and their shared coin gives the same acceptance decision.

 Continuing recursively from the leaves to the root it follows that $\sf r$ is available at time $t$ if and only if $\ell(\sf r)=v$ is available when its clock rings at time $t$.
\end{proof}

\subsection{The independent clock process on $\mathbb{T}_v$.} In order to understand the shared clock process, it will be useful to first study the following `uncoupled' version where  each node $\sf u$ is assigned $\tau_{\sf u}, \xi_{\sf u}\sim U[0,1]$ independently for every node $\sf u$. As before, we start with $I=\emptyset$, and if $\tau_{\sf u}=t$ we add $\sf u$ to $I$ if both of the following hold:
\begin{enumerate}
\item no $\sf e\in E'_{\sf u}$ is already contained in $I$,
\item $\xi_{\sf u}\leq r_{\sf \ell(\sf u)}(t)$.
\end{enumerate}
We let $a^{\textrm{ind}}_v(t)$ denote the probability that the root of $\mathbb{T}_v$ is available at time $t$ in this process. Due to the independence, $a^{\textrm{ind}}_v(t)$ is easier to analyse. We will therefore first estimate $a^{\textrm{ind}}_v(t)$  and then argue that $a^{\textrm{ind}}_v(t)$ is close to $ a^{\textrm{sh}}_v(t)$. 
Before estimating $a^{\textrm{ind}}_v(t)$, we record the following integral form of Gronwall's inequality that will come in handy (see e.g.~\cite[Lemma 2.7]{teschl2012ordinary}). 

\begin{lemma}\label{lem:Gronwall}
Let $T>0$ and let $f:[0,T]\to[0,\infty)$ be integrable. Suppose that for constants $a,b\geq 0$
\[
f(t) 
\leq
a+b\int_{0}^t f(s)\, ds
\]
for all $t\in [0,T]$. Then for all $t\in [0,T]$
\[
f(t)\leq ae^{bt}\, .
\]
\end{lemma}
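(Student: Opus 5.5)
We prove the Gronwall statement directly by the standard bootstrapping argument for the integral form. Set $F(t)=\int_0^t f(s)\,ds$, which is absolutely continuous on $[0,T]$ because $f$ is integrable, with $F'(t)=f(t)$ for almost every $t$ (Lebesgue differentiation). The hypothesis then reads $F'(t)\le a+bF(t)$ for almost every $t$. Multiplying by the integrating factor $e^{-bt}$, we obtain
\[
\frac{d}{dt}\bigl(e^{-bt}F(t)\bigr)=e^{-bt}\bigl(F'(t)-bF(t)\bigr)\le a e^{-bt}
\]
almost everywhere. The product of the absolutely continuous $F$ with the smooth $e^{-bt}$ is again absolutely continuous, so we may integrate this from $0$ to $t$, using $F(0)=0$. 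This gives $e^{-bt}F(t)\le a(1-e^{-bt})/b$ when $b>0$, that is,
\[
bF(t)\le a\bigl(e^{bt}-1\bigr).
\]

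Substituting this back into the hypothesis yields
\[
f(t)\le a+bF(t)\le a+a\bigl(e^{bt}-1\bigr)=ae^{bt}
\]
for every $t\in[0,T]$. This holds pointwise, not merely almost everywhere, since the hypothesis itself is assumed for all $t$.

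The degenerate case $b=0$ is immediate: the hypothesis is then just $f(t)\le a=ae^{0}$. The only point requiring care is the use of absolute continuity when integrating the differential inequality, because $f$ is only assumed integrable and not continuous, so the inequality $F'\le a+bF$ is only known almost everywhere. The step $e^{-bt}F(t)-0=\int_0^t (e^{-bs}F)'(s)\,ds$ is justified by the fundamental theorem of calculus for absolutely continuous functions. Alternatively, we can avoid differentiation entirely by iterating the inequality $n$ times:
\[
f(t)\le a\sum_{j<n}\frac{(bt)^j}{j!}+b^n\int_0^t\frac{(t-s)^{n-1}}{(n-1)!}f(s)\,ds.
\]
The remainder is at most $b^n T^{n-1}\|f\|_{L^1}/(n-1)!$, which tends to $0$ as $n\to\infty$. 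Letting $n\to\infty$ gives $f(t)\le ae^{bt}$ once more. I would present this iteration version if I wanted to sidestep the measure-theoretic details.
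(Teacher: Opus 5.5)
Your proof is correct, and it matches the paper's approach: the paper gives no proof of its own and cites Teschl's Lemma 2.7, whose proof is this same integrating-factor argument applied to $\int_0^t f(s)\,ds$. Your appeal to absolute continuity properly handles the paper's weaker hypothesis that $f$ is only integrable, and your iteration alternative is also valid, using $b\geq 0$ and $f\geq 0$ in the remainder bound.
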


We now prove the following estimate on $a^{\textrm{ind}}_v(t)$.

\begin{lemma}\label{lem:ind-availability}
We have
\[
a^{\textrm{ind}}_v(t)= (x_v/p)^{t^{k-1}} + o(1)\, ,
\]
uniformly over all $v\in V$ and $t\in[0,1]$.
\end{lemma}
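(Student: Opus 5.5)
The plan is to exploit the independence in the independent clock process on $\mathbb T_v$ to write an exact recursion for node availabilities, then compare it to the BP-predicted profile. For a node $\sf u$ with label $u$, let $A_{\sf u}(t)$ be the probability that $\sf u$ is available at time $t$. It depends only on the marks in the subtree strictly below $\sf u$. Since $\mathbb T_v$ is a linear hypertree and marks are independent, the events that different child edges block $\sf u$ are independent. Moreover, the nodes within one child edge act independently. A child $\sf w$ is occupied before time $t$ exactly when its clock rings at some $s<t$, it is available then, and its coin succeeds. This gives
\[
\Pr(\sf w\in I \text{ by time } t)=\int_0^t A_{\sf w}(s)\, r_{\ell(\sf w)}(s)\,ds =: \rho_{\sf w}(t),
\qquad
A_{\sf u}(t)=\prod_{\sf f \in E'_{\sf u}}\Bigl(1-\prod_{\sf w\in \sf f}\rho_{\sf w}(t)\Bigr).
\]
The target profile is $A^*_{\sf w}(s)=(x_{\ell(\sf w)}/p)^{s^{k-1}}$. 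For this profile $A^*_{\sf w}(s)r_{\ell(\sf w)}(s)=x_{\ell(\sf w)}$, so $\rho^*_{\sf w}(t)=t\,x_{\ell(\sf w)}$.

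The first main step is to show that the recursion evaluated at $\rho^*$ at the root reproduces the target up to $o(1)$. Substituting $\rho^*$ gives $\prod_{\sf f}(1-t^{k-1}\prod_{w\in \ell(\sf f)\setminus \{u\}}x_w)$. Each factor $t^{k-1}\prod x_w\le p^{k-1}$ is $O(1/\Delta)$, so $\log(1-z)=-z+O(z^2)$ turns the product into $\exp\bigl(-t^{k-1}\sum_{\sf f}\prod x_w\bigr)+O(\Delta p^{2(k-1)})$.

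The subtlety is that the child edges of $\sf u$ correspond only to those $e\in E_u$ avoiding the labels of ancestors of $\sf u$. The omitted edges are those meeting an ancestor label. For the root, nothing is omitted, and by the BP fixed point equation~\eqref{eq:BP fixed point definition} the sum is exactly $\log(p/x_v)$. For a deeper node $\sf u$, the omitted edges in $E_u$ contain some fixed ancestor vertex $w$; there are at most $\codeg(u,w)\le \eta\Delta$ of these per ancestor. So at depth $d$ the error in the exponent is $O(d\eta)$, which is $o(1)$ for bounded $d$.

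The second step is to propagate errors along the tree by truncating at depth $D\to\infty$ slowly. I would show $\sup_{\sf u \text{ at depth }\le D,\ s\le t}|A_{\sf u}(s)-A^*_{\sf u}(s)|$ is small by a Gronwall-type argument. Let $\epsilon_d(t)$ be the maximal discrepancy at depth $d$. The map $\rho\mapsto A$ is Lipschitz: the derivative of the product over $\sf f$ with respect to a single $\rho_{\sf w}$ is at most $\prod_{\sf w'\ne \sf w}\rho_{\sf w'}\le p^{k-2}$, and summing over the $\le\Delta$ child edges and $k-1$ nodes per edge gives a total factor $O(\Delta p^{k-1})=O(1)$. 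This yields
\[
\epsilon_d(t)\le C d\eta + O(\Delta p^{2(k-1)}) + K\int_0^t \epsilon_{d+1}(s)\,ds .
\]
Because the integral is nested, iterating $D$ levels gives a term $K^D t^D/D!\cdot\sup\epsilon_D\le K^D/D!$ (since $\epsilon\le1$), plus accumulated errors $\sum_{j\le D} (CD\eta+O(1/\Delta))K^j/j!\le e^K(CD\eta+o(1))$. This is exactly Lemma~\ref{lem:Gronwall}-style reasoning, but along depth. Choosing $D\to\infty$ slowly so that $D\eta\to0$ gives $\epsilon_0(t)=o(1)$ uniformly in $v$ and $t$.

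The hard step is keeping these estimates uniform. The constants must depend only on $k,c$, which needs $x_w\le p$ (from Observation~\ref{obs:Bethe free energy properties}) and $\Delta p^{k-1}\le c^{-(k-1)}$. The ancestor-edge correction must also be handled with care, including the $k=2$ case, where the codegree bound~\ref{itm:setup codegree bound} is used directly.
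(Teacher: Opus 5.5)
Your argument is correct and uses the same ingredients as the paper:
\begin{itemize}
\item the exact product recursion coming from independent marks on the linear hypertree;
\item the fact that the profile $(x_{\ell(\mathsf u)}/p)^{s^{k-1}}$ makes the occupation probability exactly $t\,x_{\ell(\mathsf u)}$;
\item the $\log(1-z)$ expansion, costing $O(\Delta p^{2(k-1)})$;
\item an ancestor-omission error that grows linearly in the depth.
\end{itemize}
Where you differ is in how you control that depth growth. The paper tracks occupation probabilities through one depth-weighted quantity, $D(t)=\sup_{\mathsf u}\sup_{s\le t}(d_{\mathsf u}+1)^{-1}\bigl|\widetilde o_{\mathsf u}(s)/x_{\ell(\mathsf u)}-s\bigr|$. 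The weight $1/(d_{\mathsf u}+1)$ absorbs both the $O(d_{\mathsf u}\Delta_2 p^{k-1})$ omission error and the children's factor $d_{\mathsf u}+2$. So a single application of Lemma~\ref{lem:Gronwall} in $t$ closes the argument over the whole tree, with no truncation. You instead index the error by depth and unroll $\epsilon_d(t)\le a_d+K\int_0^t\epsilon_{d+1}(s)\,ds$ a total of $D$ times. The simplex volume $t^D/D!$ then beats the trivial bound $\epsilon_D\le 1$ at the cutoff, and you send $D\to\infty$ with $D\eta\to 0$. This is a Picard-style unrolling of Gronwall. It is more explicit, and it mirrors the $((k-1)\Delta p^{k-1})^d/d!$ time-ordering count in Lemma~\ref{lem:inf1stmoment}. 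The price is an extra truncation parameter, whereas the paper's weighted norm handles all depths at once.

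One correction to your omission count. The members of $E_u$ excluded because they contain an ancestor label $w$ are the hyperedges through both $u$ and $w$. There are therefore at most $\Delta_2$ of them; this is not $\codeg_{\mathcal H}(u,w)=|E_u\cap E_w|$, which counts common completions, and the codegree bound \cref{itm:setup codegree bound} is not the relevant hypothesis. The estimate you need still holds:
\begin{itemize}
\item For $k\ge 3$, \cref{itm:setup Delta ell UB} gives $\Delta_2p^{k-1}=O(\eta\Delta^{-1/(k-1)})$.
\item For $k=2$, each ancestor accounts for at most one omitted singleton, costing at most $p=o(1)$.
\end{itemize}
So the per-level error at depth $d$ is $o(d)$ uniformly, and your argument goes through once $D\to\infty$ slowly enough that $D(\eta+p)\to 0$.
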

\begin{proof}
Consider the independent clock process on $\mathbb{T}_v$ as described above. 
For $\sf u\in V(\mathbb T_v)$, let 
\[
\widetilde a_{\sf u}(t)
=
\P( \sf u \text { available at time } t),
\qquad
\widetilde o_{\sf u}(t)
=
\P( \sf u \text { occupied at time } t)\, .
\]
As suggested by our heuristic analysis in Section~\ref{subsec:heuristic}, we aim to show that $\widetilde o_{\sf u}(t)$ is approximately equal to $t x_{\ell(\sf u)}$. With this in mind, we define
\[
D(t)
:=
\sup_{\sf u}\sup_{0\leq s\leq t}
\frac{1}{d_{\sf u}+1}
\left|
\frac{\widetilde o_{\sf u}(s)}
{x_{\ell(\sf u)}}-s
\right|
\]
where $d_{\sf u}$ denotes the depth of the node $\sf u$.

Observe that by conditioning on the value of the clock $\tau_\sf u$, we obtain the relation
\begin{align}\label{eq:o-cond-tau}
\widetilde o_{\sf u}(t)
=
\int_0^t
\widetilde a_{\sf u}(s)
r_{\ell(\sf u)}(s)\, ds\, .
\end{align}
Since $r_{\ell(\sf u)}(s)\leq p$ we have the bound
\begin{align}\label{eq:tilde-o-bound}
\widetilde o_{\sf u}(t)
\leq 
pt\, .
\end{align}
Note also that due to the independent marks, we have
\begin{align}\label{eq:tree-recursion}
\widetilde a_{\sf u}(t)
=
\prod_{\sf e\in E'_{\sf u}}
\left( 
1
- 
\prod_{\sf w \in \sf e}
\widetilde o_{\sf w}(t)
\right)\, .
\end{align}
For $\sf e\in E'_{\sf u}$, let 
$
\widetilde o_\sf e(t)
=
\prod_{\sf w \in \sf e} 
\widetilde o_{\sf w}(t).
$
Taking logarithms in~\eqref{eq:tree-recursion} then gives
\begin{align}\label{eq:log-available}
\log \widetilde a_{\sf u}(t)
=
-\sum_{e\in E'_{\sf u}}
\left(
\widetilde o_{\sf e}(t) 
+ O(\widetilde o^2_{\sf e}(t))
\right)
=
-\sum_{e\in E'_{\sf u}}
\widetilde o_{\sf e}(t) 
+
O(p^{k-1})
\end{align}
where for the final equality we used~\eqref{eq:tilde-o-bound} and that $|E'_{\sf u}|\leq\Delta$ and $p^{k-1}\Delta=O(1)$.

Note that by the definition of $D(t)$, the fact that every child of $\sf u$ has depth $d_\sf u+1$, and the basic inequality 
$
|\prod_{i=1}^{k-1}z_i
- \prod_{i=1}^{k-1}w_i|
\leq
\theta^{k-2}
\sum_{i=1}^{k-1}
|z_i-w_i|
$
for all $w_i,z_i\in \R$ such that $|z_i|, |w_i|\leq \theta$ for all $i$
we have
\begin{align}\label{eq:o-product-diff}
\left|
\widetilde o_{\sf e}(t)
-
t^{k-1}
\prod_{\sf w \in \sf e}
x_{\ell(\sf w)}
\right|
\leq
(pt)^{k-2}(d_\sf u+2)D(t)
\sum_{\sf w \in \sf e}
x_{\ell(\sf w)}\, .
\end{align}
Let 
$
L_{\sf u}
:=
\sum_{e\in E_{\ell(\sf u)}}
\prod_{w \in  e}
x_{w}
$
and suppose that $\sf u=(v_0,e_1,v_1,\ldots,e_d,v_d)$ (so in particular $d=d_\sf u$).
Note that $e\in E_{\ell(\sf u)}$ is the label of some $\sf e \in E'_\sf u$ unless it contains some $v_i$ where $i<d$. Each such $v_i$ belongs to at most $\Delta_2$ members of $E_{\ell(\sf u)}$ and so at most $d_{\sf u}\Delta_2$ members of $E_{\ell(\sf u)}$ are not represented in $E'_{\sf u}$.
Therefore
\[
0\leq
L_{\sf u}
-
\sum_{e\in E'_{\sf u}}\prod_{\sf w\in e}x_{\ell(\sf w)}
\leq d_{\sf u}\Delta_2p^{k-1}.
\]
Summing~\eqref{eq:o-product-diff} over
$\sf e\in E'_{\sf u}$ and returning to
\eqref{eq:log-available} gives
\begin{align*}
\left|
\log\widetilde a_{\sf u}(t)
+t^{k-1}L_{\sf u}
\right|
&\leq
(pt)^{k-2}(d_{\sf u}+2)D(t)
\sum_{\sf e\in E'_{\sf u}}
\sum_{\sf w\in\sf e}x_{\ell(\sf w)}
+
O\bigl((1+d_{\sf u}\Delta_2)p^{k-1}\bigr)
\\
&\leq
C(d_{\sf u}+1)D(t)
+
O\bigl((1+d_{\sf u}\Delta_2)p^{k-1}\bigr),
\end{align*}
where the final inequality uses $\Delta p^{k-1}=O(1)$ and
\[
\sum_{\sf e\in E'_{\sf u}}
\sum_{\sf w\in\sf e}x_{\ell(\sf w)}
\leq (k-1)\Delta p\, .
\]

Noting that $\Delta_2p^{k-1}=o(1)$
we then have
\begin{align}\label{eq:a-final}
\left|
\widetilde a_{\sf u}(t)
-
\exp
\left(
-t^{k-1}L_{\sf u}
\right)
\right|
=
O
\left(
(d_\sf u+1)D(t)
\right) 
+
o(d_\sf u+1)\, 
\end{align}
where the implicit constant and function in the $O, o$ notation are uniform in $\sf u, v, t$.
Multiplying this inequality by $r_{\ell(\sf u)}(t)\leq p$, integrating, and recalling the identity~\eqref{eq:o-cond-tau} yields
\begin{align}\label{eq:o-final}
\left|
\widetilde o_{\sf u}(t)
-
\int_{0}^t
r_{\ell(\sf u)}(s)
\exp
\left(
-s^{k-1}L_{\sf u}
\right)
 ds
\right|
=
\left|
\widetilde o_{\sf u}(t)
-
t x_{\ell(\sf u)}
\right|
=
O
\left(
p
(d_\sf u+1)
\int_{0}^tD(s)\, ds
\right) +o(p(d_\sf u+1))\, .
\end{align}
where for the second inequality we
used that 
$
r_{\ell(\sf u)}(s)
\exp
(-s^{k-1}L_{\sf u})
=
x_{\ell(\sf u)}
$
by the definition of $r_{\ell(\sf u)}(s)$ and the fact that $x$ is a BP fixed point. Dividing by $(d_\sf u+1)x_{\ell(\sf u)}$, recalling that $x_{\ell(\sf u)}=\Theta(p)$ uniformly in $\sf u$, and taking suprema gives
\[
D(t) \leq C
\int_{0}^tD(s)\, ds
+o(1)
\]
for some constant $C$ and error term both uniform in $t$ and $v$.
Applying Gronwall's inequality (Lemma~\ref{lem:Gronwall}) we conclude that 
\[
D(t)=o(1)\, ,
\]
uniformly in $t, v$.
Returning to~\eqref{eq:a-final} at the root $\sf r$ where $d_\sf r=0$ and $L_\sf r=\log (p/x_v)$ completes the proof. 
\end{proof}

It remains to show that $a^{\textrm{ind}}_v(t)$ is close to $a^{\textrm{sh}}_v(t)$. 
\begin{lemma}\label{lem:as-close}
We have
\[
|a^{\textrm{ind}}_v(t)-a^{\textrm{sh}}_v(t)|=o(1)
\]
uniformly in $v\in V$ and $t\in [0,1]$.
\end{lemma}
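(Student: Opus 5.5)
We couple the two processes: give every node $\sf u$ of $\mathbb T_v$ an independent mark $(\tau_{\sf u},\xi_{\sf u})$ for the independent clock process. For the shared clock process, use the same marks except that repeated labels copy the mark of one canonical node. Concretely, explore the influence cluster of the root at time $t$ in breadth-first order. The first time a label $w$ is encountered, that node keeps its independent mark. Any later node with label $w$ receives that same mark. As long as no label is repeated inside the explored cluster, the marks seen by both processes on $\mathcal J_{\sf r}(t)$ coincide. Since the cluster determines root availability, the two processes then agree on whether the root is available. Hence
\[
|a^{\textrm{ind}}_v(t)-a^{\textrm{sh}}_v(t)|\leq \P(\text{the influence cluster of the root contains two distinct nodes with the same label}).
\]
Here the cluster is computed in the independent process, and the event is the same under the coupling up to the first collision.

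We bound this probability in two steps: truncate in depth, then count collisions via a first moment.

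\textbf{Step 1 (depth).} A node $\sf w$ at depth $d$ is in the cluster only if there is a chain ${\sf r}=\sf w_0,\sf w_1,\dots,\sf w_d=\sf w$. In this chain each parent edge of $\sf w_{i}$ is influential at time $\tau_{\sf w_{i-1}}$, and therefore the clocks decrease along the chain. In the independent process, a given tree edge is influential with probability at most $p^{k-1}$, since each coin must satisfy $\xi\le r\le p$. Each node has at most $\Delta$ child edges, each with $k-1$ nodes. Since clocks along the chain must be decreasing, this contributes an extra factor $1/d!$. So the expected number of depth-$d$ nodes in the cluster is at most $((k-1)\Delta p^{k-1})^d/d! \le K^d/d!$. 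Choosing $D\to\infty$ slowly, the probability that the cluster reaches depth $D$ is $o(1)$. The same bound shows the expected cluster size is $O(1)$.

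\textbf{Step 2 (collisions).} Within depth $D$, we bound the expected number of pairs of distinct cluster nodes $\sf w\neq\sf w'$ with $\ell(\sf w)=\ell(\sf w')$. Such a pair gives two chains from the root that branch at a last common ancestor $\sf z$. Their labels form a closed structure in $\cH$: two self-avoiding walks from $\ell(\sf z)$ along distinct edges ending at the same vertex. We count these as in Lemma~\ref{lem:tree-estimator}, building the union of the two label walks edge by edge, each with weight $p^{k-1}$ per edge. In a tree-like walk each new edge costs $\Delta p^{k-1}=O(1)$. The final step, where the second walk returns to an already used vertex, costs one of two things:
\begin{enumerate}
\item a codegree factor $\eta\Delta p^{k-1}=O(\eta)$ by \cref{itm:setup codegree bound}, if a new edge must contain two already-chosen vertices (for $k=2$), or
\item $\Delta_\ell p^{k-\ell}=O(\eta)$ for some $2\le \ell\le k-1$ by \cref{itm:setup Delta ell UB}, when the overlap is through a partial edge.
\end{enumerate}
Distinct edges are forced by the self-avoiding property, and if both walks use the same edge they branch at distinct children. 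So the expected number of collisions within depth $D$ is at most $C_D\eta=o(1)$ once $D\to\infty$ slowly relative to $\eta$. Markov's inequality then gives the lemma, uniformly in $v,t$.

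\textbf{Main obstacle.} The delicate part is making the coupling rigorous. The shared process reuses marks, so influence events along the two chains are not independent, and the probability bounds of Step 1 must be applied to the independent process with marks revealed in exploration order. We would handle this by defining the bad event as ``first repeated label during exploration''. Up to that stopping time the explored marks are genuinely independent uniforms in both processes, and the union bound over chains in Step 2 can be carried out entirely in the independent process. We would also check separately the case where a repeated label is the root label $v$. That case is excluded because SAWs never revisit $v$.
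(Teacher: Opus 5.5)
The gap is in the coupling step, and that step is the whole difficulty of this lemma. Whether a child set $\mathsf e\in E'_{\mathsf x}$ of a cluster node is influential depends on the marks of \emph{all} nodes of $\mathsf e$. These nodes are usually not in the cluster, and their labels routinely repeat labels of other inspected nodes. So having no repeated label in $\mathcal J_{\mathsf r}(\omega_{\textrm{ind}},t)$ does not make the two clusters agree.

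Concretely, take the triangle hypergraph, root label $ab$ and $t=1$. With probability bounded below the root child set $\{ac,bc\}$ is influential. Then the $\Theta(n)$ child sets $\{ad,cd\}$ of the node labelled $ac$ are inspected, and every label $ad$ was already inspected as a root child. An adaptive rule preserves the law of the shared process only if a label's representative is chosen before its mark is seen, e.g.\ the first \emph{inspected} node. Taking the first \emph{cluster} node instead biases its mark, which has passed its coin test. A fixed choice, such as the shallowest node, fails in the same way as what follows.

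Under that rule, the depth-two nodes labelled $ad$ use root-child marks in the shared process but their own marks in the independent one. These are different, essentially independent marks. So with probability bounded below, some set $\{ad,cd\}$ is influential in the independent process and none is in the shared one. The node labelled $ac$, and then (one checks) the root, therefore has different availability in the two processes with probability bounded below. Yet $\mathcal J_{\mathsf r}(\omega_{\textrm{ind}},t)$ contains no repeated label, since the root child labelled $ad$ is not in it.

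So your displayed inequality is not established. The stopping-time fix does not help either: a repeated label among inspected nodes occurs with probability bounded below. Also, Step~2 bounds a different event, namely a pair inside a single cluster. The two processes are close in law, but not pathwise under such a coupling.

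The missing idea is a way to compare laws without a pathwise coupling. The paper truncates by cluster size and interpolates through intermediate processes, uncoupling the nodes $\mathsf u_1,\dots,\mathsf u_a$ of one label one at a time. Plain telescoping would be useless here. A single changed mark can alter $F$ whenever that node lies in a cluster, and summed over all nodes with repeated labels this is of constant order.

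The key step is the $\Gamma\leftrightarrow\Gamma_i$ symmetrization \eqref{eq:Gamma-swap}. It writes each increment as half the mean of a four-term mixed difference $D_i$. By Observation~\ref{obs:F-diff}, $D_i\neq 0$ forces both $\mathsf u_i$ and some $\mathsf u_j$ with $j>i$ into the union of four truncated clusters. This is why Lemma~\ref{lem:influence-same-label} concerns two same-label nodes lying in clusters of two \emph{different}, correlated processes. There, only a factor $p$ per distinct label is available, not your $p^{k-1}$ per tree edge, which is valid only in the independent process. That is why its proof needs a separate analysis, via $\Delta_\ell$ and the codegree bound, for an edge that brings no new labels.

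Finally, the truncation must also be controlled for $\omega_{\textrm{sh}}$. The paper does this by applying Lemma~\ref{lem:fourcorners} to $\mathbf 1_{\{|\mathcal J_{\mathsf r}(\omega,t)|\le m\}}$, whereas your Step~1 bounds only the independent cluster.
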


\subsection{Intermediate clock processes on $\bb T_v$}
To prove this lemma, it will be useful to `interpolate' between the shared and independent clock processes. We do this by going through the nodes that are coupled to another node one by one, and introducing a clock and coin that is independent from everything else. This leads us to \emph{intermediate clock processes} which we define as follows. First recall that in the independent clock process, each node $\sf u\in V(\bb T_v)$ is assigned an independent mark $(\tau_{\sf u}, \xi_{\sf u})$ which is uniform in $[0,1]^2$. Now, given a map $\rho: V(\bb T_v)\to V(\bb T_v)$ such that $\ell(\rho(\sf u))=\ell(\sf u)$, we let $\omega_\rho$ denote the \emph{intermediate} clock process where we assign the node $\sf u$ the mark $(\tau_{\rho(\sf u)}, \xi_{\rho(\sf u)})$. In other words, two nodes use the same mark precisely when they have the same image under $\rho$, otherwise their marks are independent. In particular, nodes with distinct labels receive independent marks. Note that $\omega_\rho$ defines a random set $I\subseteq V(\mathbb T_v)$ in the same way as in the independent/shared clock processes.

If $\rho$ is the identity map, then $\omega_\rho$ is the independent clock process. If for each $x\in V(\cH)$, we pick a unique representative $\sf w_x\in V(\bb T_v)$ such that $\ell(\sf w_x)=x$ and define $\rho(\sf u)= \sf w_{\ell(\sf u)}$ for all $\sf u$, then $\omega_\rho$ is the shared clock process.  Henceforth, let $\omega_{\textrm{ind}}, \omega_{\textrm{sh}}$ denote the independent and shared clock process respectively. 

Roughly speaking, we will show that if availability of the root differs under $\omega_{\textrm{ind}}, \omega_{\textrm{sh}}$, then we can find two distinct nodes of $\mathbb T_v$ with the same label that influenced the root in some intermediate model. We then show this event is unlikely, using the fact that $\cH$ is $\eta$-locally tree-like with $\eta\to 0$.

We now make this formal. 

Let $\omega$ be a clock process. For $m\in\mathbb N$, it will also be useful to consider the \emph{truncated} influence cluster $\mathcal{J}^m_\sf u(\omega,t)$ where we stop building the influence cluster at $\sf u$ under $\omega$ at time $t$ (following some arbitrary fixed ordering in which to process nodes and edges) as soon as its number of nodes is $> m$. Thus $|\mathcal{J}^m_\sf u(\omega,t)|\leq m+k-1$.

Let $A(\omega,t)$ denote the indicator of the event that the root $\sf r$ is available under $\omega$ at time $t$. Let $A_m(\omega,t)=A(\omega,t) \mathbf 1_{\{|\mathcal{J}_\sf r(\omega,t)|\leq m\}}$.  Lemma~\ref{lem:as-close} will follow easily from the following two lemmas. 

\begin{lemma}\label{lem:fourcorners}
Fix $m\in \N$, and let $F(\omega,t)=A_m(\omega,t)$ or $F(\omega,t)=\mathbf 1_{\{|\mathcal{J}_\sf r(\omega,t)|\leq m\}}$. Then 
\[
\left|
\E(F(\omega_\textrm{sh},t))
-
\E(F(\omega_\textrm{ind},t))
\right|
=o(1)\, ,
\]
uniformly in $t\in [0,1]$.
\end{lemma}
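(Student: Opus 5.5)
The plan is to interpolate between $\omega_{\textrm{ind}}$ and $\omega_{\textrm{sh}}$ through intermediate processes $\omega_\rho$, changing one coupling at a time, and to show that each change rarely affects $F$. The key structural point is that $F(\omega,t)$ depends only on the truncated influence cluster $\mathcal J^m_{\sf r}(\omega,t)$. This cluster has at most $m+k-1$ nodes, and whether $|\mathcal J_{\sf r}|\le m$ and whether the root is available are both determined by it. So if two clock processes assign the same marks to every node the truncated exploration visits, they give the same $F$. Hence $F(\omega_{\textrm{sh}},t)\neq F(\omega_{\textrm{ind}},t)$ can only happen if the exploration under some process encounters two distinct nodes with the same label.

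Concretely, I would couple all processes on one probability space. Each node $\sf u$ receives an independent mark $(\tau_{\sf u},\xi_{\sf u})$, and $\omega_{\textrm{sh}}$ uses the representative mark $(\tau_{\sf w_{\ell(\sf u)}},\xi_{\sf w_{\ell(\sf u)}})$. To avoid an ordering issue, I would choose representatives adaptively along the exploration: run the truncated exploration under $\omega_{\textrm{sh}}$, and let the first explored node with a given label be the representative, which is allowed since any choice of representatives yields the same law of $\omega_{\textrm{sh}}$. Then the explorations under $\omega_{\textrm{sh}}$ and $\omega_{\textrm{ind}}$ coincide, step by step, until the first time a node $\sf u'$ is explored whose label already appears among previously explored nodes. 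I would define $B$ as the event that the truncated exploration under $\omega_{\textrm{ind}}$ contains two distinct nodes with the same label. On the complement of $B$ the two explorations agree throughout, so it suffices to show $\Pr(B)=o(1)$ uniformly in $t$ and $v$.

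To bound $\Pr(B)$ under the independent process, I would union bound over bounded-size shapes. A realized truncated cluster is a subtree of $\mathbb T_v$ containing the root, with at most $m/(k-1)+1$ edges, and each included edge $\sf e$ must be influential. Under independent marks this has probability at most $\prod_{\sf w\in\sf e}\Pr(\tau_{\sf w}<t,\ \xi_{\sf w}\le r(\tau_{\sf w}))\le p^{k-1}$, since $r\le p$. Thus the expected number of clusters of a given shape, i.e.\ the number of embeddings times $p^{(k-1)(\#\text{edges})}$, is $O_m(1)$, because each node has at most $\Delta$ child edges and $\Delta p^{k-1}=O(1)$. The same computation with labels is where the locally tree-like hypotheses enter. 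A repeated label on two distinct tree nodes corresponds to a closed walk in $\mathcal H$ of length $O(m)$ from the SAW structure, that is, a bounded-length cycle through the labels. Counting such configurations, as in Lemma~\ref{lem:tree-estimator}, forces at some step either a codegree factor $\eta\Delta$ in place of $\Delta$ (from \ref{itm:setup codegree bound}) or an $\ell$-degree factor $\Delta_\ell p^{k-\ell}=O(\eta)$ (from \ref{itm:setup Delta ell UB}), giving $\Pr(B)\le C_m\eta=o(1)$.

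I expect the main obstacle to be this cycle-counting step. I would handle it by ordering the explored edges, and examining the first edge whose label set intersects the labels already seen in a vertex not accounted for by the tree parent. That edge either shares at least two vertices with the union of earlier labels, which gives a $\Delta_{\ell}$ factor with $\ell\ge2$, or it closes a cycle at a single vertex whose label was already used. In the second case the SAW constraint (distinct vertices along each walk) means the repeated vertex is reached through a different branch, and the codegree bound applies at the branch point. A subsidiary check is that the adaptive choice of representatives genuinely reproduces the law of $\omega_{\textrm{sh}}$; if that is delicate, I would instead telescope through a sequence of intermediate processes $\omega_\rho$, merging one label class at a time, and bound each step by the probability of the same collision event.
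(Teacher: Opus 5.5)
Your argument breaks at the claim that on the complement of $B$ the two explorations agree. (The adaptive choice of representatives is legitimate; that is not the problem.)

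\textbf{Why the coupling fails.} The influence cluster is built by testing \emph{every} child edge of every included node. So $F$ also depends on the marks of all nodes in non-influential child edges. These boundary nodes number of order $\Delta$ per cluster node and are never included. They repeat labels constantly. Under your coupling (own marks for $\omega_{\textrm{ind}}$, the representative's mark for $\omega_{\textrm{sh}}$), any boundary node whose label was already read gets independent marks in the two processes. So the explorations can split even when neither cluster contains two nodes with the same label. If instead you enlarge $B$ to collisions among all read nodes, then $\Pr(B)$ is no longer small.

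\textbf{A concrete instance.} Take $\mathcal H$ encoding triangles in $K_n$ ($k=3$, $\Delta\approx n$, $p\approx cn^{-1/2}$) and $t=1$. With probability $\Theta(1)$ the root (label $xy$) has an influential child edge $\{\mathsf{xz},\mathsf{yz}\}$. Child edges of $\mathsf{xz}$ carry labels $\{xw,zw\}$ and those of $\mathsf{yz}$ carry $\{yw,zw\}$, so label $zw$ is read twice for each of $\approx n$ values of $w$. When $\mathsf{xw}$ fails its test, the first $zw$-node stays outside both clusters. Yet the second $zw$-node uses that mark under $\omega_{\textrm{sh}}$ and a fresh mark under $\omega_{\textrm{ind}}$. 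The edge $\{yw,zw\}$ below $\mathsf{yz}$ is then influential in exactly one process with probability $\Theta(p^2)$. The expected number of such $w$ is $\Theta(np^2)=\Theta(1)$. Such a discrepancy can switch whether $\mathsf{yz}$ is occupied, and hence whether the root is available. So under your coupling $\Pr(F(\omega_{\textrm{sh}},t)\neq F(\omega_{\textrm{ind}},t))$ stays bounded away from $0$, although no cluster ever contains a same-label pair. The expectations are close only because the reused mark is essentially fresh for the second subtree. That is a cancellation in expectation, not almost-sure agreement.

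\textbf{The fallback and the missing idea.} Your fallback has the same flaw. Changing the marks of one label class alters $F$ as soon as a \emph{single} node of that class enters one of the two clusters (Observation~\ref{obs:F-diff}). That is a first-order event, and summed over label classes it is of order $\E|\mathcal J_{\mathsf r}|=O(1)$. The missing idea is the paper's symmetrization:
\begin{itemize}
\item Uncouple one node $\mathsf u_i$ at a time by giving it a fresh mark $\Gamma_i$.
\item Use invariance of the law under swapping $\Gamma$ and $\Gamma_i$ to replace $\E[F(\omega_{i-1})-F(\omega_i)]$ by half the mixed difference in \eqref{eq:Gamma-swap}.
\item Apply Observation~\ref{obs:F-diff} to two different pairings of the four processes. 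This works because the first discrepant edge is influential, hence inside a cluster, in one of the two processes compared.
\item Conclude that the mixed difference vanishes unless $\mathsf u_i$ and some $\mathsf u_j$ with $j>i$ both lie in the union of four truncated clusters, a genuinely second-order event.
\end{itemize}
This is why Lemma~\ref{lem:influence-same-label} must control same-label pairs lying in clusters of two \emph{different} intermediate processes, not just your event $B$ under $\omega_{\textrm{ind}}$. Your cycle-counting sketch is essentially that lemma's proof and is fine.
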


\begin{lemma}\label{lem:inf1stmoment}
Uniformly over $v\in V$ and $t\in [0,1]$,
\[
\E(|\mathcal J_{\sf r}(\omega_{\textrm{ind}},t)|)=O(1)
\]
\end{lemma}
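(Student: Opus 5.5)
We bound $\E|\mathcal J_{\sf r}(\omega_{\textrm{ind}},t)|$ by a first-moment count over the nodes of $\bb T_v$ that can lie in the cluster. Each such node is reached through a chain of influential edges, so we organize the count as a branching (Galton--Watson type) recursion on the tree. The key features of the independent process are these: marks at distinct nodes are independent, and the cluster is built only from \emph{child} edges. We define the cluster of the root at time $t$ recursively as follows. A child edge $\sf e\in E'_{\sf u}$ of a node $\sf u$ whose reference time is $s$ (with $s=t$ for the root and $s=\tau_{\sf u}$ otherwise) is included if every $\sf w\in\sf e$ satisfies $\tau_{\sf w}<s$ and $\xi_{\sf w}\le r_{\ell(\sf w)}(\tau_{\sf w})$.

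For a node $\sf u$ and a time $s$, let $M_{\sf u}(s)$ be the expected size of the cluster grown from $\sf u$ with reference time $s$. By independence and linearity,
\[
M_{\sf u}(s)\le 1+\sum_{\sf e\in E'_{\sf u}}\E\Big[\mathbf 1\{\sf e\text{ influential at } s\}\sum_{\sf w\in\sf e}M_{\sf w}(\tau_{\sf w})\Big].
\]
The children's marks are independent of each other and of the subtrees below them. So, conditioning on the marks of $\sf e$, each summand factorizes: for each $\sf w'\in\sf e\setminus\{\sf w\}$ we get a factor $\P(\tau_{\sf w'}<s,\ \xi_{\sf w'}\le r(\tau_{\sf w'}))\le ps$, and for $\sf w$ itself we get $\int_0^s r_{\ell(\sf w)}(\sigma)M_{\sf w}(\sigma)\,d\sigma\le p\int_0^s M_{\sf w}(\sigma)d\sigma$. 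Since $|E'_{\sf u}|\le\Delta$, setting $\bar M(s)=\sup_{\sf u}M_{\sf u}(s)$ gives
\[
\bar M(s)\le 1+(k-1)\Delta p^{k-1}s^{k-2}\int_0^s\bar M(\sigma)\,d\sigma\le 1+C\int_0^s \bar M(\sigma)\,d\sigma,
\]
using $\Delta p^{k-1}\le c^{-(k-1)}$ from \cref{itm:setup p is critical}. Lemma~\ref{lem:Gronwall} (Gronwall) then yields $\bar M(t)\le e^{C}$, uniformly in $v$ and $t$.

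The main obstacle is making this rigorous. The tree $\bb T_v$ is infinite in principle, though it is actually finite since SAWs in a finite $\cH$ are finite. A priori the suprema might be infinite, and $M_{\sf u}$ must be measurable and integrable for Gronwall to apply. I would handle this in one of two ways. The first is to run the recursion on the truncated tree of depth $R$, where the bound holds uniformly in $R$ because all estimates use only $|E'_{\sf u}|\le \Delta$; monotone convergence then passes to the full tree. The second uses that $\bb T_v$ is finite, so $M_{\sf u}$ is bounded by $|V(\bb T_v)|$ and the supremum over the finitely many nodes is finite. Either way, Lemma~\ref{lem:Gronwall} applies to the bounded function $\bar M$, which is measurable and in fact nondecreasing in $s$, since cluster membership is monotone in the reference time.

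One more point needs checking. A node $\sf w$ in an influential edge has its own cluster grown at reference time $\tau_{\sf w}$, which is exactly how Weitz-style influence clusters were defined. The decomposition is therefore exact up to double counting, and double counting only helps the upper bound. The exponent $s^{k-2}\le1$ is harmless.
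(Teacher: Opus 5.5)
Your argument is correct, but it is organized differently from the paper's proof.

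\textbf{The paper's route.} The paper does a direct first-moment count by depth. There are at most $((k-1)\Delta)^d$ nodes $\mathsf w$ at depth $d$. For such a node to lie in $\mathcal J_{\mathsf r}(\omega_{\textrm{ind}},t)$, two things must happen:
\begin{itemize}
\item the clocks along the spine of the root-to-$\mathsf w$ path must be decreasing, which has probability at most $1/d!$;
\item all $(k-1)d$ non-root nodes of that path must pass their coin tests, which has probability at most $p^{(k-1)d}$ given the clocks.
\end{itemize}
So the expected number of cluster nodes at depth $d$ is at most $((k-1)\Delta p^{k-1})^d/d!$, and summing over $d$ gives $\exp((k-1)\Delta p^{k-1})$.

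\textbf{Your route.} You use the tree structure and the independence of marks to write a one-step recursion for $M_{\mathsf u}(s)$, the expected cluster size as a function of the reference time. You then close it with \cref{lem:Gronwall}, in the same style as the proof of \cref{lem:ind-availability}.

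\textbf{Comparison.} The two arguments are really the same computation. Picard-iterating your inequality $\bar M(s)\le 1+C\int_0^s\bar M$, with $C=(k-1)\Delta p^{k-1}$, reproduces exactly the paper's series $\sum_d C^d s^d/d!$. The only difference is that your $1/d!$ comes from nested time integrals rather than from the probability that $d$ clocks appear in a given order. The paper's version is shorter and needs no analytic side conditions. Yours requires $\bar M$ to be finite and measurable in advance, and you supply both correctly:
\begin{itemize}
\item $\mathbb T_v$ is finite, because self-avoiding walks in $\mathcal H$ have length less than $n$;
\item $M_{\mathsf u}(s)$ is nondecreasing in $s$, so $\bar M$ is measurable.
\end{itemize}

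One small remark: your recursion is in fact an identity, not just an upper bound. Distinct child edges, and the subtrees below distinct children, of a linear hypertree are disjoint, so there is no double counting to worry about.
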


Before proving these lemmas we first show how they imply Lemma~\ref{lem:as-close}.

\begin{proof}[Proof of Lemma~\ref{lem:as-close}]
Fix an $m\in \N$ and recall that $A_m(\omega,t)=A(\omega,t) (1-\mathbf 1_{\{|\mathcal{J}_\sf r(\omega,t)|> m\}})$ and so
\begin{align*}
|a^{\textrm{ind}}_v(t)-a^{\textrm{sh}}_v(t)|
&=
|\E (A(\omega_\textrm{ind},t)) -
\E (A(\omega_\textrm{sh},t))|\\
&\leq
|\E (A_m(\omega_\textrm{ind},t)) -
\E (A_m(\omega_\textrm{sh},t))|
+\P(|\mathcal{J}_\sf r(\omega_\textrm{ind},t)|> m)
+\P(|\mathcal{J}_\sf r(\omega_\textrm{sh},t)|> m)\, .
\end{align*}
By Lemma~\ref{lem:fourcorners} with 
$F(\omega)=A_m(\omega,t)$ we have
$|\E (A_m(\omega_\textrm{ind},t)) -
\E (A_m(\omega_\textrm{sh},t))|=o(1)$. By Lemma~\ref{lem:inf1stmoment} and Markov's inequality, $\P(|\mathcal{J}_\sf r(\omega_\textrm{ind},t)|> m)=O(1/m)$. Applying Lemma~\ref{lem:fourcorners}, this time with $F(\omega)=\mathbf 1_{\{|\mathcal{J}_\sf r(\omega,t)|\leq m\}}$, then gives $\P(|\mathcal{J}_\sf r(\omega_\textrm{sh},t)|> m)=O(1/m)+o(1)$. The result follows by taking $m\to\infty$ sufficiently slowly. 
\end{proof}

It remains to prove Lemmas~\ref{lem:fourcorners} and \ref{lem:inf1stmoment}. We begin with Lemma~\ref{lem:inf1stmoment}.

\begin{proof}[Proof of Lemma~\ref{lem:inf1stmoment}]
Let $N_d$ denote the number of nodes in $\mathcal J_{\sf r}(\omega_{\textrm{ind}},t)$ at depth $d$ and note that $N_d\leq ((k-1)\Delta)^d$. Suppose $\sf w\in \mathcal J_{\sf r}(\omega_{\textrm{ind}},t)$ has depth $d$ and let $P_\sf w$ denote the path from $\sf r$ to $\sf w$.  If $\sf u_1, \ldots, \sf u_{d-1}$ are the nodes of degree $>1$ in $P_\sf w$, we must have $\tau_\sf w<\tau_{\sf u_{d-1}}<\ldots<\tau_{\sf u_1}$ which occurs with probability $1/d!$. Moreover, all the non-root nodes of $P_\sf w$ must have passed their coin tests which, conditioned on any outcome of the clocks at these node, occurs with probability $\leq p^{(k-1)d}$. We conclude that 
\[
\E N_d \leq \frac{((k-1)\Delta p^{k-1})^d}{d!}\, .
\]
Thus
\[
\E|\mathcal J_{\sf r}(\omega_{\textrm{ind}},t)|
\leq
\sum_{d\geq0}
\frac{((k-1)\Delta p^{k-1})^d}{d!}
=
\exp((k-1)\Delta p^{k-1})
=
O(1).
\]
\end{proof}

 We now turn to Lemma~\ref{lem:fourcorners}. We begin with a basic observation.
 
 \begin{observation}\label{obs:F-diff}
 Fix $m\in \N$ and let $F$ be as in Lemma~\ref{lem:fourcorners}.
 Suppose that $\omega, \omega'$ are two clock processes on $\bb T_v$ that differ only in the marks at nodes $\sf U\subseteq V(\bb T_v)$. Suppose that $F(\omega,t)\neq F(\omega',t)$,  then  $\sf U\cap (\mc J^m_\sf r(\omega,t) \cup \mc J^m_\sf r(\omega',t))\neq \emptyset$.
 \end{observation}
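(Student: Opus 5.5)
We prove the contrapositive: if $\sf U$ misses both truncated clusters $\mc J^m_\sf r(\omega,t)$ and $\mc J^m_\sf r(\omega',t)$, then $F(\omega,t)=F(\omega',t)$. The plan is to show that under this hypothesis the two truncated clusters are literally the same set, built by the same sequence of steps. Then we check that $F$ is a function of the truncated cluster (together with the marks on it).

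For the first part, recall that $\mc J^m_\sf r(\omega,t)$ is grown by a deterministic exploration, following the fixed ordering of nodes and edges. When a node $\sf w$ is processed, the algorithm inspects each $\sf e\in E'_{\sf w}$ and decides whether it is influential at time $t$ (if $\sf w=\sf r$) or at time $\tau_{\sf w}$ (otherwise). This decision reads only the marks of $\sf w$ (through its clock) and of the nodes of $\sf e$. Whenever such an $\sf e$ is influential, all its nodes are added to the cluster. If $\sf e$ is not influential, then some node of $\sf e$ fails its clock or coin test. This node is the one subtle point, since a node that is read but never added might still lie in $\sf U$.

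To handle it, we run both explorations in lockstep and argue by induction on the number of steps that they coincide. Suppose that up to some step the explored sets agree and contain no node of $\sf U$, so the processed nodes carry identical marks under $\omega$ and $\omega'$. Consider the next edge $\sf e$ examined. If $\sf e$ is influential under $\omega$, then all nodes of $\sf e$ are added to $\mc J^m_\sf r(\omega,t)$, so by hypothesis none lies in $\sf U$. Their marks therefore agree, and $\sf e$ is influential under $\omega'$ as well; the symmetric argument applies with $\omega$ and $\omega'$ swapped. Hence $\sf e$ is influential under $\omega$ iff under $\omega'$, and in both cases the same nodes are added. The truncation rule (stop once more than $m$ nodes have been added) depends only on the sequence of additions, so both explorations stop at the same moment with $\mc J^m_\sf r(\omega,t)=\mc J^m_\sf r(\omega',t)$, and all nodes in this common set carry the same marks under $\omega$ and $\omega'$.

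For the second part, we show that $F$ depends only on the truncated cluster with its marks. First, $\mathbf 1_{\{|\mc J_\sf r(\cdot,t)|\le m\}}$ is the event that the truncated exploration terminates on its own with at most $m$ nodes, rather than being cut off. On that event $\mc J^m_\sf r=\mc J_\sf r$. Availability of $\sf r$ is determined by the full cluster and its marks: we decide which nodes are occupied recursively from the leaves, exactly as in the proof of \cref{lem:Weitz}. Hence $A_m=A\cdot\mathbf 1_{\{|\mc J_\sf r|\le m\}}$ is also a function of the truncated cluster and its marks. Since these coincide under $\omega$ and $\omega'$, we get $F(\omega,t)=F(\omega',t)$, as claimed.

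I expect the lockstep induction to be the only delicate point. The observation only controls nodes that end up in the clusters, yet non-influential edges may contain unexplored nodes of $\sf U$. The symmetric argument above resolves this: an influential edge under either process has all its nodes inside that process's cluster, so its marks agree and it is influential under the other process too.
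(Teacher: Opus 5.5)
Your proof is correct and is essentially the paper's argument in contrapositive form. The paper takes the first edge at which the two truncated explorations diverge, notes that its parent lies in both clusters and may be assumed to lie outside $\mathsf U$ (so the relevant clocks agree), and concludes that the diverging edge, which lies in one of the two clusters, must meet $\mathsf U$; your lockstep induction performs exactly this check step by step, and you are more explicit than the paper about $F$ being a function of the truncated cluster together with its marks.
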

\begin{proof}
First note that $F(\omega,t)$ is determined by the influence cluster $\mc J^m_\sf r(\omega,t)$ and so $F(\omega,t)\neq F(\omega',t)$ implies that $\mc J^m_\sf r(\omega,t)\neq \mc J^m_\sf r(\omega',t)$. Let $\sf e$ be the first set that is included in $\mc J^m_\sf r(\omega,t)$ but not $\mc J^m_\sf r(\omega',t)$ (or vice versa) in the exploration process that defines the truncated influence cluster. Let $\sf x$ denote the parent node of $\sf e$. Since $\sf x\in \mc J^m_\sf r(\omega,t)$ we may assume that $\sf x \notin \sf U$ (else we are done). Let $\tau_\sf x$ denote the clock at $\sf x$ under $\omega, \omega'$ (where we set $\tau_\sf x=t$ if $\sf x$ is the root). It then follows that $\sf e$ is influential at time $\tau_\sf x$ under $\omega$ but not under $\omega'$ (or vice versa). Since $\omega, \omega'$ are identical except for the marks assigned to $\sf U$ it must then be the case that $\sf U\cap \sf e\neq \emptyset$. 
\end{proof}

Before proving Lemma~\ref{lem:fourcorners} we also need the following lemma which shows that in any intermediate clock model, it is unlikely to find two distinct nodes of the same label in the influence cluster. In fact, we need the following slightly stronger statement.

\begin{lemma}\label{lem:influence-same-label}
Fix $m$ and for each pair $\sf u, \sf w\in V(\mathbb T_v)$ fix a pair of intermediate processes $\omega_{\sf u, \sf w}, 
\omega'_{\sf u, \sf w}$.
We have, uniformly in $t\in [0,1]$,
\[
\sum_
{\substack
{
\sf u \neq \sf w
\\
\ell(\sf u) = \ell(\sf w)
}}
\P
\left(
\sf u 
\in
\mathcal J_\sf r^m(\omega_{{\sf u, \sf w}},t)
,
\sf w 
\in
\mathcal J_\sf r^m(\omega'_{{\sf u, \sf w}},t)
\right)
=
o(1)\, .
\]
\end{lemma}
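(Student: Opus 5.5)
We bound each summand by a first-moment (path-counting) estimate, as in the proof of Lemma~\ref{lem:inf1stmoment}, and then sum over pairs $\sf u\neq\sf w$ with a common label.

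\textbf{Paths into a truncated cluster.} If $\sf u\in\mathcal J^m_{\sf r}(\omega,t)$, the whole root-to-$\sf u$ path $P_{\sf u}$ lies in the cluster, so its depth is at most $m$. Each node on the path has passed its coin test, and the clocks along the path decrease from the root. For the two nodes we take the union $P_{\sf u}\cup P_{\sf w}$: a subtree consisting of a common trunk from $\sf r$ to the last common ancestor $\sf z$, followed by two disjoint branches.

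\textbf{Marks in an intermediate process.} The difficulty is that in $\omega_{\sf u,\sf w}$ and $\omega'_{\sf u,\sf w}$ several nodes may share marks, so coin tests are not independent. I would handle this crudely. Let $\mathcal E$ be the event that every node of $P_{\sf u}$ passes its test under $\omega$ and every node of $P_{\sf w}$ passes under $\omega'$. Since $\omega$ and $\omega'$ are both built from the same family of independent marks on $V(\bb T_v)$, we can realise them on one probability space. Then $\mathcal E$ is contained in the event that, for every distinct label occurring among the non-root nodes of $P_{\sf u}\cup P_{\sf w}$, the associated coin(s) satisfy $\xi\leq p$, since $r\leq p$.

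Tree edges along a path carry distinct $\cH$-edges, and each edge contributes $k-1$ fresh labels except where labels repeat. The probability of $\mathcal E$ is therefore at most $p^{\#\text{distinct labels}}$. I drop the clock-ordering factor: depth is at most $m$, so there is no need for $1/d!$.

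\textbf{Counting configurations.} Let $x=\ell(\sf u)=\ell(\sf w)$. It remains to bound
\[
\sum_{\substack{\sf u\neq \sf w\\ \ell(\sf u)=\ell(\sf w),\ d_{\sf u},d_{\sf w}\leq m}} p^{N(\sf u,\sf w)},
\]
where $N$ is the number of distinct labels of non-root nodes in $P_{\sf u}\cup P_{\sf w}$. Project $P_{\sf u}\cup P_{\sf w}$ to $\cH$. We get a walk structure from $v$ that closes a cycle: two self-avoiding walks from $v$ diverging at $\ell(\sf z)$ and both ending at $x$. Since each walk is self-avoiding, the projected edge set forms a connected sub-hypergraph with at most $2m$ edges containing a cycle (Berge cycle).

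Reveal the edges in order along the trunk, then branch one, then branch two. Tree-like steps contribute a factor $\Delta p^{k-1}=O(1)$. Consider the step that closes the cycle, that is, the last edge of branch two, which must contain the already-present vertex $x$ (or an earlier repeated vertex). It either
\begin{itemize}
\item[(a)] contains two previously seen vertices, contributing at most $\Delta_\ell\,p^{k-\ell}=O(\eta)$ by \cref{itm:setup Delta ell UB}, or the codegree bound $\eta\Delta p^{k-1}$ by \cref{itm:setup codegree bound} when $\ell=2$ with one new overlap; or
\item[(b)] is an edge already used, which is excluded by the distinct-edge/SAW constraints except at the branch point, where it gives codegree-type losses.
\end{itemize}
This is exactly the case analysis in the proof of Lemma~\ref{lem:tree-estimator}. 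With $2m$ edges there are $O_m(1)$ shapes, so the total is $O_m(\eta)+O_m(p)=o(1)$, using $p=O(\eta)$ as noted in the proof of Lemma~\ref{lem:stein-app2}.

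\textbf{Main obstacle.} The delicate step is rigorously arguing that the projection forces a genuine cycle that costs a factor $\eta$. We know $\sf u\neq\sf w$ have the same label, and the walks are distinct SAWs ending at the same vertex. I would show that at the first divergence point the two walks use either different edges or different next vertices, and that rejoining at $x$ forces an edge meeting previously revealed vertices in at least two places, or a pair of vertices with a common completion. This is where \cref{itm:setup codegree bound} and \cref{itm:setup Delta ell UB} enter. The remaining subtlety is that the distinct-label count may be reduced by further label coincidences elsewhere. Each such coincidence only produces additional cycle-closing steps, and each of these again costs $O(\eta)$ or $O(1)$, so it only helps.
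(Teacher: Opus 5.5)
The first part of your proposal matches the paper. Truncation gives root paths with $O_m(1)$ nodes. In any intermediate process, nodes with different labels draw their marks from disjoint independent families, so each summand is at most $p^{N}$, where $N$ is the number of distinct non-root labels in $P_{\mathsf u}\cup P_{\mathsf w}$. It then suffices to show that there are $o(\Delta^{a/(k-1)})$ configurations with $N=a$.

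\textbf{The gap is in locating the saving.} It is not true that the cycle-closing edge always costs $O(\eta)$. That edge can have all $k$ of its labels already revealed without being a reused $\cH$-edge. It then costs $O(1)\cdot p^{0}$, because $\Delta_k=1$ and \cref{itm:setup Delta ell UB} only concerns $2\le \ell\le k-1$.

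For example, take $k=3$ and the walks $(v,\{v,a,b\},a,\{a,c,z\},z)$ and $(v,\{v,d,g\},d,\{d,c,z\},z)$. In your order every step reveals $2$ or $0$ new labels, so every step costs $O(1)$, and you only get $O(1)$ for this family. For $k=2$ this happens in every configuration: \cref{itm:setup Delta ell UB} is vacuous, and the closing edge always has both endpoints already seen.

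Your case (b) is also wrong as stated. The second walk may reuse an $\cH$-edge of the first walk well after the branch point. Compare $(v,\{v,a,b\},b,\{b,c,d\},c,\{a,c,z\},z)$ with $(v,\{v,a,b\},a,\{a,c,z\},z)$. There the closing step again costs $O(1)$, and the saving sits at the earlier step $\{b,c,d\}$.

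\textbf{The missing idea is the paper's joint count of two edges.} Let $m_i$ be the number of new labels brought by the $i$-th edge.
\begin{itemize}
\item If some $m_i\in\{1,\dots,k-2\}$, that step alone costs $\Delta_{k-m_i}p^{m_i}=O(\eta)$.
\item Otherwise all $m_i\in\{0,k-1\}$. Take the first $\mathsf e_j$ with $m_j=0$; it exists because $\ell(\mathsf u)=\ell(\mathsf w)$. The edges $\mathsf e_1,\dots,\mathsf e_{j-1}$ carry distinct labels. Let $\mathcal S$ be the minimal subtree spanning the root and the nodes carrying $\ell(\mathsf e_j)$, and let $\mathsf e$ be its deepest edge, with parent node $\mathsf x$.
\end{itemize}
After revealing $\mathcal S$ without the children of $\mathsf e$, the pair $(\mathsf e,\mathsf e_j)$ brings exactly $k-1$ new labels, so its budget is $\Delta$. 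Set $h=\ell(\mathsf e_j)\cap\ell(\mathsf e\setminus\{\mathsf x\})$. When $1\le |h|\le k-2$, the pair admits only $O(\Delta_{k-|h|}\Delta_{|h|+1})=o(\Delta)$ choices. When $|h|=k-1$, write $\ell(\mathsf e)=h\cup\{x\}$ and $\ell(\mathsf e_j)=h\cup\{y\}$; the pair then admits at most $\codeg_{\cH}(x,y)=o(\Delta)$ choices. This last case needs $\ell(\mathsf e)\neq\ell(\mathsf e_j)$, so that $x\neq y$. That follows from the SAW-tree construction together with the label-distinctness of $\mathsf e_1,\dots,\mathsf e_{j-1}$.

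Your remark about ``a pair of vertices with a common completion'' points in this direction. But you need to choose this pair of edges, handle the intermediate-$|h|$ case, and verify $x\neq y$. Without that, your argument does not produce the required $o(1)$ factor.
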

\begin{proof}
Fix $t\in[0,1]$ and let $\mathcal J_\sf r^m(\omega)=\mathcal J_\sf r^m(\omega,t)$. All of the bounds below are independent of $t$.
Let $M:=m+k-1$ and recall that $|\mathcal J_\sf r^m(\omega)|\leq M$ for all $\omega$. Given $\sf u\in V(\mathbb T_v)$, let $P_\sf u$ denote the the unique path from $\sf r$ to $\sf u$ in $\mathbb T_v$.
Observe that if $\sf u\in \mathcal J_\sf r^m(\omega)$ for some $\omega$ then $P_\sf u\subseteq \mathcal J_\sf r^m(\omega)$ and so $|V(P_\sf u)|\leq M$.

Given $S\subseteq V(\mathbb T_v)$, let 
\[
\|S\|=|\ell(S \setminus \{\sf r\})|\, .
\]

Note that for the event 
$
\{
\sf u 
\in
\mathcal J_\sf r^m(\omega_{{\sf u, \sf w}})
,
\sf w 
\in
\mathcal J_\sf r^m(\omega'_{{\sf u, \sf w}})
\}
$
to occur, a coin at each non-root node in $P_\sf u \cup P_\sf w$ must have passed its test. Since coins with distinct labels are independent, it follows that 
\[
\P
\left(
\sf u 
\in
\mathcal J_\sf r^m(\omega_{{\sf u, \sf w}})
,
\sf w 
\in
\mathcal J_\sf r^m(\omega'_{{\sf u, \sf w}})
\right)
\leq 
p^{\|P_\sf u \cup P_\sf w\|}\, .
\]
Fix $a$ and let 
\[
N_a
:=
\left|
\left\{
(\sf u, \sf w): 
\sf u \neq \sf w,\,  
\ell(\sf u)= \ell(\sf w),\, 
|V(P_\sf u)|, |V(P_\sf w)|\leq M,\, 
\|P_\sf u \cup P_\sf w\|=a
\right\}
\right|
\, .
\]
Since there are $\leq 2M$ choices for $a$ and $p\Delta^{1/(k-1)}=O(1)$, it will suffice to show that $N_a=o(\Delta^{a/(k-1)})$ to conclude the proof.
To help with the counting, we make some additional specifications for the structure of $P_\sf u, P_\sf w$. First, suppose that $P_\sf u, P_\sf w$ have length $t,s$ respectively and that they share a prefix of length $r$, that is, $P_\sf u=(\sf e_1,\ldots, \sf e_t)$ and $P_\sf w=(\sf e_1,\ldots, \sf e_r, \sf e_{t+1},\ldots, \sf e_{t+s-r})$. Furthermore, let us specify the equivalence relation that  specifies which nodes in $P_\sf u, P_\sf w$ share the same label (without specifying the label itself). Since $|V(P_\sf u)|, |V(P_\sf w)|\leq M$ there are $O_M(1)$ choices for these specifications. Let 
\[
m_i=
\left|
\ell(\sf e'_i) \setminus \bigcup_{j<i} \ell(\sf e_j)
\right|
\]
where $\sf e_i'$ denotes $\sf e_i$ with its parent node removed.
Note that the sequence $(m_i)$ is determined by the above specifications. Moreover, we have $0\leq  m_i\leq k-1$ for all $i$ and $\sum m_i=a$.

We choose the edges of $P_\sf u \cup P_\sf w$ one by one in the order $\sf e_1, \ldots, \sf e_{t+s-r}$. Suppose that we have chosen $\sf e_1, \ldots, \sf e_{i-1}$ and that $m_i=q$. This means that when we choose $\sf e_i$ we reveal $q$ new labels and $k-1-q$ labels in $\sf e_i$ have already been revealed. There are $O_M(1)$ ways to choose these $k-1-q$ labels and to choose the parent node for $\sf e_i$. Once these have been specified, there are at most $\Delta_{k-q}$ choices for $\sf e_i$. It follows that there are $O_M(\Delta_{k-q})$ choices for this step. Recall that $\Delta_{k-q}=O(\Delta^{q/(k-1)})$ and if $q\in \{1,\ldots, k-2\}$ we have the stronger bound $\Delta_{k-q}=o(\Delta^{q/(k-1)})$. Therefore if one of the $m_i$ lies in $\{1,\ldots, k-2\}$, the number of pairs $\sf u, \sf w$ producing the sequence $(m_i)$ is at most
\[
o\left(
\Delta^{\sum m_i/(k-1)}
\right)
=
o\left(
\Delta^{a/(k-1)}
\right)\, .
\]
It remains to consider the case where $m_i\in  \{0,k-1\}$ for all $i$. 

Let $j$ be the minimal index for which $m_j=0$ and note that $j\geq 1$ exists since $\ell(\sf u)=\ell(\sf w)$. 
Let $\cT$ denote the subtree of $\mathbb T_v$ consisting of the edges $\sf e_1 \cup\ldots\cup \sf e_{j-1}$.
Since $m_i=k-1$ for all $i<j$, there is a unique set 
$
\sf U \subseteq  V(\cT)
$
such that $\ell(\sf U)=\ell(\sf e_j)$. Let $\cS\subseteq \cT$ be the minimal connected subtree of $\cT$ containing $\sf U$ and the root $\sf r$. Let $\sf e$ be the edge of maximum depth in $\cS$ and let $\sf x$ be its parent node. Let $\cS'$ denote $\cS$ with all the nodes in $\sf e\setminus \{\sf x\}$ removed and note that $\cS'$ is connected and contains the root $\sf r$. 

We will count the number of choices for $P_\sf u, P_\sf w$ as follows. 
\begin{enumerate}
\item Choose $\cS'$ edge by edge in such a way that an edge incident to the root is chosen first and we only choose the next edge once its parent has been chosen.
\item Choose the pair $(\sf e, \sf e_j)$.
\item Choose the remaining edges, edge by edge, in such a way that we only choose an edge once its parent has been chosen.
\end{enumerate}
Let $n_1,\ldots, n_c$ be the number of new labels revealed at each edge revelation in Step 1. Let $n_{c+1},\ldots, n_d$ be the number of new labels revealed at each edge revelation in Step 3. Note that by the definitions of $\cS, \cT$ and the fact that $m_j=0$, we must reveal exactly $k-1$ new labels when we reveal $(\sf e, \sf e_j)$. It follows that $\sum_i n_i=a-k+1$. Arguing as above, we have 
\[
O_M\left(\Delta^{\sum n_i/(k-1)}\right)
=
O_M\left(\Delta^{a/(k-1)-1}\right)
\]
 choices for the edges in Steps 1 and 3. Thus, to show that $N_a=o(\Delta^{a/(k-1)})$, it suffices to show that the number of choices for $(\sf e, \sf e_j)$ in Step 2 is $o(\Delta)$.

To this end, let $\sf e' = \sf e \setminus \{ \sf x \}$ and let $h=\ell(\sf e_j)\cap \ell(\sf e')$ so that $1\leq |h| \leq k-1$. Note that in Step 1 we chose $\ell(\sf e_j) \setminus h$ and so there are at most $\Delta_{k-|h|}$ choices for $\ell(\sf e_j)$. Once  $\ell(\sf e_j)$ has been chosen, its new vertices (those not chosen in Step 1) form the set $h$. We note that $\ell(\sf e)$ contains $h$ along with the label of the parent node of $\sf e$ which was chosen at Step 1. There are therefore at most $\Delta_{|h|+1}$ choices for $\ell(\sf e)$. The joint number of choices for $(\sf e, \sf e_j)$ is therefore at most
\[
O_M(\Delta_{k-|h|} \Delta_{|h|+1})
\]
which is $o(\Delta)$ if $1\leq |h| \leq k-2$. 

It remains to consider the case where $|h|=k-1$. First we note that $\ell(\sf e)\neq \ell(\sf e_j)$. Indeed, if $\sf e, \sf e_j$ are incident, then by construction of $\bb T_v$, either $\ell(\sf e), \ell(\sf e_j)$ are consecutive edges in a self-avoiding walk in $\cH$ or they are two distinct extensions of a self-avoiding walk in $\cH$. If $\sf e, \sf e_j$ are not incident, then the label of the parent node of $\sf e_j$ must not appear in $\ell(\sf e)$ since $\sf e\in\{\sf e_1, \ldots, \sf e_{j-1}\}$, the parent node of $\sf e_j$ is contained in $\sf e_1\cup  \ldots \cup \sf e_{j-1}$ and all the labels of the nodes $\sf e_1\cup  \ldots \cup \sf e_{j-1}$ are distinct (since $m_i=k-1$ for all $i<j$). Now, let $y$ be the unique element of $\ell(\sf e_j)\setminus h$. We have $\ell(\sf e_j)=h\cup\{y\}$ and $\ell(\sf e)=h\cup \{x\}$ where $x$ is the label of the parent node of $\sf e$. Since $\ell(\sf e)\neq \ell(\sf e_j)$ we have $x\neq y$, and so there are at most $o(\Delta)$ choices for $h$. We conclude that there are at most $\codeg_{\mc H}(x,y)=o(\Delta)$ choices for $(\sf e, \sf e_j)$ as desired. 
\end{proof}

\begin{proof}[Proof of Lemma~\ref{lem:fourcorners}]
 Recall that in the independent clock process, each node $\sf u\in V(\bb T_v)$ is assigned an independent mark $(\tau_{\sf u}, \xi_{\sf u})$ which is uniform in $[0,1]^2$.
Fix $u\in V(\cH)$ and let 
$\sf u_1,\ldots, \sf u_a$
denote all the nodes in $\mathbb T_v$ with label $u$. 
Under $\omega_\textrm{sh}$, each $\sf u_i$ receives the same mark 
$
\Gamma:=(\tau_{\sf u_a}, \xi_{\sf u_a})
$
say.
Our goal is to `uncouple' these marks by assigning each $\sf u_i$ its own independent mark one by one. 
To this end, let 
$\Gamma_i=(\tau_{\sf u_i}, \xi_{\sf u_i})$ for all $i\in\{1,\ldots,a-1\}$.
 For $i\in\{0,\ldots, a-1\}$, let $\omega_i$ denote the (intermediate) process obtained from $\omega_\textrm{sh}$ where $\sf u_1,\ldots, \sf u_{i}$ are given new marks $\Gamma_1,\ldots, \Gamma_{i}$ respectively. In particular, $\omega_0=\omega_\textrm{sh}$. 
Fix $t\in [0,1]$ and let $F(\omega):=F(\omega,t)$ and $\mathcal{J}^m_\sf r(\omega)=\mathcal{J}^m_\sf r(\omega,t)$. 
Our goal is to compare $F(\omega_0)$ and $F(\omega_{a-1})$. We do this in steps, by writing
\begin{align}\label{eq:telescope1}
F(\omega_0)-F(\omega_{a-1})=\sum_{i=1}^{a-1} F(\omega_{i-1})-F(\omega_{i})\, .
\end{align}
Fix $i\in \{1,\ldots, a-1\}$ and let $\omega_{i-1}'$ denote the process obtained from $\omega_{i-1}$ where $\sf u_{i},\dots, \sf u_{a}$ are given the new mark $\Gamma_{i}$. Let $\omega_{i}'$ denote the intermediate process obtained from $\omega_{i-1}$ where $\sf u_{i+1},\dots, \sf u_{a}$ are given the new mark $\Gamma_{i}$. We note that by symmetry (under swapping $\Gamma,\Gamma_{i}$),
\begin{align}\label{eq:Gamma-swap}
\E
\left[
F(\omega_{i-1})
-
F(\omega_{i})
\right]
=
\frac{1}{2}
\E
\left[
F(\omega_{i-1})
-
F(\omega_{i})
+
F(\omega'_{i-1})
-
F(\omega'_{i})
\right]\, .
\end{align}
Let $D_{i}$ denote the quantity inside the expectation on the RHS. Note that if $D_{i}\neq 0$, then either $F(\omega_{i-1})\neq F(\omega_{i})$ or $F(\omega'_{i-1})\neq F(\omega'_{i})$. In the former case, note that $\omega_{i-1}, \omega_{i}$ are identical aside from the mark at $\sf u_i$
and so $\sf u_i$ is contained in $\mathcal{J}^m_\sf r(\omega_{i-1})$ or $\mathcal{J}^m_\sf r(\omega_{i})$ by Observation~\ref{obs:F-diff}.
 Similarly if $F(\omega'_{i-1})\neq F(\omega'_{i})$ then $\sf u_i$ is contained in $\mathcal{J}^m_\sf r(\omega'_{i-1})$ or $\mathcal{J}^m_\sf r(\omega'_{i})$. To summarise, if $D_i\neq 0$ then $\sf u_i$ must lie in 
\[
\mathcal J(i):=  
\mathcal{J}^m_\sf r(\omega_{i-1})\cup \mathcal{J}^m_\sf r(\omega_{i})\cup \mathcal{J}^m_\sf r(\omega'_{i-1})\cup \mathcal{J}^m_\sf r(\omega'_{i})\, .
\]

On the other hand, if $D_{i}\neq 0$, then we can also conclude that either $F(\omega_{i-1})\neq F(\omega'_{i})$ or $F(\omega_{i})\neq F(\omega'_{i-1})$. Note that $\omega_{i-1}, \omega'_{i}$ are identical aside from the marks at $\sf u_{i+1},\ldots, \sf u_a$ and similarly for $\omega_{i}, \omega'_{i-1}$. Again by Observation~\ref{obs:F-diff}, it follows that one of $\sf u_{i+1},\ldots, \sf u_a$ must lie in $\mathcal J(i)$.

Since $|D_{i}|\leq 2$, we conclude that
\[
\E |D_{i}|
\leq 
2
\P
\left(
\sf u_i \in \mathcal J(i),
\{\sf u_{i+1},\ldots, \sf u_a\}
\cap
\mathcal J(i)
\neq\emptyset
\right)
\leq
2
\sum_{j=i+1}^a
\P
\left(
\sf u_i, \sf u_j \in \mathcal J(i)
\right)\, .
\]
Returning to~\eqref{eq:telescope1},
~\eqref{eq:Gamma-swap}, we conclude that
\[
\left|\E
\left[
F(\omega_{0})
-
F(\omega_{a-1})
\right]
\right|
\leq
\sum_{i=1}^{a-1}
\sum^a_{j=i+1}
\P
\left(
\sf u_i, \sf u_j \in \mathcal J(i)
\right)\, .
\]
Taking $\omega_{a-1}$, uncoupling any other collection of nodes with the same label as above, repeating until we reach $\omega_{\textrm{ind}}$, and finally applying Lemma~\ref{lem:influence-same-label} completes the proof. 
\end{proof}

Lemma~\ref{lem:root-availability} now follows immediately from Lemmas~\ref{lem:Weitz}~\ref{lem:ind-availability} and~\ref{lem:as-close}.

\subsection{Bounding the entropy}
We are now ready to prove Lemma~\ref{lem:Bethe FE lower bound}.

\begin{proof}[Proof of Lemma~\ref{lem:Bethe FE lower bound}]
Let $x$ be a BP fixed point, and let $I$ denote the independent set arising from the guided random greedy process on $\cH$ corresponding to $x$. For $v\in V$, let $X_v=\mathbf 1_{\{v\in I\}}$.

Condition on the clock values $(\tau_v)_{v\in V}$ and expose the $X_v$ in increasing clock order. When $v$ is reached its state, blocked or unblocked, is determined by the preceding decisions. If $v$ is blocked then $X_v$ has entropy $0$. If $v$ is unblocked then $X_v$ is a Bernoulli random variable with mean $r_v(\tau_v)$. The entropy chain rule therefore gives the identity
\[
H(I \mid (\tau_v)_{v\in V})
=
\sum_{v\in V} \int_{0}^1 
a_v(t)
h(r_v(t))\, dt\, .
\]
Similarly,
\[
\E |I|
=
\sum_{v\in V} \int_{0}^1 
a_v(t)
r_v(t)\, dt\, .
\]
Since conditioning cannot increase entropy we conclude that
\[
H(I)
+
\log \lambda \E |I|
\geq 
\sum_{v\in V} \int_{0}^1 
a_v(t)
(
h(r_v(t))
+\log \lam \cdot 
r_v(t))
\, dt\, .
\]
Recall that $r_v(t)= p (x_v/p)^{1-t^{k-1}}=\Theta(p)$ and note that
\[
h(r_v(t))
+\log \lam \cdot 
r_v(t)
=
r_v(t)
\left( 
\log (\lambda/r_v(t))
+1
\right)
+O(p^2)
=
O(p)\, .
\]
It follows from Lemma~\ref{lem:root-availability} that 
\begin{align*}
H(I)
+
\log \lambda \E |I|
&\geq
\sum_{v\in V} \int_{0}^1 
(x_v/p)^{t^{k-1}}
(
h(r_v(t))
+\log \lam \cdot 
r_v(t))
\, dt
+o(np)\\
&=
\sum_{v\in V} \int_{0}^1 
(x_v/p)^{t^{k-1}}
r_v(t)
\left( 
\log (\lambda/r_v(t))
+1
\right)
\, dt
+o(np)\\
&=
\widetilde{\mathcal{B}}_{\mc H,p} (x) +np +o(np)\, .
\end{align*}
By Observation~\ref{obs:Gibbs} and
Observation~\ref{obs:Bethe equivalence at BPFP}, it follows that
\[
\log Z_{\mc H}(\lambda)
\geq
\mc B_{\mc H,p}(x)+np-o(np).
\]
Finally, Observation~\ref{obs:Bethe free energy properties}
allows us to choose $x$ to be a maximizer of
$\mc B_{\mc H,p}$, completing the proof.
\end{proof}

\section{A structural result for $\mu_{\cH,p}$}\label{sec:hypergraph-structure}
In this section, we prove a structural result for typical samples from $\mu_{\cH,p}$ that is a key step toward proving the structural portion of~\cref{thm:graphon clique avoidance}. 

Throughout this section we assume the setup of \cref{thm:Bethe exactness} and let $\bb I\sim \mu_{\cH,p}$. Recall also the definition of the empirical marginals $\mathbf x_v=\mathbf x_v(\bb I)$ from~\eqref{eq:empirical-marginals}. Recall also from \cref{sec:entropy-window} that the window $W$ is a $q$-random binomial subset of $V$ with $q=q(n)\to0$ and we write $\cW=(W,\bb I\cap W)$.

Our main focus in this section is to prove the following result which shows that the structure of $\bb I$ is typically captured by the empirical marginals $\mathbf x_v$. 

\begin{theorem}\label{thm:hypergraph-structure}
    Let $\cA\subseteq 2^V$ be non-empty such that $\log (2|\cA|)=o(np)$.
    \[
    \E\max_{A\in \cA}
    \left|
    |\bb I\cap A|-\sum_{v\in A}
    \mathbf x_v
    \right|
    =o(np).
    \]
\end{theorem}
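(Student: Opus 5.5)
The plan is to prove exponential concentration for each fixed $A$ at scale $np$, and then take a union bound over $\cA$. Fix $\eps>0$ and let $f_A(I)=|I\cap A|-\sum_{v\in A}x_v(I)$. It suffices to show that, uniformly in $A\subseteq V$,
\[
\mu_{\cH,p}\bigl(f_A(\bb I)>\eps np\bigr)\le \exp\bigl(-c\eps^2np+o(np)\bigr),
\]
with the same bound for $-f_A$ and some $c=c(k,c)>0$. Given this, the hypothesis $\log(2|\cA|)=o(np)$ makes the probability that some $A\in\cA$ has $|f_A|>\eps np$ at most $\exp(-\Omega(\eps^2np))$. On that exceptional event we use $|f_A|\le |\bb I|+np$. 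By \cref{obs:spatial-markov}, $\P(R\subseteq\bb I)\le p^{|R|}$, so $\bb I$ has binomial-type upper tails and $\P(|\bb I|>Cnp)\le e^{-\Omega(np)}$ for large $C$; together with Cauchy--Schwarz this makes the contribution of the exceptional event $o(np)$. Then let $\eps\to0$.

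To get the tail bound I would use relative entropy, in the spirit of \cref{obs:Gibbs}. Let $B_A=\{f_A>\eps np\}$ and let $\nu=\mu_{\cH,p}(\cdot\mid B_A)$. Then
\[
-\log\mu_{\cH,p}(B_A)=D(\nu\|\mu_{\cH,p})=\log Z_{\cH}(\lambda)-H(\nu)-\log\lambda\,\E_\nu|\bb I|.
\]
By \cref{lem:Bethe FE lower bound}, $\log Z_{\cH}(\lambda)\ge\max_x\mc B_{\cH,p}(x)+np-o(np)$. It therefore suffices to show that every $\nu$ supported on $B_A$ satisfies
\[
H(\nu)+\log\lambda\,\E_\nu|\bb I|\le \max_x\mc B_{\cH,p}(x)+np-c\eps^2np+o(np).
\]
To prove this, I would rerun the upper-bound argument of \cref{sec:upper bound} with $\nu$ in place of $\mu$: reveal a window $W$, then reveal the remaining vertices in a uniformly random order $\vec\alpha$. \cref{lem:cheap-window} and \cref{lem:Vx-entropy-bd} go through for any $\nu$, except that the occupation probability $o_v=\P_\nu(v\in\bb I\mid\cW)$ is no longer tied to $pu_v(1)$. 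So the per-vertex bound becomes
\[
\int_0^1\Bigl(u_v(t)\,h\bigl(o_v/u_v(t)\bigr)+o_v\log\lambda\Bigr)\,dt .
\]
As a function of $o_v$ this is concave and maximized at $o_v\approx pu_v(1)$, up to $O(p^2)$ corrections. Its value falls below the maximum by at least $c'(o_v-pu_v(1))^2/p$, using $u_v=\Theta(1)$. Summing over $v\in A$ and applying Cauchy--Schwarz, a total discrepancy $\sum_{v\in A}(o_v-pu_v(1))\ge\eps np/2$ costs at least $c\eps^2np$. The remaining terms then reproduce $\E_\nu\widetilde{\mc B}(\ol{\mathbf x})+np$, and hence at most $\max_x\mc B_{\cH,p}(x)+np+o(np)$ via \cref{lem:fixedpointrewrite}. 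Separately, on $B_A$ the quantity $\sum_{v\in A}\E_\nu[\mathbf 1_{v\in\bb I}-\mathbf x_v]$ is at least $\eps np$. This forces either $\sum_{v\in A}(o_v-pu_v(1))\gtrsim\eps np$, which pays the entropy penalty, or a macroscopic discrepancy between $pu_v(1)$ and $\mathbf x_v$.

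The main obstacle is that the Poisson and Chen--Stein estimates (\cref{lem:availability-target}, \cref{lem:availability product decomposition}, \cref{lem:fixedpointrewrite}) use the spatial Markov property of $\mu$, which fails for $\nu$. To avoid re-proving them under $\nu$, I would prove the needed estimates in a form that is insensitive to conditioning on events of probability at least $e^{-O(\eps^2np)}$. Concretely, I would show that the errors in those lemmas, summed over $v$, are small with probability $1-e^{-\omega(\eps^2 np)}$ under $\mu$, and not merely in expectation; an event that rare then has negligible probability under $\nu$. The route I would try first is to express each error as a sum over vertices of local bounded terms and use the window and random-order structure to obtain exponential concentration. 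This step is where I expect the real difficulty to lie.
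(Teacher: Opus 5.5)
Your reduction to a per-$A$ exponential tail bound plus a union bound over $\cA$ is sound. But the tail bound is never established, and the penalty mechanism you sketch for it is false. Hold $u_v(\cdot)$ fixed and differentiate $\int_0^1 u_v(t)h(o_v/u_v(t))\,dt+o_v\log\lambda$ in $o_v$. At $o_v=pu_v(1)$ the derivative is $\int_0^1\log\bigl((u_v(t)/u_v(1)-p)/(1-p)\bigr)\,dt$. This is nonnegative, since $u_v$ is non-increasing in $t$. For $u_v(t)\approx e^{-(pt)^{k-1}\mathbf y_v}$ it is about $\frac{k-1}{k}p^{k-1}\mathbf y_v$, which is of order one whenever $p^{k-1}\mathbf y_v$ is, as is typical in the critical regime. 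So the per-vertex bound is maximized near $\lambda\exp\bigl(\int_0^1\log u_v(t)\,dt\bigr)\approx pe^{-p^{k-1}\mathbf y_v/k}$, not at $pu_v(1)$. Pushing $o_v$ above $pu_v(1)$ therefore \emph{increases} the bound at first order, and there is no local quadratic penalty. The bound is asymptotically tight for $\mu_{\cH,p}$ only because extra occupation at $v$ lowers the availabilities $u_w(t)$ of other vertices (the edge term of $\mc B_{\cH,p}$). Any penalty would therefore need a global stability analysis, which your Cauchy--Schwarz step does not provide.

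The transfer to $\nu$ is also more than a technical obstacle. Under $\nu$, the quantities $u_v(t)$ and $o_v$ are $\nu$-conditional probabilities given $\cW$. The Poisson relation $u_v(t)\approx e^{-(pt)^{k-1}\mathbf y_v}$ is a statement about a law, derived from the spatial Markov identity for $\mu_{\cH,p}$; it is not a property of individual configurations. So knowing that the $\mu$-error terms are small with overwhelming $\mu$-probability says nothing about their $\nu$-counterparts.

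The missing idea is that no concentration under $\mu_{\cH,p}$ is needed. The exponential concentration for the union bound can come from the window, which is independent of $\bb I$. With $N_A=|\bb I\cap A|$ and $\hat N_A=\E[N_A\mid\cW]$, the paper splits by the triangle inequality.

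First, $|\hat N_A-\sum_{v\in A}\mathbf x_v|\le\sum_{v\in V}|\P(v\in\bb I\mid\cW)-\mathbf x_v|$ holds simultaneously for all $A$. The right-hand side has expectation $o(np)$, because $\P(v\in\bb I\mid\cW)=pu_v(1)$ for $v\notin W$ and by \cref{lem:availability-target} at $t=1$; this is \cref{cor:estimator-xv}.

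Second, compare $N_A$ with the $\cW$-measurable $\tilde N_A=q^{-1}|\bb I\cap A\cap W|$. Conditional on $\bb I$, $q\tilde N_A\sim\mathrm{Bin}(N_A,q)$. Chernoff over the window's randomness and a union bound then give $\E\max_{A\in\cA}|N_A-\tilde N_A|\le\eps np+O\bigl(\log(2|\cA|)/(\eps q)\bigr)$, which is $o(np)$ when $\eps,q\to0$ slowly. Conditional Jensen transfers this to $\hat N_A$ (\cref{lem:NA-hatNA}). No large-deviation analysis of non-Gibbs measures is involved.
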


As a look ahead, in the next section we will apply this result in the case where $\cH$ is the hypergraph encoding copies of a fixed graph $H$ in $K_n$. There we take $\cA$ to be the set of all cuts in $K_n$, thereby recovering the cut norm.

We next observe that, typically, the vector of empirical marginals $\ol{\bf x}=(\mathbf x_v)_{v\in V}$ almost maximizes the Bethe functional $\mathcal{B}_{\mc H,p}$. Indeed, comparing Lemmas~\ref{lem:Bethe FE lower bound} and~\ref{lem:partition function UB} and using Lemma~\ref{lem:fixedpointrewrite} shows that
\begin{align}\label{eq:mean-vs-max}
\max_{x\in [0,1]^V} \mathcal{B}_{\mc H,p}(x)
-
\E[\mathcal{B}_{\mc H,p}(\ol{\mathbf{x}})] 
=o(np)\, .
\end{align}
We therefore have the following immediate corollary.

\begin{corollary}\label{cor:bar x almost maximizes B}
Let $\cA\subseteq 2^V$ be non-empty such that $\log (2|\cA|)=o(np)$. Then whp both of the following hold:
\[
\mathcal{B}_{\mc H,p}(\ol{\mathbf{x}})
\geq \max_{x\in [0,1]^V} \mathcal{B}_{\mc H,p}(x)-o(np)
\]
and
\[
\left|
    |\bb I\cap A|-\sum_{v\in A}
    \mathbf x_v
    \right|
    =o(np)
    \quad \text{for all}\quad 
    A\in \cA\, .
\]
\end{corollary}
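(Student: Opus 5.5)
The plan is to derive both statements from Markov's inequality, once we have expectation bounds that are $o(np)$.

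For the first statement, set $Z:=\max_{x\in[0,1]^V}\mathcal{B}_{\mc H,p}(x)-\mathcal{B}_{\mc H,p}(\ol{\mathbf x})$. Since $Z$ is the maximum minus the value at a particular point, $Z\geq 0$ pointwise. By \eqref{eq:mean-vs-max}, $\E Z=o(np)$. Write $\E Z=\delta np$ with $\delta\to 0$. Markov's inequality then gives
\[
\P\bigl(Z\geq \sqrt{\delta}\,np\bigr)\leq \sqrt{\delta}\to 0 .
\]
So whp $\mathcal{B}_{\mc H,p}(\ol{\mathbf x})\geq \max_x\mathcal{B}_{\mc H,p}(x)-\sqrt{\delta}\,np$, and $\sqrt{\delta}\,np=o(np)$.

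For the second statement, set $M:=\max_{A\in\cA}\bigl||\bb I\cap A|-\sum_{v\in A}\mathbf x_v\bigr|$, which is nonnegative. \cref{thm:hypergraph-structure} gives $\E M=\delta' np$ with $\delta'\to 0$. The same Markov argument shows that $M\leq \sqrt{\delta'}\,np$ with probability at least $1-\sqrt{\delta'}$. On that event the required bound holds for all $A\in\cA$ at once, because $M$ is a maximum over $\cA$.

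A union bound over the two failure events, each of probability $o(1)$, shows that both statements hold simultaneously whp. There is no real obstacle here. All the work sits in \eqref{eq:mean-vs-max}, which combines \cref{lem:Bethe FE lower bound}, \cref{lem:partition function UB} and \cref{lem:fixedpointrewrite}, and in \cref{thm:hypergraph-structure}. The only point needing care is that $Z\geq 0$ pointwise, which holds by definition of the maximum; this is what allows an expectation bound to be turned into a high-probability bound.
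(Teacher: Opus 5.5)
Your proposal is correct and is essentially the paper's argument: the paper also deduces both statements from \eqref{eq:mean-vs-max} and \cref{thm:hypergraph-structure} by applying Markov's inequality to the nonnegative quantities $Z$ and $M$ (with $Z\geq 0$ because $\overline{\mathbf x}\in(0,p]^V\subseteq[0,1]^V$), followed by a union bound. Your explicit $\sqrt{\delta}$ thresholds just spell out details the paper leaves implicit; the degenerate case $\delta=0$ is trivial, since then $Z=0$ almost surely.
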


\begin{proof}
This follows from~\cref{eq:mean-vs-max} and \cref{thm:hypergraph-structure} and Markov's inequality. 
\end{proof}

The starting point for the proof of \cref{thm:hypergraph-structure} is the following simple corollary of \cref{lem:availability-target}. 
\begin{corollary}\label{cor:estimator-xv}
\[
    \E\sum_{v\in V}
    \left|
    \P(v\in \bb I \mid \cW) 
    -
    \mathbf x_v
    \right|
    =o(np).
\]
\end{corollary}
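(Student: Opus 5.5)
We prove \cref{cor:estimator-xv} by splitting on whether $v\in W$ and then using the Poisson approximation of \cref{lem:availability-target} at $t=1$. For $v\in W$ the window reveals $X_v$, so $\P(v\in\bb I\mid\cW)=X_v$. Hence
\[
\E\sum_{v}\mathbf 1_{\{v\in W\}}\bigl|X_v-\mathbf x_v\bigr|\leq \E\sum_v \mathbf 1_{\{v\in W\}}(X_v+\mathbf x_v)\leq 2qnp=o(np).
\]
Here we used that $W$ is independent of $\bb I$, that $\P(v\in\bb I)\le p$ by \cref{obs:spatial-markov}, and that $\mathbf x_v\le p$.

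For $v\notin W$, \eqref{eq:occ-cond-window} gives $\P(v\in\bb I\mid\cW)=p\,u_v(1)$. On the other hand $\mathbf x_v=p\,e^{-p^{k-1}\mathbf y_v}$. Therefore
\[
\E\Bigl[\mathbf 1_{\{v\notin W\}}\bigl|\P(v\in\bb I\mid\cW)-\mathbf x_v\bigr|\Bigr]\leq p\,\E\bigl|u_v(1)-e^{-p^{k-1}\mathbf y_v}\bigr|.
\]
\cref{lem:availability-target} with $t=1$ says that the right-hand expectation is $o(1)$ uniformly in $v$, so this term is $o(p)$ uniformly in $v$.

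Summing the two cases over all $n$ vertices gives the total bound $o(np)$. The only point needing care is that the $o(1)$ error in \cref{lem:availability-target} is uniform in $v$, and the lemma states this explicitly. There is no real obstacle: the corollary is essentially a restatement of that lemma at $t=1$, combined with the spatial Markov identity.
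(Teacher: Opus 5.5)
Your proposal is correct and follows essentially the same route as the paper's proof. For $v\notin W$ you use \eqref{eq:occ-cond-window} together with \cref{lem:availability-target} at $t=1$, uniformly in $v$. For $v\in W$ you bound the contribution by $\E|\bb I\cap W|+p\E|W|\le 2npq=o(np)$, using independence of $W$ and $\bb I$.
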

\begin{proof}
We recall from~\eqref{eq:occ-cond-window} that for $v\in V$,
$
\mathbf 1_{\{v\notin W\}}\P(v\in\mathbb I\mid\cW)=\mathbf 1_{\{v\notin W\}} p u_v(1).
$
On the other hand, by \cref{lem:availability-target} applied with $t=1$ we have
\[
    \E
    \left|
    pu_v(1) 
    -
    \mathbf x_v
    \right|
    =o(p)
\]
uniformly in $v\in V$. Summing over $v\notin W$ we conclude that
\[
    \E
    \sum_{v\in V}\mathbf 1_{\{v\notin W\}}\left|
    pu_v(1) 
    -
    \mathbf x_v
    \right|
    =
    \E
    \sum_{v\in V}\mathbf 1_{\{v\notin W\}}\left|
    \P(v\in \bb I\mid \cW) 
    -
    \mathbf x_v
    \right|
    =o(np)\, .
\]
Moreover, by the triangle inequality
\[
 \E
    \sum_{v\in V}\mathbf 1_{\{v\in W\}}\left|
    \P(v\in \bb I\mid \cW) 
    -
    \mathbf x_v
    \right|
    \leq \E |\bb I\cap W| + p\E|W|
    \leq 2npq
    =o(np)\, ,
\]
where for the first inequality we used that $ \mathbf x_v\leq p$ and for the second we recalled that $\P(v\in \bb I)\leq p$, $\P(v\in W)=q=o(1)$ and $W,\bb I$ are independent.
\end{proof}

Given $A\subseteq V$, let $N_A:=|\bb I \cap A|$ and let $\hat N_A$ denote the estimator $\E[N_A\mid \cW]$. Observe that~\cref{cor:estimator-xv} allows us to control the distance between $\hat N_A$ and $\sum_{v\in A}\mathbf x_v$. 
\cref{thm:hypergraph-structure} will therefore follow easily from the following lemma which shows that $\hat N_A$ is a good estimator for $N_A$. 
\begin{lemma}\label{lem:NA-hatNA}
Let $\cA\subseteq 2^V$ be non-empty such that $\log (2|\cA|)=o(np)$. Then
\[
\E\max_{A\in \cA}
|
N_A - \hat N_A
|
=
o(np)\, .
\]
\end{lemma}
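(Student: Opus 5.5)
The plan is to prove an exponential-moment bound for $N_A-\hat N_A$, conditionally on the window, that holds for each fixed $A$. A union bound over $\cA$ then finishes, since $\log(2|\cA|)=o(np)$. The tool for passing from the true conditional law of $\bb I$ to a tractable reference law is the Donsker--Varadhan inequality: for any reference law $\nu$ and any $f$,
\[
\E_{\mu}[f]\le D(\mu\|\nu)+\log\E_\nu e^{f}.
\]
Fix the window outcome $\cW$ and the ordering $\vec\alpha$. We apply this with $f=\theta\max_{A\in\cA}\sigma_A(N_A-\hat N_A)$, where $\sigma_A\in\{\pm1\}$ and $\theta\to0$ slowly. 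Bounding $e^{\max}$ by the sum over the $2|\cA|$ signed choices gives
\[
\theta\,\E\max_{A}|N_A-\hat N_A|\le \E\, D\bigl(\mu(\cdot\mid\cW,\vec\alpha)\,\|\,\nu_{\cW,\vec\alpha}\bigr)+\log(2|\cA|)+\max_{A,\sigma}\log\E_\nu e^{\theta\sigma(N_A-\hat N_A)}.
\]
We need all three terms on the right to be $o(\theta np)$.

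For the reference law $\nu$ we take the \emph{sequential} law suggested by the upper-bound argument. Reveal vertices in increasing $\alpha$-order. At time $t=\alpha_v$, a vertex $v\notin W$ is occupied with probability $\mathbf 1_{U_v(t)}\,pu_v(1)/u_v(t)$, capped at $1$; vertices in $W$ are set according to $\cW$. By the chain rule, the KL divergence of the true law from $\nu$ equals
\[
\sum_v \Bigl[H(X_v\mid \cE_t,\mathbf 1_{U_v(t)})-H(X_v\mid\cE_t,\vec\alpha,X_{<_v})\Bigr],
\]
averaged over $t$ and $\cW$, up to the identical-marginal correction. This is exactly the total slack in the inequality \eqref{eq:entropyWST} of Lemma~\ref{lem:Vx-entropy-bd}. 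The upper bound (Lemma~\ref{lem:partition function UB} with Lemma~\ref{lem:fixedpointrewrite}) and the lower bound (Lemma~\ref{lem:Bethe FE lower bound}) match up to $o(np)$, and every other step of the upper-bound chain is an inequality. Hence this total slack, and so $\E D$, is $o(np)$. I would write this comparison carefully, tracking which inequalities in the proof of Lemma~\ref{lem:partition function UB} are tight, so that the slack of \eqref{eq:entropyWST} is isolated.

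Under $\nu$, the increments $X_v-\mathbf 1_{U_v(\alpha_v)}pu_v(1)/u_v(\alpha_v)$ form a martingale difference sequence with bounded increments and conditional variances $O(p)$. Bernstein/Freedman therefore gives
\[
\log\E_\nu e^{\theta\sigma(N_A-M_A)}=O(\theta^2np),\qquad M_A:=\sum_{v\in A}\mathbf 1_{U_v(\alpha_v)}\frac{pu_v(1)}{u_v(\alpha_v)}.
\]
It remains to replace $M_A$ by $\hat N_A=\sum_{v\in A}pu_v(1)$ (the window vertices contribute only $O(npq)$).

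This replacement is the step I expect to be the main obstacle: one needs $\sum_{v\in A}g_v(\alpha_v)\bigl(\mathbf 1_{U_v(\alpha_v)}-u_v(\alpha_v)\bigr)$ to be small uniformly in $A$, where $g_v=pu_v(1)/u_v$. My first attempt is to integrate out $\vec\alpha$: the availability indicators are driven by earlier occupied vertices, and $U_v$ fails only through edges whose other vertices lie in $\bb I$. So I would write $\mathbf 1_{U_v(t)}-u_v(t)$ via the Chen--Stein representation of Lemma~\ref{lem:stein-app} as a combination of occupation indicators of $(k-1)$-sets near $v$. Those indicators are again of martingale type under $\nu$, and the errors are controlled by Lemma~\ref{lem:availability-target} and Lemma~\ref{lem:tree-estimator}. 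Iterating the Freedman bound at this second level should give exponential moments of order $O(\theta^2np)+o(\theta np)$. Choosing $\theta\to0$ with $\theta^{-1}\bigl(o(np)+\log 2|\cA|\bigr)=o(np)$ then yields the lemma. If the second-level concentration fails to be uniform, the fallback is to enlarge the window ($q\to0$ more slowly), so that $\mathbf 1_{U_v}$ is nearly $\cW$-measurable in $L^1$ as in Corollary~\ref{cor:stein-app}, and to absorb the difference into the KL term.
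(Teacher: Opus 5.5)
\textbf{Verdict: your proposal has a genuine gap at the step you flagged, and that step carries the whole difficulty.}

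\textbf{What works.} The first two ingredients are sound.
\begin{enumerate}[(i)]
\item Your $\nu$ uses $\mu(X_v=1\mid\cE_t,\mathbf 1_{U_v(t)})=\mathbf 1_{U_v(t)}pu_v(1)/u_v(t)$ as its transition probability. So the chain rule does identify $\E D\bigl(\mu(\cdot\mid\cW,\vec\alpha)\,\|\,\nu\bigr)$ with the total slack in \eqref{eq:entropyWST}. Sandwiching $\log Z_{\cH}(\lambda)$ between Lemma~\ref{lem:Bethe FE lower bound} and the upper-bound chain then shows this slack is $o(np)$.
\item Freedman's inequality does handle $N_A-M_A$.
\end{enumerate}

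\textbf{Where it fails.} The gap is the replacement of $M_A$ by $\hat N_A$. By Jensen, $\log\E_\nu e^{\theta\sigma(N_A-\hat N_A)}\ge\theta\sigma(\E_\nu N_A-\hat N_A)$. So at the very least you need $\P_\nu(U_v(t))\approx u_v(t)=\P(U_v(t)\mid\cW)$ on average over $v\in A$. Beyond that, you need exponential concentration under $\nu$ of $\sum_{v\in A}g_v(\alpha_v)(\mathbf 1_{U_v(\alpha_v)}-u_v(\alpha_v))$ at scale $o(np)$, with failure probability $e^{-\omega(\log|\cA|)}$. Nothing in your argument supplies this:
\begin{enumerate}[(a)]
\item A global KL bound of size $o(np)$ says nothing about individual vertices or individual sets $A$.
\item The terms $\mathbf 1_{U_v(\alpha_v)}-u_v(\alpha_v)$ are predictable, not martingale increments, so Freedman does not apply to them.
\item Lemma~\ref{lem:stein-app}, Corollary~\ref{cor:stein-app} and Lemma~\ref{lem:availability-target} are statements about $\mu$. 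Their proofs use the spatial Markov identity of Observation~\ref{obs:spatial-markov}, which $\nu$ does not satisfy.
\end{enumerate}
Uniform concentration of availability counts over $\cA$ is essentially as hard as the lemma you are trying to prove.

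Your fallback also fails. $\mathbf 1_{U_v}$ is not close to $\cW$-measurable: its conditional variance $u_v(1)(1-u_v(1))$ is in general of order one. Corollary~\ref{cor:stein-app} only pins down the conditional probability $u_v(t)$, not the indicator itself.

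\textbf{The missing idea.} Take the concentration from the randomness of the window, which is independent of $\bb I$, rather than from $\bb I$. Set $\tilde N_A=q^{-1}|\bb I\cap A\cap W|$, which is $\cW$-measurable.
\begin{enumerate}[(1)]
\item Since $\hat N_A=\E[N_A\mid\cW]$, you have $|\hat N_A-\tilde N_A|\le\E[\max_{A'}|N_{A'}-\tilde N_{A'}|\mid\cW]$ for every $A$. So it suffices to bound $\E\max_{A}|N_A-\tilde N_A|$.
\item Conditionally on $\bb I$, $q\tilde N_A\sim\mathrm{Bin}(N_A,q)$.
\item Chernoff plus a union bound over $\cA$, integrated over the threshold and combined with $\E|\bb I|\le np$, gives $\E\max_A|N_A-\tilde N_A|\le\eps np+O\bigl((1+\log|\cA|)/(\eps q)\bigr)$.
\item This is $o(np)$ once $\eps,q\to0$ sufficiently slowly.
\end{enumerate}
This is the paper's proof, and it needs none of the entropy, KL or Poisson-approximation machinery.
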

\begin{proof}
For $A\in \cA$, let $\tilde N_A:=q^{-1}|\bb I \cap A \cap W|$. First note that it suffices to show that 
$\E\max_{A\in \cA}
|
N_A - \tilde N_A
|
=
o(np)$ since we have
\begin{align*}
\E\max_{A\in \cA}
|
\hat N_A - \tilde N_A
|
=
\E\max_{A\in \cA}
\left|
\E[ N_A - \tilde N_A \mid \cW]
\right|
&\leq
\E\max_{A\in \cA}
\E[| N_A - \tilde N_A| \mid \cW]\\
&\leq 
\E
\E[\max_{A\in \cA}| N_A - \tilde N_A| \mid \cW]\\
&=\E\max_{A\in \cA}| N_A - \tilde N_A|\, .
\end{align*}
Now observe that conditioned on the event $\{\bb I=I\}$, $N_A=|I\cap A|$ and $q\tilde N_A\sim \text{Bin}(N_A, q)$. Thus, by Chernoff's inequality
\[
\P_W(|\tilde N_A - N_A|>t \mid \mathbb I=I)
\leq
2\exp\left( 
-\frac{q t^2}{2|I\cap A|+t}
\right)
\]
Noting that $|I\cap A|\leq |I|$ and taking a union bound over all $A\in \cA$ gives
\[
\P_W(\max_{A\in \cA}|\tilde N_A - N_A|>t \mid \mathbb I=I)
\leq
2|\cA|\exp\left( 
-\frac{q t^2}{2|I|+t}\,
\right).
\]
We now integrate this estimate over $t$ to bound the expectation. 
Fix $\eps\in(0,1)$ and note that if $t\geq \eps |I|$, then $q t^2/(2|I|+t)\geq \eps qt/3$. Thus for $T\geq \eps |I|$,
\[
\E_W(\max_{A\in \cA}|\tilde N_A - N_A| \mid \mathbb I=I)
\leq 
T 
+ 
\int_{T}^\infty 2|\cA|
\exp\left( 
-\frac{\eps q t}{3}
\right)\, dt 
=
T+\frac{6|\cA|}{\eps q}
\exp\left( 
-\frac{\eps q T}{3}
\right)\, .
\]
Setting $T=\eps |I| + 3/(\eps q)\log |\cA|$ yields
\[
\E_W(\max_{A\in \cA}|\tilde N_A - N_A| \mid \mathbb I=I)
\leq 
\eps |I| + \frac{3}{\eps q}\log |\cA| + \frac{6}{\eps q}\, .
\]
Taking expectations with respect to $I$ and noting that $\E |\bb I|\leq np$ by \cref{obs:spatial-markov} gives
\[
\E(\max_{A\in \cA}|\tilde N_A - N_A|)
\leq 
\eps np + \frac{3}{\eps q}\log |\cA| + \frac{6}{\eps q}\, .
\]
The result follows by taking $\eps, q\to 0$ sufficiently slowly. Such a choice is possible since the assumption $\log(2|\mc A|) = o(np)$ implies that $np = \omega(1)$.
\end{proof}

\cref{thm:hypergraph-structure} now follows quickly. 
\begin{proof}[Proof of \cref{thm:hypergraph-structure}]
 The triangle inequality and \cref{cor:estimator-xv} imply that 
\[
\E\max_{A\in \cA}
|
\hat N_A - \sum_{v\in A} \mathbf x_v
|
\leq
\E\sum_{v\in V}
    \left|
    \P(v\in \bb I \mid \cW) 
    -
    \mathbf x_v
    \right|
=
o(np)\, .
\]
The result now follows from \cref{lem:NA-hatNA} and the triangle inequality. 
\end{proof}

\section{Graphon formulation for subgraph avoidance}\label{sec:Graphon}
\newcommand{\AutH}{{\mr{Aut}(H)}}

A fundamental problem in the theory of large deviations is to determine the rate function for non-existence of a fixed graph $H$ in $G(n,p)$ and to find the structure of a typical $H$-free sample from $G(n,p)$. In this section we apply \cref{thm:Bethe exactness} to these problems in the case that $H$ is strictly $2$-balanced and $p$ is critical with respect to $H$.

Recall that for a graph $H$ we define
\[
m_2(H) \coloneqq \max_{\substack{F \subseteq H:\\|V(F)|\geq3}} \frac{e(F)-1}{v(F)-2}.
\]
A graph $H$ with $e(H) \geq 3$ is \textit{strictly $2$-balanced} if this maximum is uniquely achieved at $F=H$. Notably, all cliques on at least three vertices are strictly $2$-balanced. On the other hand, a clique with a pendant edge is not.

Recall the definitions of $\mc W$, $t(H,w)$ and $I(w)$ from \cref{sec:intro avoiding subgraphs}. Analogously to $\mc B_c^{K_r}$ we define, for a graphon $w$,
\[
\mc B_c^H(w) \coloneqq - \frac{1}{|\AutH|} c^{e(H)} t(H,w) - cI(w).
\]
We also define
\[
F_H(c) \coloneqq \max_{w \in \mc W} \mc B_c^H(w)
\]
(we prove the maximum exists below). Our main theorem on $H$ avoidance in $G(n,p)$, which implies \cref{thm:graphon clique avoidance}, is the following. Recall that for a graph $G$, we write $\tilde G$ for the step graphon it induces, and we write $\delta_\Box(\cdot,\cdot)$ for the cut distance. We extend the cut distance in the obvious way to bounded symmetric kernels (such as the normalized graphon $\tilde G/p$).

\begin{theorem}\label{thm:H non-existence}
    Let $H$ be a strictly $2$-balanced graph, let $c>0$, and let $p = cn^{-1/m_2(H)}$. Then
    \[
    \lim_{n \to \infty} n^{1/m_2(H)-2} \log \left( \Pr[H \nsubseteq G(n,p)] \right) = F_H(c).
    \]
    Moreover, if $G \sim G(n, p)$ conditioned on being $H$-free, then whp there exists a graphon $w$ that maximizes $\mc B_c^{H}$ and such that $\delta_\Box(\tilde G/p,w) = o(1)$.
\end{theorem}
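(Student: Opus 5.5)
We apply \cref{thm:Bethe exactness} to the hypergraph $\cH=\cH_H(n)$ whose vertex set is $\binom{[n]}{2}$ and whose edges are the edge sets of copies of $H$ in $K_n$. Then $k=e(H)$, $\Delta=\Theta(n^{v(H)-2})$, and $p\Delta^{1/(k-1)}=\Theta(c)$ exactly when $p=cn^{-1/m_2(H)}$, because $H$ is strictly $2$-balanced so that $m_2(H)=(e(H)-1)/(v(H)-2)$. The locally tree-like conditions are checked by counting. $\Delta_\ell$ counts copies of $H$ containing $\ell$ fixed edges. These edges span a subgraph $F$ with $v(F)\ge 3$, and the count is $O(n^{v(H)-v(F)})$. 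Strict $2$-balancedness, $(e(F)-1)/(v(F)-2)<m_2(H)$, gives $n^{v(H)-v(F)}=o(\Delta^{(k-\ell)/(k-1)})$ for $2\le \ell\le k-1$. The codegree bound is the case of two edges of $H$, and pairs sharing no vertex give a factor $n^{-1}$ relative to $\Delta$. Hence \cref{thm:Bethe exactness} yields
\[
\log\Pr[H\nsubseteq G(n,p)]=\max_{x}\mc B_{\cH,p}(x)+o(n^2p).
\]
Since $n^2p=c\,n^{2-1/m_2(H)}$, the remaining task is to show $\max_x \mc B_{\cH,p}(x)=n^2p\,F_H(c)/c+o(n^2p)$ after normalization. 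Concretely, writing $x=p\,w_x$ with $w_x$ the step graphon, we have
\[
\sum_{v}(x_v-p-x_v\log(x_v/p))=-\tfrac{n^2p}{1}\cdot I(w_x)+O(np),
\]
and the edge term equals $\frac{n^{v(H)}p^{e(H)}}{|\AutH|}t(H,w_x)+O(n^{v(H)-1}p^{e(H)})$, using that non-injective homomorphisms are negligible. Since $n^{v(H)}p^{e(H)}=c^{e(H)-1}n^2p$ in this regime, we get $\mc B_{\cH,p}(x)=(n^2p/c)\,\mc B^H_c(w_x)+o(n^2p)$.

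This gives the upper bound $\max_x\mc B_{\cH,p}\le (n^2p/c)\sup_w\mc B_c^H(w)+o(n^2p)$ directly. For the lower bound, approximate an arbitrary graphon $w$ by the step graphon obtained by averaging $w$ over an $n$-equipartition. Both $t(H,\cdot)$ and $I$ behave well under this averaging. $I$ is convex, so averaging does not increase it, and $t(H,\cdot)$ is cut-continuous, with the averaged graphon converging to $w$ in cut norm (indeed in $L^1$ a.e.). This gives $\liminf$ at least $\mc B_c^H(w)$, so the limit equals $\sup_w \mc B^H_c$.

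The existence of a maximizer, and the structural statement, come from compactness of $(\widetilde{\mc W},\delta_\Box)$ together with upper semicontinuity of $\mc B^H_c$. Here $t(H,\cdot)$ is $\delta_\Box$-continuous, and $I$ is lower semicontinuous in the cut (weak) topology by convexity, since cut convergence implies weak convergence after suitable alignment. Hence $\mc B^H_c$ attains its maximum.

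For structure we use \cref{cor:bar x almost maximizes B} with $\cA$ the family of edge sets of cuts $S\times T$. This family has $|\cA|\le 4^n$, and $n=o(n^2p)$ because $1/m_2(H)<1$. Whp the empirical marginals satisfy $\mc B_{\cH,p}(\ol{\mathbf x})\ge\max-o(n^2p)$, and $G$ is within $o(n^2p)$ of $\ol{\mathbf x}$ on every cut. After normalizing by $n^2p$, the second fact says $\|\tilde G/p-w_{\ol{\mathbf x}}\|_\Box=o(1)$, and the first says $\mc B_c^H(w_{\ol{\mathbf x}})\ge F_H(c)-o(1)$. Suppose $w_{\ol{\mathbf x}}$ were not $\delta_\Box$-close to the set of maximizers. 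Compactness and upper semicontinuity would then give a subsequence converging to a non-maximizer whose value is at least $F_H(c)$, a contradiction. So there is a maximizer $w$ with $\delta_\Box(w_{\ol{\mathbf x}},w)=o(1)$, and the triangle inequality finishes the argument.

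The main obstacle is the semicontinuity of the entropy term $I$ under $\delta_\Box$. The functional $w\mapsto \frac12\iint(w\log w+1-w)$ must be shown lower semicontinuous on the unlabeled graphon space with respect to cut distance. I would prove this via the variational (Legendre) representation $I(w)=\sup_{\phi\ \text{step}}\frac12\iint(\phi w-e^{\phi}+1)$. Each functional $\iint\phi w$ with $\phi$ a step function is cut-continuous, because a step $\phi$ is a finite combination of rectangle indicators, and a supremum of continuous functionals is lower semicontinuous. The same argument also shows $I(w_{\ol{\mathbf x}})$ is controlled, noting that $\ol{\mathbf x}\le p$ keeps $w\in[0,1]$. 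A secondary check is that the $O(np)$ diagonal and non-injective corrections are indeed $o(n^2p)$.
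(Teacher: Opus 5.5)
Your proposal is correct and follows essentially the paper's route: verify that $\mc H(H,n)$ is locally tree-like and critical, compare $\mc B_{\mc H,p}$ with $\mc B_c^H$ through step graphons (limsup from a discrete maximizer, liminf from equipartition averages of a maximizing graphon), obtain maximizers from cut-compactness and upper semicontinuity of $\mc B_c^H$, and deduce the structure from \cref{cor:bar x almost maximizes B} applied to the cuts together with compactness. Two details need tightening: in the upper bound you must note, as the paper does via \cref{obs:Bethe free energy properties}, that the maximizer of $\mc B_{\mc H,p}$ is a BP fixed point and hence satisfies $x\le p$, since otherwise $x/p$ is not a graphon and the bound is not ``direct''; and since $|E_u\cap E_v|$ is not the number of copies of $H$ containing both $u$ and $v$, the codegree bound should instead use that strictly $2$-balanced $H$ has minimum degree at least $2$, which forces any common completion of $u$ and $v$ to span all (at least three) endpoints of $u$ and $v$.
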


We observe that \cref{thm:H non-existence} implies \cref{thm:graphon clique avoidance}, since $K_r$ is strictly $2$-balanced and $m_2(K_r) = (r+1)/2$.

In order to prove \cref{thm:H non-existence}, we first show that it fits the framework of \cref{thm:Bethe exactness}. To this end, for a graph $H$, we define the constraint hypergraph $\mc H = \mc H(H,n)$ as follows: the vertices are the $\binom{n}{2}$ edges of $K_n$ (viewed as the host graph of $G(n,p)$), and the hyperedges correspond to copies of $H$ in $K_n$. The following lemma follows straightforwardly from the definitions. It is proved in \cite[Lemma 7.1]{jenssen2026non}; we include the proof here for completeness.

\begin{lemma}\label{lem:Bethe exactness applies to strictly 2 balanced graphs}
    Let $H$ be a strictly $2$-balanced graph and let $c>0$. There exists a constant $\delta = \delta(H)$ such that $\mc H = \mc H(H,n)$ is $n^{-\delta}$-locally-tree-like, and $p=cn^{-1/m_2(H)}$ is $\Theta_c(1)$-critical with respect to $\mc H$.
\end{lemma}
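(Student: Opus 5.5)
We verify the three conditions of \cref{def:LTL} directly for $\mc H=\mc H(H,n)$, which is $k$-uniform with $k=e(H)$. Write $v_H=v(H)$, $e_H=e(H)$, and set $m=m_2(H)=(e_H-1)/(v_H-2)$ (strict $2$-balancedness says the maximum is attained at $F=H$). The plan is to compute all degree parameters by counting copies of $H$ in $K_n$ containing a prescribed set of host edges.

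\textbf{Maximum degree and criticality.} Every edge of $K_n$ lies in the same number of copies of $H$, namely $\Delta=\Theta_H(n^{v_H-2})$, since a copy is fixed once we choose which edge of $H$ it maps onto and then embed the remaining $v_H-2$ vertices. Hence
\[
p\Delta^{1/(k-1)}=\Theta\bigl(c\, n^{-1/m}\, n^{(v_H-2)/(e_H-1)}\bigr)=\Theta_H(c),
\]
so \cref{itm:setup p is critical} holds with a constant depending only on $c$ and $H$.

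\textbf{$\ell$-degrees.} Take a set $S$ of $\ell$ host edges, with $2\le \ell\le k-1$. Copies of $H$ containing $S$ correspond to placing $S$ on a subgraph $F\subseteq H$ with $e(F)=\ell$, where $F$ is taken without isolated vertices, and then embedding the remaining $v_H-v(F)$ vertices. This gives $\Delta_\ell=O_H(\max_F n^{v_H-v(F)})$, with the maximum over such $F$ spanning $\ell$ edges. We must show $v_H-v(F)\le (v_H-2)(k-\ell)/(k-1)-\delta$.

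We reduce to a statement about the complementary structure. Rewrite the inequality as $(v_H-2)-(v(F)-2)\le (v_H-2)-(v_H-2)(\ell-1)/(k-1)-\delta$, which is equivalent to
\[
\frac{e(F)-1}{v(F)-2}<\frac{e_H-1}{v_H-2}\quad\text{whenever }v(F)\ge 3,
\]
with some margin. Because $H$ is strictly $2$-balanced and $F\neq H$, this holds with a strict gap depending only on $H$.

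Two edge cases remain. First, if $F$ spans all $v_H$ vertices but has fewer edges, then $v_H-v(F)=0$, which is smaller than $(v_H-2)(k-\ell)/(k-1)$ since $\ell<k$. Second, if $v(F)$ is small, say $F$ is a matching or a disconnected union, then $(e(F)-1)/(v(F)-2)$ is small and the inequality is even easier. Since there are finitely many $F$, a uniform $\delta>0$ exists. Taking $\eta=n^{-\delta'}$ then gives \cref{itm:setup Delta ell UB}.

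\textbf{Codegree.} For two distinct host edges $u,v$, we have $\codeg(u,v)\le \Delta_2$, provided every $2$-subset of host edges lies in at most $\Delta_2$ copies. The $\ell=2$ case of the previous step gives $\Delta_2\le n^{-\delta}\Delta^{(k-2)/(k-1)}\le n^{-\delta}\Delta$, so \cref{itm:setup codegree bound} follows.

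The main obstacle is the careful translation of strict $2$-balancedness into the exponent inequality for every $F$ with $2\le e(F)\le k-1$, including degenerate $F$ where the naive density comparison needs $v(F)\ge3$ or where $v(F)=v_H$. I would handle this by a short case split on $v(F)$: $v(F)=v_H$, $3\le v(F)<v_H$, and $F$ a single vertex pair, which is impossible since $\ell\ge2$ forces $v(F)\ge3$.
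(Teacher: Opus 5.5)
Your computation of $\Delta$, the criticality check, and the reduction of \cref{itm:setup Delta ell UB} to $\frac{e(F)-1}{v(F)-2}<m_2(H)$ for proper subgraphs $F\subsetneq H$ with $v(F)\ge 3$ are essentially the paper's argument and are fine.

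The gap is in the codegree step. In this paper $\codeg_{\mc H}(u,v)=|E_u\cap E_v|$ counts \emph{common completions}. These are the $(k-1)$-sets $g$ of host edges such that both $g\cup\{u\}$ and $g\cup\{v\}$ are copies of $H$. It is not the number of hyperedges containing $\{u,v\}$. For such a $g$ we have $u,v\notin g$, so neither $g\cup\{u\}$ nor $g\cup\{v\}$ contains both $u$ and $v$. There is therefore no map from $E_u\cap E_v$ into copies of $H$ through $\{u,v\}$, and $\Delta_2$ gives no control here.

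Indeed, $\codeg\le\Delta_2$ fails in general. For a graph (the case $k=2$) the codegree is the number of common neighbours, while $\Delta_2\le 1$. For the constraint hypergraph of the two-edge path, codegrees are of order $\Delta$. So \cref{itm:setup codegree bound} needs a property of $H$ beyond your $\ell$-degree bounds.

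The missing idea, which the paper uses, is that a strictly $2$-balanced $H$ has minimum degree at least $2$. Hence deleting one edge from a copy of $H$ removes no vertex. So for $g\in E_u\cap E_v$, the vertex set of $K_n$ spanned by $g$ equals that spanned by $g\cup\{u\}$ and by $g\cup\{v\}$. This set has exactly $v(H)$ vertices and contains the endpoints of both $u$ and $v$, which number at least three since $u\neq v$.

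Thus at most $v(H)-3$ vertices of the copy are free, giving $\codeg_{\mc H}(u,v)=O(n^{v(H)-3})=O(\Delta/n)$. This yields \cref{itm:setup codegree bound} with $\eta=n^{-\delta}$ for any fixed $\delta<1$. Taking $\delta$ to be the minimum of this and your \cref{itm:setup Delta ell UB} margin, and replacing your codegree paragraph with this argument, completes the proof.
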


\begin{proof}
    Let $\delta = \delta(H)$ be a sufficiently small constant. We write $r = v(H)$ and $k = e(H)$.

    We first note that every vertex in $\mc H$ has degree $\Delta(\mc H) = \frac{2k(r-2)!}{|\AutH|}\binom{n-2}{r-2}$.
    
    Next, we observe that if $e$ and $f$ are distinct edges in $K_n$, then they jointly span at least three vertices. Additionally, since $H$ is strictly $2$-balanced, it has minimum degree at least two. Thus, any edge set that completes each of them to a copy of $H$ must have at most $r-3$ vertices not in $e \cup f$. Therefore, $\max_{e,f} \codeg_{\mc H}(e,f) = O(n^{r-3}) = O(\Delta(\mc H)/n)$, and \cref{itm:setup codegree bound} is satisfied.

    That \cref{itm:setup Delta ell UB} holds follows from the definition of strict $2$-balance. Indeed, this definition implies that there exists some $\varepsilon>0$ such that
    \[
    r - v(F) < r - 2 - \frac{e(F)-1}{m_2(H)} - \varepsilon
    \]
    for every $F \subsetneq H$ with at least three vertices. Let $\ell \in \{2,\ldots,k-1\}$ and let $F' \subseteq E(K_n)$ have $\ell$ edges. Note that $v(F') \geq 3$. The number of copies of $H$ containing $F'$ is at most the number of ways to identify $F'$ with a subgraph $F$ of $H$, times the number of ways to extend this identification to a copy of $H$ in $K_n$. There are $O_H(1)$ ways to identify $F'$ with a subgraph $F \subseteq H$. It follows that the number of ways to complete $F'$ to a copy of $H$ is at most
    \begin{align*}
    O \left( n^{r - v(F')} \right) & = O \left( n^{r - 2 - (\ell-1)/m_2(H) - \varepsilon} \right) = O \left( \Delta(\mc H)^{1 - (\ell-1)/((r-2)m_2(H))} \times n^{- \varepsilon} \right)\\
    & = O \left( \Delta(\mc H)^{1 - (\ell-1)/(k - 1)} \times n^{- \varepsilon} \right) = O \left( \Delta(\mc H)^{(k - \ell)/(k - 1)} \times n^{- \varepsilon} \right),
    \end{align*}
    so \cref{itm:setup Delta ell UB} holds.

    Finally, we note that
    \begin{align*}
    p\Delta(\mc H)^{1/(k-1)} & = cn^{-1/m_2(H)} \times \left( \frac{2k(r-2)!}{|\AutH|}\binom{n-2}{r-2} \right)^{1/(k-1)}\\
    & = (1+o(1)) c \left( \frac{2k}{|\AutH|} \right)^{1/(k-1)}.
    \end{align*}
    It follows that $p$ is $\Theta_c(1)$-critical with respect to $\mc H$, as desired.
\end{proof}

We now prove that $F_H(c)$ is well-defined.

\begin{lemma}
    Let $H$ be a graph and let $c>0$. The function $\mc B_c^H$ is bounded above on $\mc W$ and attains its supremum.
\end{lemma}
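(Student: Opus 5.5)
Boundedness is immediate, so the plan concentrates on existence of a maximizer. Since $0\le w\le 1$, we have $t(H,w)\ge 0$. The integrand $w\log w+1-w$ is nonnegative on $[0,1]$, being a convex function vanishing at $w=1$; hence $I(w)\ge 0$. It is also at most $1$, since $w\log w\le 0$ and $1-w\le 1$. Therefore
\[
-\tfrac{c}{2}\le \mc B_c^H(w)\le 0
\]
for all $w\in\mc W$. Alternatively, the constant graphon $w\equiv 1$ gives the value $-c^{e(H)}/|\AutH|$, so the supremum $s:=\sup_{w\in\mc W}\mc B_c^H(w)$ is a finite real number.

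For attainment, take a maximizing sequence $w_n$ with $\mc B_c^H(w_n)\to s$. The natural compact space is the graphon space modulo measure-preserving relabelings, with the cut distance $\delta_\Box$; by Lovász--Szegedy this space is compact. Both terms of $\mc B_c^H$ are invariant under measure-preserving maps, so passing to a subsequence we may assume $\delta_\Box(w_n,w)\to 0$ for some $w\in\mc W$. After relabeling the $w_n$, we may even assume $\|w_n-w\|_\Box\to 0$. The counting lemma then gives $t(H,w_n)\to t(H,w)$, so the energy term converges.

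The entropy term is the main obstacle, because $I$ is not continuous in the cut norm. The plan is to show that $I$ is lower semicontinuous with respect to $\delta_\Box$, which suffices: then $-cI$ is upper semicontinuous and $\mc B_c^H(w)\ge\limsup_n \mc B_c^H(w_n)=s$. Since $\varphi(u)=u\log u+1-u$ is convex and continuous on $[0,1]$, I would prove lower semicontinuity by the standard route. Convergence in cut norm implies weak convergence in $L^2([0,1]^2)$, since $\int (w_n-w)\mathbf 1_{S\times T}\to 0$ for all rectangles, rectangles span a dense subspace, and the $w_n$ are uniformly bounded. Convex, continuous integral functionals are weakly lower semicontinuous: $I$ is strongly lower semicontinuous by Fatou or dominated convergence, and convexity upgrades this to weak lower semicontinuity via Mazur's lemma. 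If one prefers to avoid functional analysis, an alternative is to write $\varphi$ as a supremum of affine functions $a u+b$, and to approximate $\varphi(w)$ by $\sup_i\,(a_i w+b_i)$ on a fine step partition $\mathcal P$. Cut-norm convergence gives convergence of the integrals of $w_n$ over each cell of $\mathcal P\times\mathcal P$, and Jensen applied on each cell gives $\liminf I(w_n)\ge I(w_{\mathcal P})$. Finally, $I(w_{\mathcal P})\uparrow I(w)$ by martingale convergence and continuity of $\varphi$.

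Combining the two terms, $\mc B_c^H(w)\ge s$, so $w$ attains the supremum. The only delicate point is ensuring that the relabeling chosen to realize cut-norm convergence preserves both functionals. It does, because $t(H,\cdot)$ and $I(\cdot)$ are both invariant under measure-preserving bijections and, more generally, under weak isomorphism.
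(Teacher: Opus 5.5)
Your argument is correct and is essentially the paper's proof. Nonnegativity of $t(H,\cdot)$ and $I$ gives $\mathcal B_c^H\le 0$. Attainment then follows from compactness of the graphon space under $\delta_\Box$, invariance of both terms, the counting lemma, and lower semicontinuity of $I$; the paper simply cites the last fact from Chatterjee--Varadhan, while you sketch a proof via weak $L^2$ convergence and convexity.

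The only slip is the claimed lower bound $-c/2\le\mathcal B_c^H(w)$, which fails for large $c$. For example, $w\equiv 1$ gives $-c^{e(H)}/|\mathrm{Aut}(H)|$, and the correct bound is $-c^{e(H)}/|\mathrm{Aut}(H)|-c/2$. This bound is never used in your argument, so nothing breaks.
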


\begin{proof}
    Observe that for every $w \in \mc W$, both $t(H,w)$ and $I(w)$ are nonnegative. In the case of $t(H,w)$ this is apparent from the definition. In the case of $I(w)$ this follows from the fact that $x\log x + 1 - x \geq 0$ for every $x \in [0,1]$. Thus $\mc B_c^H(w) \leq 0$.

    The fact that $\mc B_c^H$ attains its maximum follows from compactness of the graphon space (i.e., $\mc W$ modulo measure preserving transformations) with respect to the cut metric, the fact that $t(H,w)$ and $I(w)$ are invariant under measure preserving transformations, and the facts that $t(H,w)$ is continuous and $I(w)$ is lower semicontinuous with respect to the cut metric (cf.\ \cite[Lemma 2.1]{chatterjee2011large}). These imply that $\mc B_c^H$ induces an upper semicontinuous functional on the graphon space and therefore attains its supremum.
\end{proof}

We now prove a connection between the discrete Bethe free energy $\mc B_{\mc H,p}$ and its continuous counterpart $\mc B_c^H$. In what follows, given an edge-weighting $w:E(K_n)\to[0,1]$, we identify it with the step-graphon that gives weight $w(ij)$ to the square $((i-1)/n,i/n)\times ((j-1)/n,j/n)$ for $i \neq j$ and weight $0$ everywhere else.

\begin{claim}\label{clm:discrete to continuous Bethe comparison}
    Let $H$ be strictly $2$-balanced and let $c>0$. Let $p = c n^{-1/m_2(H)}$. Let $w:E(K_n)\to[0,p]$. Then
    \[
    n^{1/m_2(H)-2}\mc B_{\mc H(H,n),p}(w) = \mc B_c^H(w/p) + o(1)
    \]
    uniformly over $w$.
\end{claim}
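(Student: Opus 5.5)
We compare the two terms of $\mc B_{\mc H,p}(w)$ separately with the two terms of $\mc B_c^H(w/p)$. Write $N=\binom n2$, $r=v(H)$, $k=e(H)$ and $\phi = w/p$, a $[0,1]$-valued edge weighting. The normalization works because $n^{2}p = c\,n^{2-1/m_2(H)}$ and $m_2(H)=(k-1)/(r-2)$. From the latter, $n^{r}p^{k} = c^{k} n^{r-k/m_2(H)} = c^k n^{2-1/m_2(H)}$, since $r - k(r-2)/(k-1) = 2 - (r-2)/(k-1)$. So both terms live at scale $n^{2-1/m_2(H)}$, and it suffices to show each matches up to $o(n^{2-1/m_2(H)})$ uniformly in $w$.

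\emph{Entropy term.} We have
\[
\sum_{e\in E(K_n)} \bigl(w_e - p - w_e\log(w_e/p)\bigr) = -p\sum_{e}\bigl(\phi_e\log\phi_e + 1 - \phi_e\bigr).
\]
For the step graphon $\phi$ (zero on diagonal squares) the functional $I(\phi)$ equals $\frac{1}{2n^2}\sum_{i\neq j}(\phi_{ij}\log\phi_{ij}+1-\phi_{ij}) + \frac{1}{2n^2}\cdot n$. The last term is the diagonal contribution, since $x\log x+1-x=1$ at $x=0$. Hence the discrete sum equals $-pn^2 I(\phi) + O(pn)$. Multiplying by $n^{1/m_2(H)-2}$ gives $-cI(\phi)+O(1/n)$, uniformly in $w$ because the integrand is bounded by $1$.

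\emph{Edge term.} We have $\sum_{F\in E(\mc H)}\prod_{e\in F} w_e = p^k \sum_{\text{copies } F}\prod_{e\in F}\phi_e$. The number of labelled embeddings of $H$ into $K_n$ is $|\AutH|$ times the number of copies, so the sum over copies equals
\[
\frac{1}{|\AutH|}\sum_{\text{injective } \sigma:[r]\to[n]} \prod_{ij\in E(H)}\phi_{\sigma(i)\sigma(j)}.
\]
Meanwhile $n^r t(H,\phi)$ is the same sum over all maps $\sigma$. Non-injective maps contribute a nonzero amount only if no edge of $H$ collapses, since $\phi$ vanishes on the diagonal. In any case their total is at most $O_r(n^{r-1})$ because $\phi\le1$. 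Therefore the edge term equals
\[
\frac{p^k n^r}{|\AutH|}\, t(H,\phi) + O(p^k n^{r-1}).
\]
After normalization this is $\frac{c^k}{|\AutH|}t(H,\phi)+O(1/n)$, uniformly.

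Adding the two estimates gives the claim with error $O(1/n)$. The only points needing care are these:
\begin{itemize}
\item the exponent bookkeeping $n^r p^k\asymp n^2 p$, which uses exactly the definition of $m_2(H)$ being attained at $F=H$ (strict $2$-balance is only needed elsewhere);
\item the diagonal convention in $I$, which is harmless at order $pn$.
\end{itemize}
I expect no real obstacle. The mildly delicate step is confirming the automorphism normalization, since the hyperedges are unlabelled copies while $t(H,\cdot)$ counts labelled homomorphisms.
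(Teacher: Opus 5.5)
Your proof is correct and follows essentially the paper's route: you compare the edge term and the entropy term separately, discard non-injective maps at cost $O(n^{r-1})$ and the diagonal squares at cost $O(1/n)$, and match normalizations via $n^rp^k=c^k n^{2-1/m_2(H)}$ and $n^2p=c\,n^{2-1/m_2(H)}$. Your version is, if anything, slightly more explicit than the paper's about the exponent bookkeeping and the $|\mathrm{Aut}(H)|$ factor.
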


\begin{proof}
    We write $r = v(H)$ and $k = e(H)$. We consider the two contributions to $\mc B_c^H(w/p)$ and $\mc B_{\mc H(H,n),p}(w)$ separately. First, by definition
    \begin{align*}
        t(H,w/p) & = \int_{[0,1]^{v(H)}} \prod_{ij \in E(H)} \frac{w(y_i,y_j)}{p} dy_1\cdots dy_{v(H)}\\
        & = \frac{1}{n^{r}} \sum_{\substack{x_1,\ldots,x_r \in [n]\\x_1,\ldots,x_r\text{ distinct}}} \prod_{ij\in E(H)} \frac{w(x_ix_j)}{p} + O \left( \frac{n^{r-1}}{n^r} \right)\\
        & = \frac{|\AutH|}{n^r p^k} \sum_{\substack{H' \subseteq K_{n}\\H' \cong H}} \prod_{e \in E(H')}w(e) + O \left( \frac{n^{r-1}}{n^r} \right)\\
        & = n^{1/m_2(H) - 2} \frac{|\AutH|}{c^k} \sum_{\substack{H' \subseteq K_{n}\\H' \cong H}} \prod_{e \in E(H')}w(e) +  O \left( \frac{n^{r-1}}{n^r} \right).
    \end{align*}
    Therefore
    \[
    \frac{1}{|\AutH|} c^k t(H,w/p) = n^{1/m_2(H) - 2} \sum_{\substack{H' \subseteq K_{n}\\H' \cong H}} \prod_{e \in E(H')}w(e) - o(1).
    \]

    We now consider $I(w/p)$. By definition
    \begin{align*}
        c I(w/p) & = \frac{c}{2} \iint_{x,y \in [0,1]} \left( \frac{w(x,y)}{p} \log \left(\frac{w(x,y)}{p} \right) + 1 - \frac{w(x,y)}{p} \right)dxdy\\
        & = \frac{c}{2n^2} \sum_{\substack{i,j \in [n]\\i \neq j}} \left( \frac{w(ij)}{p} \log \left( \frac{w(ij)}{p}\right) + 1 - \frac{w(ij)}{p} \right) + O(n^{-1})\\
        & = n^{1/m_2(H)-2} \sum_{ij \in E(K_{n})} \left( w(ij) \log\left(\frac{w(ij)}{p}\right) + p - w(ij) \right) + o(1).
    \end{align*}
    It follows that
    \begin{align*}
    \mc B_c^H(w/p) & = - \frac{1}{|\AutH|} c^k t(H,w/p) - cI(w/p)\\
    & = n^{1/m_2(H)-2} \left( - \sum_{\substack{H' \subseteq K_{n}\\H' \cong H}} \prod_{ij \in E(H')}w(ij) - \sum_{ij \in E(K_{n})} \left( w(ij) \log\left(\frac{w(ij)}{p}\right) + p - w(ij) \right) \right) - o(1)\\
    & =n^{1/m_2(H)-2} \mc B_{\mc H(H,n),p}(w) + o(1),
    \end{align*}
    as claimed.
\end{proof}

We now prove the first assertion in \cref{thm:H non-existence}.
\begin{lemma}\label{lem:H rate function}
    Let $H$ be a strictly $2$-balanced graph and let $c>0$. Then
    \[
    \lim_{n \to \infty} n^{1/m_2(H)-2} \log \left( \Pr[H \nsubseteq G(n,cn^{-1/m_2(H)})] \right) = F_H(c).
    \]
\end{lemma}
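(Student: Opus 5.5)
Since $G(n,p)$ is $H$-free exactly when the random edge set is independent in $\mc H=\mc H(H,n)$, I will read off the rate function from \cref{thm:Bethe exactness}, via \cref{lem:Bethe exactness applies to strictly 2 balanced graphs} and \cref{clm:discrete to continuous Bethe comparison}. By \cref{lem:Bethe exactness applies to strictly 2 balanced graphs}, $\mc H$ is $n^{-\delta}$-locally tree-like and $p$ is critical; also $\Delta(\mc H)=\Theta(n^{r-2})\to\infty$. So \cref{thm:Bethe exactness} gives
\[
\log \Pr[H\nsubseteq G(n,p)] = \max_{x\in[0,1]^{E(K_n)}} \mc B_{\mc H,p}(x) + o(\tbinom n2 p).
\]
Since $\binom n2 p = \Theta(n^{2-1/m_2(H)})$, after multiplying by $n^{1/m_2(H)-2}$ the error is $o(1)$. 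It therefore suffices to show
\[
n^{1/m_2(H)-2}\max_x \mc B_{\mc H,p}(x) = F_H(c)+o(1).
\]

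\emph{Restricting the maximization.} By \cref{obs:Bethe free energy properties}, the maximum of $\mc B_{\mc H,p}$ is attained at a BP fixed point, and every BP fixed point has all coordinates at most $p$. So I may restrict to $w:E(K_n)\to[0,p]$. By \cref{clm:discrete to continuous Bethe comparison}, uniformly over such $w$,
\[
n^{1/m_2(H)-2}\mc B_{\mc H,p}(w)=\mc B^H_c(w/p)+o(1).
\]
Here $w/p$ is a step graphon with values in $[0,1]$.

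\emph{Upper bound.} For the maximizer $w$, the function $w/p$ lies in $\mc W$ (it is a step function with zero diagonal). Hence the rescaled maximum is at most $F_H(c)+o(1)$.

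\emph{Lower bound; the main obstacle.} I need step graphons $w_n/p$ with $n$ equal steps and zero diagonal whose values $\mc B^H_c(w_n/p)$ tend to $\mc B^H_c(w^*)$, where $w^*$ maximizes $\mc B^H_c$. The naive route fails: $I$ is only lower semicontinuous in cut distance, so cut-close approximations could give a worse (larger) entropy term. Instead I approximate in $L^1$.
\begin{itemize}
\item Choose $w_n$ by averaging $w^*$ over the $n\times n$ grid squares and setting the diagonal squares to $0$.
\item By the Lebesgue differentiation / martingale convergence theorem, these averages converge to $w^*$ in $L^1$; zeroing the diagonal changes only $O(1/n)$ measure.
\item Homomorphism densities are $L^1$-continuous, with $|t(H,u)-t(H,u')|\le e(H)\|u-u'\|_1$, so the $t(H,\cdot)$ term converges.
\item The integrand $\phi(x)=x\log x+1-x$ is continuous on $[0,1]$ and convex, so Jensen gives $I(\text{average})\le I(w^*)$ on each square. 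The diagonal squares contribute only $O(1/n)$, since $\phi(0)=1$ is bounded.
\end{itemize}
Hence $\liminf_n \mc B^H_c(w_n)\ge \mc B^H_c(w^*)=F_H(c)$. Setting $w=p\,w_n$ on $E(K_n)$, which takes values in $[0,p]$, and applying the claim, the rescaled maximum is at least $F_H(c)-o(1)$. Combining the upper and lower bounds proves the limit.
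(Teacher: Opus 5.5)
Your proposal is correct and follows essentially the same route as the paper: you apply \cref{thm:Bethe exactness} via \cref{lem:Bethe exactness applies to strictly 2 balanced graphs}, restrict to weights in $[0,p]$ using \cref{obs:Bethe free energy properties}, use \cref{clm:discrete to continuous Bethe comparison} in both directions, and obtain the lower bound from an $L^1$ grid-averaging of a maximizer $w^*$. Your two deviations are both valid: the Jensen bound $I(w_n)\le I(w^*)+O(1/n)$ replaces the paper's assertion $I(w_n)\to I(w^*)$ and suffices for the lower bound, and your direct uniform two-sided estimate replaces the paper's limsup/liminf subsequence argument. One small correction: martingale convergence only applies along nested grids such as $n=2^j$, so for general $n$ the $L^1$ convergence should be justified by Lebesgue differentiation, or by density of continuous functions together with the $L^1$-contractivity of averaging.
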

\begin{proof}
    For conciseness, define $f(n) \coloneqq \log \left( \Pr[H \nsubseteq G(n,cn^{-1/m_2(H)})] \right)$. We will prove the two inequalities
    \begin{equation}\label{eq:FHC limsup}
        \limsup_{n \to \infty} n^{1/m_2(H)-2} f(n) \leq F_H(c)
    \end{equation}
    and
    \begin{equation}\label{eq:FHC liminf}
        F_H(c) \leq \liminf_{n \to \infty} n^{1/m_2(H)-2} f(n).
    \end{equation}
    Together these imply the first assertion of \cref{thm:H non-existence}.

    We begin with \eqref{eq:FHC limsup}. Let $\{n_\ell\}_{\ell\in\N}$ be a sequence such that
    \[
    \lim_{\ell \to \infty} n_\ell^{1/m_2(H)-2} f(n_\ell) = \limsup_{n \to \infty} n^{1/m_2(H)-2} f(n).
    \]
    Let $p_\ell \coloneqq c n_\ell^{-1/m_2(H)}$. By \cref{lem:Bethe exactness applies to strictly 2 balanced graphs,thm:Bethe exactness}, if $x_\ell$ is a maximizer of $\mc B_{\mc H(H,n_\ell),p_\ell}$, then
    \[
    f(n_\ell) = \mc B_{\mc H(H,n_\ell),p_\ell}(x_\ell) + o(n_\ell^2p_\ell).
    \]
    Additionally, recall from \cref{obs:Bethe free energy properties} that $x_\ell(e) \leq p_\ell$ for every edge $e \in E(K_{n_\ell})$. Noting that by definition $n_\ell^2p_\ell = c n_\ell^{2-1/m_2(H)}$, we see that
    \[
    \lim_{\ell\to\infty} n_\ell^{1/m_2(H)-2} \mc B_{\mc H(H,n_\ell),p_\ell}(x_\ell) = \limsup_{n \to \infty} n^{1/m_2(H)-2} f(n).
    \]

    For each $\ell$ we define the step graphon $w_\ell$ that is induced by $x_\ell/p_\ell$ in the obvious way. That is, we identify $V(K_{n_\ell})$ with the integer set $[n_\ell]$, and for each $i\neq j \in [n_\ell]$ we define $w_\ell$ to have constant weight $x_\ell(ij)/p_\ell$ on the square $((i-1)/n_\ell,i/n_\ell)\times((j-1)/n_\ell,j/n_\ell)$. On all remaining points of the square (including the diagonal squares of the form $((i-1)/n_\ell,i/n_\ell)^2$) we let $w_\ell$ have constant weight $0$. We observe that since $x_\ell \leq p_\ell$, the function $w_\ell$ is bounded between $0$ and $1$.

    We claim that
    \[
    F_H(c) \geq \mc B_c^H(w_\ell) = n_\ell^{1/m_2(H)-2} \mc B_{\mc H(H,n_\ell),p_\ell}(x_\ell) + o(1),
    \]
    which will imply \eqref{eq:FHC limsup}. Indeed, the inequality follows from the definition of $F_H(c)$. The equality follows from \cref{clm:discrete to continuous Bethe comparison}.

    We now prove \eqref{eq:FHC liminf}. Let $w$ be a graphon such that $\mc B_c^H(w) = F_H(c)$. For each $n \in \N$, let $p_n \coloneqq c n^{-1/m_2(H)}$. We will build a sequence of weightings $x_n : E(K_n) \to [0,1]$ such that $\lim_{n \to\infty} n^{1/m_2(H)-2} \mc B_{\mc H(H,n),p_n} (x_n) = \mc B_c^H(w)$, implying \eqref{eq:FHC liminf}. Concretely, for each $n$, define the step graphon $w_n$ by giving it weight $n^2 \iint_{((i-1)/n,i/n) \times ((j-1)/n,j/n)} w(x,y) dxdy$ on each square $((i-1)/n,i/n) \times ((j-1)/n,j/n))$ with $i\neq j$, and weight $0$ elsewhere. Note that $\{w_n\}$ converges to $w$ in $L^1$. We then define the weighting $x_n$ by
    \[
    x_n(ij) \coloneqq p_n w_n((i-1/2)/n,(j-1/2)/n).
    \]
    
    Since $w_n$ converges to $w$ in $L^1$ it follows that $t(H,w_n) \to t(H,w)$ and that $I(w_n) \to I(w)$. Therefore $\lim_{n\to\infty} \mc B_c^H(w_n) = \mc B_c^H(w) = F_H(c)$. Additionally, by \cref{clm:discrete to continuous Bethe comparison}, $n^{1/m_2(H)-2}\mc B_{\mc H(H,n),p_n}(p_nw_n) = \mc B_c^H(w_n)+o(1)$, implying \eqref{eq:FHC liminf} and completing the proof.
\end{proof}

Next, we prove that typical $H$-free graphs are close in cut metric to the empirical marginals $\mathbf{x}$. To recap the definitions, specialized to the case of $H$-free graphs, let $G \subseteq K_n$ be an $H$-free graph. Write $\mc U(G)$ for the set of edges $e \subseteq K_n$ such that $G+e$ is $H$-free. For each $e \in K_n$, write $y_e(G)$ for the number of copies $H' \subseteq K_n$ of $H$ containing $e$ such that $H' \setminus e \subseteq \mc U(G)$. Finally, define $\mathbf{x} = \mathbf{x}(G)$ by $\mathbf{x}_e = p \exp\left(-p^{e(H)-1} y_e\right)$. Additionally, we prove that $\mathbf{x}$ is usually close to a maximizer of $\mc B_c^H$.

\begin{lemma}\label{lem:H structure}
    Let $H$ be a strictly $2$-balanced graph and let $c>0$. Let $G \sim G(n,cn^{-1/m_2(H)})$ conditioned on being $H$-free. Write $\mathbf{x} = \mathbf{x}(G)$. Whp $\delta_\Box(\tilde G/p,{\mathbf{x}} /p) = o(1)$. Additionally, whp there exists a graphon $w$ that maximizes $\mc B_c^H$ and such that $\delta_\Box({\mathbf{x}} / p, w) = o(1)$.
\end{lemma}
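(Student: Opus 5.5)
We prove the two assertions of \cref{lem:H structure} separately. In both we work in the hypergraph $\mc H = \mc H(H,n)$, with $V = E(K_n)$, $k = e(H)$ and $np$ replaced by $\binom n2 p = \Theta(n^{2-1/m_2(H)})$. By \cref{lem:Bethe exactness applies to strictly 2 balanced graphs}, the setup of \cref{thm:Bethe exactness} applies. Note that $G\sim G(n,p)$ conditioned on being $H$-free has exactly the law $\mu_{\mc H,p}$.

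\textbf{Closeness of $G$ and $\mathbf x$.} We apply \cref{cor:bar x almost maximizes B} with $\cA$ the family of all edge sets of the form $E_{K_n}(S,T)$ for $S,T\subseteq[n]$. Then $|\cA|\le 4^n$, so $\log(2|\cA|) = O(n) = o(n^2p)$, because $n p = c n^{1-1/m_2(H)}\to\infty$ (using $m_2(H)>1$). Whp we therefore get
\[
\Bigl||G\cap E(S,T)| - \sum_{e\in E(S,T)}\mathbf x_e\Bigr| = o(n^2p)\qquad\text{for all } S,T.
\]
Dividing by $n^2p$ gives
\[
\sup_{S,T}\Bigl|\int_{S\times T}(\tilde G - \mathbf x)/p\Bigr| = o(1),
\]
where we view $S,T$ as unions of intervals. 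The cut norm of a step kernel with $n\times n$ steps is attained on unions of step intervals, up to a factor at most $4$ coming from the convention with ordered pairs and from splitting into diagonal and off-diagonal terms. Hence $\|\tilde G/p - \mathbf x/p\|_\Box = o(1)$, and in particular $\delta_\Box(\tilde G/p,\mathbf x/p) = o(1)$. This step is routine.

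\textbf{Closeness of $\mathbf x/p$ to a maximizer.} Here we combine the first conclusion of \cref{cor:bar x almost maximizes B} with \cref{clm:discrete to continuous Bethe comparison} and compactness. Since $\mathbf x_e\le p$, \cref{clm:discrete to continuous Bethe comparison} gives
\[
\mc B_c^H(\mathbf x/p) = n^{1/m_2(H)-2}\mc B_{\mc H,p}(\mathbf x)+o(1).
\]
The proof of \cref{lem:H rate function} shows that $n^{1/m_2(H)-2}\max_x\mc B_{\mc H,p}(x) \to F_H(c)$. Therefore, whp, $\mc B_c^H(\mathbf x/p)\ge F_H(c)-o(1)$.

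It remains to show that near-maximizers are close to maximizers. We claim that for every $\eps>0$ there is $\delta>0$ such that
\[
\mc B_c^H(w)\ge F_H(c)-\delta \quad\Longrightarrow\quad \delta_\Box(w,\mc M)<\eps,
\]
where $\mc M$ is the set of maximizers. Suppose not. Then there is a sequence $w_j$ with $\mc B_c^H(w_j)\to F_H(c)$ and $\delta_\Box(w_j,\mc M)\ge\eps$. By compactness of the graphon space under $\delta_\Box$, some subsequence converges to a limit $w$. Upper semicontinuity of $\mc B_c^H$ gives $\mc B_c^H(w)\ge F_H(c)$, so $w\in\mc M$, contradicting $\delta_\Box(w_j,\mc M)\ge\eps$. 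Upper semicontinuity holds because $t(H,\cdot)$ is continuous and $I$ is lower semicontinuous in $\delta_\Box$. Finally, let $\eps\to0$ slowly.

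\textbf{Expected obstacle.} The main point needing care is a technical one. The step graphon $\mathbf x/p$ has zero diagonal blocks and $t(H,\cdot)$ counts non-injective maps, but both issues contribute $O(1/n)$ and are already absorbed in \cref{clm:discrete to continuous Bethe comparison}. The other point is that compactness yields only approximation in $\delta_\Box$ (after a measure-preserving rearrangement), not in $\|\cdot\|_\Box$. This suffices, since the statement is in terms of $\delta_\Box$. Combining the two parts, the triangle inequality for $\delta_\Box$ yields the structural statement in \cref{thm:H non-existence}.
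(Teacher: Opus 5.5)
Your proposal is correct and follows essentially the same route as the paper. The first assertion applies \cref{cor:bar x almost maximizes B} to the family of cut edge sets $E_{S,T}$, with $\log|\cA|\le n\log 4 = o(\binom n2 p)$; your handling of the ordered-pair factor is a detail the paper glosses over. The second combines the whp near-optimality of $\mathbf x$ with \cref{clm:discrete to continuous Bethe comparison}, the convergence $n^{1/m_2(H)-2}\max_x \mc B_{\mc H,p}(x)\to F_H(c)$, and compactness plus upper semicontinuity of $\mc B_c^H$.
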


\begin{proof}
    We first prove that whp $\delta_\Box(\tilde G/p,{\mathbf{x}}(G) /p) = o(1)$. This is equivalent to proving that whp
    \begin{equation}\label{eq:cut metric G vs bar x}
    \delta_\Box(\tilde G,{\mathbf{x}}(G)) = o(p),    
    \end{equation}
    which follows almost immediately from \cref{cor:bar x almost maximizes B}. Indeed, by definition,
    \begin{align*}
    \delta_\Box(\tilde G, \mathbf{x}) & \leq \sup_{S,T \subseteq [0,1]}|\tilde G(S \times T) - \mathbf{x}(S \times T)|\\
    & = \frac{1}{n^2} \max_{S,T \subseteq [n]} \left| |\{(i,j) \in S \times T : ij \in E(G)\}| - \sum_{i \in S, j \in T} \mathbf{x}_{ij} \right|.
    \end{align*}
    We observe that the collection $\mc A \subseteq 2^{E(K_n)}$ of sets of the form $E_{S,T} = \{ij : (i,j) \in S\times T\}$ with $S,T \subseteq [n]$ satisfies $\log|\mc A| \leq \log(4^n) = o\left(\binom{n}{2} p\right)$, and so \cref{cor:bar x almost maximizes B} implies that whp
    \[
    \max_{A \in \mc A} \left| |E(G) \cap A| - \sum_{ij \in A} \mathbf{x}_{ij} \right| = o(n^2p),
    \]
    implying \eqref{eq:cut metric G vs bar x}.

    We now prove that whp $\mathbf{x} / p$ is close to a maximizer of $\mc B_c^H$. Let $\varepsilon >0$ and let $\mc M_\varepsilon \subseteq \mc W$ be the set of all graphons at distance at most $\varepsilon$ from a maximizer of $\mc B_c^H$. It suffices to prove that whp $\mathbf{x}/p \in \mc M_\varepsilon$. From the compactness of $\mc W$ and the upper-semicontinuity of $\mc B_c^H$ there exists some $\delta>0$ such that for every $w \in \mc W \setminus \mc M_\varepsilon$ we have $\mc B_c^H(w)<F_H(c)-\delta$. On the other hand, by \cref{cor:bar x almost maximizes B}, whp
    \begin{align*}
    n^{1/m_2(H)-2}\mc B_{\mc H(H,n),p}(\mathbf{x}) & = n^{1/m_2(H)-2} \max_{z \in [0,1]^{E(K_n)}} \mc B_{\mc H(H,n),p}(z) + o(1)\\
    & \stackrel{\cref{lem:H rate function}}{=} F_H(c) + o(1).
    \end{align*}
    By \cref{clm:discrete to continuous Bethe comparison}, $n^{1/m_2(H)-2}\mc B_{\mc H(H,n),p}(\mathbf{x}) = \mc B_c^H(\mathbf{x}/p) + o(1)$. Therefore, $\mc B_c^H(\mathbf{x}/p) = F_H(c)+o(1)$ whp, as claimed.
\end{proof}

Together, \cref{lem:H rate function,lem:H structure} imply \cref{thm:H non-existence}.

\subsection{The structural transition in non-bipartite $H$-free random graphs}

Let $H$ be strictly $2$-balanced. In particular, the discussion in this section encompasses $H=K_r$, which is the primary case we have in mind. \cref{thm:H non-existence} reduces the log asymptotics of $\Pr[H \nsubseteq G(n,cn^{-1/m_2(H)})]$ to determining $F_{H}(c)$. It similarly reduces studying the large-scale structure of typical $H$-free samples from $G(n,cn^{-1/m_2(H)})$ to understanding the set $\mc M_c^H \coloneqq \{ w \in \mc W : \mc B_c^H(w) = F_H(c) \}$ of maximizers of $\mc B_c^H$.

For the remainder of this section, fix a strictly $2$-balanced $H$ with chromatic number $r \geq 3$. The established results (cf.\ \cite{luczak2000triangle,saxton2015hypergraph,balogh2015independent,kozma2023lower,janson1990poisson}) tell us that as $c\to\infty$, typical samples from $G(n,cn^{-1/m_2(H)})$ conditioned on not containing $H$ are approximately $(r-1)$-partite, whereas when $c \to 0$ such samples do not have a similar large scale structure. We show here how this phenomenon is reflected in $\mc M_c^H$ and thus, by \cref{thm:H non-existence}, in the typical structure of $H$-free samples from $G(n,cn^{-1/m_2(H)})$.

Let $k=e(H)$ and $R=k-1$.

\subsubsection{Small $c$}

In this section we show that for $c$ sufficiently small in terms of $H$, $\mc B_c^{H}$ is uniquely maximized by an explicit constant-density graphon. In particular, typical $H$-free samples from $G(n,p)$ in this regime do not have any notable partite structure.

We write $W$ for Lambert's $W$-function, i.e., the inverse of $f(x)=xe^x$.

The following follows directly from the authors' results in \cite{jenssen2026non}, where it was proved using contraction of a Belief Propagation operator (and with the explicit quantity $c_H = \left(\frac{e|\AutH|}{2k(k-1)}\right)^{1/(k-1)}$). We give here a proof using \cref{thm:H non-existence} and a variational analysis of the functional $\mc B_c^{H}$.

\begin{theorem}\label{thm:small c H-free analysis}
    Let $H$ be strictly $2$-balanced with chromatic number $r \geq 3$. There exists some $c_H > 0$ such that the following holds. For every $c<c_H$, let $w_c$ be the constant graphon
    \[
    w_c \equiv \frac{1}{c} \left( \frac{|\AutH|}{2kR} W \left( \frac{2k R c^R}{|\AutH|} \right) \right)^{1/R}.
    \]
    Then $w_c$ is the unique maximizer of $\mc B_c^{H}$, and
    \[
    \log(\Pr[H \nsubseteq G(n,cn^{-1/m_2(H)})]) = n^{2-1/m_2(H)} \mc B_c^{H}(w_c) + o(n^{2-1/m_2(H)}).
    \]
\end{theorem}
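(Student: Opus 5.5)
The second assertion follows at once from the first together with \cref{thm:H non-existence}: once $F_H(c)=\mc B_c^H(w_c)$, the limit statement in \cref{thm:H non-existence} gives the displayed asymptotic. So everything reduces to a variational statement about $\mc B_c^H$. We show that for $c$ small every maximizer is constant, and that the only constant maximizer is $w_c$.

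\emph{The constant case.} For $w\equiv a\in[0,1]$,
\[
\mc B_c^H(a)=-\frac{c^k a^k}{|\AutH|}-\frac c2\bigl(a\log a+1-a\bigr).
\]
Its stationarity equation is
\[
\frac{k c^{k} a^{k-1}}{|\AutH|}+\frac c2\log a=0,
\qquad\text{that is,}\qquad
a=\exp\Bigl(-\frac{2k c^{R} a^{R}}{|\AutH|}\Bigr).
\]
Put $z=\frac{2kR}{|\AutH|}(ca)^R$. The equation becomes $ze^z=\frac{2kRc^R}{|\AutH|}$, so $z=W(\cdot)$. This recovers the stated value $a=w_c$. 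The equation has a unique solution because the left side of $a=e^{-\kappa a^R}$ is increasing in $a$ and the right side is decreasing. The endpoints $a=0$ and $a=1$ are excluded by the derivative signs.

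\emph{Every maximizer is constant.} We use the Euler--Lagrange equation. Let $w$ be a maximizer. Perturbing $w$ on sets where $0<w<1$, and checking the boundary values separately, gives almost everywhere
\[
w(x,y)=\exp\Bigl(-\frac{2c^{R}}{|\AutH|}\,t_{xy}(H,w)\Bigr),
\]
where $t_{xy}(H,w)=\sum_{e\in E(H)}t_{x,y}(H\setminus e,w)$ is the rooted homomorphism density obtained by differentiating $t(H,w)$ in the edge variable $w(x,y)$, with the endpoints of $e$ fixed at $x,y$. Since $0\le t_{xy}\le k$, this forces $w\ge e^{-2kc^R/|\AutH|}=1-O(c^R)$. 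Hence every maximizer is uniformly close to $1$ when $c$ is small.

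Next, let $T$ be the map $w\mapsto \exp(-\frac{2c^R}{|\AutH|}t_{\cdot\cdot}(H,w))$. We show $T$ is a contraction in $L^\infty$ on $\mc W$. The quantity $t_{xy}(H\setminus e,\cdot)$ is multilinear in the $k-1$ remaining edge copies of $w$, and each factor is bounded by $1$. Therefore
\[
\|t_{\cdot\cdot}(H,w)-t_{\cdot\cdot}(H,w')\|_\infty\le k(k-1)\|w-w'\|_\infty,
\]
and since $|e^{-s}-e^{-s'}|\le|s-s'|$ for $s,s'\ge0$, the map $T$ has Lipschitz constant at most $2k(k-1)c^R/|\AutH|<1$ for $c<c_H$. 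Thus $T$ has a unique fixed point. The constant $w_c$ is a fixed point, because for constant $w$ we have $t_{xy}=ka^{k-1}$, which gives exactly the constant equation above. Every maximizer is a fixed point, so the maximizer is unique and equals $w_c$. Existence of a maximizer is already known.

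\emph{Main obstacle.} The delicate step is justifying the Euler--Lagrange equation pointwise almost everywhere, including on the sets where $w=0$ or $w=1$. On $\{w=0\}$, the entropy derivative $\frac c2\log w=-\infty$ rewards increasing $w$, so this set must be null. On $\{w=1\}$, the one-sided derivative $-\frac{kc^k}{|\AutH|}\cdot(\text{positive})<0$ rewards decreasing $w$, but that derivative vanishes where $t_{xy}=0$. To handle this, use the one-sided inequality $w\le\exp(-\dots)$ there, which is still compatible with the fixed-point and contraction argument once it is recast as: $w=\min(1,Tw)=Tw$, since $Tw\le1$. Perturbations must be bounded measurable and symmetric, and $t(H,\cdot)$ is differentiable along them by multilinearity, so the variational argument goes through.
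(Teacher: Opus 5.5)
Your argument is correct, but it follows a different route from the paper's.

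\emph{The paper's route.} It never analyses an arbitrary maximizer. It shows that for $c<(|\mathrm{Aut}(H)|/(2kR))^{1/R}$ the functional $\mathcal{B}_c^H$ is strictly concave along every segment $tu+(1-t)v$ in $\mathcal{W}$. The second derivative of $t(H,\cdot)$ is a sum of $k(k-1)$ mixed terms, each bounded by $\iint(u-v)^2$ via Cauchy--Schwarz. The entropy term contributes at most $-\frac{c}{2}\iint(u-v)^2$ because $tu+(1-t)v\le 1$. The paper then checks that the interior constant $w_c$ has vanishing directional derivatives. Strict concavity plus stationarity at once make $w_c$ the unique global maximizer.

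\emph{Your route.} You derive the Euler--Lagrange/KKT condition $w=Tw$ for an arbitrary maximizer. You then show the continuum BP operator $T$ is an $L^\infty$ contraction. This is the graphon analogue of the BP-contraction argument of \cite{jenssen2026non} that the paper mentions.

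\emph{Comparison.} Both routes rest on the same multilinearity estimate: your Lipschitz constant $k(k-1)$ is exactly the paper's Hessian bound. Both give the same threshold $c_H=(|\mathrm{Aut}(H)|/(2kR))^{1/R}$. The paper's route is shorter and sidesteps precisely the boundary analysis you flag as the main obstacle. It also needs no compactness-based existence of a maximizer for the uniqueness part. Yours needs both, but in exchange it shows that the iterates $T^m w$ converge to $w_c$ in $L^\infty$ from any starting graphon.

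\emph{Boundary sets.} Your discussion here is loosely worded but right in substance. Suppose $t_{xy}>0$ on a positive-measure part of $\{w=1\}$. Decreasing $w$ there strictly increases $\mathcal{B}_c^H$ to first order, since the entropy derivative vanishes at $w=1$. Hence $t_{xy}=0$, and so $Tw=1=w$, a.e.\ on $\{w=1\}$. On $S=\{w=0\}$, raising $w$ to $\varepsilon$ gains $-\frac{c}{2}|S|(\varepsilon\log\varepsilon-\varepsilon)$ in the entropy term. This beats the $O(\varepsilon)$ loss in the $t(H,\cdot)$ term, so $S$ is null.
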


To prove \cref{thm:small c H-free analysis} it suffices to prove that for sufficiently small $c$, $\mc B_c^{H}$ is strictly concave and that $w_c$ is a stationary point of $\mc B_c^{H}$. These facts are established by the next two claims.

\begin{claim}
    For every $c < (|\AutH|/(2kR))^{1/R}$, $\mc B_c^{H}$ is strictly concave.
\end{claim}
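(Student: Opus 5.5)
The plan is to verify strict concavity along every segment of $\mc W$ by a second-variation computation: the entropy term $-cI$ contributes a uniformly negative curvature, and the homomorphism-density term contributes a positive curvature that is small when $c$ is small.

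Fix $w_0\neq w_1$ in $\mc W$ (differing on a set of positive measure). Put $h=w_1-w_0$, which is symmetric and bounded, and $w_s=w_0+sh$ for $s\in[0,1]$. Define $g(s)=\mc B_c^H(w_s)$. It suffices to show $g''(s)<0$ for all $s\in(0,1)$. Then $g$ is strictly concave on $[0,1]$: it is continuous on $[0,1]$ by dominated convergence, since $w\mapsto w\log w$ is bounded and continuous on $[0,1]$.

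\textbf{Step 1 (entropy term).} For $s\in(0,1)$, at every point where $h\neq0$ we have $w_s>0$, because $w_s$ is a strict convex combination of values in $[0,1]$ that are not both zero. The integrand $\phi(z)=z\log z+1-z$ has $\phi''(z)=1/z\ge 1$ on $(0,1]$. Differentiating under the integral (justified locally in $s$ away from the endpoints) gives
\[
\frac{d^2}{ds^2}\,cI(w_s)=\frac c2\iint \frac{h(x,y)^2}{w_s(x,y)}\,dx\,dy\;\ge\;\frac c2\|h\|_2^2 .
\]

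\textbf{Step 2 (density term).} The function $s\mapsto t(H,w_s)$ is a polynomial in $s$. Its second derivative is a sum over ordered pairs of distinct edges $e\neq f$ of $H$. Each summand is the integral of $h(x_e)h(x_f)\prod_{g\neq e,f}w_s(x_g)$. Bound each summand by replacing the other factors by $1$ and integrating out the irrelevant variables.
\begin{itemize}
\item If $e$ and $f$ share a vertex, the summand is at most $\int\big(\int|h(x,y)|\,dy\big)^2dx\le\|h\|_2^2$, by Cauchy--Schwarz in $y$.
\item If $e$ and $f$ are disjoint, the summand is at most $\|h\|_1^2\le\|h\|_2^2$.
\end{itemize}
There are $k(k-1)=kR$ ordered pairs, so $\big|\frac{d^2}{ds^2}t(H,w_s)\big|\le kR\|h\|_2^2$.

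\textbf{Step 3 (combine).} Combining the two steps gives
\[
g''(s)\le\Big(\frac{c^kkR}{|\AutH|}-\frac c2\Big)\|h\|_2^2 .
\]
This is strictly negative whenever $\|h\|_2>0$ and $c^{R}<|\AutH|/(2kR)$, which is exactly the hypothesis on $c$. Hence $\mc B_c^H$ is strictly concave.

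\textbf{Main obstacle.} The delicate point is the regularity of $I$ near $w=0$, where $\phi'$ blows up. I handle it as above: I only differentiate for $s$ in the open interval, where $w_s>0$ on the support of $h$. There $1/w_s\le 1/(s\,w_1+(1-s)w_0)$, and the term $h^2/w_s$ is bounded by $|h|/\min(s,1-s)$, so dominated convergence applies locally. Strict concavity on $[0,1]$ then follows from continuity at the endpoints.
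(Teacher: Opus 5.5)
Your proof is correct and follows the paper's argument: compute $g''$ along the segment, bound the entropy contribution below by $\frac{c}{2}\iint h^2$ using $w_s\le 1$, and bound each of the $k(k-1)$ terms of $\frac{d^2}{ds^2}t(H,w_s)$ by $\iint h^2$ via Cauchy--Schwarz. The differences are minor: the paper applies Cauchy--Schwarz directly on $[0,1]^{v(H)}$ without separating shared-vertex from disjoint edge pairs, and it omits the differentiation-under-the-integral and endpoint-continuity justifications that you supply.
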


\begin{proof}
    We will show that for every two graphons $u\neq v$, the function $g:[0,1]\to\R$ defined by
    \[
    g(t) \coloneqq \mc B_c^{H}(tu+(1-t)v)
    \]
    has negative second derivative on $(0,1)$. Expanding definitions, we see that
    \begin{equation}\label{eq:g second derivative}
    \begin{split}
    g''(t) & = - \frac{c^k}{|\AutH|} \frac{d^2}{dt^2} t(H,tu+(1-t)v) - \frac{c}{2} \iint_{x,y \in [0,1]} \frac{(u(x,y)-v(x,y))^2}{tu(x,y)+(1-t)v(x,y)} dxdy\\
    & \leq - \frac{c^k}{|\AutH|} \frac{d^2}{dt^2} t(H,tu+(1-t)v) - \frac{c}{2} \iint (u-v)^2.
    \end{split}
    \end{equation}
    We now observe that $\frac{d^2}{dt^2} t(H,tu+(1-t)v)$ can be written as a sum of $k(k-1)$ polynomials of the form
    \[
    \int_{[0,1]^{v(H)}} (u(x_a,x_b) - v(x_a,x_b))(u(x_c,x_d) - v(x_c,x_d)) \prod_{\substack{ij \in E(H):\\ij \neq ab,cd}} (tu(x_i,x_j) + (1-t)v(x_i,x_j)) dx_1\cdots dx_{v(H)}.
    \]
    The product on the right is bounded in absolute value by $1$. The Cauchy--Schwarz inequality implies that
    \[
    \int_{[0,1]^{v(H)}} |u(x_a,x_b) - v(x_a,x_b)||u(x_c,x_d) - v(x_c,x_d)| dx_1\cdots dx_{v(H)} \leq \iint (u-v)^2.
    \]
    Together with \eqref{eq:g second derivative} this implies that
    \[
    g''(t) \leq \left( \frac{k(k-1) c^k}{|\AutH|} - \frac{c}{2} \right) \iint (u-v)^2,
    \]
    which is negative as long as $c < (|\AutH|/(2kR))^{1/R}$.
\end{proof}

\begin{claim}
    For every $c>0$, the graphon $w_c$ defined in \cref{thm:small c H-free analysis} is a stationary point of $\mc B_c^{H}$.
\end{claim}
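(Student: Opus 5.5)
To show $w_c$ is stationary, I will compute the first variation of $\mc B_c^{H}$ at a constant graphon $w\equiv a$ with $a\in(0,1]$. Let $h$ be a bounded symmetric perturbation. Then
\[
\frac{d}{ds}\Big|_{s=0}\mc B_c^H(a+sh) = \iint \Phi(x,y)\,h(x,y)\,dxdy .
\]
The entropy part is straightforward: differentiating $I$ gives $\tfrac12\log w$, so the entropy term contributes $-\tfrac{c}{2}\log a$ to $\Phi$.

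For the homomorphism density, differentiating $t(H,w)$ at a constant graphon gives $\sum_{ab\in E(H)} a^{k-1}\iint h$. Indeed, fixing the variables $x_a,x_b$ and integrating out the rest, each remaining factor is the constant $a$. The ordered-pair convention needs care here. Since $h$ is symmetric, $\iint h(x_a,x_b)$ does not depend on the orientation, and the total is $k a^{k-1}\iint h$. Hence
\[
\Phi \equiv -\frac{c^k k a^{k-1}}{|\AutH|} - \frac{c}{2}\log a ,
\]
which is a constant function.

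Two cases then finish the argument. If $\Phi\equiv 0$, the first variation vanishes in every direction, so $w\equiv a$ is stationary in the interior sense. We have $w_c\le 1$, since a stationary point of the discrete problem has $x\le p$; if $a<1$ it is an interior point, and since $\Phi=0$ all admissible perturbations are fine.

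The remaining step is to check that $a=w_c$ solves $\Phi=0$, i.e.
\[
\log(1/a) = \frac{2k c^{k-1}}{|\AutH|}\, a^{R}, \qquad R=k-1 .
\]
Set $z = R\log(1/a)$, so that $a^R=e^{-z}$. The equation becomes
\[
z e^{z} = \frac{2kR c^R}{|\AutH|} .
\]
Therefore $z = W\!\left(\tfrac{2kRc^R}{|\AutH|}\right)$ and $a = e^{-z/R}$.

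This has to be matched with the stated formula, and that matching is the one real obstacle. Using $e^{-z} = z/(ze^{z})$, we get
\[
a^R = \frac{|\AutH|}{2kRc^R}\, W\!\left(\frac{2kRc^R}{|\AutH|}\right),
\]
which is exactly $w_c^R$ as defined in \cref{thm:small c H-free analysis}, the factor $1/c$ coming from $c^{-R}$ under the $R$-th root. Since $z>0$, we get $a<1$, so $w_c$ lies in the interior and is a stationary point. I expect the only delicate points to be the normalization constants: the factor $\tfrac12$ in $I$ must pair correctly with the symmetric counting in $t(H,\cdot)$, and that pairing is what produces the factor $2k$.
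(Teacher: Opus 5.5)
Your proof is correct and is essentially the paper's argument. You compute the directional derivative of $\mathcal{B}_c^H$ at a constant graphon $q$ as $-\bigl(\frac{k c^k q^{R}}{|\mathrm{Aut}(H)|}+\frac{c}{2}\log q\bigr)\iint u$ and show it vanishes at $q=w_c$ via $W(z)e^{W(z)}=z$; you solve $\Phi=0$ forward for $a$, while the paper plugs in $q$ and checks $\log q=-W(z)/R$, the same computation in reverse. One blemish is your aside that $w_c\le 1$ ``since a stationary point of the discrete problem has $x\le p$'', which is not a valid reason for the graphon problem, but it is superfluous because you later show directly that $a=e^{-z/R}\in(0,1)$, which is what keeps $w_c+su$ in $\mathcal W$ for small $|s|$.
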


\begin{proof}
    We will prove that for every graphon $u$, the function
    \[
    g(t) \coloneqq \mc B_c^{H}(w_c+tu)
    \]
    (which is always defined in a neighborhood of $0$) has a stationary point at $0$. Set
    \[
    q \coloneqq \frac{1}{c} \left( \frac{|\AutH|}{2kR} W \left( \frac{2k R c^R}{|\AutH|} \right) \right)^{1/R}
    \]
    as the value that $w_c$ takes. The definitions imply that
    \begin{align*}
        g'(0) & = - \frac{c^{k}}{|\AutH|} k q^R \iint u(x,y) dxdy - \frac{c}{2} \log q \iint u(x,y) dxdy\\
        & = - \frac{c}{2} \left( \frac{2kc^R}{|\AutH|} \times \frac{|\AutH|}{2kRc^R} W \left( \frac{2k R c^R}{|\AutH|} \right) + \log q \right) \iint u(x,y) dxdy\\
        & = - \frac{c}{2} \left(  \frac{1}{R} W \left( \frac{2k R c^R}{|\AutH|} \right) + \log q \right) \iint u(x,y) dxdy.
    \end{align*}
    The final quantity is equal to zero since
    \begin{equation}\label{eq:R W q equality}
    \frac{1}{R} W \left( \frac{2k R c^R}{|\AutH|} \right) + \log q = 0.
    \end{equation}
    Indeed, writing $z \coloneqq \frac{2k R c^R}{|\AutH|}$, we have $q^R = W(z)/z$. Since, by definition, $W(z)e^{W(z)}=z$, it follows that $W(z)/z = e^{-W(z)}$. Consequently, $\log q = -W(z)/R$, implying \eqref{eq:R W q equality}.
\end{proof}

\subsubsection{Large $c$}\label{sec:large c analysis}

As mentioned above, as $c \to \infty$, typical $H$-free samples from $G(n,cn^{-1/m_2(H)})$ are approximately $(r-1)$-partite. In this section we show that the corresponding structure is reflected in graphons maximizing $\mc B_c^{H}$, and hence in typical $H$-free samples from $G(n,cn^{-1/m_2(H)})$.

We define $w_r^{1,0}$ as the balanced $(r-1)$-partite graphon (with internal density $0$ in each part and crossing density $1$ between parts). We prove the following on the structure of graphons maximizing $\mc B_c^{H}$, with $c$ large.

\begin{theorem}\label{thm:large c graphon H-free analysis}
    Let $H$ be strictly $2$-balanced with chromatic number $r \geq 3$. Suppose that $\{m_c\}_{c>0}$ is a family of graphons such that for each $c$, $\mc B_c^{H}(m_c) = F_{H}(c)$. Then $\lim_{c\to\infty}\delta_\Box(m_c,w_r^{1,0}) = 0$.
\end{theorem}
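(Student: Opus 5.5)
Since $t(H,\cdot)$ and $I$ are invariant under measure-preserving maps and the graphon space is compact in $\delta_\Box$, I will argue by contradiction via compactness. Suppose there are $\varepsilon>0$ and $c_j\to\infty$ with $\delta_\Box(m_{c_j},w_r^{1,0})\geq\varepsilon$. Passing to a subsequence, $m_{c_j}\to m$ in cut distance for some graphon $m$. The plan is to show that any such limit must satisfy $t(H,m)=0$ and $\iint m\geq \frac{r-2}{r-1}$. A removal-lemma/stability argument in the graphon setting then forces $m=w_r^{1,0}$ up to measure-preserving maps, which contradicts the assumption.

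\textbf{Step 1: lower bound on $F_H(c)$.} Test $\mc B_c^H$ on $w_r^{1,0}$. Since $\chi(H)=r$, we have $t(H,w_r^{1,0})=0$. The integrand of $I$ is $0$ where $w=1$ and $1$ where $w=0$, so $I(w_r^{1,0})=\frac{1}{2(r-1)}$. Hence $F_H(c)\geq -\frac{c}{2(r-1)}$.

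\textbf{Step 2: consequences for a maximizer.} From $\mc B_c^H(m_c)\geq -c/(2(r-1))$ and the nonnegativity of both terms, we get
\[
\frac{c^{k}}{|\AutH|}t(H,m_c)\leq \frac{c}{2(r-1)},\qquad I(m_c)\leq \frac{1}{2(r-1)}.
\]
The first bound gives $t(H,m_c)=O(c^{1-k})\to0$, and continuity of $t(H,\cdot)$ in cut distance yields $t(H,m)=0$. For the second, note that $I(w)=\frac12\iint\phi(w)$ with $\phi(x)=x\log x+1-x$. The function $\phi$ is convex, decreasing, and satisfies $\phi(x)\geq (1-x)^2/2\geq0$; in particular $\phi(x)\geq 1-x-(\text{something})$ is not linear, so I will use the bound $\phi(x)\ge 1-x - x\log(1/x)$ and lower semicontinuity of $I$ instead. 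Lower semicontinuity of $I$ in cut distance gives $I(m)\leq\liminf I(m_{c_j})\leq \frac{1}{2(r-1)}$, that is, $\iint\phi(m)\leq\frac{1}{r-1}$.

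\textbf{Step 3: the extremal step (main obstacle).} It remains to show that a graphon $m$ with $t(H,m)=0$ and $\iint\phi(m)\leq \frac1{r-1}$ must equal $w_r^{1,0}$. The condition $t(H,m)=0$ implies that the support $\{m>0\}$ carries no homomorphic copy of $H$ in measure. By the graphon version of Erdős–Stone–Simonovits, the support therefore has measure at most $\frac{r-2}{r-1}$. Since $\phi\geq 1$ wherever $m=0$, the zero set has measure at least $1-\frac{r-2}{r-1}=\frac{1}{r-1}$, so $\iint\phi(m)\geq \frac1{r-1}$. Equality is then forced throughout: $\phi(m)=0$, i.e.\ $m=1$, on the support, and the support has exactly the extremal measure $\frac{r-2}{r-1}$. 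Thus $m$ is a $\{0,1\}$-valued $H$-free graphon of extremal density. The graphon form of Erdős–Simonovits stability, in which $H$-free with $\chi(H)=r$ is handled by the removal lemma together with $K_r$-freeness of a blow-up argument, then identifies $m$ with $w_r^{1,0}$ up to measure-preserving maps.

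I expect the subtlest points to be making these graphon extremal and stability statements rigorous for a general $H$ of chromatic number $r$ via the removal lemma, and checking that lower semicontinuity of $I$ holds in the form needed. Once $m=w_r^{1,0}$ is established, it contradicts $\delta_\Box(m_{c_j},w_r^{1,0})\geq\varepsilon$.
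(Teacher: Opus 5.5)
Your proposal is correct and follows the paper's proof essentially step for step. It compares with $w_r^{1,0}$ to bound $t(H,m_c)$ and $I(m_c)$, passes to a cut-distance limit using continuity of $t(H,\cdot)$ and lower semicontinuity of $I$, applies graphon Erd\H{o}s--Stone--Simonovits to the support $\mathbf 1_{\{m>0\}}$ to force $m$ to be $\{0,1\}$-valued with extremal edge density, and finishes with graphon stability (the paper cites \cite[Theorem 16.23]{lovasz2012large}, and your argument by contradiction is equivalent to its ``every convergent subsequence'' phrasing). The only blemish is the aside in Step 2: $\phi(x)\ge 1-x-x\log(1/x)$ is in fact an identity and is never used, so you can delete it.
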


The proof of \cref{thm:large c graphon H-free analysis} is as follows. We will first argue that as $c$ increases, the density of $H$ in $m_c$ approaches zero. We will also show an entropy inequality for $m_c$. These will later be combined with the stability version of the Erd\H{o}s--Stone--Simonovits theorem and a compactness argument to prove the theorem.

\begin{claim}
    For every $c$,
    \[
    t(H,m_c) \leq \frac{|\AutH|}{2(r-1)c^R}
    \]
    and
    \[
    I(m_c) \leq \frac{1}{2(r-1)}.
    \]
\end{claim}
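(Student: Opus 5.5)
The plan is to compare $m_c$ with the balanced $(r-1)$-partite graphon $w_r^{1,0}$, using only that $m_c$ is a maximizer together with the sign information on the two terms of $\mc B_c^H$. Because $H$ has chromatic number $r$, every homomorphism of $H$ into the $(r-1)$-partite graphon must place some edge inside a part, so $t(H,w_r^{1,0})=0$.

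For the entropy term, $w_r^{1,0}$ takes only the values $0$ and $1$. The integrand $w\log w+1-w$ equals $1$ where $w=0$ and $0$ where $w=1$. The set where $w_r^{1,0}=0$ consists of the $r-1$ diagonal blocks, of total measure $(r-1)\cdot(r-1)^{-2}=1/(r-1)$. Hence
\[
I(w_r^{1,0})=\frac{1}{2(r-1)},
\qquad
\mc B_c^H(w_r^{1,0})=-\frac{c}{2(r-1)}.
\]

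Maximality of $m_c$ gives $\mc B_c^H(m_c)\ge -c/(2(r-1))$, that is,
\[
\frac{c^k}{|\AutH|}\,t(H,m_c)+c\,I(m_c)\le \frac{c}{2(r-1)}.
\]
Both terms on the left are nonnegative, as noted in the proof that $F_H(c)$ is well defined, since $t(H,w)\ge 0$ and $x\log x+1-x\ge0$ on $[0,1]$. Each term is therefore separately at most $c/(2(r-1))$.

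Dropping the entropy term and dividing by $c^k/|\AutH|$ gives
\[
t(H,m_c)\le \frac{|\AutH|}{2(r-1)c^{k-1}}=\frac{|\AutH|}{2(r-1)c^R},
\]
since $R=k-1$. Dropping the homomorphism term and dividing by $c$ gives $I(m_c)\le 1/(2(r-1))$.

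No step here is a real obstacle. The only point needing care is the computation of $I(w_r^{1,0})$, specifically checking the measure of the diagonal blocks and the convention $0\log 0=0$.
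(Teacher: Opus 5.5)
Your proposal is correct and follows essentially the same route as the paper: maximality gives $\mc B_c^H(m_c)\ge \mc B_c^H(w_r^{1,0})=-c/(2(r-1))$, where $t(H,w_r^{1,0})=0$ because $\chi(H)=r$, and you then drop each of the two nonnegative penalty terms in turn. The only difference is that you spell out the computation $I(w_r^{1,0})=1/(2(r-1))$, which the paper leaves implicit.
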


\begin{proof}
    Since $m_c$ maximizes $\mc B_c^{H}$, there holds
    \[
    - \frac{c^k}{|\AutH|}t(H,m_c) \geq - \frac{c^k}{|\AutH|}t(H,m_c) - c I(m_c) = \mc B_c^{H}(m_c) \geq \mc B_c^{H}(w_r^{1,0}) = - \frac{c}{2(r-1)},
    \]
    where the final equality follows from plugging $w_r^{1,0}$ into the definition of $\mc B_c^{H}$ while noting that $t(H,w_r^{1,0})=0$. The first assertion in the claim follows.

    Arguing similarly,
    \[
    - c I(m_c) \geq \mc B_c^{H}(w_r^{1,0}) = - \frac{c}{2(r-1)},
    \]
    and the claim follows.
\end{proof}

We now prove \cref{thm:large c graphon H-free analysis}.

\begin{proof}
    Since the graphon space is compact with respect to the cut metric, it suffices to prove that every convergent subsequence of $\{m_c\}$ converges to $w_r^{1,0}$.
    
    Let $\{c_n\}_{n\in\N}$ be a sequence with $c_n\to\infty$ and such that $m_{c_n}$ converges in cut metric to $w \in \mc W$. Since $t(H,\cdot)$ is continuous in cut metric and $I(\cdot)$ is lower semicontinuous, the previous claim implies that
    \[
    t(H,w) = \lim_{n\to\infty} t(H,m_{c_n}) \leq \lim_{n\to\infty} \frac{|\AutH|}{2(r-1)c_n^R} = 0
    \]
    and that
    \[
    I(w) \leq \liminf_{n\to\infty} I(m_{c_n}) \leq \frac{1}{2(r-1)}.
    \]
    We now claim that $w = \mathbf 1_{w>0}$ (i.e., $w$ takes the value $1$ almost everywhere on its support). Indeed, if we let $u = \mathbf 1_{w>0}$, then $t(H,u) = 0$. Therefore, the graphon version of the Erd\H{o}s--Stone--Simonovits theorem implies that $\iint u(x,y)dxdy \leq 1- 1/(r-1)$. Additionally,
    \[
    2 I(w) = \iint_{w=0}1 + \iint_{w>0}w(x,y)\log(w(x,y)) + 1 - w(x,y) dxdy \geq 1 - \iint u(x,y) dxdy \geq \frac{1}{r-1}.
    \]
    It follows that $\iint u(x,y) dxdy = 1-1/(r-1)$, and additionally, that $w=1$ almost everywhere on its support. Thus, $u = w$ and $\iint w(x,y)dxdy = 1 - 1/(r-1)$.

    By the stability version of the Erd\H{o}s--Stone--Simonovits theorem for graphons (cf.\ \cite[Theorem 16.23]{lovasz2012large}) we have $\delta_\Box(w,w_r^{1,0})=0$, completing the proof.
\end{proof}

\subsubsection{Conjectured structure of optimizers}

The small-$c$ and large-$c$ examples lead to a natural family of candidates for optimizing graphons $w_c$: Balanced $(r-1)$-step graphons. Specifically, for $\alpha,\beta \in [0,1]$, let $w_r^{\alpha,\beta}$ be the graphon with constant value $\beta$ on ``diagonal'' squares of the form $((i-1)/(r-1),i/(r-1)) \times ((i-1)/(r-1),i/(r-1))$ and constant value $\alpha$ everywhere else. We conjecture that optimizers of $\mc B_c^{K_r}$ can always be found in this family, as well as the stronger conjecture that \textit{all} optimizers are in this family.

\begin{conjecture}\label{con:block optimizers}
    For every $r \geq 3$ and $c>0$,
    \[
    F_{K_r}(c) = \max_{\alpha,\beta \in [0,1]} \mc B_c^{K_r} (w_r^{\alpha,\beta}).
    \]
\end{conjecture}

\begin{conjecture}\label{con:block structure}
    For every $r \geq 3$ and $c>0$,
    \[
    \{ w \in \mc W : \mc B_c^{K_r}(w) = F_{K_r}(c) \} \subseteq \{ w_r^{\alpha,\beta} : \alpha,\beta \in [0,1] \},
    \]
    where we view graphons as equivalent under measure-preserving relabelings.
\end{conjecture}

The results in \cite{jenssen2026non} mentioned above imply that \cref{con:block structure} holds when $c < \left(\frac{e (r-2)!}{R}\right)^{1/R}$. As mentioned in the introduction, Kwan and Pham \cite{kwan2026nonlinear} proved the $r=3$ case of \cref{con:block structure}. It is possible that a more detailed structural analysis along the lines of \cref{sec:large c analysis} might be sufficient to prove \cref{con:block optimizers} for large $c$. However, proving the conjecture for all $r$ and $c$ might require additional ideas.

\section*{On the use of AI in this paper}

\cref{lem:Bethe FE lower bound} was originally proved using a discrete version of the random greedy algorithm in \cref{subsec:BP-guided}. The proof was complete and correct as far as we could tell, but it was rather technical. The authors subsequently discovered the conceptually simpler proof using the BP guided random greedy process. However, when writing the details we found a mistake in our argument for comparing $a^{\textrm{ind}}_v(t)$ and $a^{\textrm{sh}}_v(t)$. ChatGPT 5.6 Sol was used to find a fix which led to the introduction of intermediate clock processes and the proof of \cref{lem:fourcorners}. The statement of \cref{lem:influence-same-label} had to be updated accordingly, but the proof remained essentially unchanged. With the help of GPT 6 Astra we tried to trace the idea behind \cref{lem:fourcorners} in the literature and we found the clearest precedent in the work of Chatterjee~\cite{Chatterjee2008}. There the author uses discrete mixed differences, analogous to the four-term expression in~\eqref{eq:Gamma-swap}, to describe interactions between coordinates in a coordinate-resampling approach to normal approximation.

All other proofs in the paper are the authors' own, and the authors also wrote the manuscript. Microsoft Copilot and ChatGPT 6 were used as editing, literature search, and proofreading tools, including checking mathematical content. Needless to say, the authors take full responsibility for the paper.

\section*{Acknowledgments}
MJ is supported by a UK Research and Innovation Future Leaders Fellowship MR/W007320/2. WP is supported in part by NSF grant DMS-2348743. MS was supported by NSF grant DMS-2349024 and by ISF Beresheet grant 4287/26.
\bibliographystyle{amsplain.bst}
\bibliography{triangle}

\end{document}